\title{The space of tight contact structures on $\R^3$ is contractible} 
\author{Yakov Eliashberg\thanks{Partially supported by the NSF grants   DMS-1807270 and DMS-2104473} \\ Stanford University\\ USA  \and   Nikolai Mishachev 
\\Lipetsk Technical University\\Russia} 
\date{}  
\let\oldmarginpar\marginpar
\renewcommand\marginpar[1]{\-\oldmarginpar[\raggedleft\footnotesize #1]%
{\raggedright\footnotesize #1}}
\theoremstyle{plain}
\newtheorem{theorem}{Theorem}[section]
\newtheorem{thm}[theorem]{Theorem}
\newtheorem{cor}[theorem]{Corollary}
\newtheorem{prop}[theorem]{Proposition}
\newtheorem{lemma}[theorem]{Lemma}
\theoremstyle{remark}
\newtheorem{remark}[theorem]{Remark}
\newtheorem*{remark*}{Remark}
\theoremstyle{Example}
\newtheorem{Example}[theorem]{Example}
\theoremstyle{definition}
\newcommand{\wt}{\widetilde}
\newcommand{\wh}{\widehat}
\newcommand{\ol}{\overline}
\newcommand{\p}{\partial}
\newcommand{\eps}{\varepsilon}
\newcommand{\Tight}{\mathrm {Tight}}
\newcommand{\R}{{\mathbb{R}}}
\newcommand{\C}{{\mathbb{C}}}
\newcommand{\e}{{\bf e}}
\newcommand{\Int}{{\rm Int\,}} %Interior
\renewcommand{\min}{{\rm min}}
\renewcommand{\max}{{\rm max}}
\newcommand{\loc}{{\rm loc}}
\newcommand{\Id}{\mathrm {Id}}
\newcommand{\Ker}{\mathrm{Ker\,}}
\newcommand{\Coker}{\mathrm{Coker\,}}
\newcommand{\Diff}{\mathrm{Diff}}
\newcommand{\Span}{\mathrm{Span}}
\newcommand{\LL}{\mathcal{L}}
\newcommand{\FF}{\mathcal{F}}
\newcommand{\PP}{\mathcal{P}}
\newcommand{\TT}{\mathcal{T}}
\newcommand{\HH}{\mathcal{H}}
\newcommand{\ZZ}{\mathcal{Z}}
\def\Op{{\mathcal O}{\it p}\,}
\numberwithin{figure}{section}
\renewcommand{\Vert}{\mathrm {Vert}}
\newcommand{\Uert}{\mathrm {Uert}}
\newcommand{\Tr}{\mathrm {Tr}}
\newcommand{\Simple}{\mathrm {Simple}}
\begin{document}

\maketitle
\rightline{\sl To Emmanuel Giroux on his 60th anniversary}
\begin{abstract}

 One of the results  of the   paper \cite{El92} was the proof that any tight contact structure on $S^3$ is diffeomorphic to the standard one. It was also claimed there without a proof that   similar methods could be used to  prove  a multi-parametric version: the space of tight contact structures on $S^3$, fixed at a point, is contractible.
We prove this result  in the current paper.

 \end{abstract}

 %\onehalfspacing
 \tableofcontents
\section{Introduction}\label{sec:intro}

A contact structure $\xi$ is called {\em overtwisted}, see \cite{El89}, if there exists an embedded $2$-disc $D\subset \R^3$ which is tangent to $\xi$ along $\p D$. A non-overtwisted contact structure is called {\em tight}.
It is a fundamental result of D.~Bennequin, see \cite{Be83}, that  the standard contact structure $\zeta_0=\{dz-ydx=0\} $ on $\R^3$  is tight.

The following theorem is the main result of the current paper.
\begin{thm}\label{thm:main}
The space of standard at infinity tight contact structures on $\R^3$ is contractible.
\end{thm}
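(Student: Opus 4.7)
The plan is to prove the stronger parametric statement: for every continuous map $f: S^k \to \Tight(\R^3)$, $f$ is null-homotopic rel the standard-at-infinity condition. Since each member of a family $\{\xi_s\}_{s\in S^k}$ agrees with $\zeta_0$ outside some compact set, a parametric rescaling and cutoff reduces the problem to contracting a compact family $\{\xi_s\}$ of tight contact structures on a fixed ball $B\subset\R^3$, with standard characteristic foliation on $\partial B$, to the constant family $\zeta_0|_B$.

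The main tool is a parametric version of the disc-filling technique of \cite{El92}. First, using a parametric treatment of the flexibility of characteristic foliations on $S^2$, arrange that the characteristic foliation on $\partial B$ has the standard normal form (two elliptic singularities joined by meridians) for every $s$. Then, following the non-parametric argument, realize each $(B,\xi_s)$ as the convex boundary of a Stein ball $(B^4,J_s)$ and produce a foliation of $B^4$ by $J_s$-holomorphic discs with a common interior marked point. The boundary circles of these discs foliate $\partial B^4$ and canonically identify $(\partial B^4,\xi_s)$ with the standard tight structure; carrying the construction out continuously in $s$ yields a family of contactomorphisms $\phi_s$ implementing a null-homotopy of $f$. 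The induction on $k$ proceeds by a relative version of the same statement, attaching cells to a previously constructed nullhomotopy.

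The principal obstacle is the parametric moduli theory. One must choose $J_s$ continuously in $s$ (which requires a parametric fillability statement of comparable depth to the theorem itself), rule out disc and sphere bubbling along one-parameter paths in the family, and maintain transversality of the evaluation map at the marked point. Parametric Gromov compactness, together with genericity of tamed almost complex structures on Stein balls and careful control of boundary regularity of the disc moduli in families, should handle this, though some care is needed at loci where the geometry of the filling degenerates. Once the continuous family of disc foliations is available, the contraction of the family $\{\xi_s\}$ to $\zeta_0$ is essentially formal: the parametric straightening of the $\phi_s$ to contactomorphisms of $(B,\zeta_0)$, combined with the contractibility of the contactomorphism group fixing $\partial B$, produces the required null-homotopy.
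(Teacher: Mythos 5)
Your overall reduction (rescale to a compact family on a fixed ball with controlled boundary data, then build a homotopically canonical filling) matches the paper's strategy in outline, but the two steps that carry all the weight have genuine gaps. First, the claim that one can ``arrange that the characteristic foliation on $\partial B$ has the standard normal form (two elliptic singularities joined by meridians) for every $s$'' is not a parametric flexibility statement: in a generic $k$-parameter family the characteristic foliation on a fixed sphere undergoes birth--death (embryo) bifurcations of singular points, and eliminating all of them down to the standard picture simultaneously in $s$ requires tightness in an essential, non-local way. This normalization is, in effect, the main content of the theorem --- if you had it, the canonical extension of the standard foliation to the ball would finish the proof with no holomorphic curves at all. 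The paper deliberately does \emph{not} normalize to the standard form: it only normalizes singularities to generalized Morse type (Proposition \ref{prop:Igusa}, an Igusa-type theorem), and then handles arbitrary tight generalized Morse foliations via the combinatorial analysis of Proposition \ref{prop:key} and the existence of (simple) taming functions (Proposition \ref{prop:tight-2-sphere}), which are what make the extension to the ball homotopically canonical (Proposition \ref{prop:ext-fol-cont0}).

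Second, the holomorphic-disc step is circular as you set it up, and you say so yourself: producing the Stein fillings $(B^4,J_s)$ \emph{continuously in $s$} ``requires a parametric fillability statement of comparable depth to the theorem itself.'' For a single tight $\xi_s$ one knows a filling exists only by invoking the non-parametric theorem of \cite{El92}; there is no independent mechanism that selects such fillings continuously over $S^k$, and the degenerations ``where the geometry of the filling degenerates'' are exactly where the argument would break. The paper sidesteps global fillings and global disc moduli entirely: complex geometry enters only locally, through realizing the sphere (equipped with a special simple taming function) as a strictly pseudoconvex hypersurface in $\C^2$ near its singular locus, which is a soft, $h$-principle-level construction. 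So while your route is genuinely different in flavor --- Gromov--Eliashberg disc filling versus the paper's characteristic-foliation combinatorics plus taming functions --- as written it does not constitute a proof: both the parametric boundary normalization and the parametric filling are missing ideas, not missing details.
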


A contact structure defines an orientation of a contact 3-manifold. Given an oriented contact manifold we call a contact structure {\em positive} if the   contact orientation is   the given one.
\begin{cor}\label{cor:main}
The space $\Tight_+(S^3)$ of positive  tight contact structures on $S^3$ is homotopy equivalent to $\R {\mathrm P}^2$.
\end{cor}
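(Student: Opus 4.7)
The plan is to deduce the corollary from Theorem \ref{thm:main} by analyzing the evaluation map at a single point. Fix $p \in S^3$ and consider
$$\mathrm{ev}_p : \Tight_+(S^3) \to \mathrm{Gr}(2, T_p S^3) \cong \R\mathrm{P}^2, \qquad \xi \mapsto \xi_p.$$
The target is $\R\mathrm{P}^2$, not $S^2$, because a positive contact structure is an un-cooriented 2-plane field: positivity $\alpha \wedge d\alpha > 0$ is invariant under $\alpha \mapsto -\alpha$. I aim to show that $\mathrm{ev}_p$ is a Serre fibration with contractible fibers, which forces $\Tight_+(S^3) \simeq \R\mathrm{P}^2$.

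Surjectivity is immediate: the action of $\mathrm{SO}(4) \subset \Diff^+(S^3)$ on the standard tight contact structure realizes every pair $(p, P)$, since $\mathrm{SO}(4)$ acts transitively on pairs consisting of a point $p \in S^3$ and a 2-plane $P \subset T_p S^3$. The homotopy-lifting property is a parametric Gray-stability statement: given a path $P_t$ of 2-planes at $p$ and a lift $\xi_0$ with $(\xi_0)_p = P_0$, one extends $P_t$ to a continuously varying family of germs of contact structures at $p$ agreeing with the germ of $\xi_0$ at $t=0$, applies Gray's theorem in a small neighborhood $U$ of $p$ to produce a contact isotopy $\psi_t^{\xi_0}$ compactly supported in $U$ realizing this family, and sets $\xi_t = (\psi_t^{\xi_0})_* \xi_0$.

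Contractibility of a single fiber reduces to Theorem \ref{thm:main}. Fix $P_0 \in \R\mathrm{P}^2$ and a Darboux germ $\xi_0^{\std}$ at $p$ with $(\xi_0^{\std})_p = P_0$, defined on a small closed ball $B_p \ni p$. Let $F^{\std} \subset \mathrm{ev}_p^{-1}(P_0)$ consist of those $\xi$ that coincide with $\xi_0^{\std}$ on $B_p$. Choosing a diffeomorphism $S^3 \setminus \mathrm{int}(B_p) \cong \R^3$ that identifies $\xi_0^{\std}$ near $\partial B_p$ with the standard contact structure near infinity, we identify $F^{\std}$ with the space of tight contact structures on $\R^3$ standard at infinity, which is contractible by Theorem \ref{thm:main}. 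A parametric Darboux argument then exhibits $F^{\std}$ as a deformation retract of $\mathrm{ev}_p^{-1}(P_0)$: for each $\xi$ in the fiber, one produces a contact isotopy $\phi_t^\xi$ compactly supported near $p$, depending continuously on $\xi$ and equal to the identity when $\xi \in F^{\std}$, with $(\phi_1^\xi)_* \xi \in F^{\std}$.

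The main obstacle is the parametric Gray/Darboux step: continuously producing these isotopies in the parameter $\xi$ (and in $t$), with controlled support near $p$, while ensuring that the retraction is truly the identity on $F^{\std}$. The individual Gray and Darboux theorems are classical, but the multi-parametric versions with continuous dependence on $\xi$ require care—in essence, exactly the sort of canonical-choice bookkeeping that the main body of the paper is tailored to provide. Once these are in place, combining the Serre fibration and contractibility of the fiber with the long exact sequence of a fibration yields $\Tight_+(S^3) \simeq \R\mathrm{P}^2$.
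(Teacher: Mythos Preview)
Your proposal is correct and follows essentially the same route as the paper: the evaluation map $\Tight_+(S^3)\to\R\mathrm{P}^2$ at a fixed point is a Serre fibration whose fiber is homotopy equivalent to the space of standard-at-infinity tight contact structures on $\R^3$, which is contractible by Theorem~\ref{thm:main}. The only remark is that the parametric Gray/Darboux steps you flag as the ``main obstacle'' are in fact standard Moser-type arguments and do not rely on the special machinery developed in the body of the paper; the paper simply asserts the fibration in one sentence.
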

Indeed, an evaluation map $\Tight_+(S^3)\to\R  {\mathrm P}^2$, associating with a contact structure a non-oriented contact plane at a fixed point, is a Serre fibration with a fiber homotopy equivalent to the space of standard at infinity tight contact structures on $\R^3$.
\medskip
 
The non-parametric version of Theorem \ref{thm:main}, i.e. that {\em the space of standard at infinity tight contact structures on $\R^3$ is connected} was proven in \cite{El92}. Equivalently, that result means that any standard at infinity tight contact structure on $\R^3$ is diffeomorphic to $\zeta_0$ via a compactly supported   diffeomorphism of $\R^3$.  
 An approach to the proof of   the parametric case  using convex surface
theory has been suggested and partially implemented by D.~J\"{a}nichen, see \cite{Ja18}.  The proof presented in the current paper does not use the theory of contact convexity.

Denote by $\Diff_0(\R^3)$ the group of compactly supported diffeomorphisms of $\R^3$, and by
$\Diff_0(\R^3,\zeta_0)$ the group of compactly supported contactomorphisms of $(\R^3,\zeta_0)$.
The non-parametric  version  Theorem  \ref{thm:main} from \cite{El92}  together with Gray's theorem \cite{Gra59} implies that  the evaluation map $f\mapsto f_*\zeta_0$ is  a  a Serre fibration 
$\Diff_0(\R^3)\to\Tight_0(\R^3)$, where 
$\Tight_0(\R^3)$ is the space of standard at infinity tight contact structures on $\R^3$.      The fiber of this fibration is 
$\Diff_0(\R^3,\zeta_0)$. Hence,   Theorem  \ref{thm:main}    equivalently means that   {\em the inclusion map
$j:\Diff_0(\R^3,\zeta_0))\to \Diff_0(\R^3)$ is a homotopy equivalence}, which in view of  A. Hatcher's theorem \cite{Ha83}
implies that the group  $\Diff_0(\R^3,\zeta_0)$ is contractible.

 A recent paper \cite{FMAP20} by E. Fern\'{a}ndez, J. Martinez-Aguinaga and  F. Presas   used the main result of the current paper for the study of the topology of the  group of contactomorphisms of various  other $3$-manifolds.

%Note that according to \cite{El93} any contact structure on $\R^3$ (not necessarily standard at infinity) is diffeomorphic to $\zeta_0$. We  show in this paper that 
%\begin{thm}\label{thm:no-support}
%The group $\Diff(\R^3,\zeta_0)$ of all (not necessarily compactly supported) contactomorphisms of $\zeta_0$ is homotopy equivalent to
%$S^1$.
%\end{thm}
\subsubsection*{Scheme  of the proof and the plan of the paper}
As in \cite{El92}, the proof is based on the analysis of characteristic foliations on the 2-sphere induced by a family of  tight contact structures on its  neighborhood.
To make the topology of a characteristic foliation manageable we   arrange  that its singularities   are of Morse or generalized Morse type. This is achieved  in Proposition \ref{prop:Igusa}, which is  an analog for our situation of Igusa's theorem about functions with moderate singularities,  see \cite{Ig84} and \cite{EM00, EM12}.

The second   ingredient in the proof of the main result is  a new characterization of characteristic foliations  induced on a sphere by a tight contact structure, see Proposition \ref{prop:tight-2-sphere}. It is formulated   in terms of existence of a Lyapunov function with special properties.  We call Lyapunov   functions in this class {\em simple taming functions}. Let  us recall that    Giroux's  criterion  from \cite{Gir91} for tightness of characteristic foliations is applicable only to convex (in a contact sense) surfaces, i.e. surfaces admitting a transverse contact vector field. Our  characterization is  applicable to any generalized Morse foliation  on a 2-sphere without  the    contact convexity assumption. The proof of Proposition \ref{prop:tight-2-sphere} is based on the analysis of the topology of tight characteristic foliations in Section \ref{sec:allowable}, see there Proposition \ref{prop:key}.
This leads to a Proposition \ref{prop:taming-parametric} which allows us to construct a family of taming simple functions for any family of tight foliations on a 2-sphere.

The third ingredient in the proof is  Proposition \ref{prop:simple-extension}, which provides homotopically canonical extension of a family of simple functions  to a family of functions  on the ball without critical points and with contractible components of its level sets. This proposition is purely topological and has nothing to do with contact geometry.

The final ingredient is Proposition \ref{prop:ext-fol-cont} which uses the extended to the ball taming functions for construction of homotopically canonical extension of a  family of  characteristic foliations on the sphere  to   a family of contact structures on the ball. We apply for this purpose complex geometric techniques of strictly pseudoconvex hypersurfaces, though,     probably,  it could be achieved  by more direct contact geometric methods.
  We conclude the proof of Theorem \ref{thm:main}  in Section \ref{sec:proof-main}.
 
\medskip\noindent  {\bf Acknowledgements.} We are grateful to M. Jitomirskii for enlightening discussions about singularities of planar vector fields, and to Y.S. Ilyashenko for  providing a reference to the paper \cite{IY91}.  

 \section{Characteristic foliation on  a surface  in a tight contact manifold}\label{sec:char-fol}
 \subsection{Singularities of  a characteristic foliation}
Given a contact structure $\xi=\{\alpha=0\}$ on a neighborhood of a $2$-sphere $S $ in a $3$-manifold, we use the term {\em characteristic foliation} for the singular line field defined by the Pfaffian equation $\{\alpha|_{S}=0\}$, as well as for  the singular foliation
  $\FF:=\{\alpha|_{S}=0\}$  to which it integrates. A characteristic foliation is called {\em Morse} if it has no limit cycles and  all its singular points  are non-degenerate. 

More precisely, in a neighborhood of a singular point $p\in S$ we have $d\alpha|_S\neq 0$, and hence $\alpha|_S$ is a Liouville form   for the symplectic form $d\alpha|_S$. The corresponding Liouville field $Z$, $\iota(Z)(d\alpha|_S)=\alpha|_S$, integrates to   the characteristic foliation $\FF$.   In any local  coordinate system $u=(x,y)$   centered at $p$  the vector field $Z$ is given by a differential equation $\dot u= f(u)$, $f(0)=0$.  We are not assuming the coordinate system canonical (i.e. that $d\alpha=dx\wedge dy$), but require  that it defines the symplectic orientation. The linear part $A=d_0f$ has $\Tr A> 0$.
The singular point $p$ is called {\em non-degenerate} if $A$ is non-degenerate.  A  non-degenerate point   is  called {\em elliptic} if $\det A>0$ and {\em hyperbolic} otherwise. In the hyperbolic case $A$ has  two real  eigenvalues $\lambda_1>0$ and $\lambda_2<0$. In the elliptic case eigenvalues  are either positive real numbers, or conjugate complex numbers with the real part equal to $\frac{\Tr A}2$.  
  
A singular point is called an {\em embryo}  point    if $A$ has rank $1$ and if the    second differential $d^2f:\Ker A\to \Coker A$  does not vanish.   We note   that  the linear map $d^2f:\Ker A\to \Coker A$ is invariantly defined up to a (non-zero) scalar factor.

We call  a characteristic  foliation {\em generalized Morse} if all its singularities are either nondegenerate or embryos.  SIngularities of a generalized Morse foliation are shown on Fig. \ref{fig:ell-hyp-embryo}.   

 It follows from the results of F. Takens, see \cite{Ta74}, that  in a neighborhood of an embryo the directing Liouville field  $Z$ is   orbitally equivalent  to (i.e. diffeomorphic  to a field proportional to)  the field  $ x \frac{\p}{\p x}+ y^2f(y)\frac{\p}{\p y} ,\; f(0)\neq 0.$   \footnote{ Moreover, one can choose coordinates in such a way that $f(y)$ is  equal either to $1$, or $1+y$, but we will not need this stronger statement.}
 The above normalization claim also holds in a parametric form.

 \begin{lemma}\label{lm:Takens-param}
 Let $\Lambda$ be a compact parameter space and $Z_\lambda,\lambda\in\Lambda,$  a family of  $C^\infty$-vector fields on  a  neighborhood $\Op_{\R^2}0\subset\R^2$ 
  of  the origin in $\R^2$  with an embryo singularity at the origin.
 Then for any $k>0$ there exists a family of germs of $C^k$-diffeomorphisms $h_\lambda:(\R^2,0)\to(\R^2,0)$ such that $$(h_\lambda)_*Z_\lambda=g_\lambda(x,y)\left(x \frac{\p}{\p x}+ y^2f_\lambda(y)\frac{\p}{\p y}\right) ,\; f_\lambda(0),g_\lambda(0,0)\neq 0.$$

 \end{lemma}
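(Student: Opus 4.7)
The plan is to carry out the classical Takens normalization and verify that each step can be done $C^k$-smoothly in $\lambda$; the finite differentiability is forced only by the use of a parametric center manifold. In spirit this is a parametric analogue of the finitely smooth normal form results of Ilyashenko and Yakovenko~\cite{IY91}.

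\textbf{Step 1 (linear normalization).} The rank-one matrix $A_\lambda:=d_0Z_\lambda$ has positive trace, and its image and kernel lines vary smoothly with $\lambda$. A smooth family of linear coordinate changes simultaneously brings $A_\lambda$ to $\mathrm{diag}(\alpha_\lambda,0)$, giving
$$Z_\lambda=(\alpha_\lambda x+O_2)\,\partial_x+(a_\lambda y^2+b_\lambda xy+c_\lambda x^2+O_3)\,\partial_y,$$
with $\alpha_\lambda>0$ and, by the embryo hypothesis, $a_\lambda\ne 0$.

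\textbf{Step 2 (straightening invariant manifolds).} The parametric unstable manifold theorem produces a smooth family of strong unstable curves $W^u_\lambda$ tangent to the $\alpha_\lambda$-eigenvector of $A_\lambda$. For the center direction, apply the center manifold theorem to the augmented field $(Z_\lambda,0)$ on $\R^2\times\Lambda$: for any prescribed $k$ it yields a center manifold $W^c_\lambda$ of class $C^k$ jointly in $(u,\lambda)$. A $C^k$ parametric diffeomorphism straightens $W^u_\lambda$ to $\{y=0\}$ and $W^c_\lambda$ to $\{x=0\}$, making both axes invariant, whence
$$Z_\lambda=x\,P_\lambda(x,y)\,\partial_x+y\,Q_\lambda(x,y)\,\partial_y,\qquad P_\lambda(0,0)=\alpha_\lambda>0,$$
with $Q_\lambda(0,0)=0$ and $\partial_y Q_\lambda(0,0)\ne 0$; in particular $yQ_\lambda(0,y)=y^2\tilde f_\lambda(y)$ with $\tilde f_\lambda(0)\ne 0$.

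\textbf{Step 3 (orbital normalization).} Take $g_\lambda:=P_\lambda$; then $Z_\lambda/g_\lambda=x\partial_x+S_\lambda\partial_y$ with $S_\lambda:=yQ_\lambda/P_\lambda$, a saddle-node germ whose restriction to the center axis is the one-dimensional embryo $(y^2\tilde f_\lambda(y)/P_\lambda(0,y))\,\partial_y$. A parametric one-dimensional coordinate change $\tilde y=\psi_\lambda(y)$ conjugates this restriction to $\tilde y^2 f_\lambda(\tilde y)\,\partial_{\tilde y}$, and extending $\psi_\lambda$ to a neighborhood amounts to solving the conjugacy equation
$$x\,\partial_x\Phi_\lambda+S_\lambda\,\partial_y\Phi_\lambda=\Phi_\lambda^2\, f_\lambda(\Phi_\lambda),\qquad \Phi_\lambda(0,y)=\psi_\lambda(y),$$
by parametric transport along the orbits of $Z_\lambda/g_\lambda$ emanating from the center axis. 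The resulting $\Phi_\lambda$ is $C^k$ in all variables because the orbit flow is, and composing $(x,y)\mapsto(x,\Phi_\lambda(x,y))$ with the diffeomorphism of Step 2 gives the required germ $h_\lambda$.

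\textbf{Main obstacle.} The sole source of finite regularity is the parametric center manifold of Step 2, which exists in class $C^k$ for any prescribed $k$ but generally not in $C^\infty$; this is precisely why the lemma is formulated only in finite smoothness. The subsidiary technical point is the nonuniqueness of the center manifold, which must be accommodated by choosing a smoothly varying selection over the compact base $\Lambda$; this is standard but is where the hypothesis of compactness of $\Lambda$ is used.
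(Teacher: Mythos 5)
The paper itself gives no proof of this lemma: it is stated as the parametric form of Takens' normal form for saddle-nodes, with \cite{Ta74} (and, for the finitely smooth parametric theory, \cite{IY91}) as the implicit references. So your proposal must be judged on its own. Steps 1 and 2 are correct and standard: the linear normalization is unproblematic (for a rank-one matrix with nonzero trace the image and kernel are complementary eigenlines varying continuously with $\lambda$), and you correctly identify the parametric center manifold as the reason the statement is only $C^k$; the only quibble is that $\Lambda$ is merely a compact space, so rather than suspending to $\R^2\times\Lambda$ one should run the graph-transform construction fiberwise with continuous dependence on $\lambda$.

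Step 3, however, has a genuine gap, and it sits exactly where the real content of Takens' theorem lies. The Cauchy problem you write down, $x\,\partial_x\Phi_\lambda+S_\lambda\,\partial_y\Phi_\lambda=\Phi_\lambda^2 f_\lambda(\Phi_\lambda)$ with data on $\{x=0\}$, is a \emph{characteristic} initial value problem: the center axis is invariant for $x\partial_x+S_\lambda\partial_y$, so no orbit ``emanates'' from it transversally. An orbit through $(x_0,y_0)$ with $x_0\neq 0$ reaches $\{x=0\}$ only as $t\to-\infty$, and on the parabolic (node) side it converges to the origin itself; there the transported value would have to be recovered from the degenerate ODE $\dot\Phi=\Phi^2f(\Phi)$ with ``initial condition at $t=-\infty$'', which is ill-posed ($\Phi\equiv 0$ and nonzero solutions share that asymptotic datum). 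So the transport does not define $\Phi_\lambda$, let alone a $C^k$ one. The standard repair, which your sketch omits, is a Poincar\'e--Dulac step: the resonance structure of the eigenvalues $(1,0)$ lets one remove, by polynomial changes of coordinates depending continuously on $\lambda$, all monomials $x^my^n\partial_y$ with $m\geq 1$ and $x^ay^b\partial_x$ with $a\neq 1$ up to any finite order $N$; only then is the remaining discrepancy $N$-flat along $\{x=0\}$, and the orbit integrals that kill it converge (the exponential decay $x(t)=x_0e^{t}$ as $t\to-\infty$ beats the flat remainder) and produce a $C^k$ conjugacy for $N\gg k$. This two-stage argument (formal normalization to finite order, then homotopy-method removal of the flat part) is the substance of Theorem 5 of \cite{IY91}; without it your Step 3 does not go through.
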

 
    \begin{figure}[h]
\centerline{\includegraphics[width=11cm]{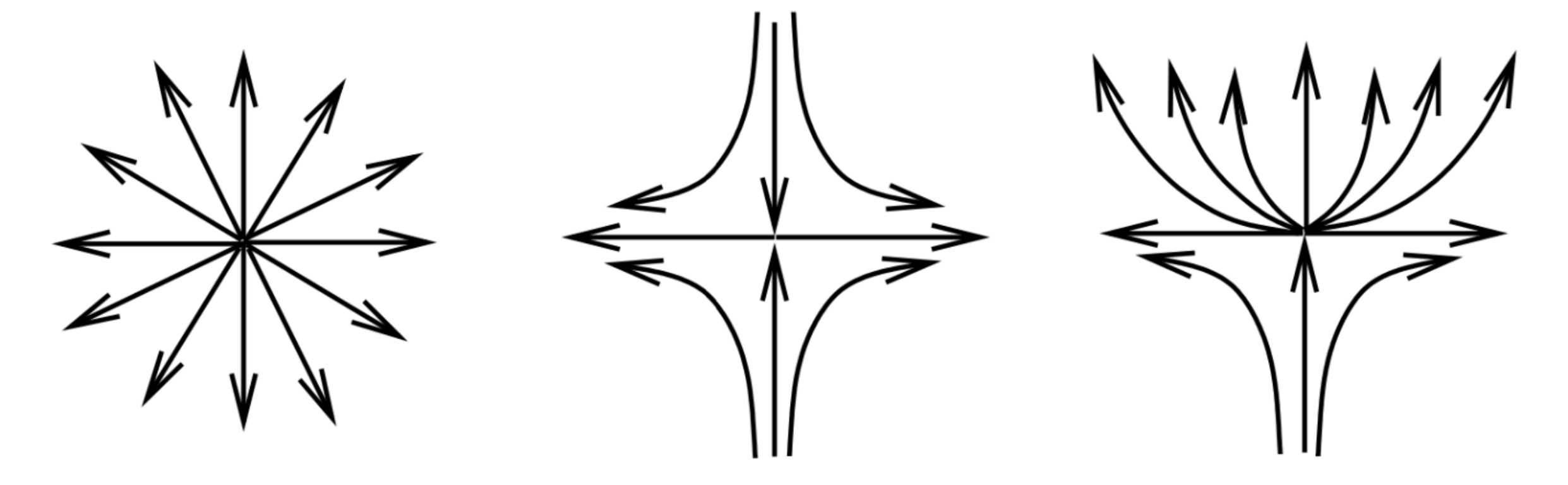}}
\caption{Elliptic, hyperbolic and embryo points} 
\label{fig:ell-hyp-embryo}
\end{figure}

Note that  a   1-parametric deformation   $Z_t= Z+  t\frac{\p}{\p y}$   has no singular points for $t>0$ and has two singular points, elliptic and  hyperbolic    for $t<0$. 
 In fact, one has the following result for any deformation of embryo singularities, see Theorem 5 in \cite{IY91}.
 \begin{prop} \label{prop:IY}
 Let   $Z_{ t}$,  $t\in\Op_{\R^n}0\subset\R^n$,   be a family of vector fields on $\Op_{\R^2}0\subset\R^2$.
 Suppose that $Z_{ 0}=   x \frac{\p}{\p x}+   y^2f(y)  ,\; f(0)\neq 0$. Then   there exist a neighborhood $U \ni 0$ in 
 $\R^2$   such that  the family $Z_t|_U$   is orbitally equivalent  for sufficiently small $t$   to a family
 $$  x \frac{\p}{\p x}+   F(y,t)\frac{\p}{\p y} ,\; F(y,0)=y^2 f(y)\neq 0. $$    \end{prop}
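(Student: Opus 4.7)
The plan is to exploit the partial hyperbolicity of $Z_0$: its $1$-jet at the origin has eigenvalue $1$ along $\p_x$ and eigenvalue $0$ along $\p_y$, so the $y$-axis is a $1$-dimensional center manifold carrying the slow dynamics $y^2 f(y)\p_y$, while the $x$-direction is strong unstable. Both structures persist under small perturbations of the parameter $t$, and the task reduces to rectifying the two associated invariant foliations in a $t$-dependent way.

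First, I would apply the center manifold theorem with parameters to obtain a $C^k$-smooth family of $1$-dimensional center manifolds $W^c_t$ of $Z_t$, each tangent at the origin to the $y$-axis. A $t$-dependent change of coordinates straightens $W^c_t$ to $\{x=0\}$; in the new chart $Z_t$ takes the form
\[ x\, a(x,y,t)\, \p_x + Y(x,y,t)\, \p_y,\qquad a(0,0,0)=1, \]
because $\{x=0\}$ is now invariant. Dividing by $a$, an orbital equivalence, produces $\tilde Z_t = x\p_x + Y_1(x,y,t)\p_y$, whose restriction to the $y$-axis satisfies $Y_1(0,y,0)=y^2 f(y)$.

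Second, I would construct the strong unstable foliation $\WW^u_t$ of $\tilde Z_t$. The expansion rate in the $x$-direction is bounded below by a constant close to $1$, whereas the linearized rate along the $y$-axis vanishes at the origin; this spectral gap allows invocation of the parametric Hirsch--Pugh--Shub / Fenichel invariant foliation theorem, producing a $C^k$-foliation of a neighborhood of the origin by $1$-dimensional leaves, transverse to the $y$-axis, tangent along it to $\p_x$, invariant under the flow of $\tilde Z_t$, and varying $C^k$-smoothly in $t$. A further change of variables $(x,y)\mapsto(x,\psi(x,y,t))$ that fixes the $y$-axis pointwise and sends each leaf of $\WW^u_t$ to a horizontal line forces the $y$-component of $\tilde Z_t$ to become independent of $x$, yielding the desired form
\[ x\, \p_x + F(y,t)\, \p_y,\qquad F(y,0)=y^2 f(y). \]

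The main obstacle is establishing joint $C^k$-regularity of $\WW^u_t$ in $(x,y,t)$. Smoothness of invariant foliations is governed by the ratio between the normal expansion rate and the rates on the central factor: here the central rate degenerates to zero at the origin, but the normal rate stays bounded below by a positive constant uniformly in small $t$, so any prescribed finite $k$ can be attained at the cost of shrinking the neighborhood. This is essentially the regularity statement of \cite{IY91} invoked in the proposition. The remaining ingredients---the two rectifications of $C^k$-families of submanifolds and foliations, and the division by $a$---are routine.
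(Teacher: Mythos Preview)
The paper does not give its own proof of this proposition; it is quoted directly as Theorem~5 of \cite{IY91}. Your two-step reduction---straighten the parametric center manifold, divide out the coefficient of $x\p_x$, then rectify the strong-unstable foliation of the resulting normally hyperbolic slow curve so that the $\p_y$-component loses its $x$-dependence---is precisely the standard mechanism behind such finitely-smooth normal forms and is in the spirit of the cited source. One phrasing to tighten: for $t\neq 0$ the field $Z_t$ need not vanish at the origin, so $W^c_t$ is not literally a center manifold ``tangent at the origin to the $y$-axis''; it is the $t$-slice of the $(n{+}1)$-dimensional center manifold of the suspended system with $\dot t=0$, and only $W^c_0$ is guaranteed to pass through the origin. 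Your identification of the regularity of the strong-unstable foliation as the place where the $C^k$ (rather than $C^\infty$) conclusion enters is correct and matches the role of \cite{IY91}.
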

  Proposition \ref{prop:IY} also holds in a slightly more global  parametric form.
  \begin{prop} \label{prop:IY-param}
  Let $\Lambda$ be a compact parameter space, $\Lambda_0\subset\Lambda$ its closed subset, and $Z_{\lambda}$, $\lambda\in\Lambda$,   be a family of vector fields on $\Op_{\R^2}0\subset\R^2$ such that for all $\lambda\in\Lambda_0$  we have
  $$Z_\lambda=  x \frac{\p}{\p x}+  y^2f(y,\lambda)\frac{\p}{\p y}.$$ Then  for any $k>0$   there exist a neighborhood $U \ni 0$ in 
 $\R^2$,  a neighborhood $\Omega\supset \Lambda_0$ in $\Lambda$, and  a family of germs of $C^k$-diffeomorphisms $h_\lambda:(\R^2,0)\to(\R^2,0)$, $\lambda\in\Omega$,  such  that  
 $$ (h_\lambda)_*Z_\lambda=g_\lambda(x,y)\left( x \frac{\p}{\p x}+  F(y,\lambda)\frac{\p}{\p y}\right),\;\lambda \in\Omega,$$ where $F(y,\lambda)=y^2f(y,\lambda)$  and $g_\lambda(x,y)=1$  for $\lambda\in\Lambda_0$.
 \end{prop}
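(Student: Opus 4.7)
The plan is to bootstrap Proposition~\ref{prop:IY-param} from its single-chart predecessor \ref{prop:IY} by a standard patching argument on $\Lambda$, taking care to enforce $g_\lambda\equiv 1$ (equivalently $h_\lambda=\mathrm{id}$) over $\Lambda_0$.

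First I would cover the compact set $\Lambda_0$ by a finite collection $V_1,\dots,V_N\subset\Lambda$ of open sets, each admitting a coordinate chart into some $\R^{n_i}$. For each basepoint $\lambda_i\in V_i\cap\Lambda_0$, Proposition \ref{prop:IY}, applied with deformation parameter the local coordinate centered at $\lambda_i$, furnishes a $C^k$ family $\tilde h^{(i)}_\lambda$ with $\tilde h^{(i)}_{\lambda_i}=\mathrm{id}$ that brings $Z_\lambda$ to a form $\tilde g^{(i)}_\lambda\bigl(x\partial_x+\tilde F^{(i)}(y,\lambda)\partial_y\bigr)$ on a shrunken neighborhood $V'_i\subset V_i$.

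Second, for $\lambda\in V'_i\cap\Lambda_0$ the diffeomorphism $\tilde h^{(i)}_\lambda$ carries the normal form $Z_\lambda=x\partial_x+y^2f(y,\lambda)\partial_y$ to a normal form with a possibly different $\tilde F^{(i)}$ and $\tilde g^{(i)}$, so it lies in the gauge group $\GG$ of self-equivalences of the family of normal forms. A direct computation identifies $\GG$ with an explicit group of upper-triangular diffeomorphisms $(x,y)\mapsto(X(x,y),Y(y))$ of $(\R^2,0)$ preserving the $x$-axis, which is contractible. I would then extend the section $\lambda\mapsto(\tilde h^{(i)}_\lambda)^{-1}\in\GG$ defined over $V'_i\cap\Lambda_0$ to a $C^k$ family $\phi^{(i)}_\lambda\in\GG$ over all of $V'_i$ and set $h^{(i)}_\lambda:=\phi^{(i)}_\lambda\circ\tilde h^{(i)}_\lambda$. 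Choosing $\phi^{(i)}_\lambda$ appropriately absorbs the $\tilde g$-factor and restores the exact $y^2f(y,\lambda)$ normal form on $\Lambda_0$, so that $h^{(i)}_\lambda=\mathrm{id}$ and $g^{(i)}_\lambda=1$ on $V'_i\cap\Lambda_0$.

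Finally, on overlaps $V'_i\cap V'_j$ the local solutions $h^{(i)}_\lambda$ and $h^{(j)}_\lambda$ differ by an element of $\GG$ that equals $\mathrm{id}$ along $\Lambda_0$, hence is close to $\mathrm{id}$ in $C^k$ on a sufficiently small neighborhood of $\Lambda_0$. Using a partition of unity subordinate to $\{V'_i\}$ on such a neighborhood, I would patch in the Lie algebra of $\GG$ (via the exponential, valid where the cocycle is close to $\mathrm{id}$) and then exponentiate back to obtain a single $h_\lambda$ on some $\Omega\supset\Lambda_0$. The main obstacle is the gauge-correction step: making precise that the $\GG$-valued section $(\tilde h^{(i)}_\lambda)^{-1}$ over $V'_i\cap\Lambda_0$ admits a $C^k$ extension to $V'_i$ which simultaneously matches the prescribed $F(y,\lambda)=y^2f(y,\lambda)$ on $\Lambda_0$. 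This amounts to an implicit function theorem in the space of normal-form data or, equivalently, a Moser-style homotopy inside $\GG$; once it is in place, the partition-of-unity patching in the third step is routine.
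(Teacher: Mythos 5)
The paper gives no proof of Proposition~\ref{prop:IY-param}: it is simply asserted as a ``slightly more global parametric form'' of Proposition~\ref{prop:IY}, which is itself quoted from Theorem~5 of \cite{IY91}. So your proposal has to be judged on its own, and while the overall shape (localize via Proposition~\ref{prop:IY}, gauge-correct over $\Lambda_0$, globalize) is reasonable, two of your steps contain genuine gaps.

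First, your identification of the gauge group $\GG$ with upper-triangular diffeomorphisms $(x,y)\mapsto(X(x,y),Y(y))$ is not correct. A self-equivalence of the normal form must send a center manifold to a center manifold, but for the saddle-node (embryo) field $x\p_x+y^2f(y)\p_y$ the smooth center manifold is not unique: already for $\dot x=x$, $\dot y=y^2$ every curve $\{x=Ce^{-1/y},\,y>0\}\cup\{x=0,\,y\le 0\}$ is an invariant $C^\infty$ center manifold. Hence $\GG$ contains elements that do not preserve $\{x=0\}$, and the ``direct computation'' underlying both your contractibility claim and your extension step needs to be redone for the correct (larger) gauge groupoid.

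Second, and more seriously, the final patching step is not ``routine.'' The transition maps $h^{(i)}_\lambda\circ(h^{(j)}_\lambda)^{-1}$ conjugate one normal form to a possibly different one, so they live in a groupoid rather than a fixed group; and even granting that, the property ``$h$ conjugates $Z_\lambda$ into normal form'' is not preserved under partition-of-unity averaging of diffeomorphisms, nor can you average in ``the Lie algebra of $\GG$ via the exponential,'' since for diffeomorphism groups the exponential map is not locally surjective near the identity. What is actually needed here is an argument in the style the paper uses elsewhere (Lemma~\ref{lm:local-taming}, Proposition~\ref{prop:simple-extension}): show that the space of normalizations of a fixed germ is contractible and that the forgetful projection is a microfibration, and then invoke Gromov's criterion to get a Serre fibration and lift the family. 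Your outline gestures at the right difficulty but does not supply the mechanism that makes the global assembly work.
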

  
 %  This deformation is {\em versal}: given (a germ at $0$ of) a  family $Z^\lambda, \lambda\in B^n$ of Liouville fields such that $Z^0=Z$ there exists a map $h:(B^n,0)\to (\R,0)$ and a family of diffeomorphisms $\phi_\lambda:(\R^2,0)\to(\R^2,0)$ such that  $d\phi_\lambda(Z^\lambda)=Z_{h(\lambda)}$.

Recall that a contact structure $\xi$ defines an  orientation of the $3$-dimensional contact manifold. Hence, assuming the surface $S$ and the contact structure $\xi$ oriented (and hence, co-oriented),  we can distinguish between positive and negative singular points.  At a regular point $p$ of the characteristic foliation   choose a vector $\tau_S(p)\in T_pS$ which defines the given co-orientation of $\xi(p)$. Choose a vector   $Z(p)$ directing the characteristic foliation in such a way that   $(\tau_S(p), Z(p))$ defines the orientation of $S$. Near singular points this orientation is the same as defined by the Liouville field near positive points, and opposite to it near the negative ones. With this convention, positive elliptic points serve as sources and negative as sinks of the characteristic flow.

A positive embryo has  1 incoming separatrix and a half-plane filled with outgoing trajectories. We will refer to the   trajectories on the boundary of this half-plane as outgoing separatrices. For a negative embryo there are 2 incoming separatarices and one   outgoing.

 The contact structure on a neighborhood of a surface is determined by the characteristic foliation up to a contactomorphism, so the tightness can be judged by the characteristic foliation.

%%%%%%%%%%%%%%%%%%%%%%

 For all discussions in Sections \ref{sec:char-fol}--\ref{sec:allowable} below only the topological type of the characteristic foliation will be important. In fact,  by a $C^1$-small isotopy of the surface in the ambient contact manifold, which is supported in an arbitrary small   neighborhood of a singular point, one can arbitrarily change the  smooth topology   keeping its topological type, see   Lemma \ref{lm:local-type} below. See Fig. \ref{fig:ell-standard} for the case of an elliptic point.
 
  \begin{figure}[h]
 \centerline{\includegraphics[width=10cm]{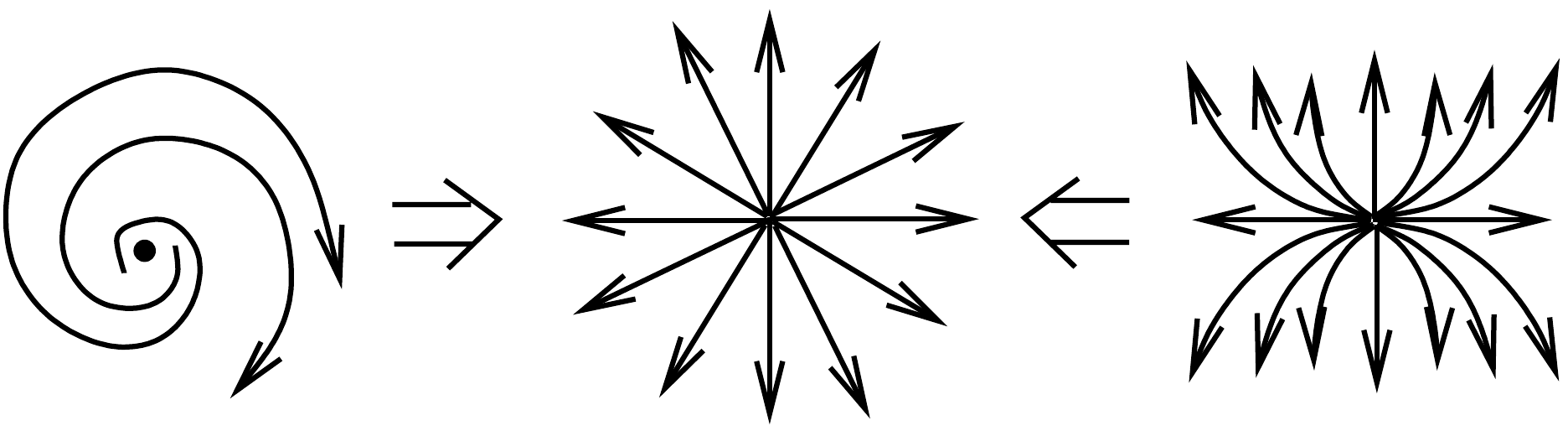}}
 \caption{Changing smooth topology of an elliptic point} 
 \label{fig:ell-standard}
 \end{figure}
 We will always picture elliptic points as nodes, see Fig. \ref{fig:ell-hyp-embryo}a) and embryos as half nodes, half saddles, see Fig. \ref{fig:ell-hyp-embryo}, though the latter picture is not possible up to   diffeomorphism.
   \begin{figure}[h]
\centerline{\includegraphics[width=4cm]{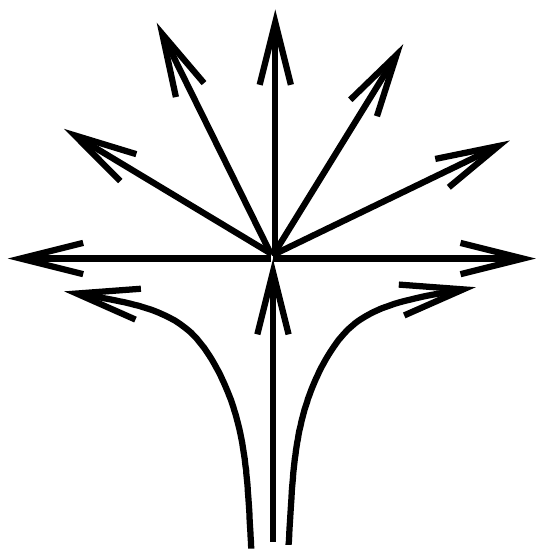}}
\caption{Topogical representation of an embryo} 
\label{fig:embryo-top}
\end{figure}
   \subsection{Manipulating characteristic foliations} 
   In the statements below by an isotopy of a surface  we always mean its isotopy in the ambient contact manifold.
   The next claim is straightforward.
\begin{lemma}\label{lm:holonomy}  
 Take a regular point $a$ of $\FF$ and  a local transverse $T$ to the trajectory $\gamma$ through $a$. Consider an isotopy $\alpha_t:T\to T$, $\alpha_0=\Id$,  supported in $\Op a$. Then  there is a $C^\infty$-small isotopy of the sphere  which realizes the holonomy $\alpha_t$ for sufficiently small $t$. See Fig. \ref{fig:perturb-fol}.

 \begin{figure}[h]
\centerline{\includegraphics[height=3.5cm,width=9cm]{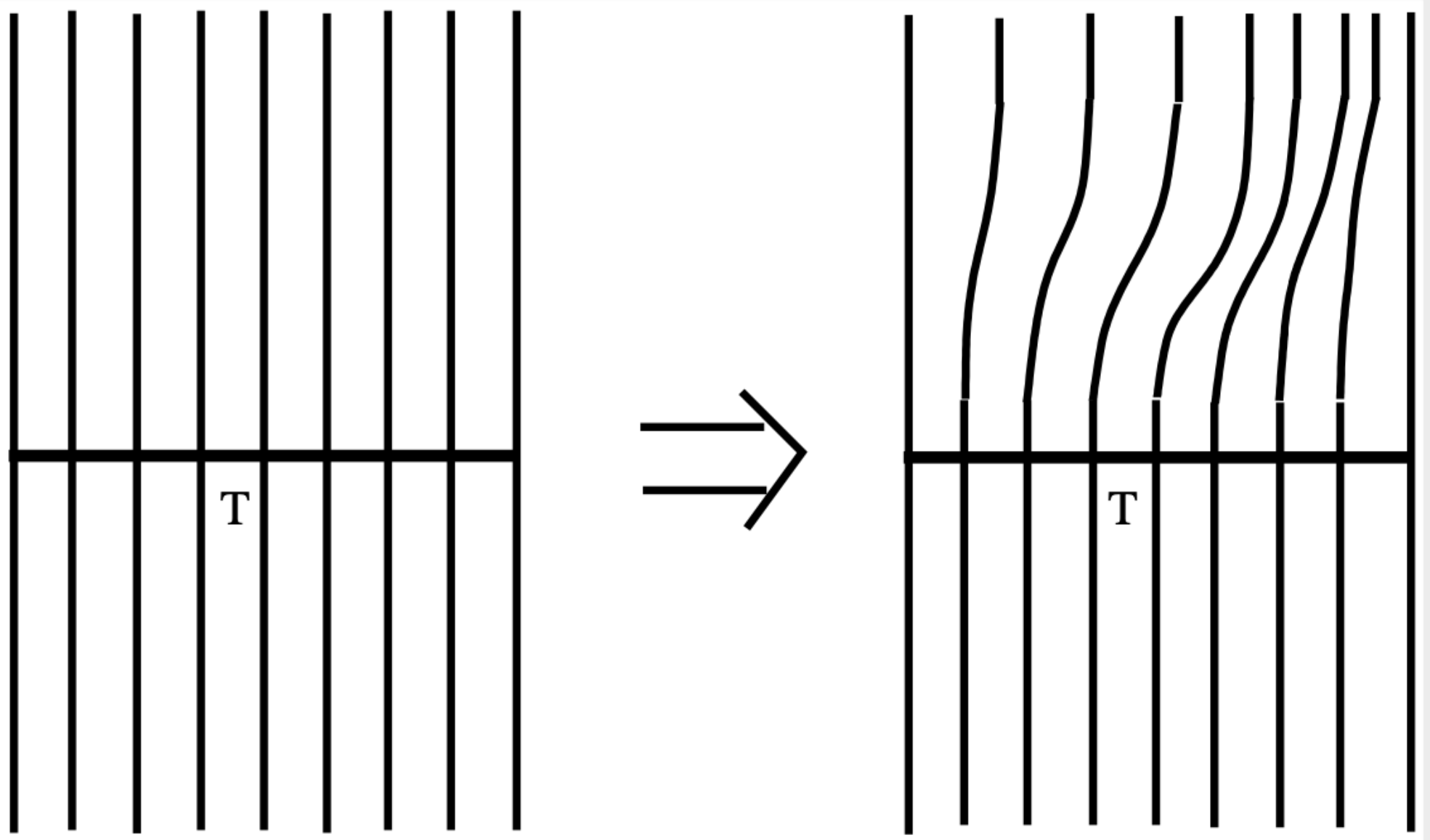}}
\caption{Perturbing a characteristic foliation} 
\label{fig:perturb-fol}
\end{figure}
  \end{lemma}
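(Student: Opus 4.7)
The plan is to reduce the lemma to a concrete local calculation. Since $a$ is a regular point of $\FF$, the contact plane $\xi_a$ is transverse to $T_aS$; after passing to a contact Darboux chart we may assume that the ambient contact structure is $\ker(dz-y\,dx)$ on $\R^3$, that $a$ is the origin, and that $S$ is locally a graph $y=\psi(x,z)$. The pullback of the contact form to $S$ is $\beta=dz-\psi\,dx$, which is nonzero at the origin by regularity.

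First I would straighten $\FF$ to a horizontal foliation in coordinates $(u,v)$ on $S$ near $a$, with the transversal $T$ lying along $\{u=0\}$ and the trajectories of $\FF$ along $\{v=\mathrm{const}\}$. Consider the family of nearby surfaces $S_h=\{y=\psi(x,z)+h(x,z)\}$, where $h$ ranges over smooth functions of compact support in $\Op a$; the pullback of the contact form to $S_h$ is $\beta_h=\beta-h\,dx$, and for $h$ small in $C^2$ the resulting characteristic foliation $\FF_h$ agrees with $\FF$ outside $\supp h$. Following a trajectory of $\FF_h$ from one side of $\supp h$ to the other and comparing with the corresponding trajectory of $\FF$ defines a holonomy correction $\Psi(h)\colon T\to T$ equal to the identity outside a small neighborhood of $a$.

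The main step is to establish surjectivity of the map $h\mapsto\Psi(h)$ onto a neighborhood of $\Id$ in $\Diff(T,a)$. Linearizing at $h=0$, the first-order variation of the exit $v$-coordinate of a trajectory $\{v=v_0\}$ takes the explicit integral form $\dot\Psi(v_0)=\int_{-\infty}^{\infty}a(u,v_0)\,\dot h(u,v_0)\,du$ with a smooth coefficient $a(u,v)$ whose value at the origin is nonzero by the same regularity hypothesis. A continuous right inverse is then immediate: given any compactly supported vector field $W(v)\,\partial_v$ on $T$, set $\dot h(u,v)=\chi(u)W(v)/a(u,v)$, where $\chi$ is a bump of integral one supported close enough to $u=0$ that $a$ remains nonzero on $\supp\chi\times\supp W$. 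Path-lifting this right inverse (or invoking the implicit function theorem in an appropriate Fr\'echet space of smooth functions) produces a family $h_t$, $t$ small, with $\Psi(h_t)=\alpha_t$ and $h_0=0$; the isotopy $S_t:=S_{h_t}$ is the desired $C^\infty$-small isotopy of $S$. The only delicate point is the surjectivity computation, which is a routine integration along trajectories once the setup is in place.
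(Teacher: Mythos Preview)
The paper omits the proof entirely, calling the claim ``straightforward,'' so there is nothing to compare your argument against. Your setup is correct and is the natural way to justify the lemma: Darboux chart, $S$ as a graph $y=\psi(x,z)$, perturbed surfaces $S_h$ with induced form $\beta_h=\beta-h\,dx$, and the linearized holonomy written as an integral along leaves with an obvious right inverse via a bump function.

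The one soft spot is the final step. Invoking the implicit function theorem ``in an appropriate Fr\'echet space of smooth functions'' is not innocent: the classical IFT fails in $C^\infty$, and Nash--Moser would be absurd overkill here. Path-lifting is closer to a complete argument but still requires you to justify an ODE in function space. You can bypass all of this with an explicit construction. In your flow-box coordinates $(u,v)$ the perturbed characteristic foliation satisfies an equation $\frac{dv}{du}=\tilde h(u,v)$, where $\tilde h$ is related to $h$ by an explicit rational expression with nonvanishing denominator for small $h$ (this is where your coefficient $a(u,v)$ appears). Now \emph{prescribe} $\tilde h(u,v)=\chi(u)\,V_t(v)$, where $\int\chi=1$ and $V_t$ is a compactly supported vector field on $T$ whose time-$1$ flow is $\alpha_t$ (such $V_t$ exist for small $t$ and depend smoothly on $t$). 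The substitution $s=\int^u\chi$ turns the ODE into $\frac{dv}{ds}=V_t(v)$, so the holonomy across the support of $\chi$ is \emph{exactly} $\alpha_t$, not merely to first order. Solving the algebraic relation for $h$ in terms of $\tilde h$ yields $h_t$ directly, smoothly in $t$, supported in $\Op a$, and $C^\infty$-small as $t\to0$. This replaces the analytic black box by a two-line computation.
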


 \begin{figure}[h]
\centerline{\includegraphics[width=11cm]{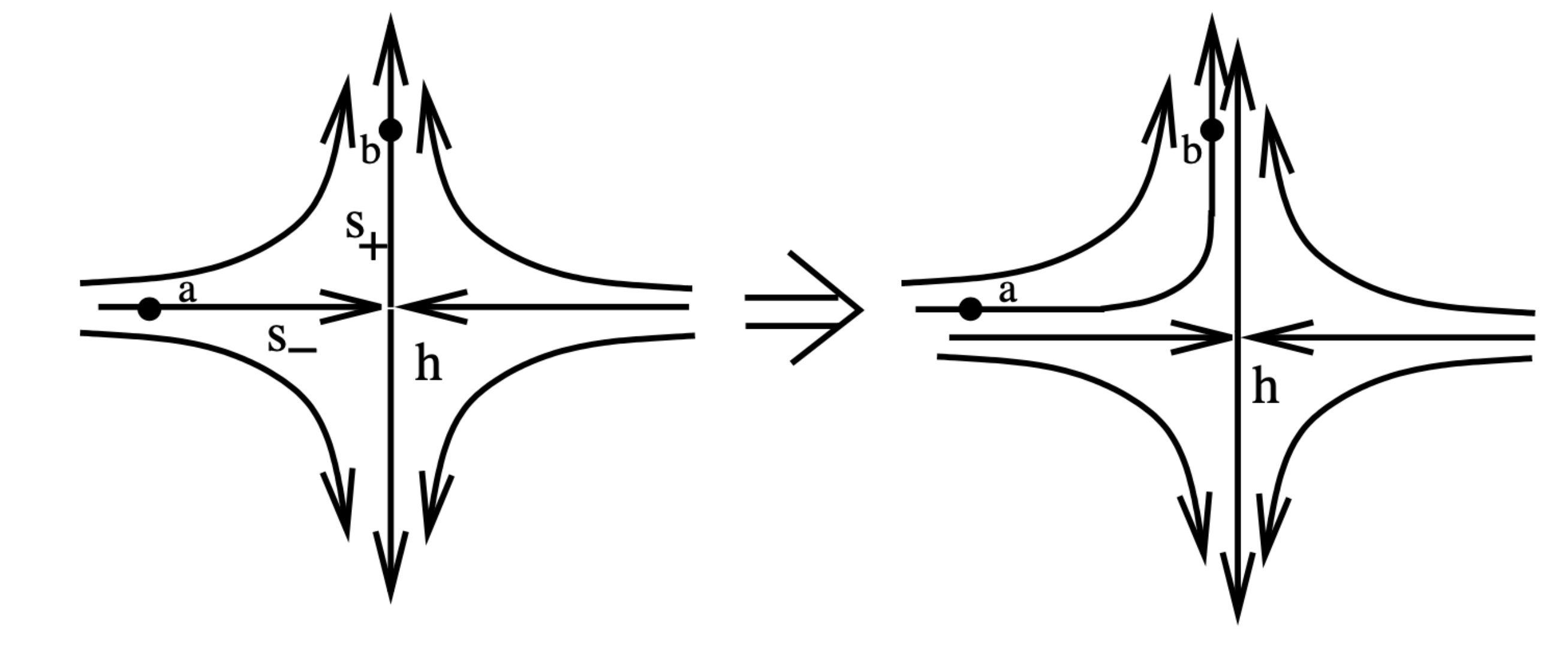}}
\caption{Bypassing a hyperbolic point} 
\label{fig:bypass-h}
\end{figure}
 Lemma \ref{lm:holonomy}  implies
\begin{cor}\label{cor:smooth-hyp} Let $h$ be a hyperbolic point, $s_-$ is an incoming separatrix, and $s_+$ outgoing. Then one can $C^\infty$-perturb $\FF$ in a neighborhood of $h$ in such a way that $s_-\cup s_+$ becomes a smooth Legendrian arc bypassing $h$. See Fig. \ref{fig:bypass-h}.
\end{cor}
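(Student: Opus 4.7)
The plan is a direct application of Lemma \ref{lm:holonomy}, combined with the standard dynamics of a hyperbolic saddle. Near $h$, the two incoming separatrices $s_-,s_-'$ and two outgoing separatrices $s_+,s_+'$ cut a small neighborhood of $h$ into four hyperbolic sectors. In the open sector whose closure contains both $s_-$ and $s_+$, every non-separatrix trajectory enters close to $s_-$, bends around $h$ without meeting it, and leaves close to $s_+$, remaining $C^\infty$ throughout. The whole proof consists in arranging that such a trajectory be the forward continuation of $s_-$ in a small perturbation of $\FF$.

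To do so, I would fix a regular point $a\in s_-$ close to but not equal to $h$, together with a local transverse $T$ to $s_-$ at $a$, parametrized by a coordinate $s$ with $s=0$ at $s_-\cap T$ and with the positive direction chosen to point into the sector bounded by $s_-$ and $s_+$. Let $\alpha_t:T\to T$, $t\in[0,\eps]$, be a smooth isotopy supported in a small neighborhood of $0$, with $\alpha_0=\Id$ and $\alpha_t(0)=t$. By Lemma \ref{lm:holonomy}, for all sufficiently small $t_0>0$ the diffeomorphism $\alpha_{t_0}$ is realized by a $C^\infty$-small isotopy of $S$ supported in an arbitrarily small neighborhood of $a$; denote the resulting characteristic foliation by $\FF'$. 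Outside $\Op a$ it agrees with $\FF$.

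In $\FF'$ consider the leaf that coincides with $s_-$ upstream of $a$. It crosses $\Op a$ and emerges at the point $s=t_0$ on $T$, i.e.\ in the sector between $s_-$ and $s_+$. By the hyperbolic dynamics of $h$, this leaf bypasses $h$ and re-enters a neighborhood of $s_+$, where $\FF'=\FF$, so it flows along a leaf asymptotic to $s_+$ and continues away from $h$ along $s_+$. The entire leaf of $\FF'$ is a single smooth trajectory, and therefore a smooth Legendrian arc in $S$, which coincides with $s_-$ far upstream of $h$, with $s_+$ far downstream of $h$, and avoids $h$ in between.

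The only subtle step is the correct choice of side for the shift $\alpha_t$: had one pushed in the opposite direction on $T$, the new leaf would have exited along $s_+'$ rather than $s_+$. With the orientation of $T$ fixed as above this is automatic, and the remaining claims reduce to Lemma \ref{lm:holonomy} and the local normal form of a hyperbolic saddle, so no further work is required.
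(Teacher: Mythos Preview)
Your approach is exactly what the paper intends (the paper offers no more than ``Lemma \ref{lm:holonomy} implies''), but there is a slip in the final step. After a single holonomy perturbation supported near $a\in s_-$, the leaf of $\FF'$ that agrees with $s_-$ upstream of $a$ crosses $T$ at parameter $t_0>0$ and from there continues in the \emph{unperturbed} foliation $\FF$; this is a non-separatrix trajectory of the saddle $h$. Such a trajectory comes close to $s_+$ while passing $h$, but it is not asymptotic to $s_+$ and certainly does not coincide with it --- downstream of $h$ it diverges from $s_+$. So the assertions that the leaf ``continues away from $h$ along $s_+$'' and ``coincides with $s_+$ far downstream of $h$'' are not justified by what you wrote.

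The repair is a second application of Lemma \ref{lm:holonomy}. Choose a regular point $b\in s_+$ still inside the neighborhood of $h$, with a transversal $T'$; the bypassed trajectory crosses $T'$ at some small parameter $u_0>0$. A second $C^\infty$-small perturbation supported near $b$, realizing a holonomy that sends $u_0$ to $0$, makes the leaf agree with $s_-$ upstream of $a$ and with $s_+$ downstream of $b$. Both perturbations are supported in the given neighborhood of $h$, and together they yield the corollary as stated.
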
 
  The statement (i) in Lemma \ref{lm:killing} below is  Giroux-Fuchs  elimination lemma, see \cite{Gir91}. Other claims of the lemma are its small variations.  
\begin{lemma}\label{lm:killing}
\begin{enumerate}
\item  Let $e,h$ be elliptic and hyperbolic   points of $\FF$ of the same sign. Suppose that $e$ is of the node type. Let $\gamma$ be separatrix   of $\FF$ connecting $e$ and $h$, $\alpha$   the separatrix of $h$ opposite to $\gamma$, and  $\delta$  another trajectory ending at $e$.
Then $e,h$ can be  cancelled in such a way  that $\alpha\cup\gamma\cup\delta$ becomes a trajectory of the resulting foliation, see Figure \ref{fig:GF-elim}. The elimination can be realized by a $C^0$-small and   supported in $\Op\gamma$ isotopy of the sphere.   
  \begin{figure}[h]
\centerline{\includegraphics[width=9cm]{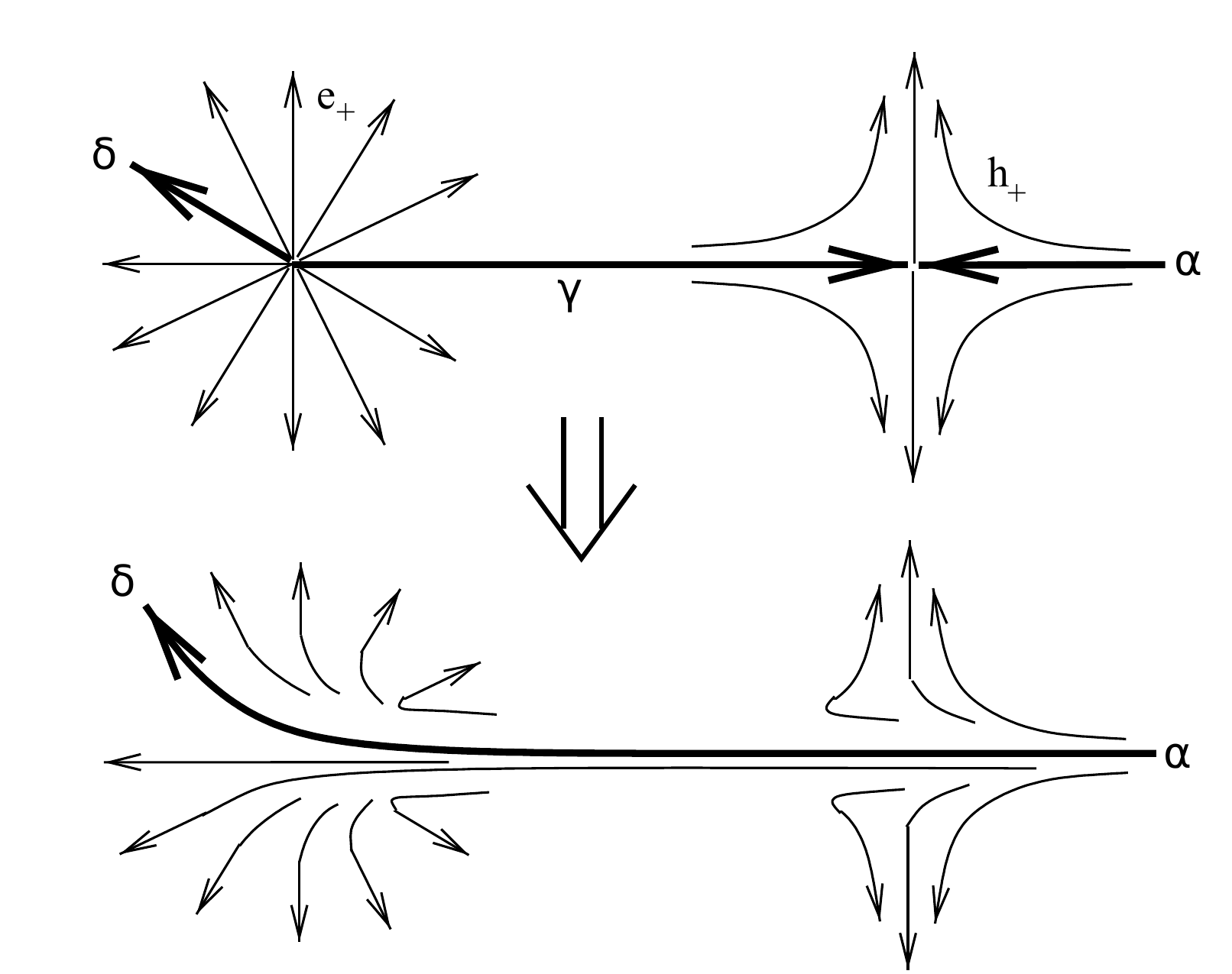}}
\caption{Eliminating an elliptic-hyperbolic pair} 
\label{fig:GF-elim}
\end{figure}
\item Let $o$ be an embryo point, $\gamma$ its separatrix, incoming  for  a positive embryo and outgoing for a negative one, and  $\delta$ any non-separatrix trajectory ending at $o$.  Then $o$ can be eliminated   by a $C^1$-small   isotopy of the sphere  supported in $\Op\gamma$.
The elimination can be done in such a way that   $\gamma\cup\delta$ becomes a trajectory of the resulting foliation, see Fig. \ref{fig:embryo-2resol1}.
\ref{fig:embryo-2resol2}.
\begin{figure}[h]
\centerline{\includegraphics[width=10cm]{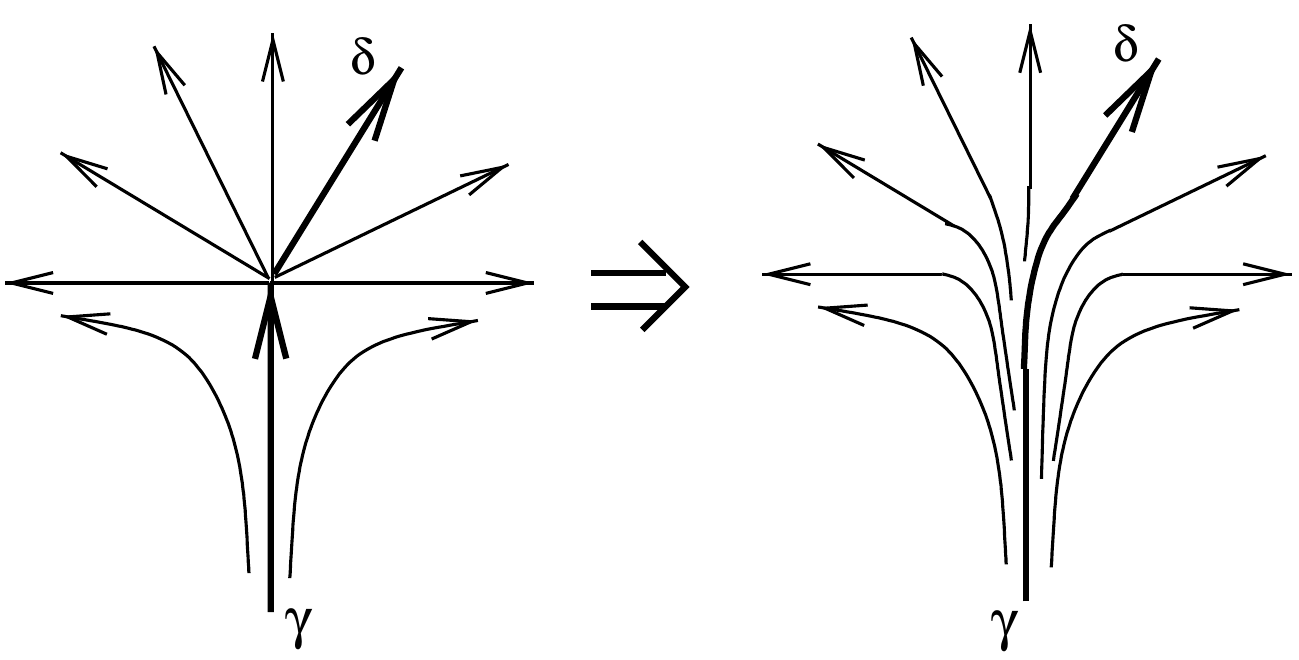}}
\caption{Eliminating an embryo} 
\label{fig:embryo-2resol1}
\end{figure}

\item Let $o$ be an embryo point,  $\delta$ any  trajectory incoming to  $o$ if $o$ is positive and outgoing if $o$ is negative. Then there exists a $C^1$-small  suppored in $\Op o$ isotopy of the sphere  which replaces   $o$ by  an elliptic-hyperbolic pair $(e,h)$ of the same sign, and    such that $\delta$ becomes one of the separatrices of $h$, see Fig. \begin{figure}[h]
\centerline{\includegraphics[width=10cm]{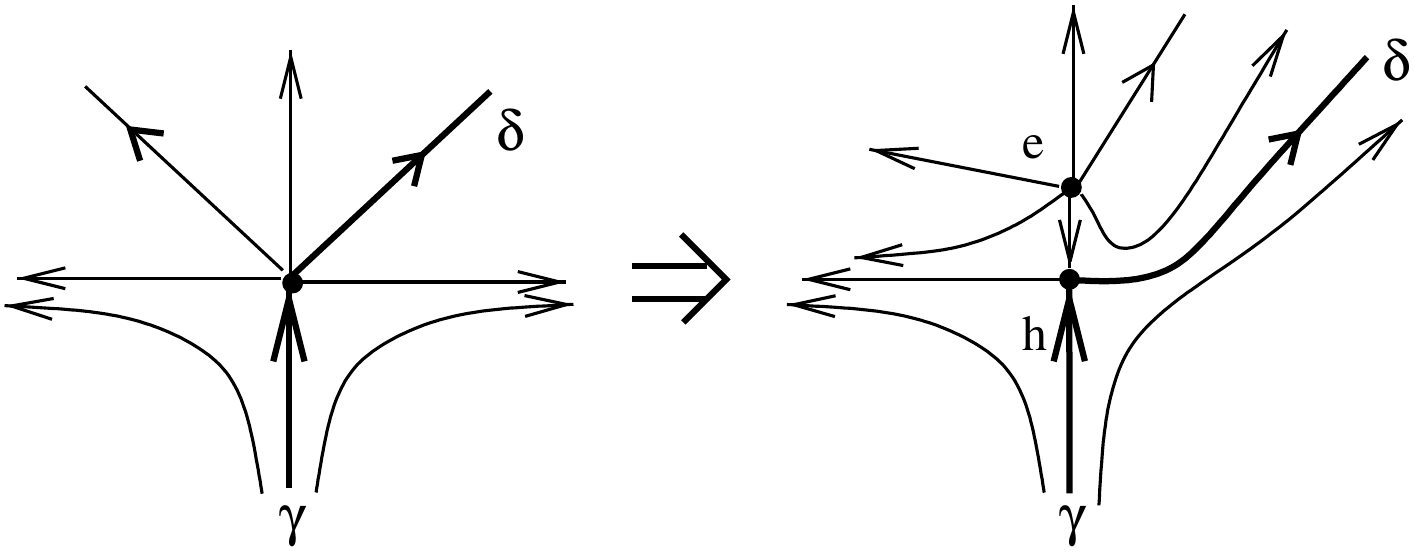}}
\caption{Resolving an embryo into an eliiptic-hyperbolic pair} 
\label{fig:embryo-2resol2}
\end{figure}

\item Let $e$ be an elliptic points and $\gamma,\gamma'$ two adjacent to it trajectories. Then by a $C^1$-small  supported in $\Op e$ isotopynone can create an elliptic-hyperbolic    pair  $e',h'$ of  the same sign as $e$ such that $\gamma,\gamma'$ becomes separatrices of $h'$, see Fig. \ref{fig:creating-e-h}.

\end{enumerate}
\end{lemma}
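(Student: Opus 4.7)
The plan is to derive all four claims from a single mechanism: local normal forms for the directing Liouville field combined with the holonomy-realization Lemma \ref{lm:holonomy}. For (i), which is the Giroux--Fuchs elimination, I would work in a Darboux neighborhood of the Legendrian separatrix $\gamma$, in which $\xi = \{dz - y\, dx = 0\}$ and the surface $S$ appears as a graph $z = u(x,y)$. The characteristic foliation is then directed by an explicit planar vector field depending on $u$. Choose coordinates so that $e$ sits at the origin, $h$ at $(1,0)$, and $\gamma$ is the $x$-axis segment between them; then write down a one-parameter family $u_\varepsilon = u_0 + \varepsilon \rho(x,y)$, where $\rho$ is a cutoff function supported in a narrow tube around $\gamma$, chosen so that at $\varepsilon = 0$ the two critical points coalesce into an embryo and for $\varepsilon > 0$ the singularities vanish altogether. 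The trajectories $\alpha$ and $\delta$ then merge with $\gamma$ into a single smooth regular trajectory, giving the stated cancellation. The resulting isotopy of $S$ is $C^0$-small because $u_\varepsilon - u_0 \to 0$ uniformly as $\varepsilon \to 0$, and it is supported in an arbitrarily small neighborhood of $\gamma$.

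Claims (ii) and (iii) are parallel and rely on the Takens normal form of Lemma \ref{lm:Takens-param}, which reduces the embryo to the model $x\frac{\p}{\p x} + y^2 f(y)\frac{\p}{\p y}$. Proposition \ref{prop:IY} describes any one-parameter deformation, so the standard saddle-node unfolding $F(y,t) = y^2 f(y) + t$ realizes both operations: for $t$ of one sign the embryo is eliminated, and for $t$ of the opposite sign it breaks into an elliptic-hyperbolic pair of the same sign as the embryo. The bifurcation model is realized by a $C^1$-small graph perturbation of $S$ inside the Takens chart, exactly as in (i), and the requirement in (ii) that a preassigned trajectory $\delta$ join $\gamma$, or in (iii) that $\delta$ become a separatrix of the newly born $h$, is arranged by localizing the perturbation on the side of the embryo through which $\delta$ passes, and, if needed, pre-composing with a small holonomy from Lemma \ref{lm:holonomy} that nudges $\delta$ into the correct position. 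Claim (iv) is then (iii) preceded by (i) in reverse near $e$: first create an embryo into which $\gamma,\gamma'$ plunge as incoming separatrices, and then split the embryo into the desired pair $(e',h')$; alternatively, one can directly apply Lemma \ref{lm:holonomy} to a transversal cutting both $\gamma$ and $\gamma'$ and realize the required bifurcation as a prescribed holonomy.

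The main technical obstacle is to carry through the two regularity budgets simultaneously. The Takens normalization and Proposition \ref{prop:IY} are only of class $C^k$ for any prescribed finite $k$, whereas the conclusions of (ii)--(iv) demand $C^1$-small isotopies of $S$ inside the ambient $C^\infty$ contact manifold. One therefore has to check that a $C^1$-small perturbation in Takens coordinates pulls back to a $C^1$-small perturbation in the original chart, which holds as soon as $k \ge 1$. For (i) the obstruction is different: the elliptic-hyperbolic pair is at bounded $C^1$-distance, so the isotopy can only be $C^0$-small, and confining its support to a thin tube around $\gamma$ is essential. A subtler issue is that the normal form controls only the orbit structure up to reparametrization, so the passage from planar bifurcations to isotopies of $S$ requires that the contact orientation and the sign of the embryo be respected; this is precisely the reason the parametric Lemma \ref{lm:Takens-param} is invoked instead of Takens' original statement, since it preserves the symplectic orientation used to distinguish positive and negative separatrices.
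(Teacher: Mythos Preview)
The paper does not actually prove this lemma: it attributes (i) to Giroux--Fuchs \cite{Gir91} and declares (ii)--(iv) to be ``small variations,'' with no further argument. Your proposal therefore supplies considerably more detail than the paper does, and the overall strategy---graph perturbations in a Darboux chart for (i), the saddle--node unfolding in Takens normal form for (ii)--(iii), and a combination for (iv)---is the standard one and is essentially correct.

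A few small points. In your description of (i) the parameter values are scrambled: at $\varepsilon=0$ you have $u_0$, which still carries both singularities; the coalescence and disappearance happen at later values of $\varepsilon$. More substantively, a bare bump $u_0+\varepsilon\rho$ will not automatically merge and cancel the two zeros of the characteristic vector field---one has to design the perturbation so that along the $x$-axis the one-dimensional problem undergoes a genuine saddle--node; in practice this is done by first putting the pair into a model form (e.g.\ $Z=(x^2-c)\partial_x+y\partial_y$ on the slice $y=0$) and then varying the constant $c$. In (iv) your intermediate embryo cannot have $\gamma,\gamma'$ as \emph{incoming} separatrices: a positive embryo has a single incoming separatrix, and the two distinguished outgoing ones bound the half-plane of outgoing trajectories. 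What you want is that $\gamma,\gamma'$ become those two outgoing boundary separatrices, which then become the outgoing separatrices of $h'$ after the split. These are wording issues rather than gaps; the mechanism you describe is the right one.
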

 \begin{figure}[h]
\centerline{\includegraphics[width=9.5cm]{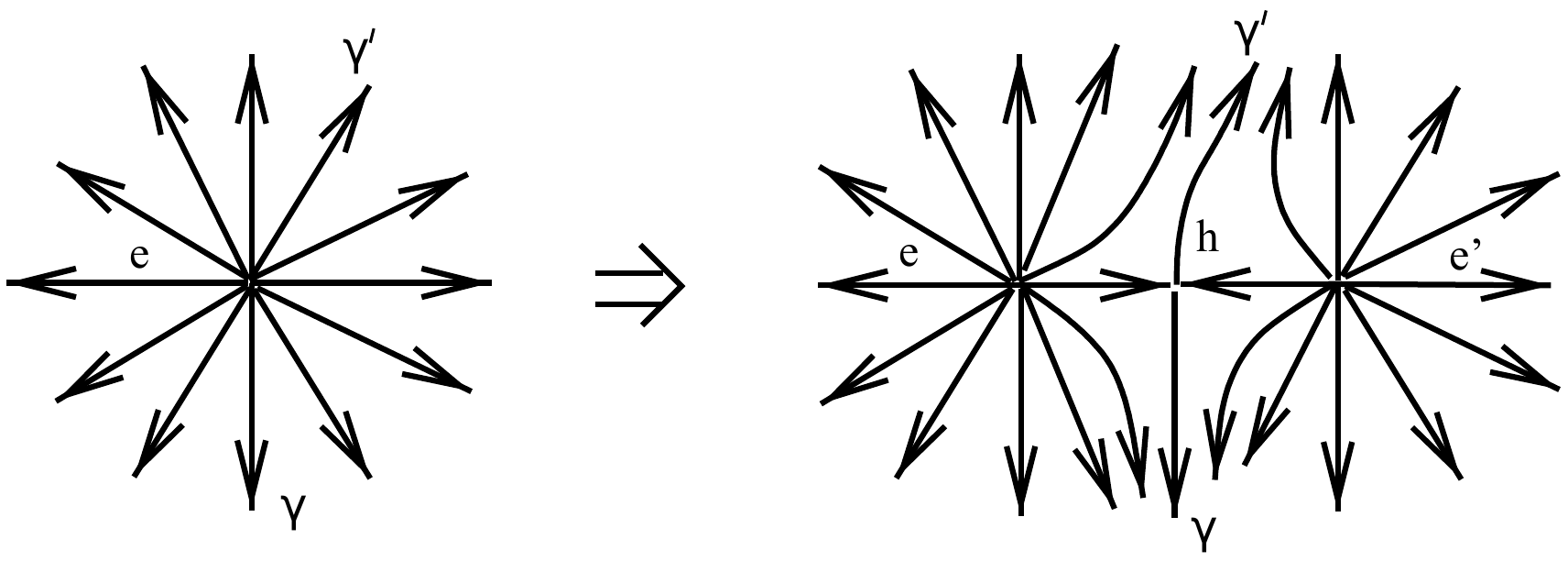}}
\caption{Creating an elliptic-hyperbolic pair} 
\label{fig:creating-e-h}
\end{figure}

\subsection{Invariants $d_\pm$}
 Let $T$  be either a closed surface in a  contact $3$-manifold,  or a surface which  bounds a  curve $\Gamma$ transverse to  the contact  structure $\xi$. In the latter case we  assume that the  oriented characteristic foliation is  outward transverse to $\p T$
and denote by $e_\pm=e_\pm(T), h_\pm=h_\pm(T)$ the numbers of elliptic and hyperbolic points. 
  Set $d_\pm:=e_\pm-h_\pm$.    
  
 We have
  $$d_++d_-=\chi(T);\; d_+-d_-=c(T),$$
  where $\chi(T)$ is the Euler characteristic and $c(T)$ is the relative Chern (Euler) number, also denoted $\ell(\p T)$ and called the  {\em self-linking} number, see \cite{El92}. It is an obstruction for extending the vector field tangent to  the foliation along the boundary $\p T$ to a non-vanishing vector field tangent to $\xi$.
  Thus,
  $$d_+=\frac12(\chi+c), d_-=\frac12(\chi-c).$$
    In particular, for a sphere $S=S^2\subset\R^3$ we have $d_\pm=1$. 

 \begin{lemma}\label{lm:d} 
 \begin{enumerate}
 \item $d_\pm(S)=1$ for a tight sphere $S$;
 \item Suppose $T$ is a genus $0$ surface with $k\geq 1$ boundary components. Suppose that $d_+(T)=1$.  Then  by  a $C^0$-small isotopy, fixed near the boundary $\p T$, one can kill all  singular points except 1  positive elliptic and $k-1$ negative hyperbolic points. In particular, when $T=D$ is a disc one can kill all  singular points except 1  positive elliptic.
 \item Let $A\subset S$ be an annulus in a tight sphere with boundary transverse in the outward sense to the characteristic foliation. Suppose $d_+(A)=1$.
 Let $D$ be  the disc bounded by  one of the boundary component $\Gamma$ of $A$ attached to 
 $\Gamma$ from the same side as $A$, see Fig. \ref{fig:annulus1}. Then $d_+(D)=1$.
 \end{enumerate}
  \end{lemma}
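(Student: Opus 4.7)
For part (i), I apply the general identities $d_+(S)+d_-(S)=\chi(S)=2$ and $d_+(S)-d_-(S)=c(S)$. Since the sphere $S$ bounds a $3$-ball $B$ and the contact plane field $\xi$ is trivializable over the contractible $B$, the Euler number $c(S)$ of the bundle $\xi|_S$ vanishes. Hence $d_+(S)=d_-(S)=1$.

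For part (ii), after a $C^0$-small isotopy (using Lemma~\ref{lm:holonomy}) I may assume $\FF|_T$ is a generic Morse foliation: no limit cycles and no saddle connections between singular points. I then argue by induction on $e_++h_++e_-+h_-$. From $d_+(T)=1$ and $\chi(T)=2-k$ one gets $d_-(T)=1-k$, so the minimal compatible configuration is $(e_+,h_+,e_-,h_-)=(1,0,0,k-1)$. If this is not yet attained, then $h_+\geq 1$ or $e_-\geq 1$. Consider $h_+\geq 1$ and pick any positive hyperbolic $h$: outward transversality of $\p T$ forbids backward exit of trajectories from $T$, so every stable separatrix of $h$ has a singular point as its $\alpha$-limit, which by genericity must be a source, i.e.\ a positive elliptic $e$. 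The Giroux--Fuchs lemma (Lemma~\ref{lm:killing}(i)) then cancels $(e,h)$ and the induction descends. The symmetric argument (running time backward and using forward separatrices of negative hyperbolics limiting to negative elliptics on the boundaries of their basins) handles the case $e_-\geq 1$. Embryo singularities are first resolved into $(e,h)$ pairs by Lemma~\ref{lm:killing}(iii). The main delicate point is ensuring that a cancellable same-sign pair always exists at each stage; the outward transversality of $\p T$ combined with genericity of $\FF$ is what makes this unavoidable.

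For part (iii), I give a two-sided estimate on $d_+(D)$. Let $D^c:=S\sm D$ be the complementary disk bounded by $\Gamma$ on the opposite side of $A$, and write $D=A\cup D_{\Gamma'}$ where $D_{\Gamma'}$ is the disk bounded by the second boundary component $\Gamma'$ of $A$. Additivity of $d_+$ over regions that meet only along transverse curves yields
\[
d_+(D)+d_+(D^c)=d_+(S)=1,\qquad d_+(D)=d_+(A)+d_+(D_{\Gamma'})=1+d_+(D_{\Gamma'}),
\]
the first equality using part~(i). Tightness enters through the Bennequin--Eliashberg inequality: in the paper's sign convention, any disk in the tight sphere with outward transverse boundary satisfies $d_+\geq 1$, and (by splitting $S$ along that transverse curve and using (i)) its inward transverse complement satisfies $d_+\leq 0$. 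Since $D$ is outward transverse at $\Gamma$, we obtain $d_+(D)\geq 1$; since $D_{\Gamma'}$ is inward transverse at $\Gamma'$, we obtain $d_+(D_{\Gamma'})\leq 0$, which combined with the displayed equality forces $d_+(D)\leq 1$. Therefore $d_+(D)=1$.
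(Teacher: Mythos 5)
Your parts (i) and (iii) are correct, and your overall strategy for (ii) --- cancel same--sign elliptic--hyperbolic pairs by Giroux--Fuchs elimination until the Euler--characteristic count forces the minimal configuration --- is the paper's. For (i) you take the purely topological route ($c(S)=0$ because $\xi$ is trivial over the ball that $S$ bounds), which is exactly the computation the paper records in the displayed formulas just before the lemma; the paper's own proof of (i) is instead dynamical (cancel all hyperbolic points, then $d_\pm=e_\pm\geq 1$), which also covers a tight sphere not presented as the boundary of a ball in a contact manifold. For (iii) your squeeze $1\leq d_+(D)=1+d_+(D_{\Gamma'})\leq 1$ is a legitimate variant of the paper's computation with $d_-$; but note that both versions rest on the same nontrivial input, namely that an outward--transverse disk in a tight sphere has $d_+\geq 1$ (equivalently, an inward--transverse one has $d_-\geq 1$). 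Inside this paper that input is not an imported Bennequin--Eliashberg inequality: it comes from the cancellation argument together with tightness (by Lemma \ref{lm:pos-psv} and the reasoning of Lemma \ref{lm:no-loops}, the union of stable separatrices of positive hyperbolic points is a forest, so $d_+=e_+-h_+=b_0\geq 1$); genericity alone does not give it, since an overtwisted disk can have $d_+\leq 0$.

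The genuine gap is in the $e_-\geq 1$ case of (ii). The situation is not time--symmetric: because $\p T$ is outward transverse, backward trajectories can never leave $T$, so every stable separatrix of a saddle has an interior $\alpha$--limit; but forward trajectories \emph{can} exit through $\p T$, so an unstable separatrix of a negative hyperbolic point need not end at any negative elliptic point. ``Running time backward'' turns the outward--transverse boundary into an inward--transverse one, so the mirror of the statement you proved for $h_+\geq 1$ is simply false here. What is actually needed (and what the paper proves) is the converse incidence: once all positive hyperbolic points are killed, for any negative elliptic sink $e$ there exists a negative hyperbolic point with an unstable separatrix ending at $e$. The argument is that every trajectory converging to $e$ originates either at the unique remaining positive elliptic source or on an unstable separatrix of a saddle (it cannot enter through the outward--transverse boundary); if no saddle feeds $e$, the closure of the basin of $e$ consists of the source, $e$, and an open disk of trajectories, i.e.\ it is all of $T$ and $T$ is a closed sphere, contradicting $k\geq 1$. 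You correctly flag that ``the main delicate point is ensuring that a cancellable same--sign pair always exists at each stage,'' but for the negative pairs your write--up does not supply that step.
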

   \begin{figure}[h]
\centerline{\includegraphics[width=6cm]{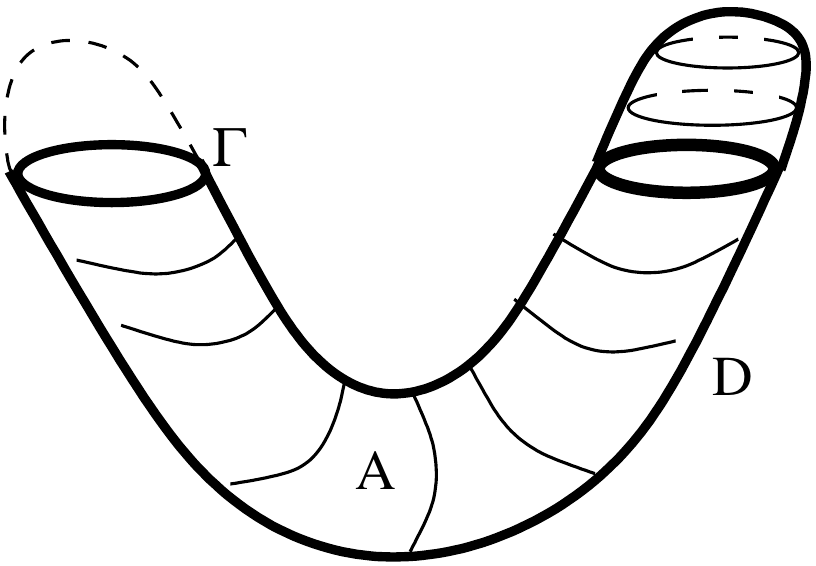}}
 \caption{Disc $D$ bounded by $\Gamma$} 
\label{fig:annulus1}
\end{figure}
  \begin{proof}
  (i) The incoming separatrices of positive hyperbolic points begin at positive elliptic points, and the outgoing separatrices of negative hyperbolic points end at negative elliptic points. Hence, all hyperbolic points can be eliminated using
  Lemma \ref{lm:killing}. On the other hand, we have $d_\pm>0$ because there should be sources and sinks. But $d_++d_-=2$, and hence, $d_\pm=1$.
  
  (ii) The incoming separatrices of positive hyperbolic points begin at positive elliptic points, and hence can be killed using Lemma \ref{lm:killing}. Suppose  that all positive hyperbolic points are killed, and hence only 1 positive elliptic left. Note that $d_-(D)=2-k-d_+(D)=1-k$, and hence there are at least $k-1$ negative hyperbolic points. If there is a negative elliptic point $e$, then  all its incoming trajectories come either from the positive elliptic point, or  from   negative hyperbolic points. If there are no incoming separatrices from hyperbolic points then  $T$ is the sphere $S$, contradicting to our assumption that $k\geq 1$. Hence, there should be a negative hyperbolic point $h$  whose outgoing separatric ends at $e$.
  Therefore, we can kill the pair $(e,h)$ using  Lemma \ref{lm:killing}.    
  
  (iii) The complement $S\setminus A$ is the union of two disjoint discs $D_1\cup D_2$. We have $d_-(D_1)+d_-(D_2)=1-d_-(A)=2$, but on the hand, each of the discs must have $d_->0$ (as the previous argument shows reversing the orientation). Hence, $d_-(D_1)=d_-(D_2)=1$ and (ii) implies that $d_+(D_1)=d_+(D_2)=0$ thus $d_+(D)=d_+(A)-d_+(D_2)=1$.
  \end{proof}

\subsection{Legendrian polygons}
 
We assume below that $\FF$ is tight and   Morse. Analogous statements hold in the generalized Morse case but we will not need them for our purposes.

A {\em polygon} is an embedded domain in $\R^2$ which is a manifold with boundary with corners.

 A {\em Legendrian polygon } in a sphere $S$ is a continuous map  $h:P\to S$ of a  polygon $P$ such that
 \begin{itemize} \item[-] $h$ is a smooth embedding on the interior  of $P$, as well as on the   boundary in the complement of vertices;
 \item[-]  each side  of $P$  is mapped onto a  leaf of a characteristic foliation on $S$ with an exception that some sides could be  mapped onto the union of two incoming or two outgoing separatrices of a hyperbolic point; in the latter  case the corresponding interior point  of the side is called a {\em pseudo-vertex}   of the polygon. See Fig. \ref{fig:Leg-polygon}.
 \end{itemize}
    \begin{figure}[h]
\centerline{\includegraphics[width=9cm]{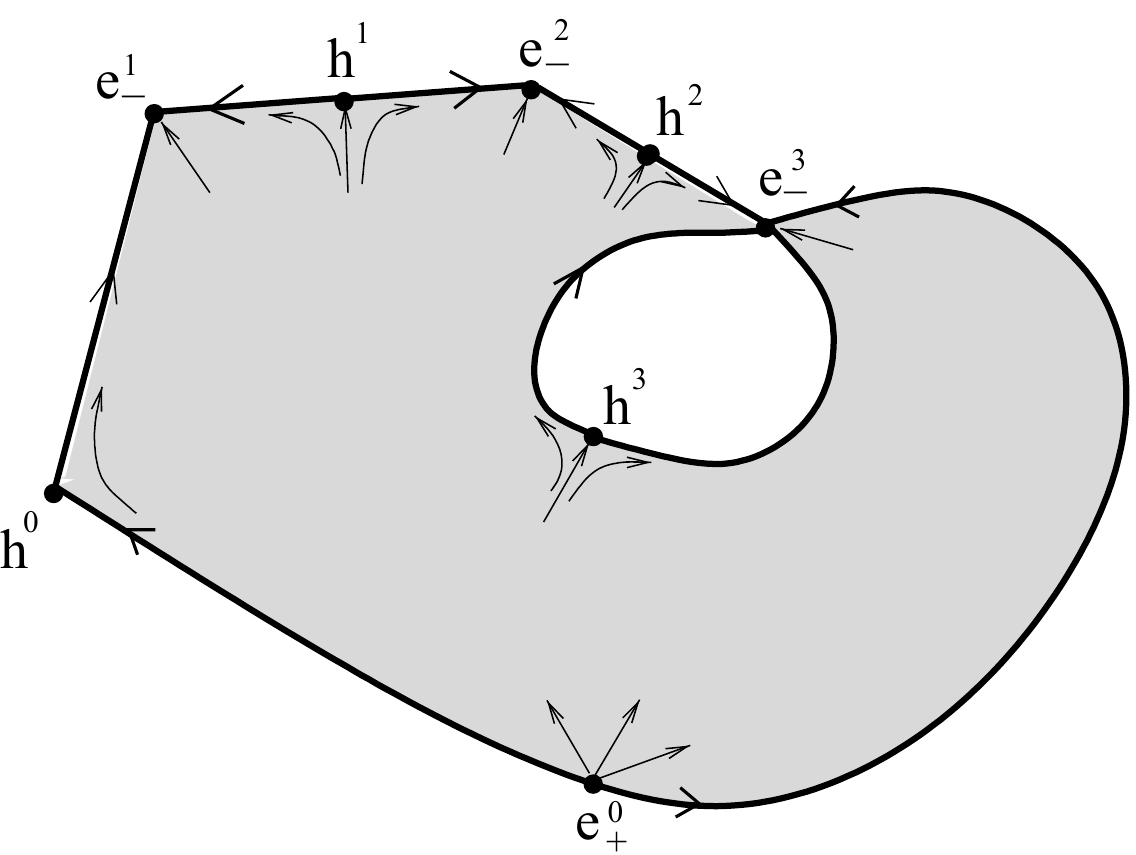}}
 \caption{Legendrian polygon; $h^1,h^2,h^3$ are pseudovertices} 
\label{fig:Leg-polygon}
\end{figure}
    \begin{lemma}\label{lm:tight-polygon}
Suppose $\FF$ is Morse and   tight. Then among elliptic  vertices and pseudovertices of a  Legendrian polygon there are points of both signs.
\end{lemma}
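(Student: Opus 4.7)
I argue by contradiction. Suppose that all elliptic vertices and pseudo-vertices of the Legendrian polygon $h:P\to S$ are of the same sign, say positive (the negative case follows by reversing the co-orientation of $\xi$). The strategy is to simplify the polygon through admissible local modifications of $\FF$ and reach a configuration that contradicts either the Morse hypothesis (no limit cycles) or the tightness constraint $d_+(S)=1$ from Lemma \ref{lm:d}(i).

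First I dispose of the hyperbolic vertices. At any hyperbolic vertex $v$ of $P$, the two adjacent sides of $P$ lie on one incoming and one outgoing separatrix of the saddle at $v$, so Corollary \ref{cor:smooth-hyp} applies and replaces the corner at $v$ by a smooth Legendrian arc bypassing $v$. After this reduction, $\p P$ is a piecewise smooth Legendrian curve whose corners are only at (positive) elliptic vertices, with (positive) pseudo-vertices possibly surviving in the interiors of some sides.

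I then iteratively cancel positive elliptic-hyperbolic pairs via the Giroux--Fuchs elimination in Lemma \ref{lm:killing}(i). The key local observation is that at a pseudo-vertex with two incoming separatrices, each endpoint of the containing side is the $\alpha$-limit of one of those separatrices, hence is forced to be a positive elliptic vertex of $P$; consequently the pseudo-vertex and an adjacent elliptic vertex are connected by a separatrix, and Lemma \ref{lm:killing}(i) applies to cancel the pair, merging the two affected sides into a single smooth Legendrian arc. A parallel argument, using the fact---as in the proof of Lemma \ref{lm:d}(i)---that in a tight Morse sphere the incoming separatrices of any positive hyperbolic point originate at positive elliptic sources, furnishes cancellation partners for the remaining positive elliptic vertices among positive hyperbolic singularities of $\FF$ inside $h(P)$.

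After all cancellations the polygon has no elliptic vertices, no pseudo-vertices and, by the first step, no hyperbolic vertices. Its boundary is therefore a smooth closed curve tangent to $\FF$ at every point, i.e.\ a limit cycle of the modified foliation. Since the modifications are $C^{0}$-small and preserve tightness, this contradicts the Morse hypothesis on $\FF$. \emph{The main obstacle} lies in verifying that at every intermediate stage a cancellation partner still exists for each remaining positive elliptic vertex---either an adjacent positive pseudo-vertex or a positive hyperbolic singularity of $\FF$ connected to it by a separatrix. This requires a Poincar\'e--Bendixson analysis of $\FF|_{h(P)}$ combined with the global positive-separatrix structure of tight Morse foliations supplied by Lemma \ref{lm:d}(i), and is the heart of the proof.
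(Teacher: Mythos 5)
Your argument is essentially the paper's own proof: smooth the hyperbolic corners via Corollary \ref{cor:smooth-hyp}, cancel each elliptic vertex against an adjacent pseudo-vertex by Giroux--Fuchs elimination (Lemma \ref{lm:killing}(i)), and conclude from the resulting closed nonsingular Legendrian curve. Two small corrections: the final contradiction is with \emph{tightness} (a closed leaf of the characteristic foliation of a sphere yields an overtwisted disc after a small perturbation), not with the Morse hypothesis on the original $\FF$, since the closed curve is a leaf of the perturbed foliation; and the detour through hyperbolic singularities in the interior of $h(P)$ is unnecessary, because source-type and sink-type marked points alternate along $\p P$, so every elliptic vertex already has an adjacent pseudo-vertex as a cancellation partner via a boundary side.
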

\begin{proof}
  If all singular points except hyperbolic  corners on the boundary of the  polygon  are of the same sign then they can first be  disjoined or smoothed using Lemma
  \ref{lm:killing}(ii) or Corollary \ref{cor:smooth-hyp}, and then pairwise cancelled  using Lemma \ref{lm:killing}(i) to get a closed non-singular leaf of the characteristic foliation. But this    contradicts the tightness assumption.
\end{proof}
   \begin{lemma}\label{lm:no-loops} Suppose $\FF$ has no homoclinics (i.e. sepratrices connecting   hyperbolic points). Then the union $\Lambda$ of stable separatrices of positive hyperbolic points 
 is a  connected tree with vertices in positive elliptic points, and edges in 1-1 correspondence with  positive hyperbolic points. See Fig. \ref{fig:eh-tree}.  \end{lemma}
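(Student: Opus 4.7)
The plan is to identify $\Lambda$ with an abstract graph whose vertices are the positive elliptic points and whose edges are the positive hyperbolic points, then to establish acyclicity using the tightness input provided by Lemma \ref{lm:tight-polygon}, and finally to deduce connectedness from the Euler-characteristic count of Lemma \ref{lm:d}(i). The graph structure itself is essentially already recorded in the proof of Lemma \ref{lm:d}(i): each positive hyperbolic point $h$ has two incoming separatrices, and since $\FF$ is Morse (no periodic orbits) and by hypothesis has no homoclinics, the $\alpha$-limit set of each such separatrix is a singular point, necessarily a source of the characteristic flow, hence a positive elliptic point. Thus each $h$ contributes exactly one edge between two (possibly coincident) positive elliptic vertices, formed by joining its two stable separatrices at $h$.

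To rule out cycles I would argue by contradiction. A simple combinatorial cycle in $\Lambda$ traces out a simple closed Legendrian curve $C\subset S^2$, because distinct hyperbolic points give separatrices with disjoint interiors and because along a simple cycle each elliptic vertex is traversed only once. By the Jordan curve theorem $C$ bounds a disc $D\subset S^2$, and the pair $(D,C)$ is a Legendrian polygon in the sense of the previous subsection: the corners of $D$ are the positive elliptic vertices traversed by the cycle, while each positive hyperbolic point on $C$ is a pseudovertex, its two incoming separatrices assembling into a single side of $D$. Since every elliptic vertex and pseudovertex of this polygon is positive, this contradicts Lemma \ref{lm:tight-polygon}. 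The same reasoning rules out loops (an edge from a positive hyperbolic point whose two stable separatrices meet at a common positive elliptic vertex) and parallel edges. Hence $\Lambda$ is a forest.

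For connectedness, if this forest has $c$ components on $e_+(S)$ vertices and $h_+(S)$ edges, then $h_+(S)=e_+(S)-c$; combined with $d_+(S)=e_+(S)-h_+(S)=1$ from Lemma \ref{lm:d}(i), this forces $c=1$, so $\Lambda$ is a tree with edges in bijection with the positive hyperbolic points, as required. The main obstacle is the acyclicity step: one has to ensure that a combinatorial cycle really does yield a genuine Legendrian polygon whose elliptic corners and pseudovertices are all positive, so that Lemma \ref{lm:tight-polygon} applies. Once acyclicity is in place, the connectedness count is purely formal.
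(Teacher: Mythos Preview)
Your proposal is correct and follows essentially the same approach as the paper's own proof: both arguments invoke Lemma~\ref{lm:tight-polygon} to rule out cycles and then use the count $d_+=e_+-h_+=1$ from Lemma~\ref{lm:d}(i) to force connectedness. Your version simply spells out in more detail what the paper compresses into two sentences.
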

     \begin{figure}[h]
\centerline{\includegraphics[width=10cm]{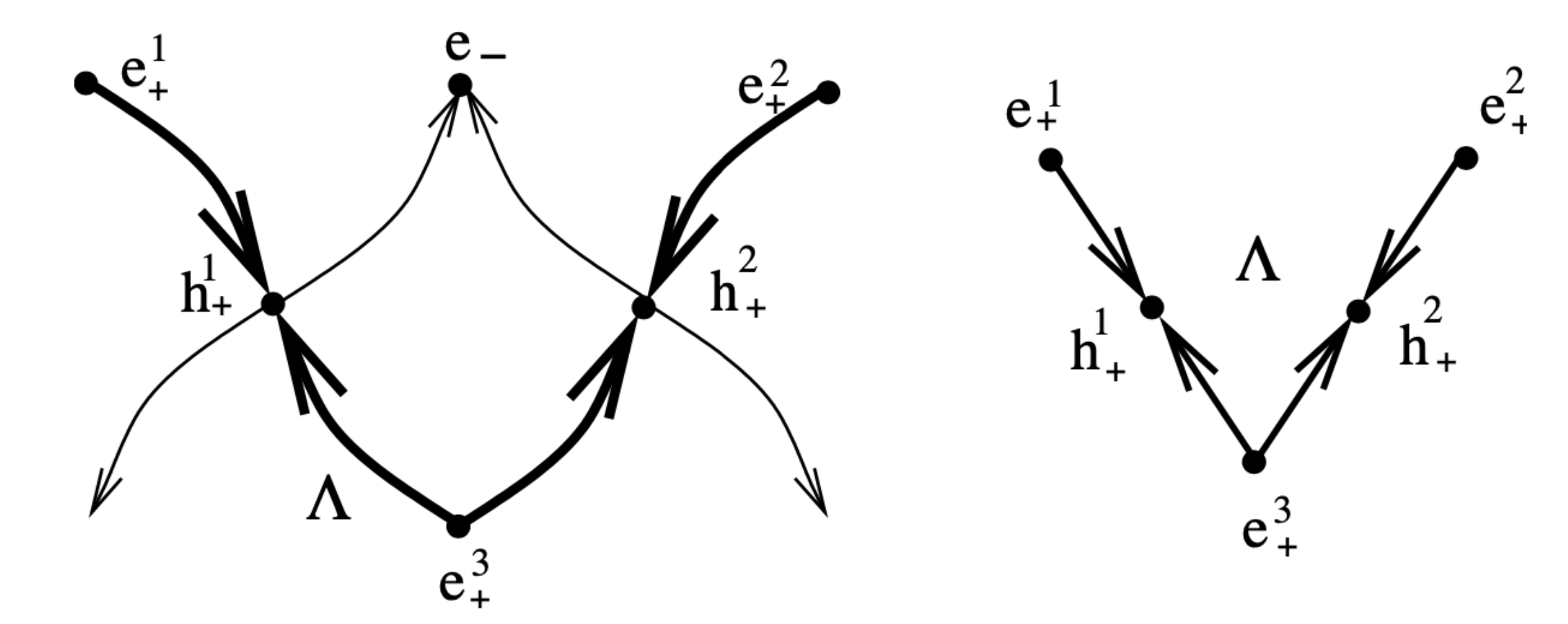}}
 \caption{Tree with vertices in positive elliptic points} 
\label{fig:eh-tree}
\end{figure}
 \begin{proof}
 First, observe that $\Lambda$ contains no loops, thanks to the tightness condition and Lemma \ref{lm:tight-polygon}.
 We also have $1=\e_+-h_+=b_0(\Gamma)$, and hence $\Gamma$ is connected.
  \end{proof}
 
\begin{cor}\label{cor:unique-path} Under the  assumptions of Lemma \ref{lm:no-loops} there is a unique path  consisting of separatrices of positive hyperbolic points connecting any 2  positive elliptic vertices. 
\end{cor}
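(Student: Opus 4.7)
The plan is to deduce this as a direct graph-theoretic consequence of Lemma \ref{lm:no-loops}. By that lemma, the subset $\Lambda\subset S$ formed by the stable separatrices of the positive hyperbolic points, together with the positive elliptic points, carries the structure of a connected tree: positive elliptic points serve as vertices, and each positive hyperbolic point $h$ contributes an edge whose two endpoints are the positive elliptic points from which the two incoming separatrices of $h$ originate. Connectedness and the absence of loops are exactly the content of Lemma \ref{lm:no-loops}.

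Once this identification of $\Lambda$ with a finite tree is in place, I would invoke the elementary fact that in a tree there is a unique simple path between any two vertices. Given two positive elliptic vertices $e,e'$, the desired path is obtained by concatenating the edges of this unique graph-theoretic path; geometrically, this means walking along a sequence of stable separatrices of positive hyperbolic points, switching branches only at positive elliptic points. Any other path of the same form would yield a second simple graph path between $e$ and $e'$ in $\Lambda$, producing a cycle and contradicting the tree property.

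The only point requiring a small amount of care is the precise translation from the geometric picture (each positive hyperbolic point has two stable separatrices, each ending at a positive elliptic source) into the combinatorial edge/vertex structure used above. I expect this to be the main (and essentially only) obstacle, but it is entirely bookkeeping: the fact that exactly two stable separatrices emanate from each positive hyperbolic point is built into the local Morse normal form, and Lemma \ref{lm:no-loops} already packages the rest. With this identification made, the corollary is immediate from the uniqueness of paths in a tree, and no further contact-geometric input is needed.
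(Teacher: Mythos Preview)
Your proposal is correct and matches the paper's approach: the paper states the corollary without proof, treating it as an immediate consequence of the tree structure established in Lemma~\ref{lm:no-loops}, which is exactly what you spell out. The only content is the elementary fact that a tree has a unique simple path between any two vertices, and you have identified this correctly.
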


 \section{Existence of allowable singularities}\label{sec:allowable}
 We prove in this section the main technical proposition about tight foliations on  the 2-sphere.
 \subsection{Allowable singular points}
 Given a characteristic foliation $\FF$, its singular point $x$ is called {\em allowable} in one of the following 4 cases:
 \begin{itemize}
 \item
 $x$ is a positive hyperbolic point with two  incoming separatrices from {\em different} positive elliptic points;
 \item
 $x$ is a negative hyperbolic point with two  incoming separatrices from {\em the same} positive elliptic point;
 \item
 $x$ is a positive embryo with the  incoming separatrix from  a  positive elliptic point;
 \item
 $x$ is a negative embryo with {\em all} the  incoming trajectories from  a  positive elliptic point.
 \end{itemize} 
 \begin{lemma}\label{lm:pos-psv} If two incoming separatrices of a positive hyperbolic point $h$ come from the same elliptic point then the characteristic foliation  is overtwisted.
 \end{lemma}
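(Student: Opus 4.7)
The plan is to show that the hypothesis forces the existence of a Legendrian bigon whose only elliptic vertex is positive, then invoke Lemma \ref{lm:tight-polygon} to conclude that $\FF$ cannot be tight. Let $\gamma_1$ and $\gamma_2$ denote the two incoming separatrices of the positive hyperbolic point $h$, and let $e$ be the (necessarily positive) elliptic point from which both emanate when followed backwards in time.

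First I would verify that $C:=\gamma_1\cup\gamma_2\cup\{e,h\}$ is a simple closed curve on $S$. The two separatrices are distinct because they approach $h$ tangent to the two opposite branches of its stable manifold, and as leaves of the nonsingular foliation on $S\setminus\{e,h\}$ they cannot meet each other. A Jordan-curve argument then cuts the sphere into two closed discs bounded by $C$; I pick either one and call it $D$.

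Next I would interpret $D$ as a Legendrian polygon in the sense of Section \ref{sec:char-fol}: it is a bigon whose two sides are the single leaves $\gamma_1$ and $\gamma_2$, whose elliptic vertex is $e$, and whose hyperbolic corner is $h$. Because each side is a single separatrix rather than the union of two separatrices of a hyperbolic point, $D$ has no pseudovertices. Thus the only elliptic vertex or pseudovertex of $D$ is $e$, which is positive. Were $\FF$ tight, Lemma \ref{lm:tight-polygon} would force the set of elliptic vertices and pseudovertices of $D$ to contain points of both signs; this contradiction shows that $\FF$ is overtwisted.

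The main obstacle, such as it is, lies in the verification that $C$ is embedded and that $D$ genuinely has no pseudovertices; both points rest on the distinctness of the two stable branches at a hyperbolic saddle and on the fact that two distinct leaves of a singular foliation meet only at singular points. Once these elementary facts are in hand, the conclusion follows directly from Lemma \ref{lm:tight-polygon}, with no separate perturbation or elimination argument required.
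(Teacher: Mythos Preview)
Your argument is correct and follows the same approach as the paper, which simply notes that the two separatrices bound a Legendrian polygon with only positive singular points and then implicitly invokes Lemma~\ref{lm:tight-polygon}. You have merely spelled out the details (embeddedness of the loop, absence of pseudovertices, positivity of $e$) that the paper leaves to the reader.
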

 Indeed,   two separatrices  form a Legendrian  polygon with only positive singular points.  

  \subsection{Key technical proposition}\label{sec:key-technical}
 The union $\Sigma=\Sigma(\FF)$ of all outgoing separatrices  of all hyperbolic points and  embryos is called the  {\em skeleton of $\FF$}.
      \begin{figure}[h]
\centerline{\includegraphics[width=12cm]{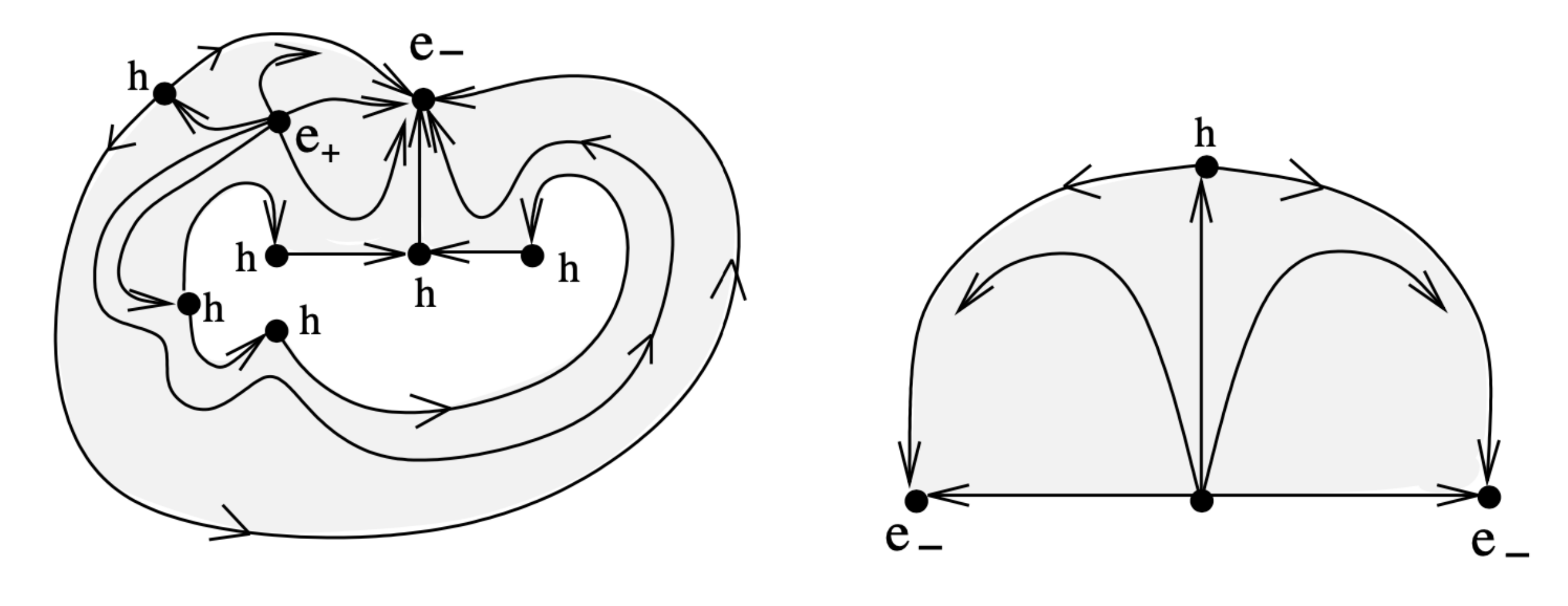}}
 \caption{Basin and semibasin} 
\label{fig:basin-semi}
\end{figure} 

 Components of $S\setminus\Sigma$  are diffeomorphic to $\R^2$ and could be of two types, {\em basins} and {\em semi-basins}.  A basin  is the union of trajectories emanating from a positive  elliptic point, called the {\em center} of the basin. A semi-basin  is the union of trajectories emanating from a positive  embryo. See Fig. \ref{fig:basin-semi}.
  \begin{prop}\label{prop:key}
 Let $\FF$ be a tight generalized Morse foliation.
 Then it has an allowable vertex.
 \end{prop}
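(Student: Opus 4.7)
The plan is to argue by contradiction: suppose $\FF$ is tight and generalized Morse with no allowable singular point, and show $\FF$ must be the trivial source-sink foliation on $S^2$ (so in particular any $\FF$ with at least one hyperbolic or embryo singularity admits an allowable vertex).

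The first step is to control positive hyperbolic points. By Lemma \ref{lm:pos-psv}, tightness rules out a positive hyperbolic with both incoming separatrices from the same positive elliptic, while the non-allowability hypothesis excludes the case of two distinct positive elliptic origins. Hence every positive hyperbolic has at least one incoming separatrix reaching back to a non-elliptic positive singularity. In the simplest setting with no embryos and no homoclinics, Lemma \ref{lm:no-loops} gives a tree with $e_+$ positive elliptic vertices and $h_+$ edges in bijection with positive hyperbolics; any edge of this tree would correspond to an allowable positive hyperbolic (its two stable separatrices ending at the two distinct elliptic endpoints of the edge). Hence $h_+=0$ and therefore $e_+=1$, with the unique positive elliptic $e$ being the only source on $S^2$.

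To handle positive embryos, I would follow the directed chain backward: a non-allowable positive embryo has its single incoming separatrix issuing from an outgoing separatrix of another positive non-elliptic singularity, and this chain has in-degree at most one at each step. It either terminates at a positive elliptic -- producing an allowable embryo en route -- or closes into a cycle forming a Legendrian polygon whose vertices are only positive singularities. After resolving each embryo on such a polygon into a positive elliptic-hyperbolic pair via Lemma \ref{lm:killing}(iii) (which preserves tightness as it is realized by a $C^1$-small isotopy of the sphere), the polygon acquires only positive-sign elliptic vertices and pseudovertices, contradicting Lemma \ref{lm:tight-polygon}. Thus positive singularities reduce to $\{e\}$, after which both incoming separatrices of any negative hyperbolic must trace back to $e$, making it allowable; and analogously every negative embryo is allowable because the entire incoming half-plane flows out of $e$. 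Hence $h_-=0$, and $d_-(S)=1$ forces $e_-=1$, yielding the trivial foliation.

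The main obstacle is the homoclinic case, where Lemma \ref{lm:no-loops} does not apply and the tree structure breaks down. I would handle it by inductive reduction: use Corollary \ref{cor:smooth-hyp} to smooth a hyperbolic along a homoclinic separatrix, or Lemma \ref{lm:killing}(i) to cancel an elliptic-hyperbolic pair on a separatrix adjacent to a homoclinic. Each such operation either directly exposes a Legendrian polygon with only positive-sign elliptic vertices and pseudovertices (contradicting Lemma \ref{lm:tight-polygon} and hence tightness) or strictly decreases the number of homoclinics while preserving tightness and the no-allowable-vertex condition, enabling induction down to the homoclinic-free case already treated. The most delicate bookkeeping will be verifying that no new allowable vertex is accidentally created by these cancellations, and that every local configuration at a homoclinic can in fact be pushed into one of these two outcomes; this is the heart of the argument.
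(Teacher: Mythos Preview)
Your overall strategy---assume no allowable vertex and derive a contradiction, with the homoclinic-free case handled by the tree of Lemma~\ref{lm:no-loops} and the general case by some inductive reduction of homoclinics---is indeed the shape of the paper's argument. But two places in your proposal are not yet proofs.

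First, your chain argument for positive embryos contains an error. You assert that the incoming separatrix of a non-allowable positive embryo ``issues from an outgoing separatrix of another \emph{positive} non-elliptic singularity.'' This is false: the $\alpha$-limit of that separatrix can perfectly well be a \emph{negative} hyperbolic point or a negative embryo (it is then an outgoing separatrix of that point). So your chain does not have a well-defined sign, and the closing-up-into-a-positive-polygon argument does not go through as stated. The paper sidesteps this entirely by first reducing to the Morse case (Lemma~\ref{lm:red-to-Morse}): each non-allowable embryo is resolved into an elliptic--hyperbolic pair in a way that makes the new hyperbolic point non-allowable, so by induction on the number of embryos one may assume there are none.

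Second, and more seriously, the homoclinic case is where essentially all the work lies, and you have only a sketch. Your proposed moves (smoothing via Corollary~\ref{cor:smooth-hyp}, cancelling via Lemma~\ref{lm:killing}(i)) do decrease complexity, but the claim that they ``preserve the no-allowable-vertex condition'' is exactly what must be checked configuration by configuration, and it is delicate: a resolution of a homoclinic generically \emph{does} change which pseudovertices are allowable. The paper does not attempt a single uniform reduction; instead it strengthens the statement to one about \emph{admissible domains} (Lemma~\ref{lm:key}), runs a double induction on the number of singular points and the number of homoclinics inside the domain, and then works through several explicit cases (Steps~1--4) to show that each homoclinic ``T-configuration'' either forces overtwistedness directly or can be resolved without creating new allowable vertices. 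The final step is a path-following argument around basin boundaries that either closes up into a smaller admissible domain or produces an overtwisted loop. None of this machinery is visible in your outline, and your acknowledgement that ``this is the heart of the argument'' is accurate: what remains to be done is the bulk of the proof.
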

 We begin by reducing the proposition to the case of a Morse foliation.
 \begin{lemma}\label{lm:red-to-Morse} If Proposition \ref{prop:key} holds for Morse foliations then it holds for generalized Morse foliations as well.
 \end{lemma}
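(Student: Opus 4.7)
The plan is to reduce the generalized Morse case to the Morse case by resolving each embryo into an elliptic--hyperbolic pair. Given a tight generalized Morse foliation $\FF$ on $S$, I would apply Lemma \ref{lm:killing}(iii) at each embryo $o$ to replace it by a pair $(e_o,h_o)$ of the same sign. The isotopies can be chosen $C^1$-small and supported in disjoint neighborhoods $\Op o$, so the resulting foliation $\FF'$ is tight (tightness is stable under $C^1$-small isotopy of the surface in the ambient contact manifold), contains no embryos, and is therefore Morse. By hypothesis, $\FF'$ admits an allowable vertex $y$; since elliptic points are never allowable, $y$ must be hyperbolic. The task is then to promote $y$ to an allowable vertex of $\FF$.

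If $y=h_o$ is one of the newly created hyperbolics, I would match allowability of $h_o$ to allowability of $o$. For a \emph{positive} embryo $o$, the two incoming separatrices of $h_o$ in $\FF'$ are the short arc from the new source $e_o$ and the continuation outside $\Op o$ of the unique incoming separatrix $\delta$ of $o$; allowability of $h_o$ then amounts to $\delta$ emanating from a positive elliptic point distinct from $e_o$, which (since $e_o$ is newly created) is exactly the condition that $o$ is allowable as a positive embryo. For a \emph{negative} embryo $o$, the two incoming separatrices of $h_o$ are $C^1$-small perturbations of the two boundary separatrices of the incoming half-plane of $o$, so allowability of $h_o$ gives that both boundary separatrices emanate from a common positive elliptic point $p$, and one must conclude that the entire incoming half-plane of $o$ also lies in the basin of $p$.

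If instead $y$ is a pre-existing hyperbolic of $\FF$, then $\FF$ and $\FF'$ coincide away from $\bigcup_o \Op o$; if the separatrices of $y$ avoid these regions then $y$ is already allowable in $\FF$. Otherwise a separatrix of $y$ traces back in $\FF$ to some embryo $o$ via its outgoing region while in $\FF'$ it traces back to $e_o$, and the same matching analysis applied to this $o$ is expected to yield an allowable vertex of $\FF$.

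The principal obstacle is the negative embryo case above: allowability of $h_o$ controls only the two specific boundary separatrices of the incoming half-plane of $o$, whereas allowability of $o$ requires \emph{every} incoming trajectory of $o$ to emanate from a common positive elliptic point. Promoting the boundary statement to the interior is the technical heart of the argument; I would use the Legendrian-polygon machinery of Lemma \ref{lm:tight-polygon} to rule out a stable separatrix of some positive hyperbolic point cutting the incoming half-plane of $o$ into sub-regions belonging to distinct basins, and thereby force the whole half-plane to lie in the basin of $p$.
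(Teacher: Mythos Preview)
Your approach differs substantially from the paper's, and the obstacle you identify is genuine---but the paper sidesteps it rather than confronting it.

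The paper argues by induction on the number of embryos, resolving them one at a time. The key difference is the \emph{direction} of the argument. Rather than resolving arbitrarily and then trying to prove ``$h_o$ allowable $\Rightarrow$ $o$ allowable'' (which, as you correctly observe, is problematic for negative embryos), the paper first asks whether the chosen embryo $o$ is itself allowable. If so, done. If not, the paper exploits the freedom in Lemma~\ref{lm:killing}(iii) to resolve $o$ so that the new hyperbolic $h_o$ is \emph{also not allowable}. For a non-allowable negative embryo there is by definition an incoming trajectory $\gamma$ arriving as an outgoing separatrix of some hyperbolic point or embryo; one resolves so that $\gamma$ becomes an incoming separatrix of $h_o$, and then $h_o$ fails the ``both incoming from the same positive elliptic'' test. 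For a non-allowable positive embryo the unique incoming separatrix is already bad and becomes the non-$e_o$ incoming separatrix of $h_o$. Since $h_o$ is arranged not to be allowable, the allowable vertex supplied by the induction hypothesis for $\FF'$ is a pre-existing singularity, and the paper asserts it remains allowable for $\FF$.

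Your route---resolving all embryos at once and then pulling allowability back---forces you into exactly the implication the paper's tailored resolution is designed to avoid. For the negative embryo case your proposed fix via Lemma~\ref{lm:tight-polygon} is not worked out: it is not clear how an outgoing separatrix of some hyperbolic point entering the interior of the incoming half-plane of $o$ would produce a Legendrian polygon with vertices of only one sign. Your handling of a pre-existing hyperbolic $y$ whose separatrix traces back through an embryo is likewise only a sketch (``the same matching analysis \ldots is expected to yield''). The paper's inductive scheme with its carefully chosen resolution is shorter and avoids these difficulties entirely.
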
 
 \begin{proof}

 We argue by induction  in the number of embryos.
 Suppose the claim is proven when there are fewer than $k$ embryos.

 Let $o$ be a positive embryo. It is  {\em not} allowable if there is  either an  incoming homoclinic, or its incoming separatrix comes from another positive embryo.
  
  \begin{figure}[h]
\centerline{\includegraphics[width=11cm]{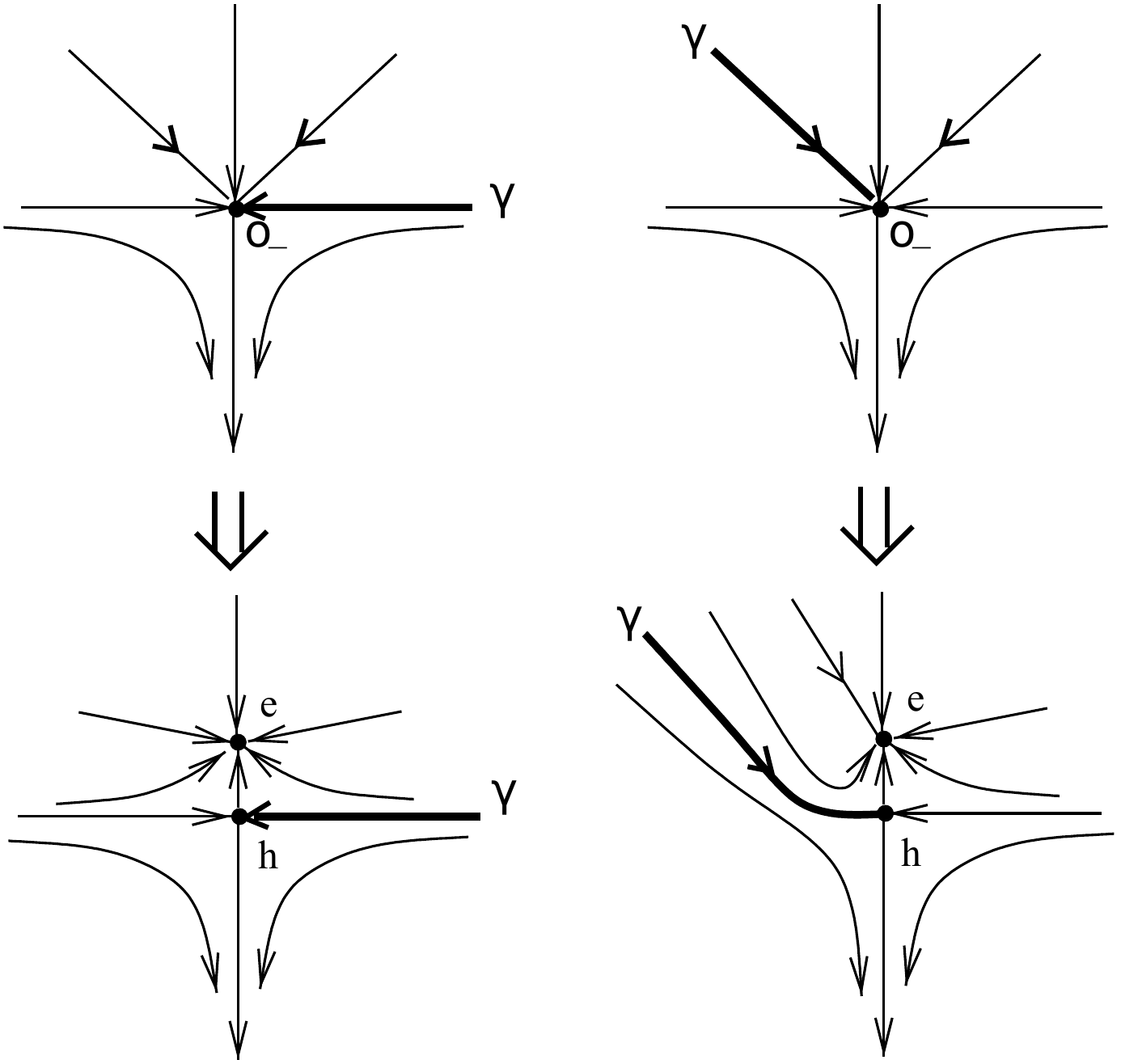}}
\caption{Resolving a negative embryo} 
\label{fig:res-neg-embryo}
\end{figure}

 In both cases let us  resolve the embryo into a pair of   positive  elliptic   and hyperbolic   points, such that the incoming separatrix  of the newly created  hyperbolic point is either homoclinic or comes from an   embryo, see Fig. \ref{fig:res-pos-embryo}. 
 This bifurcation does not create any new allowable vertices for the resulting new foliation $\FF'$, and hence, the allowable vertice provided by the  induction hypothesis for $\FF'$ is allowable  for $\FF$ as well.

 Assume now that $\FF$ has no positive embryos and let $o$ be a negative embryo.
 
 It is not allowable if and only if there is an incoming separatrix $\gamma$ either from a hyperbolic point or an embryo.
 Let us resolve the embryo $o$ into a pair of a negative  elliptic point $e$ and hyperbolic   point $h$, such that the separatrix $\gamma$ ends at   $h$, see Fig. \ref{fig:res-neg-embryo}.  Hence, $h$ is not   allowable and, therefore, the bifurcation  did not create any new allowable points, so the induction  hypothesis applies.
 \end{proof}

 %%%%%%%%%%

 \subsection{Proof of  Proposition \ref{prop:key}}
Thanks to Lemma \ref{lm:red-to-Morse}  we can  assume that $\FF$ is Morse.
Suppose that  $\FF$ has {\em   no allowable negative hyperbolic points}. 
\begin{lemma}\label{lm:bound-psv} Under the above assumption the boundary of any basin has no identified pseudovertices. \end{lemma}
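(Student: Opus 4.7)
The plan is to argue by contradiction. Suppose $B$ is a basin with positive elliptic center $e$, and that $h$ is an identified pseudovertex on $\p B$.

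First I would analyze the local picture at $h$. The hyperbolic point $h$ has four separatrices, two incoming $\gamma_1,\gamma_2$ and two outgoing $\sigma_1,\sigma_2$, cyclically alternating around $h$. Since the skeleton $\Sigma$ contains only outgoing separatrices (plus their endpoints), a small neighborhood of $h$ meets $\Sigma$ exactly in $\sigma_1\cup\{h\}\cup\sigma_2$. The complement of this cross in that neighborhood has two open ``half-disc'' components, each containing two adjacent sectors of $h$ together with one of the incoming separatrices; say $\gamma_1$ lies in one half and $\gamma_2$ in the other.

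Next I would translate the ``identified'' condition into a global statement about $B$. A non-identified pseudovertex would correspond to $\p B$ visiting $h$ exactly once, so that $B$ occupies only one of the two local halves; the identified case means $\p B$ visits $h$ twice, once from each half, so $B$ occupies all four sectors around $h$. Because the $\alpha$-limit source of a regular trajectory lies in the same connected component of $S\setminus\Sigma$ as the trajectory itself, both $\gamma_1$ and $\gamma_2$ together with their sources lie in $B$; and since in the Morse case the source of each incoming separatrix is a positive elliptic point, and $B$ contains a unique such point, namely its center $e$, both $\gamma_1$ and $\gamma_2$ must emanate from $e$.

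I would then finish by cases on the sign of $h$. If $h$ is positive, Lemma~\ref{lm:pos-psv} applies: two incoming separatrices from the same elliptic point force overtwistedness, contradicting tightness of $\FF$. If $h$ is negative, then having two incoming separatrices from the same positive elliptic point is precisely the defining condition for $h$ to be an allowable negative hyperbolic point, contradicting the standing assumption of the proof of Proposition~\ref{prop:key}.

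The main obstacle is making the equivalence ``$h$ is identified on $\p B$'' $\Longleftrightarrow$ ``both incoming separatrices of $h$ emanate from $e$'' fully rigorous. This requires a careful accounting of how the two local halves of $S\setminus\Sigma$ near $h$ are glued into the global component $B$, relying on the fact that each regular trajectory, and in particular each incoming separatrix, stays inside a single component of $S\setminus\Sigma$, so that the $\alpha$-limit of $\gamma_i$ determines which basin the corresponding half-neighborhood of $h$ belongs to.
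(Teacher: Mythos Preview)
Your argument is essentially the paper's one-line proof (``any such pseudovertex has to be negative, and hence, allowable'') unpacked: an identified pseudovertex $h$ on $\partial B$ has both incoming separatrices coming from the center $e$, so if $h$ is positive you contradict tightness via Lemma~\ref{lm:pos-psv}, and if $h$ is negative you contradict the standing assumption.

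There is one soft spot. The assertion ``a small neighborhood of $h$ meets $\Sigma$ exactly in $\sigma_1\cup\{h\}\cup\sigma_2$'' is not literally true at this stage of the argument: homoclinics have not yet been excluded, and an incoming separatrix $\gamma_i$ that happens to be an outgoing separatrix of some other hyperbolic point $h'$ lies in $\Sigma$ as well. Relatedly, the clause ``in the Morse case the source of each incoming separatrix is a positive elliptic point'' is false in the presence of homoclinics. This does not actually break your proof, for the following reason: if, say, $\gamma_1$ were a homoclinic, then locally $\Sigma$ would contain the three arcs $\gamma_1,\sigma_1,\sigma_2$ meeting at $h$, splitting a neighborhood into three sectors; in the two sectors adjacent to $\gamma_1$ the basin boundary meets $h$ as a \emph{corner} (one side along $\gamma_1$, the other along a $\sigma_j$), not as a pseudovertex. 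Hence only the sector containing $\gamma_2$ can contribute a pseudovertex at $h$, and $h$ could not be an \emph{identified} pseudovertex. So the identified-pseudovertex hypothesis itself forces both $\gamma_i$ to lie in the open basin $B$, whence their $\alpha$-limits are the center $e$. If you insert this observation (or simply replace the inaccurate general claim by ``each $\gamma_i$ lies in $B$, hence emanates from $e$ by the definition of a basin''), the argument is complete and coincides with the paper's.
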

Indeed, any such pseudovertex has to be negative, and hence, allowable. 

 %In particular, the boundary of any basin is a Legendrian polygon.
 
 \medskip
 For the induction purposes we will be proving a slightly stronger statement.
   \begin{figure}[h]
\centerline{\includegraphics[width=13cm]{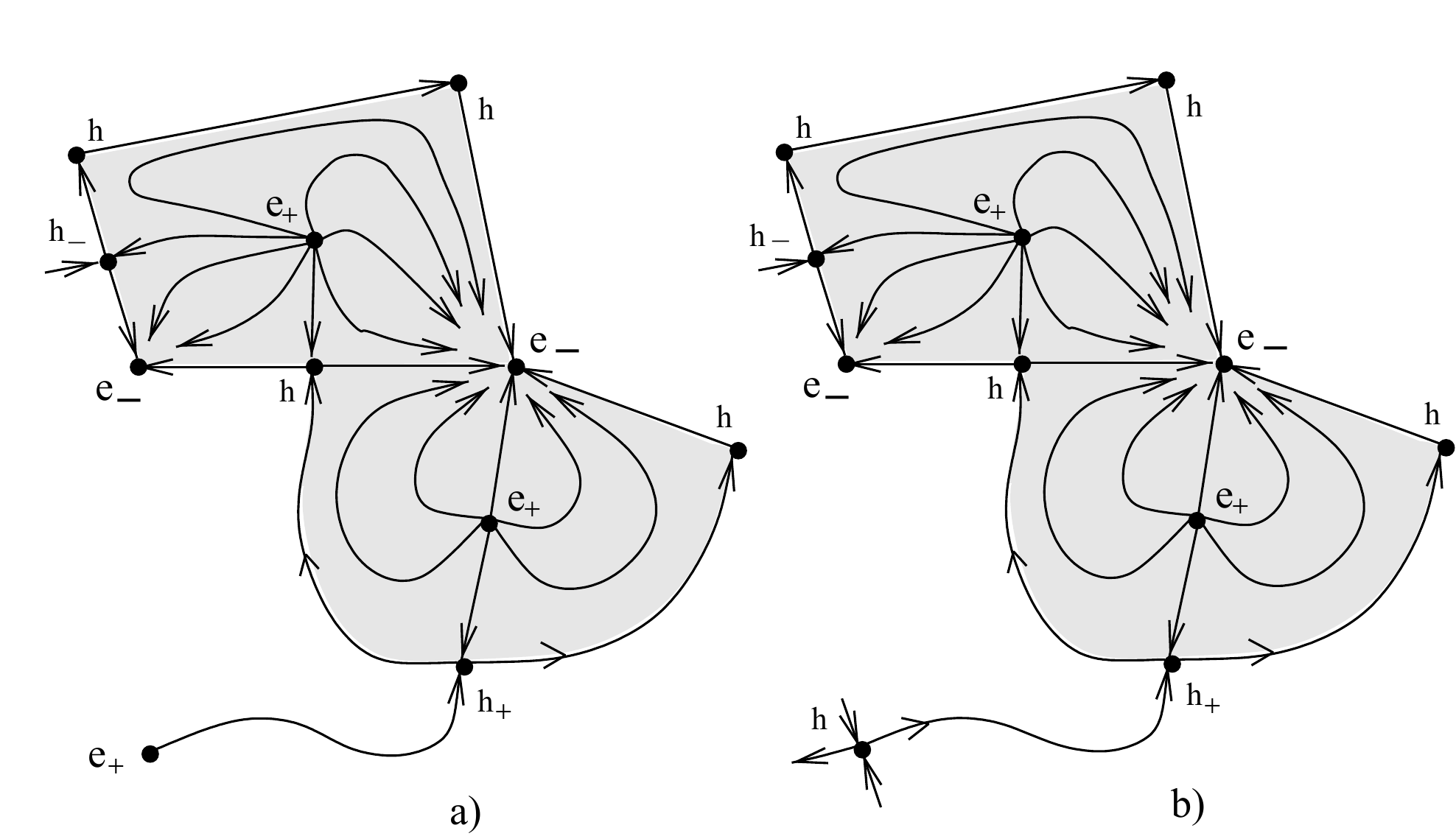}}
 \caption{Admissible (a)) and non-admissible (b)) domains} 
\label{fig:admiss-domain}
\end{figure} 
 A closed embedded domain  $U\subset S$ bounded by some of the trajectories in $ \Sigma$ is called {\em admissible} if it is a union of basins and none of positive pseudovertices on its boundary has an incoming homoclinic  from outside of
  $ U$, see Fig. 
  \ref{fig:admiss-domain}.
 \begin{lemma}\label{lm:key}
 Let $\FF$ be a tight Morse foliation. Then any admissible domain in   $U$ contains an  allowable pseudovertex.
 \end{lemma}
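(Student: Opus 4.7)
I plan to prove the lemma by induction on the number of basins contained in the admissible domain $U$. Recall the standing assumption that $\FF$ has no allowable negative hyperbolic points, so the goal is to produce an allowable \emph{positive} pseudovertex.

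For the base case, let $U=B$ be a single basin with center $e^+$. Since every positive elliptic point lies in the interior of its own basin, $\partial B$ has no positive elliptic vertex; by Lemma \ref{lm:tight-polygon}, the both-signs requirement then forces a positive hyperbolic pseudovertex $h\in\partial B$. Its two stable separatrices either come from positive elliptic points or are homoclinics. Admissibility rules out incoming homoclinics from outside $U$, and since $U$ has no hyperbolic points in its interior, no incoming homoclinic can originate inside $U$ either. Thus both stable separatrices come from positive elliptic points, and Lemma \ref{lm:pos-psv} together with tightness forces them to come from two \emph{different} positive elliptic points, so $h$ is allowable.

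For the inductive step, suppose $U$ has $n\ge 2$ basins and the result holds for admissible domains with fewer basins. If any positive hyperbolic in $U$ is allowable we are done, so assume otherwise. By Lemma \ref{lm:pos-psv}, no positive hyperbolic can have both stable separatrices from the same center; hence every positive hyperbolic in $U$ must carry at least one stable homoclinic. The plan is to cut along a suitable homoclinic $\gamma$ to produce an admissible sub-domain $U'\subsetneq U$ which is a union of a proper subset of the basins $\{B_i\}$, and then apply the inductive hypothesis to $U'$; the allowable pseudovertex provided by induction will lie in $U$ and contradict the standing assumption. The candidate $\gamma$ is chosen to be an \emph{innermost} homoclinic, bounding a minimal union of basins in $U$, and $U'$ is taken to be the union of the basins on the inner side of the skeleton arcs determined by $\gamma$.

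The main obstacle is verifying that $U'$ is genuinely admissible: no positive pseudovertex of $\partial U'$ may have an incoming homoclinic escaping into $U\setminus U'$. This requires a careful combinatorial analysis of how the stable separatrices of positive hyperbolic points in $\overline{U'}$ interact with the cutting arcs, leveraging the tree structure from Lemma \ref{lm:no-loops} applied to the homoclinic-free part of the skeleton, the pseudovertex bookkeeping in Lemma \ref{lm:bound-psv}, and iterative use of Lemma \ref{lm:tight-polygon} to rule out stray homoclinic escapes. The innermost choice of $\gamma$ is what ensures that any homoclinic associated with a $\partial U'$ pseudovertex cannot cross $\partial U'$ outwards, thereby preserving admissibility and completing the induction.
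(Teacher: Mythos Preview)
Your inductive step is not a proof: you explicitly identify ``the main obstacle'' (verifying that the cut sub-domain $U'$ is admissible) and then defer it to an unspecified ``careful combinatorial analysis.'' This is precisely where all the work lies. A homoclinic $\gamma$ is just an arc in the skeleton, not a closed curve, so ``cutting along $\gamma$'' does not by itself produce a sub-domain; you need a closed curve in $\Sigma$ bounding a union of basins. Even granting some choice of $U'$, the admissibility condition requires that no positive pseudovertex on $\partial U'$ receives a homoclinic from outside $U'$---and the new boundary pieces you introduce by cutting are exactly where such escapes can occur. Your ``innermost'' criterion is not made precise, and it is not clear it prevents this. The paper handles this difficulty quite differently: it inducts on the pair (number of singular points, number of homoclinics), and in the inductive step it first \emph{modifies the foliation} by resolving homoclinics (Steps~1--4), which reduces the homoclinic count without creating new allowable points; only after those reductions does it build a specific counter-clockwise path through basins and homoclinics whose closure yields either an overtwisted loop or a strictly smaller admissible domain.

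Your base case also has gaps. You invoke Lemma~\ref{lm:tight-polygon}, but that lemma applies to Legendrian polygons, and when homoclinics are present the boundary of a basin need not be one (edges can be identified, and homoclinic endpoints give corner vertices rather than pseudovertices). The paper's base case is the homoclinic-free situation $I_{n,0}$, where this issue does not arise. Moreover, your dichotomy ``homoclinic from outside $U$'' versus ``homoclinic from inside $U$'' omits the possibility of a homoclinic originating at a hyperbolic point on $\partial U$ itself; admissibility says nothing about such homoclinics, and the interior of a single basin contains no hyperbolic points, so neither clause rules them out.
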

 \noindent{\bf Induction.} We will be proving Lemma \ref{lm:key} by induction over the total number of all singular points  and homoclinics in $U$. We assume that  {\em there are no allowable singularities in $U$} and  will deduce from that assumption that $\FF$ is overtwisted.
 
    {\sl Induction hypothesis $I_{n,k}$.}  {\em The statement holds if   there are $\leq n$ singular points and $\leq k$ homoclinics.}
    
\begin{lemma}[Base of Induction]\label{lm:base} $I_{n,0}$ holds.
\end{lemma}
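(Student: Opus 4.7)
The plan is to argue by contradiction: assuming $U$ contains no allowable singularity, I would produce a Legendrian polygon on $S$ all of whose elliptic vertices and pseudovertices are negative, thereby contradicting Lemma~\ref{lm:tight-polygon} and the tightness of $\FF$.

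First I would rule out positive hyperbolic points from $\bar U$. Since $k=0$, every hyperbolic point in $\bar U$ is non-homoclinic, so each of its two incoming separatrices originates at a positive elliptic point. If a positive hyperbolic $h_+$ were to lie in $\bar U$, the non-allowability hypothesis would force its two incoming separatrices to come from the \emph{same} $e_+$; Lemma~\ref{lm:pos-psv} would then immediately conclude that $\FF$ is overtwisted, contradicting tightness. Hence all hyperbolic points in $\bar U$ are negative, and non-allowability forces the two incoming separatrices of every such $h_-\in\bar U$ to come from two \emph{distinct} positive elliptic points.

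Next I would pick a basin $B\subset U$ with center $e_+^0$ (one exists because $U$ is a nonempty union of basins) and describe $\partial B$ as a Legendrian polygon. At any $h_-\in\partial B$ the two incoming separatrices (coming from distinct points $e_+^a\neq e_+^b$) split a neighborhood of $h_-$ into two pairs of sectors; the pair adjacent to the incoming separatrix from $e_+^a$ lies in the basin of $e_+^a$, and symmetrically for $e_+^b$. Exactly one of these pairs therefore lies in $B$, namely the pair adjacent to the incoming separatrix emanating from $e_+^0$, and $\partial B$ passes through $h_-$ along the two outgoing separatrices that bound that pair. Consequently $h_-$ is a (negative) pseudovertex of the Legendrian polygon $\partial B$, whose only elliptic corners are the negative elliptic endpoints $e_-$ at which the outgoing separatrices terminate.

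Every elliptic vertex and every pseudovertex of $\partial B$ is therefore negative, which is exactly the configuration ruled out by Lemma~\ref{lm:tight-polygon}; this is the desired contradiction. The main obstacle is the sector analysis at each $h_-\in\partial B$: the non-allowability hypothesis $e_+^a\neq e_+^b$ is essential to ensure that such an $h_-$ is a genuine pseudovertex of $\partial B$ (rather than sitting in the interior of $B$, which would happen if all four sectors lay in the single basin $B$), so that no positive vertex or pseudovertex can sneak onto $\partial B$.
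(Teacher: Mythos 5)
Your proof is correct and follows essentially the same route as the paper's: pick a basin, verify that under the non-allowability assumption its boundary is a genuine (non-identified) Legendrian polygon all of whose elliptic vertices and pseudovertices are negative, and contradict Lemma~\ref{lm:tight-polygon}. The only cosmetic difference is that you run the argument by contradiction (first excluding positive hyperbolic points via Lemma~\ref{lm:pos-psv}), whereas the paper extracts a positive pseudovertex from Lemma~\ref{lm:tight-polygon} and observes it is allowable; the content is identical.
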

\begin{proof}
 Pick any basin $T\subset U$. According to Lemma \ref{lm:bound-psv} the boundary $\p T$ has no identified pseudovertices, and hence it is a Legendrian polygon. But then Lemma \ref{lm:tight-polygon} yields a positive pseudovertex on $\p T$  which is allowable in this case.
 \end{proof}

   Suppose that $I_{m,j}$ holds for $m < n$ and all $j$, and for $m=n$ and $j<k$.  Let us prove $I_{n,k}$.
  \subsubsection*{Elimination of certain configurations} 
  
  The following sequence of claims  proves the induction hypothesis assuming existence of  certain configurations. After each step  we are adding the absence of the corresponding configuration as an additional assumption.

  {\bf Step 1.}
  {\em Suppose there is  a hyperbolic point $h$ such that  the two incoming to $h$ separatrices are homoclinic. Then $I_{n,k}$ holds.}
 
  \begin{proof}
  \begin{figure}  \label{fig:4hom}
  \centerline{\includegraphics[width=12cm]{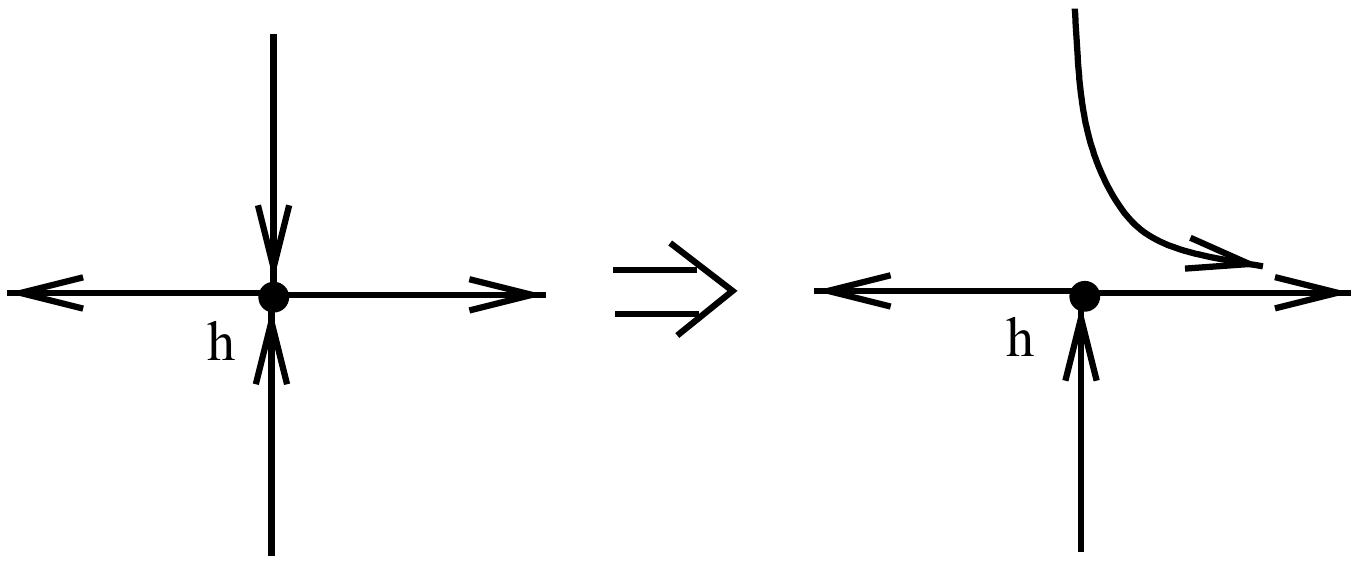}}
  \caption{Resolving  the case with 2 incoming homoclinics}
  \label{fig:resolve-4}
  \end{figure}
  Resolving one of the incoming separatrices we get a foliation without any additional allowable singularities.
  See Fig. \ref{fig:resolve-4}.
  \end{proof}
      
 Therefore,  we can assume that any homoclinic appears in a T-shaped configuration with exactly 1 incoming separatrix. We call   the hyperbolic point with the incoming homoclinic the {\em center} of the T-configuration.   The basins adjacent to the homoclinic    will be called {\em side} basins, and the third basin  will be called  the {\em base}.

 {\bf Step 2.}
{\em  Suppose that the center of a homoclinic configuration is positive, and   the base basin coincides with one of the side basins. Then  $I_{n,k}$ holds.}
 
  \begin{proof}
  \begin{figure} 
  \centerline{\includegraphics[width=9cm]{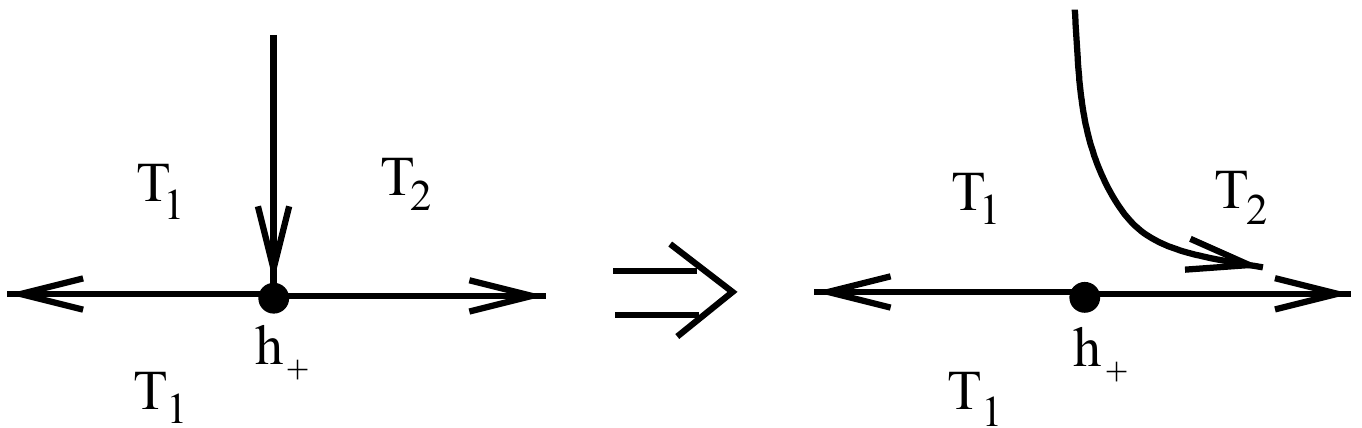}}

   \caption{1 incoming homoclinics, $h$ positive, $T_1=T_3$}
     \label{fig:pos-h-res}
  \end{figure}
  Assuming $T_1=T_3$ and 
  resolving the incoming separatrix in such a way that $h$ becomes a pseudovertex on the boundaries of $T_1$ and $T_3$,    see Fig. \ref{fig:pos-h-res}, we get
  an overtwisted foliation.  \end{proof}

{\bf Step 3.}
  {\em Suppose that any of the following two conditions holds:
  \begin{itemize}
  \item[-] the center $h$  of a homoclinic configuration is   negative, or
  \item[-] $h$ is positive but the side basins coincide.
  \end{itemize}
  Then $I_{n,k}$ holds.}
    \begin{proof}
   
  Suppose that $h$ is negative and $T_1\neq T_3$. Then
  resolving the incoming separatrix in such a way that $h$ becomes a pseudovertex on the boundaries of $T_1$ and $T_3$ we get
  a foliation without any additional allowable singularities. See Fig. \ref{fig:neg-h-res}.

   \begin{figure}
  \centerline{\includegraphics[width=9cm]{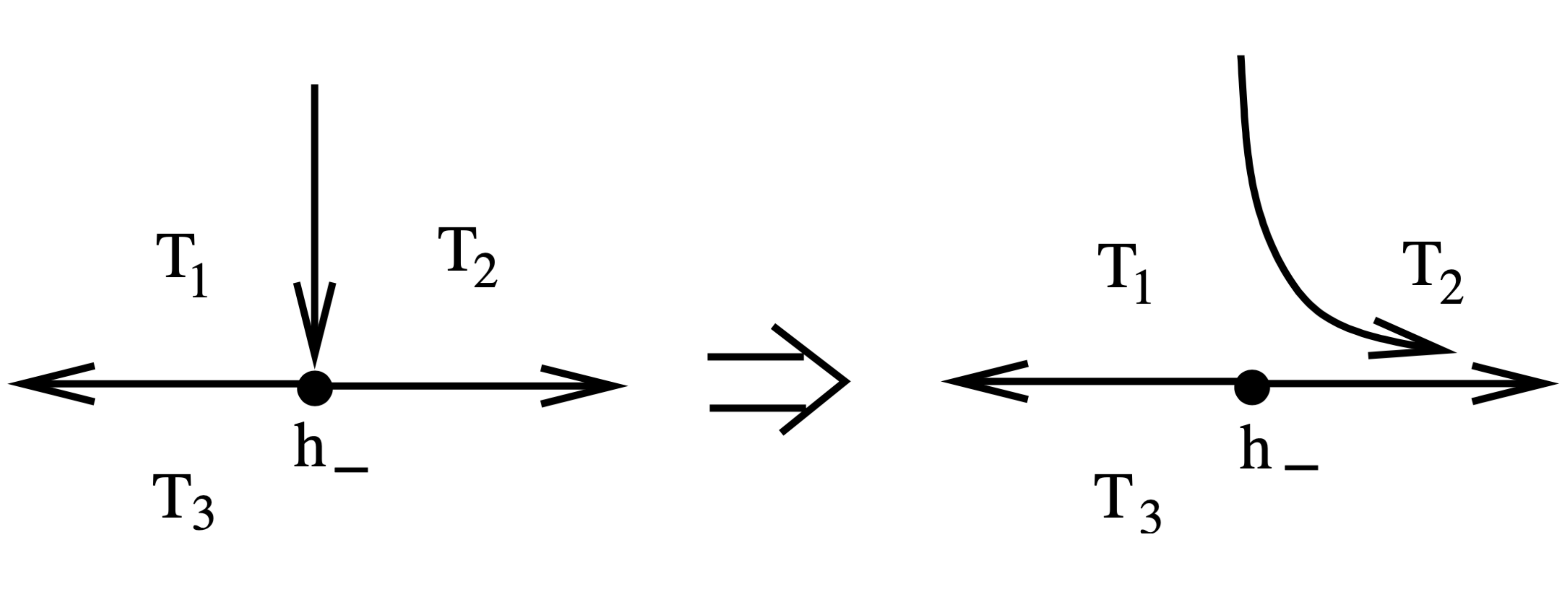}}

   \caption{1 incoming homoclinics, $h$ negative, $T_1\neq T_3$}
     \label{fig:neg-h-res}
  \end{figure}

  Hence, we can assume that for a homoclinic configuration  with a negative center  all adjacent basins coincide.
  
  \begin{figure}
  \centerline{\includegraphics[width=6cm]{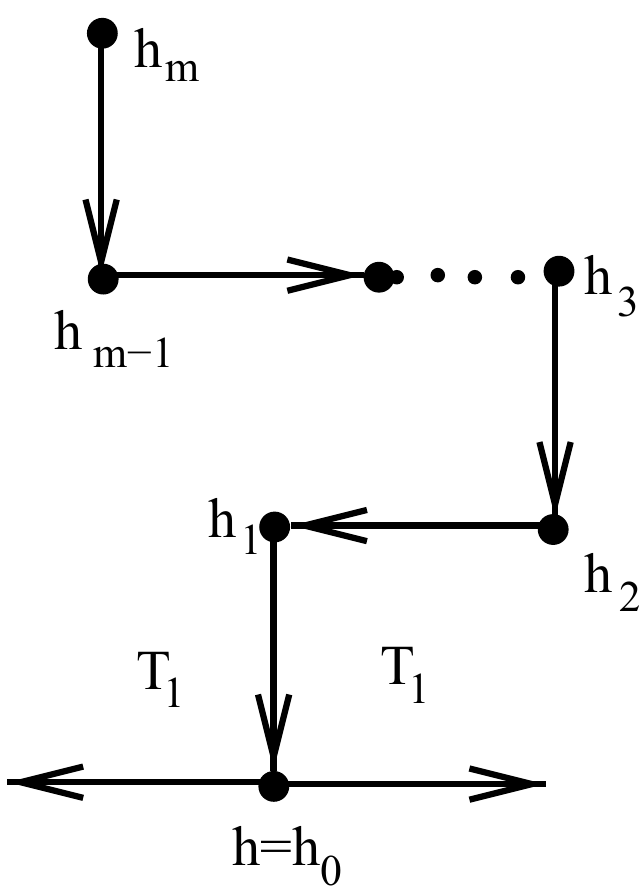}}

   \caption{Case $T_1=T_2$}
      \label{fig:chain-hom}
  \end{figure}

  Suppose   now that $h$ is either negative, or positive but   adjacent side basins coincide.
  Consider the maximal  homoclinic  chain
 $ h_m, h_{m-1},\dots, h_1,h_0:=h$ incoming to $h$, see Fig. \ref{fig:chain-hom}. If  $m=1$ then  $h_1$ is a pseudovertex.  Moreover,   it has to be negative,  because adjacent to it basins coincide, and hence,   allowable.  In the  general case $h_{1}$ still has to be negative because  we assumed that for   homoclinic configurations with positive centers  all adjacent basins  are pairwise distinct.
 Hence, arguing by induction in $m$ we conclude that  that $h_m $ is an allowable negative pseudovertex.
     \end{proof}

  {\bf Step 4.}
 {\em  If the skeleton $\Sigma$ has any end points then $I_{n,k}$ holds.}
 
    \begin{proof}
  The end point is   a negative elliptic point  $e$, and the incoming to $e$ separatrix comes from  a hyperbolic point $h$, see Fig. \ref{fig:end-point}.
    \begin{figure}
  \centerline{\includegraphics[width=6cm]{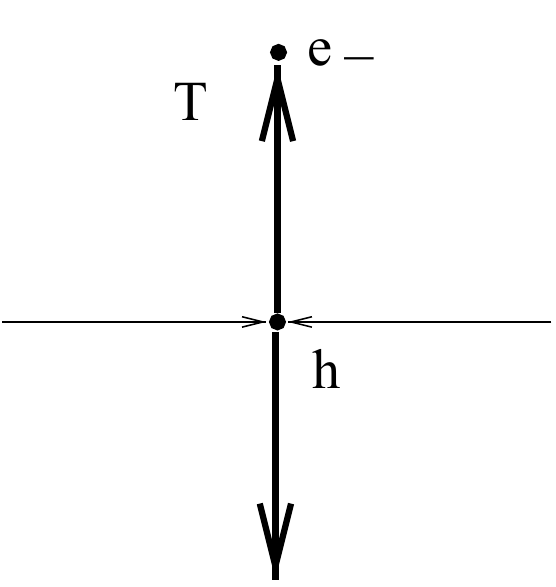}}

   \caption{Negative elliptic end point}
      \label{fig:end-point}
  \end{figure}
  We claim that  $h$ cannot     have an incoming homoclinic. Indeed, otherwise by Step 2 it would have to be negative, which was already ruled out in Step 3.
Hence, $h$ is a negative pseudovertex, as adjacent to it basins coincide, and hence it is allowable.   \end{proof}

   \begin{lemma}\label{lm:basin-psv}
   The boundary of any basin has a positive  pseudovertex.
   \end{lemma}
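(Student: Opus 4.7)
The plan is to view $\p T$ as a Legendrian polygon and apply Lemma \ref{lm:tight-polygon}, which asserts that in any tight Morse Legendrian polygon points of both signs must occur among the elliptic vertices and pseudovertices.

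The first step is to verify that $\p T$ really carries the structure of a Legendrian polygon. Since we are working in the Morse reduction and we have assumed throughout the induction that $\FF$ has no allowable singularities in $U$ (in particular, no allowable negative hyperbolic points), Lemma \ref{lm:bound-psv} applies and rules out identified pseudovertices on $\p T$. Thus $\p T$ decomposes into pieces of outgoing separatrices of positive and negative hyperbolic points, with honest hyperbolic corners (where only one outgoing separatrix of a hyperbolic point lies on $\p T$), pseudovertices (where both outgoing separatrices of a single hyperbolic point lie on $\p T$), and elliptic vertices (where a separatrix on $\p T$ terminates).

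The second step is to observe that every elliptic vertex of $\p T$ is negative. Indeed, the arcs composing $\p T$ are outgoing separatrices of hyperbolic points; they flow along the characteristic foliation and, in a Morse foliation without closed leaves, can only accumulate on a sink. The sinks of the characteristic flow are precisely the negative elliptic points: positive elliptic points are sources and cannot serve as terminal endpoints of outgoing separatrices.

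The third step is simply to apply Lemma \ref{lm:tight-polygon} to $\p T$: tightness forces both signs to be represented among the elliptic vertices and pseudovertices, so since all elliptic vertices are negative, at least one pseudovertex must be positive, which is the statement to be proved. The only genuine content beyond this sign-counting is ensuring that $\p T$ is a bona fide Legendrian polygon, and this is exactly what Lemma \ref{lm:bound-psv} was put in place to provide under our standing assumption; so there is no real obstacle once the preliminary machinery of Section \ref{sec:char-fol} is in hand.
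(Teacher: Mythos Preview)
Your proof is correct and follows essentially the same approach as the paper: both invoke Lemma~\ref{lm:bound-psv} to ensure $\p T$ is an honest Legendrian polygon and then appeal to Lemma~\ref{lm:tight-polygon}. The only difference is cosmetic---the paper phrases it as a contradiction (assume all pseudovertices negative, deduce injectivity, contradict Lemma~\ref{lm:tight-polygon}), while you argue directly that every elliptic vertex is negative so the positive point supplied by Lemma~\ref{lm:tight-polygon} must be a pseudovertex.
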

   \begin{proof}  By assumption there are no negative identified  pseudovertices, and no negative hyperbolic points which are centers of homoclinic configurations. On the other hand, the boundary of any basin must contain at least one pseudovertex. Indeed,  only a pseudovertex can serve as a source on the boundary of a basin.
   If all pseudovertices in   $\p T$ were negative then the polygon would be injective, and hence it would be     a Legendrian polygon, which    contradicts  Lemma \ref{lm:tight-polygon}.
   \end{proof}

  \begin{proof}  [Proof of Lemma \ref{lm:key}] Suppose that there are no   allowable pseudovertices. We will show that this assumption implies overtwistedness of $\FF$.
    \begin{figure}
  \centerline{\includegraphics[width=9cm]{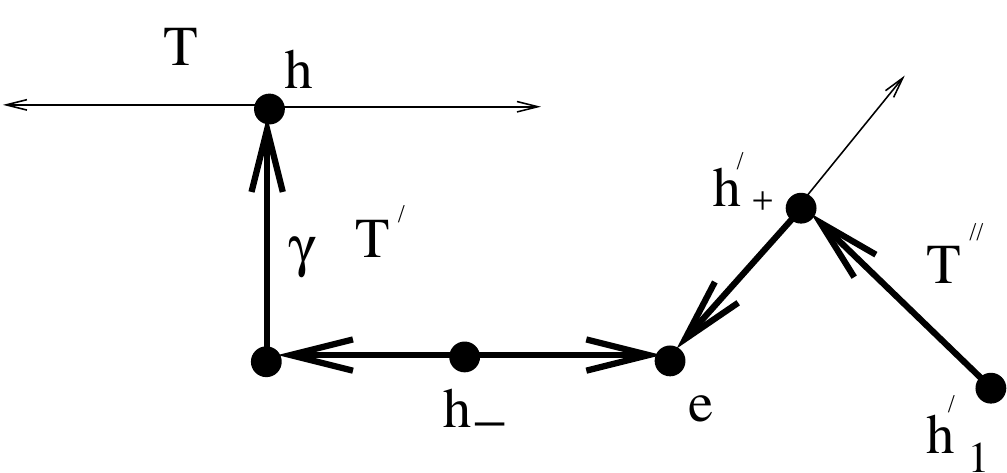}}
   \caption{Building up a path in Case $\alpha)$}
    \label{fig:building-a-path}
  \end{figure}
 Consider any basin $T\subset U$. By Lemma \ref{lm:basin-psv}    there is a pseudovertex $h\in\p T $.  It has to be positive because of Step 3. By assumption it is not  allowable, and hence, there exists   a   homoclinic  $\gamma$   incoming from outside of $T$ from a hyperbolic point  $h_1$. Consider a path $\Gamma$ from $h$ to $h_1$ along $\gamma$, and continue  it counter-clockwise along the boundary of a  side basin adjacent to $\gamma$.   There are two   possibilities:
  \begin{itemize}
  \item[$\alpha)$] we arrive to a positive pseudovertex  $h'$ with an incoming  from outside  homoclinic;
  \item[$\beta)$] the loop closes up.      
     \end{itemize}
 
 In Case $\alpha)$ we turn to the incoming homoclinic and  continue the process going counter-clockwise around the boundary of an adjacent to $h'$ side basin. See Fig. \ref{fig:building-a-path}.
  
  In Case $\beta)$ we get an    admissible domain with fewer vertices.   
  The possible configuration of closing up the loop in Case $\beta)$  are shown on Fig. \ref{fig:return-beta}.  Note that in all of the  subcases except 
  $\beta_3)$ we get an overtwisted loop.
   \begin{figure}[h]
  \centerline{\includegraphics[width=14cm]{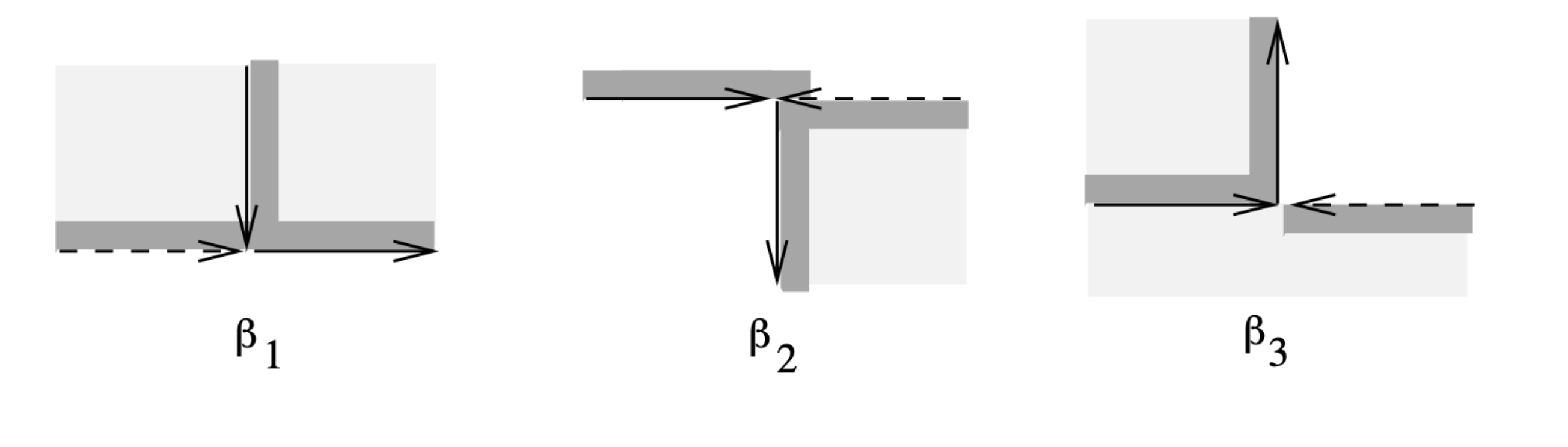}}
   \caption{Closing up a loop in Case $\beta)$}
     \label{fig:return-beta}
  \end{figure}
  This concludes the proof of Lemma \ref{lm:key}, and with it  the proof   of Proposition  \ref{prop:key}.
  \end{proof}

 \section{Simple functions on $B^3$ and $S^2$}\label{sec:simple}
\subsection{Simplicity condition} 
Consider a  function $\Phi$ on the $3$-ball $B=B^3$ without critical points,  which restricts to the boundary sphere $S=\p B$ as a generalized Morse function.
We call $\Phi$ {\em simple} if   components of each  of its level sets are  contractible.
 
 %The following lemma provides an equivalent characterization of a  simple function on the ball.
% \begin{lemma}\label{lm:ball-simple}
 %Let $\Phi:B\to\R$ be a function without critical points    which restricts to the boundary sphere $S=\p B$ as a generalized Morse function.
 %Then $\Phi$ is simple if and only if the positive critical points of points of  local maxima and minima of the functions $\Phi$ and $\Phi|_S$ coincide.
 %\end{lemma}
 %\begin{proof}
  %Suppose
 %\end{proof}
 Our goal is to characterize the restrictions of simple functions to the sphere $S=\p B^3$.

 Let  $\phi:S\to\R$ be  a  generalized Morse function. 
  Choose a gradient like vector field $X$ for $\phi$ (the property we describe  does not  dependent on this choice).
  Call an index 1 Morse critical point $p$ of $\phi$ on the level $A_a:=\{\phi(p)=a\}$ {\em positive} (resp. {\em negative}) if the stable manifold of $p$ intersects the regular level set $A_a^-:=\{\phi=a-\eps\}$ at different (resp. the same) components of this level set, see Fig. \ref{fig:simple-pos-neg}.
   \begin{figure}[h]
\centerline{\includegraphics[width=8cm]{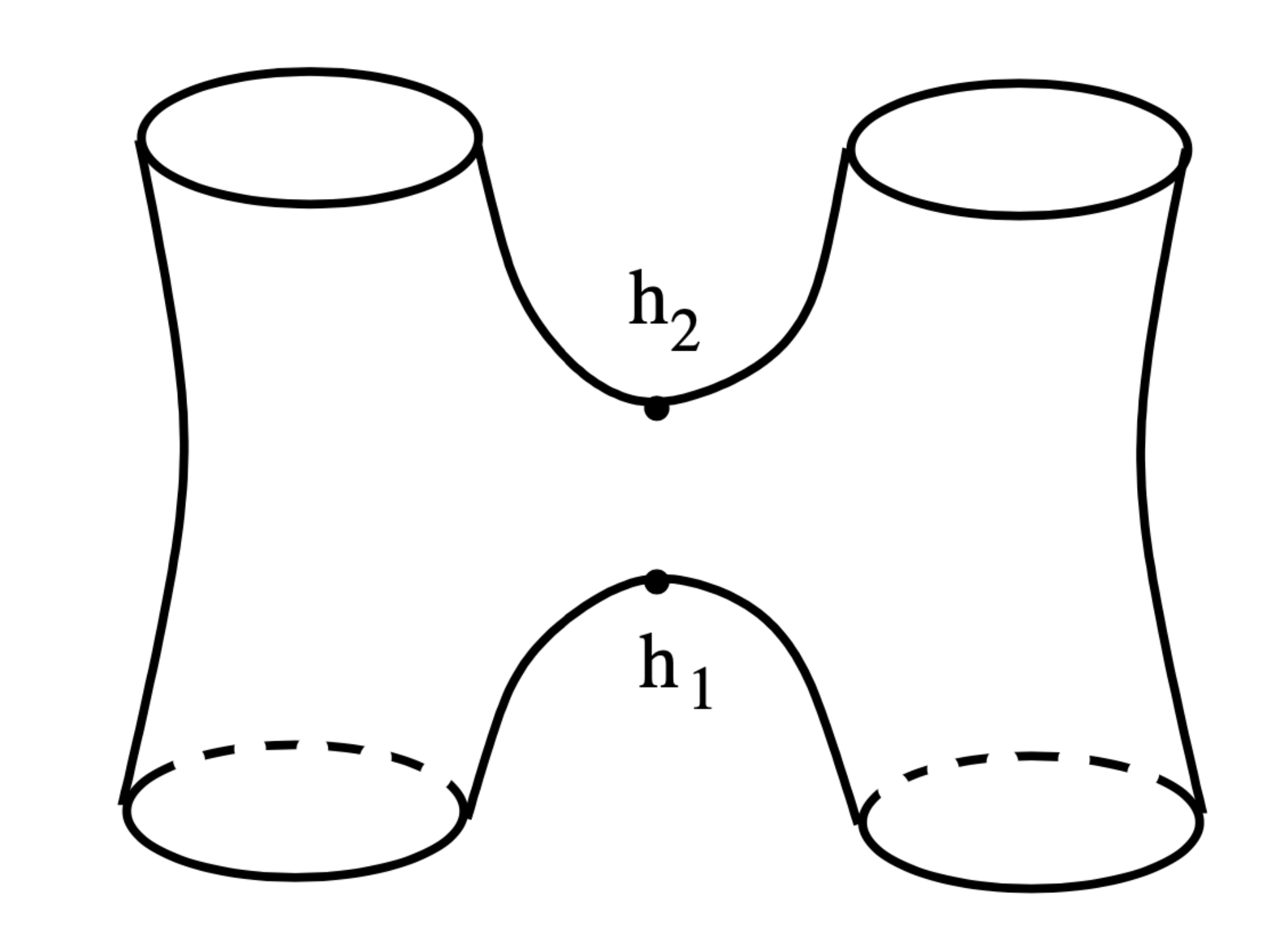}}
\caption{$h_1$ is a positive  and  $ h_2$ is a negative hyperbolic point} 
\label{fig:simple-pos-neg}
\end{figure}

Let $h^{+}_1,\dots, h^{+}_{\ell_{+}}$ be  positive  hyperbolic    points on a  critical level $A=\{\phi=a\}$. Let  $A^{-}=\{\phi=c-\eps\}$ be a regular level for a sufficiently small $\eps>0$.  Denote by $\sigma_j^\pm$ stable manifolds  of hyperbolic points $h_j^\pm$.

Consider a  ribbon graph  $\Gamma^+_a(\phi)$ whose vertices are  components of the level $A^{-}$, and     edges correspond to stable separatrices $\sigma_j^+$  of   positive hyperbolic points   on the critical level $A_a=\{\phi=a\}$, see Fig. \ref{fig:graph-Gamma}.
The ribbon structure is given by the cyclic ordering of the end points of each $\sigma_j^+$ adjacent to a given component of $A^{-}_a$. We assume the level set $A^-_a$ oriented as the boundary of $\{\phi\leq a\}$.

  A function $\phi$ is called {\em simple} if  for every critical level $a$ the graph $\Gamma^+_a(\phi)$ is a union of trees.
 \begin{figure}[h]
\centerline{\includegraphics[width=11cm]{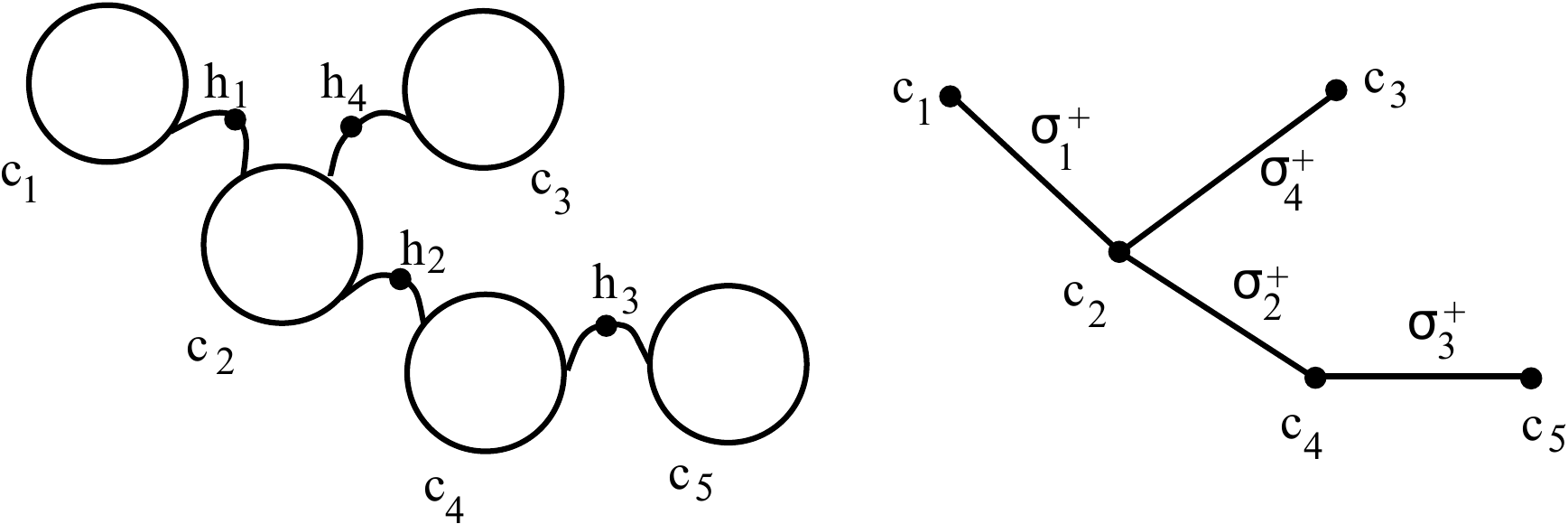}}
\caption{Graph $\Gamma^+_a(\phi)$} 
\label{fig:graph-Gamma}
\end{figure}

   Note that the simplicity condition is open, but not necessarily closed.  
  \begin{lemma}\label{lm:simple-restrict} \begin{enumerate}
  \item The restriction of a simple function from the ball to its boundary sphere is simple.
 \item Any simple function on the boundary of the ball extends to the ball as a simple function.
 \end{enumerate} \end{lemma}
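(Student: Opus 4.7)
My plan is to view the level sets $\tilde A_t := \Phi^{-1}(t)$ as a family of properly embedded $2$-surfaces in $B^3$ with boundary on $S^2$. Since $\Phi$ has no interior critical points, the topology of $\tilde A_t$ only changes as $t$ crosses a critical value of $\phi := \Phi|_{S^2}$, and each critical point of $\phi$ at level $a$ contributes a specific local surgery to the level surface: an elliptic births or kills a disk component, a positive hyperbolic attaches a boundary $1$-handle merging two distinct disks, and a negative hyperbolic cuts one disk along an arc.

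For part (1), I would argue by contradiction using the singular level set $\Phi^{-1}(a)$ at a critical level $a$. Near each positive hyperbolic $h^+$ of $\phi$ at level $a$, the surface $\Phi^{-1}(a)$ degenerates to a bow-tie whose vertex is the saddle point: the two distinct disks of $\tilde A_{a-\eps}$ whose boundary circles contain the stable separatrices of $h^+$ become identified at that single point in the singular level. Suppose $\Gamma^+_a(\phi)$ contained a cycle of positive hyperbolics $h_1,\dots,h_k$ joining disks $D_{i_1},\dots,D_{i_k}$ in a loop. Then the union $D_{i_1}\cup\cdots\cup D_{i_k}$ together with the $k$ pinch identifications at the corresponding saddles is, up to homotopy, a cycle of $k$ disks joined at points; contracting each disk to its pinches yields the cycle graph, which has $H_1 = \Z$. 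This subspace sits inside a single component of $\Phi^{-1}(a)$ (all pinches of the cycle lie in the same component), so that component is not contractible, contradicting the simplicity of $\Phi$. Hence $\Gamma^+_a(\phi)$ is a forest and $\phi$ is simple.

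For part (2), I would build a simple $\Phi$ extending a given simple $\phi$ by sweeping upward through the critical values of $\phi$ and prescribing the topology of the level surfaces by local models. Below $\min\phi$ the level sets are empty. At each critical value $a$, the forest structure of $\Gamma^+_a$ determines the topology change canonically: each positive elliptic inserts a standard small disk, each positive hyperbolic attaches a boundary $1$-handle connecting the two distinct disks indexed by the corresponding edge of the forest, each negative hyperbolic cuts one disk along a model arc through the saddle, and each negative elliptic caps off a disk. Between consecutive critical values, the level surfaces are propagated into $B^3$ by an inward gradient-like vector field, producing a smooth $\Phi$ with no interior critical points. The main obstacle is to make these local moves at a common critical level $a$ mutually compatible: here the tree structure of each connected component of $\Gamma^+_a$ lets one order the positive-hyperbolic $1$-handle attachments by a rooted-tree traversal, ensuring each handle joins two already disjoint disks, while the negative-hyperbolic cuts and elliptic insertions/removals act on different components or in disjoint neighborhoods. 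Putting all of this together produces a $\Phi$ all of whose level sets are disjoint unions of disks, i.e.\ a simple extension.
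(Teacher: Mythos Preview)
Your argument is correct and follows essentially the same level-by-level strategy as the paper, with only minor differences in presentation.

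For part (i), your contradiction argument via the singular level $\Phi^{-1}(a)$ is a clean variant of the paper's reasoning. The paper phrases the same point as an equivalence: passing a critical point of $\phi$ adds or removes a $1$-handle from the level surface of $\Phi$, positive hyperbolics correspond to additions, and contractibility of the post-addition surface is exactly the forest condition on $\Gamma^+_a$. Your necklace-of-disks picture makes the ``only if'' direction of that equivalence explicit via $H_1$. Note that your argument implicitly uses that regular sublevels of $\Phi$ are unions of disks, which indeed follows since a compact contractible surface with boundary is a disk; and that the definition of simple for $\Phi$ covers singular levels as well (it does: the paper says ``each of its level sets'').

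For part (ii), your construction is the same handle-by-handle build as the paper's, but your phrase ``propagated into $B^3$ by an inward gradient-like vector field'' is slightly misleading: at this stage there is no ambient $B^3$ to flow into. The paper avoids this by building the $3$-manifold $B_m=\{\Phi\le a_m-\eps\}$ abstractly, attaching half-handles (of types $D^k_+\times D^{3-k}$ or $D^k\times D^{3-k}_+$) at each critical level, and checking inductively that every component stays a $3$-ball; only at the end does one identify the result with $B$. The simplicity (forest) hypothesis is precisely what guarantees this last check. You should also verify, as the paper does, that the arcs for the negative hyperbolic cuts at a common level $a$ can be chosen disjointly in the disk $D_C$; this uses that $S$ is a sphere, so the attaching points are unlinked on $C$. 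Once you rephrase your construction in these terms, it coincides with the paper's proof.
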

  \begin{proof}
 (i) Let $\Phi:B\to\R$ be a simple function.
  Passing through a critical point of the function $\phi:=\Phi|_{\p B}$ either adds an index 1 handle to the level sets of $\Phi$, or subtracts a handle. The simplicity condition for $\Phi$ imposes constraints only on the handle addition, and  it is equivalent to the condition that the union of components of the level $\{\Phi=a-\eps\}$ with all attached handles has contractible components.
  But this means that positive points for $\phi$ corresponds to handle attaching points of $\Phi$, and the contractibility condition is equivalent to the simplicity condition for $\phi$.
  \medskip
  
(ii)   It is sufficient to consider the case when $\phi$ is a Morse function. Indeed, a creation of an embryo point is a phenomenon localized near a point, and if the function is extended to the ball $B$ it is straightforward to perform the corresponding bifurcation keeping the function on the ball critical point free.
  
  Recall a handlebody presentation for  a function $\Phi$ on a manifold $M$ with boundary which has no critical points and  which restricts to $\p M$ as a Morse function, see Fig. \ref{fig:handlebody}.  Take two copies of $M_1,M_2$ of $M$, and  consider a double $\wh M:=M_1\mathop{\cup}\limits_{\p M_1=\p M_2} M_2$, the canonical   involution $j:\wh M\to \wh M$ and a the   map $s:\wh M\to M$ with the fold on $ \Sigma:=\p M_1=\p M_2 $ which is a diffeomorphism on $\Sigma$ and interiors of the two copies. Given a function $\Phi: M\to \R$  which has no critical points and  which restricts to $\p M$ as a Morse function
  $\phi:\p M\to\R$, the function $\wh \Phi=\Phi\circ s$ has Morse critical points in 1-1 correspondence with the critical points of $\phi$. Consider the  handlebody presentation  $\wh M =\wh H_0\cup\dots\cup \wh H_m$ of $\wh M$ corresponding to the function $\wh \Phi$. The involution $j$ descends to  a handle  $H_k=D^k\times D^{n-k}$ as   
the reflection   either on the first, or the second factor. In the latter case the corresponding critical point $p$ on the critical level $a$ has the same index for $\phi$ and $\wh\Phi$ and a ``half-handle" $D^k\times D^{n-k}_+$ is attached to the  sublevel set $\{\Phi\leq a-\eps\}$ along a tubular neighborhood of $\p D^k\times 0$  in $\{\Phi= a-\eps\}$. In the former case the  critical point $p$  has  a smaller index $k-1$ for  $\wh\Phi$ and a ``half-handle" $D^k_+\times D^{n-k}$ is attached to the  sublevel set $\{\Phi\leq a-\eps\}$ along a tubular neighborhood of $\p_-( D^k_+)\times 0$  in $\{\Phi= a-\eps\}$.   Here we denoted by $D^k_+$ the upper-half disc $D^k\cap \{x_k\geq 0\}$ in $\R^k$, and by $(\p D^k_+)_-$ the part
$\p D^k_+ \cap\{x_k=0\}$ of its boundary.
   \begin{figure}[h]
\centerline{\includegraphics[width=11cm]{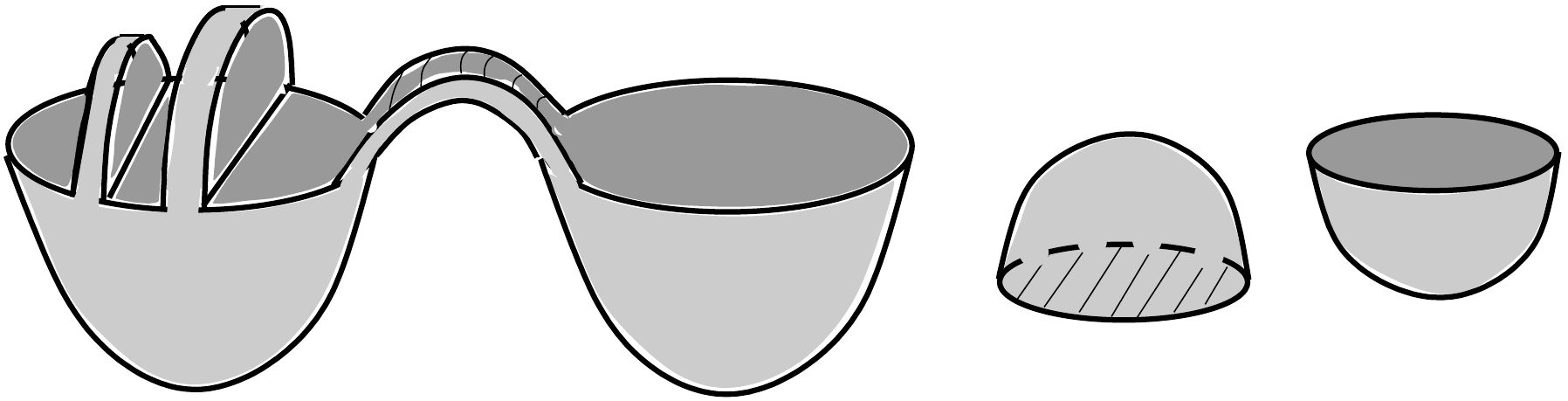}}
\caption{Handlebody presentation of a manifold with boundary} 
\label{fig:handlebody}
\end{figure}

We will be building the ball $B$ together with the function $\Phi$ inductively over critical values of $\phi$.
 
Let $ a_m$ be one of the   critical values  $a_0<\dots<a_k$ of $\phi$.
 Suppose that we already constructed a $3$-manifold  $B_m$ with boundary with corners, $\p B_m=\p_-B_m\cup\p_+B_m$, a diffeomorphism $g: \{\phi\leq a-\eps\}\to\p_-B_m$, and   a function $\Phi_m:B_m\to\R$ without critical points such that
  \begin{itemize}
 \item[-]   each component of $\p_+B_m$ is a 2-disc and $\Phi_m|_{S_+}=a-\eps$;
 \item[-] each component of $B_m$ is homeomorphic to a 3-ball;
 \item[-]   $\Phi_m\circ g=\phi$.
\end{itemize}
 For each negative hyperbolic point  $h^-_j\in A_a$ its stable manifolds  $\sigma^-_j$ have  end points on the same component $C$ of $A^-=\{\phi= a-\eps\}$.  Moreover, for any two negative hyperbolic point  $h^-_j\in A_a$ and $h^-_i\in A_a$  with the end points of $\sigma^-_j$ and $\sigma^-_i$ on $C$, these end points are not interlinked because $S$ is a sphere. 

Let $D_C$ denote  the component of $\p_+B_m$ bounded by a component $C$ of $A^-$.  For all  hyperbolic points  $h^-_{j_1},\dots,  h_{j_s}\in A_a$  with end points on  $C$ there are  disjoint embeddings      $\psi_{j_i}:\sigma^-_{j_i}\to D_C$ with end points equal to the end points   $\p \sigma_{j_i}^-\subset C$. The manifold $B_{m+1}$ is the result of attaching index $2$ half-handles $D^2_+\times D^1$ to $B_m$ along $\wt\sigma_{j_i}:=\psi_j(\sigma_j)$ and  and for each positive   hyperbolic point $h^+_j$ whose stable manifold $\sigma^+j$ have end points on components $C, C'$ of $A^{-}$ we attach an index 1-handle half-handle $D_1\times D^{2}_+$ along $\p\sigma^+_j$.  For  an index $0$ point  we add a  component 
$D^3_+$, and for an  index 2 point  we glue $ D^3_+$  to the  disc  $D_C$ bounded by the corresponding component  $C$ of $A_a$.

The simplicity condition for $\phi$ ensures that each component of $B_{m+1}$ is homeomorphic to the 3-ball, and that the canonical extension of $\Phi$ to $B_{m+1}$ is simple.

 \end{proof}    

 % \begin{lemma}\label{lm:simplicity-criterion}
 % Let $Z$ be any gradient vector field for a generalized Morse function $\phi$. $\phi$  is {\em simple} if and only if there   are no cycles consisting of stable manifolds $s_j^+$.
%  \end{lemma}
  \begin{cor}\label{cor:symmetry} If $\phi$ is simple   then $-\phi$ is simple as well.
 \end{cor}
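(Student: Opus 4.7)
The plan is to bypass the combinatorial definition of simplicity on $S$ entirely and push the symmetry through the ball. The key observation is that the definition of a simple function $\Phi : B \to \R$ depends only on the collection of subsets $\{\Phi^{-1}(c)\}_{c\in\R}$, not on any orientation of $\R$: the condition is that each level set has contractible components, and $\Phi^{-1}(c) = (-\Phi)^{-1}(-c)$. Also, $-\Phi$ has no critical points iff $\Phi$ does, and restricts to the boundary as a generalized Morse function iff $\Phi$ does. So simplicity of functions on the ball is tautologically invariant under $\Phi \mapsto -\Phi$.

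Given this, the strategy is simply to commute the operation $\phi\mapsto -\phi$ through the extension/restriction correspondence supplied by Lemma~\ref{lm:simple-restrict}. Concretely, I would first apply part (ii) of that lemma to extend a given simple $\phi$ on $S=\p B$ to a simple $\Phi$ on the ball $B$. Then I would observe, as above, that $-\Phi$ is automatically simple on $B$. Finally, part (i) of Lemma~\ref{lm:simple-restrict} gives that the restriction $(-\Phi)|_S = -\phi$ is simple on $S$, which is the desired conclusion.

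The only step that might conceivably cause trouble is verifying that the definitions line up correctly: one has to check that the ``positive'' versus ``negative'' dichotomy for hyperbolic points of $\phi$ and for hyperbolic points of $-\phi$ on $S$ is indeed swapped in the way needed for the graph $\Gamma^+_a$ to change roles consistently with simplicity on the ball. But this is essentially forced once one knows that the restriction-of-simple-is-simple direction does not refer to the sign in its proof, and the graph-theoretic reformulation is only used to make the condition checkable intrinsically on $S$. So no separate combinatorial argument about trees or forests is needed: the symmetry is inherited from the manifestly $\pm$-symmetric condition on level sets in $B$.

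In summary, I expect the proof to be one short paragraph: extend, negate on the ball, restrict. There is no real obstacle, provided Lemma~\ref{lm:simple-restrict} is in hand; the corollary is just a clean consequence of the fact that the ball-level definition of simplicity is stated in terms of unoriented level sets.
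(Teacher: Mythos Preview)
Your proposal is correct and is exactly the argument the paper gives: the paper's one-line proof reads ``Indeed, the above property obviously holds for simple functions on the 3-ball,'' which is precisely your extend--negate--restrict strategy via Lemma~\ref{lm:simple-restrict}. Your additional paragraph worrying about the positive/negative dichotomy on $S$ is unnecessary, as you yourself conclude, since the argument never touches the graph $\Gamma^+_a$ directly.
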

 Indeed, the above property obviously holds for simple functions on  the 3-ball.

   \subsection{The parametric case}
   Denote by $\Simple_3$ and $\Simple_2$ the spaces of simple functions on the 3-ball and the 2-sphere, respectively.
  \begin{prop}\label{prop:simple-extension}
  The restriction map $r:\Simple_3\to\Simple_2$ is a Serre fibration with contractible fiber. In particular,
 any family of simple functions on $S$ extends to a family of simple functions on the 3-ball $B$.
 \end{prop}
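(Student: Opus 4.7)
The plan is to upgrade the handlebody construction from the proof of Lemma~\ref{lm:simple-restrict}(ii) to a parametric form, showing that every choice entering it lies in a contractible space. This reduces the proposition to (a) contractibility of the fiber $r^{-1}(\phi)$ for each fixed $\phi\in\Simple_2$, and (b) the homotopy lifting property of $r$ for CW pairs $(D^k,\partial D^k)$; the ``in particular'' clause then follows from (a) and (b) applied to the pair $(D^k,\emptyset)$ together with the non-parametric surjectivity already provided by Lemma~\ref{lm:simple-restrict}(ii).

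For (a), fix a Morse simple $\phi$; the generalized Morse case reduces to this by the local embryo-bifurcation argument used in the proof of Lemma~\ref{lm:simple-restrict}(ii). The induction over critical values in that proof shows that the combinatorial type of the half-handle decomposition assigned to $\phi$ is rigidly determined by the sign data of hyperbolic points and the ribbon graph $\Gamma_a^+(\phi)$. What remains to be chosen, at each critical level, is (i) a system of disjoint properly embedded arcs $\psi_{j_i}(\sigma_{j_i}^-)\subset D_C$ with prescribed endpoints (and analogous attaching data for the positive 1-handles and for the 0- and 2-handle caps), and (ii) the function $\Phi$ itself, realizing that attachment without interior critical points. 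Space (i) is contractible: the configuration space of disjoint properly embedded arcs in a 2-disc with fixed endpoints and planar cyclic order is contractible, as follows by induction on the number of arcs from the contractibility of $\Diff(D^2,\partial D^2)$. Once (i) is fixed, space (ii) is contractible by a standard reparametrization of level sets along a chosen gradient-like vector field. Hence $r^{-1}(\phi)$ is a tower of contractible fiber bundles and is itself contractible.

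For (b), given a family $\phi_\lambda$ on $S$ parametrized by $D^k$ and a lift $\Phi_\lambda$ over $\partial D^k$, I would stratify $D^k$ so that $\phi_\lambda$ is Morse of constant combinatorial type on the top stratum, with embryo bifurcations confined to codimension-one strata. On each top cell the attaching data from (i) and function data from (ii) depend on $\lambda$ in contractible spaces, so the lift on the cell boundary extends into the cell by standard obstruction theory. Across an embryo stratum one must splice in a local normal form for the embryo together with a one-parameter model for the birth-death of a canceling half-handle pair in $B$. The main obstacle will be this last splicing step: producing an explicit parametric handlebody model across an embryo moment that is critical-point-free in the interior of $B$, preserves simplicity, and is consistent with the contractible data already fixed on either side of the stratum. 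This is a local computation whose essential ingredients are the embryo normal form and the contractibility of arc configuration spaces, but it requires care to glue smoothly to the ambient global data.
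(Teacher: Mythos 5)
There is a genuine gap, and it is precisely at the two places where your plan replaces a soft argument by a hard one. For the contractibility of the fiber $r^{-1}(\phi)$, your tower of attaching data (arcs in $D_C$, then the function realizing the attachment) only parametrizes the \emph{specific} extensions produced by the handlebody recipe of Lemma \ref{lm:simple-restrict}(ii). The fiber, however, consists of \emph{all} critical-point-free $\Phi$ on $B$ with contractible level components restricting to $\phi$; to conclude contractibility you must show that the inclusion of your combinatorial family into the whole fiber is a weak equivalence, i.e.\ that an arbitrary simple extension (whose level components are arbitrary embedded discs in $B^3$) can be canonically deformed to standard form. That is a genuinely three-dimensional statement, and it is exactly where the paper invokes Hatcher's theorem $\Diff(B^3,\p)\simeq *$ (Lemma \ref{lm:microfibration1} is a one-line corollary of it). Your two-dimensional arc-space argument does not substitute for this.

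For the lifting property, stratifying the parameter disc $D^k$ by combinatorial type and doing obstruction theory cell by cell cannot work as stated: a Serre fibration must lift \emph{arbitrary} continuous families, which admit no such stratification (critical values cross on complicated sets, the combinatorial type is not locally constant off a nice stratum), and you yourself flag the embryo splicing as unresolved. The idea you are missing is the paper's use of Gromov's microfibration lemma: since the fibers are non-empty (Lemma \ref{lm:simple-restrict}(ii)) and contractible (Hatcher), it suffices to lift homotopies for a short time $t\in[0,\eps]$. This short-time lift is then elementary: extend $\Phi_\lambda$ to a slightly larger ball $\wh B$, flow $B$ inside $\wh B$ along a gradient-like field $v_\lambda$ with $d\Phi_\lambda(v_\lambda)=1$ by an amount $\delta_{\lambda,t}$ matching the perturbed boundary values, and use that simplicity is an open condition. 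No normal forms, no stratification, and no embryo model are needed. I recommend you keep your handlebody analysis only for the non-parametric existence statement and restructure the parametric argument around the microfibration lemma.
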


 Note that  Lemma \ref{lm:simple-restrict}(ii) asserts that the fibers of $r$ are non-empty.

 It is convenient to use the notion of a micro-fibration introduced by  M. Gromov in \cite{Gro86}. A map $p:X\to Y$ is called a micro-fibration if for any map $F:D^k\to X$ and a homotopy $f_t:D^k\to Y$ starting at $f_0:=p\circ F$ there exists $\eps>0$, and a covering homotopy $F_t$ with $p\circ F_t=f_t$ for  $t\in[0,\eps]$.
An example in Section 1.4.2 (see also  the first exercise   in Section 3.3.1) of  Gromov's book  \cite{Gro86}  states that any microfibration with non-empty contractible fibers is a Serre fibration, and in particular a homotopy equivalence. The details of the proof are  provided by M. Weiss in \cite{We05}.
 
 \begin{lemma}\label{lm:microfibration1} For any $\phi\in \Simple_2$ the fiber $r^{-1}(\phi)\subset\Simple_3$ is contractible.
 \end{lemma}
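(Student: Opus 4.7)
The plan is to parametrize the fiber $r^{-1}(\phi)$ explicitly by the inductive handle-attachment construction used in the proof of Lemma \ref{lm:simple-restrict}(ii), and then show that at each stage of the induction the space of choices is contractible. The composition of contractible choices yields a contractible fiber.

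First I would reduce to the case where $\phi$ is simple Morse. An embryo is a local bifurcation that is modeled (in a neighborhood of the point on $S^2$) by a standard normal form, and its extension to a critical-point-free germ on a half-ball is unique up to contractible choice. Hence, splitting off small half-balls around each embryo of $\phi$, the fiber for $\phi$ is the product of local contractible pieces with the fiber for the Morse function obtained by removing these neighborhoods, and it suffices to prove contractibility in the Morse case.

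Next, for $\phi$ Morse with critical values $a_0<\cdots<a_k$, a simple extension $\Phi$ is recovered from $(B_m,\Phi_m)$ by performing, at each critical value $a_m$, the half-handle attachments described in the proof of Lemma \ref{lm:simple-restrict}(ii). The only genuine choices are:
\begin{enumerate}
\item For each negative hyperbolic point $h^-_j\in A_{a_m}$, a proper embedded arc $\psi_j$ in the disc component $D_C\subset\p_+B_m$ with prescribed endpoints on $\p D_C=C$, mutually disjoint across the hyperbolic points on the same component;
\item For each positive hyperbolic point $h^+_j$ with stable separatrices ending on distinct disc components $D_C,D_{C'}\subset\p_+B_m$, an embedded framed arc in $\p_+B_m\setminus\Int(D_C\cup D_{C'})$... along which to glue the index $1$ half-handle; in fact, the attaching data is determined up to the choice of two points on the boundaries of $D_C$ and $D_{C'}$ (which are fixed by $\phi$) and a framing, hence lies in a contractible space;
\item For each index $0$ or index $3$ critical point, and each half-handle, the critical-point-free extension of $\Phi_m$ over the attached half-handle with prescribed boundary values.
\end{enumerate}
For (i), the space of non-crossing disjoint proper arc systems in a disc with prescribed endpoint pattern is contractible (Alexander trick for arcs), and the non-crossing property is automatic because $S^2$ is planar. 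For (iii), the space of critical-point-free extensions of a function on the boundary of a handle to its interior, with prescribed boundary values, is convex (use the product structure of the handle and linearly interpolate between the prescribed endpoint values of the function), hence contractible.

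The main obstacle will be verifying, in the positive hyperbolic case (ii), that the simplicity condition on $\phi$, namely that the ribbon graph $\Gamma^+_{a_m}(\phi)$ is a union of trees, is exactly what is needed to keep each component of $B_{m+1}$ a $3$-ball throughout the parametric construction and to ensure the space of attaching configurations is contractible. The tree condition prevents two index-$1$ half-handles from creating a nontrivial loop, so that the attaching data at a given critical level factorizes as a product over the edges of the trees, each factor being a contractible choice of framed arc between two distinguished boundary points of two disjoint discs. Combining these contractible choices inductively over $m=0,\ldots,k$, and noting that in between critical levels the space of critical-point-free completions is also contractible by a convex-interpolation argument, gives a canonical contraction of $r^{-1}(\phi)$ to the extension produced in Lemma \ref{lm:simple-restrict}(ii).
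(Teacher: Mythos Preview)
Your proposal takes a very different route from the paper, which disposes of this lemma in a single line: ``This is an immediate corollary of Hatcher's theorem.'' The contractibility of the group of diffeomorphisms of $B^3$ fixing the boundary is precisely the deep $3$-dimensional input being invoked.

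Your handle-by-handle approach is natural in spirit but has a genuine gap, located in step (iii) and in the closing ``convex-interpolation'' remark. The space of critical-point-free functions on a half-handle (or on a product region between two consecutive critical levels) with prescribed boundary values is \emph{not} convex: a convex combination of two functions without critical points can certainly acquire interior critical points. Your sentence ``use the product structure of the handle and linearly interpolate between the prescribed endpoint values'' produces one particular extension, not a contraction of the space of all extensions. What you actually need at step (iii) is the contractibility of the space of nonsingular functions on a $3$-ball with fixed boundary data, and this is essentially Hatcher's theorem itself --- already in the simplest case where $\phi$ has only one minimum and one maximum, the fiber $r^{-1}(\phi)$ is exactly that space, and its contractibility is not elementary. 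So the inductive scheme does not bypass the hard $3$-dimensional input; it merely relocates it into step (iii), where the justification you give fails.
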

This is an immediate corollary of Hatcher's theorem \cite{Ha83}.

\begin{proof}[Proof of Proposition \ref{prop:simple-extension}]
It remains to check the microfibration property.
Let $\Phi_\lambda:B\to\R^3$, $\lambda\in\Lambda$, be any family of simple functions parameterized by a compact set of parameters $\Lambda$. The family $\Phi_\lambda$ extends to a   ball $\wh B \supset  B$ of a slightly larger radius. Let $v_\lambda$ be a family of vector fields on $\wh B$ such that $d\Phi_\lambda(v_\lambda)=1$ (e.g. $v_\lambda=\frac{\nabla \Phi_\lambda}{\||\nabla \Phi_\lambda\||})$. By integrating this vector field we define  for a sufficiently small $\eps$   an isotopy
$\theta_{\lambda,t}:B\to\wh B, \lambda\in\Lambda, t\in[0,\eps]$, beginning with the inclusion $B\hookrightarrow\wh B$ as 
$\theta_{\lambda,0}$. For any  function $\delta:  B\to[0,\eps)$ we denote by $g_{\lambda,\theta}$ an embedding $B\to \wh B$
defined by the formula $$ g_{\lambda,\delta}(x):=\theta_{\lambda, \delta(x)}.$$
Note that $\Phi_\lambda\circ g_{\lambda,\delta}(x)=\Phi_\lambda+\delta$.

Consider now  a deformation  $\phi_{\lambda, t}, \lambda\in \Lambda,t\in [0,1]$,   of the family $\phi_{\lambda,0}:=\Phi_\lambda|_{\p B}$.
For a sufficiently small $t$ we have $|\phi_{\lambda, t}-\phi_{\lambda,0}|<\eps$.
Consider a family of functions $\delta_{\lambda,t}:B\to\R$ such that for $x\in\p B$ we have  $\delta_{\lambda,t}(x):=\phi_{\lambda, t}(x)-\phi_{\lambda,0}(x)$. Then   $\Phi_\lambda\circ g_{\lambda,\delta_{\lambda,t}}(x)=\phi_{\lambda, t}(x)$ for $x\in\p B$, and hence, the family of functions $\Phi_{\lambda,t}:=\Phi_\lambda\circ g_{\lambda_t}$ provides the required extension of the family $\phi_{\lambda, t}$ to the ball $B$.  The simplicity is an open property. Hence,  the functions $\Phi_{\lambda,t}$ are simple if $\eps$ is chosen small enough.
\end{proof}
 \begin{remark} Let us  point out that the space  $\Simple_2$ (and hence, $\Simple_3$) is not contractible. In fact, it is not even simply connected, as one can exhibit a non-contractible loop of simple functions on the $2$-sphere with one minimum, one saddle point and two maxima.
 \end{remark}

\section{Taming functions and their properties}

\subsection{Lyapunov functions}\label{sec:Lyapunov}
 Recall that $\phi$ is called a {\em Lyapunov} function for a vector field $Z$ on a compact manifold $X$ if 
 $d\phi(Z)\geq c_1|Z|^2+c_2|\nabla\phi|^2$ for a positive constants  $c_1,c_2$. The property is independent of the choice of an ambient metric. Equivalently one says that $Z$ is {\em gradient-like} for $\phi$.

\begin{lemma}\label{lm:local-taming} \begin{itemize}
\item[(i)]  Let $\ZZ^*$ be the space  of  germs  of Liouville vector fields on $(\R^2,0)$ with an isolated non-degenerate zero at the origin, and $\LL$ be the space of pairs $(Z, F)$ where $Z\in\ZZ^*$ and $F$ is a  germ of   a Lyapunov function for $Z$. Then the projection $\pi:\LL\to\ZZ^*$ is a Serre fibration with a contractible fiber.  

  \item[(ii)] Let  $\Lambda$ be a compact parameter space and $\Lambda_0\subset\Lambda$ its closed subset.  Consider a family $Z_{\lambda}$, $\lambda\in\Lambda,  $      of Liouville fields on   $\Op_{\R^2}0\subset  \R^2$ such  that $Z_{\lambda }$ has an embryo point  at $0$ for  $\lambda\in\Lambda_0$. Then  there exist   a neighborhood $U  \ni 0$ in $\R^2$, a neighborhood $\Omega\supset \Lambda_0$,    and a   family    of Lyapunov  functions $\phi_{ \lambda }:U\to\R$, $\lambda\in\Omega, $ for $Z_{\lambda }|_{U}$.
  \end{itemize}
 \end{lemma}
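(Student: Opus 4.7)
The plan is to combine three ingredients: (a) for a fixed Liouville field $Z$ the set of germs of Lyapunov functions is a convex, hence contractible, subset of the space of smooth function germs; (b) the Lyapunov inequality $dF(Z)\geq c_1|Z|^2+c_2|\nabla F|^2$ is an open condition in the $C^1$-topology jointly in $(F,Z)$; and (c) for every non-degenerate Liouville linear part, as well as for every Takens normal form at an embryo, there is an explicit quadratic or cubic Lyapunov function. Convexity in (a) is immediate from the convexity of $x\mapsto x^2$: if $F_0,F_1$ satisfy the inequality with constants $c_1,c_2$ and $F_t:=(1-t)F_0+tF_1$, then $|\nabla F_t|^2\leq (1-t)|\nabla F_0|^2+t|\nabla F_1|^2$, and $dF_t(Z)=(1-t)dF_0(Z)+t\,dF_1(Z)\geq c_1|Z|^2+c_2|\nabla F_t|^2$.

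For part (i), the explicit Lyapunov functions in (c) come from classical Lyapunov matrix theory. When the linear part $A$ at $0$ is elliptic (so $\Tr A>0$ and $\det A>0$), one finds a positive definite symmetric $Q$ with $QA+A^TQ$ positive definite, and $\phi(x):=x^TQx$ is Lyapunov for the linear flow, hence by (b) for $Z$ on a small neighborhood. When $A$ is hyperbolic with eigenvalues $\lambda_1>0>\lambda_2$, in coordinates aligned with its eigenvectors the form $\phi(x,y):=x^2-y^2$ gives $d\phi(Z_{\lin})=2\lambda_1 x^2+2|\lambda_2|y^2$, which is positive definite and comparable both to $|Z_{\lin}|^2$ and $|\nabla\phi|^2$. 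Together with (a), this shows each fiber of $\pi:\LL\to\ZZ^*$ is nonempty and convex, hence contractible. The openness in (b) furnishes the microfibration property: if $F_0$ is Lyapunov for $Z_0$ and $Z_t$ is a deformation of $Z_0$, then $F_0$ remains Lyapunov for $Z_t$ for all sufficiently small $t$, yielding a covering homotopy on $[0,\eps]$. By the Gromov--Weiss theorem quoted in the excerpt, a microfibration with contractible fibers is a Serre fibration.

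Part (ii) follows the same scheme, but requires parametric Takens normalization to handle the degenerate embryo. For $\lambda\in\Lambda_0$, Lemma \ref{lm:Takens-param} provides a neighborhood $\Omega\supset\Lambda_0$ in $\Lambda$ and a $C^k$-family of diffeomorphism germs $h_\lambda$ with
$$(h_\lambda)_*Z_\lambda=g_\lambda(x,y)\Bigl(x\tfrac{\p}{\p x}+y^2 f_\lambda(y)\tfrac{\p}{\p y}\Bigr),\qquad f_\lambda(0)\neq 0,\ g_\lambda(0,0)>0,$$
where the positivity of $g_\lambda(0,0)$ follows from the positivity of the trace of the linear part of the Liouville field. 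On the model field set
$$\wt\phi_\lambda(x,y):=\tfrac12 x^2+\tfrac13\sgn(f_\lambda(0))\,y^3.$$
A direct computation gives $d\wt\phi_\lambda((h_\lambda)_*Z_\lambda)=g_\lambda\bigl(x^2+y^4\,|f_\lambda(y)|\bigr)$ on a neighborhood where $f_\lambda$ keeps the sign of $f_\lambda(0)$, while $|\nabla\wt\phi_\lambda|^2=x^2+y^4$. Shrinking $U$ and $\Omega$ so that $g_\lambda$ and $|f_\lambda|$ enjoy uniform positive bounds yields the Lyapunov inequality uniformly in $\lambda\in\Omega$. The pullback $\phi_\lambda:=\wt\phi_\lambda\circ h_\lambda$ is then the required family.

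The main obstacle is producing a Lyapunov function at the degenerate embryo in a way that varies continuously through the bifurcation, and this is exactly what the parametric Takens normalization accomplishes. A minor technicality is that $h_\lambda$ is only $C^k$ rather than smooth, but since the Lyapunov inequality is a $C^1$-condition it suffices to take $k=1$. If one further needs the family over $\Omega$ to agree with a pre-existing Lyapunov family on $\Lambda\setminus\Lambda_0$ (for example, one produced by part (i) at parameters where $Z_\lambda$ has a non-degenerate zero), convexity from (a) lets one interpolate the two local constructions via a partition of unity in the parameter $\lambda$, with the interpolated function still Lyapunov for the same uniform constants.
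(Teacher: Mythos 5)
Part (i) of your proposal is correct and follows essentially the same route as the paper: nonempty fibers via explicit quadratic Lyapunov functions for the linearization, convexity (hence contractibility) of each fiber, openness of the Lyapunov condition giving the microfibration property, and the Gromov--Weiss theorem to upgrade to a Serre fibration. Your use of the general Lyapunov matrix equation in the elliptic case versus the paper's case-by-case quadratic forms is an immaterial difference.

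Part (ii) has a genuine gap. You invoke Lemma \ref{lm:Takens-param} to get the normal form $(h_\lambda)_*Z_\lambda=g_\lambda\bigl(x\tfrac{\p}{\p x}+y^2f_\lambda(y)\tfrac{\p}{\p y}\bigr)$ on a neighborhood $\Omega\supset\Lambda_0$, but that lemma only applies where the singularity is an embryo, i.e.\ on $\Lambda_0$ itself. For $\lambda\in\Omega\setminus\Lambda_0$ the embryo generically splits into an elliptic--hyperbolic pair or disappears, so the second component of the normalized field cannot be written as $y^2f_\lambda(y)$ with $f_\lambda(0)\neq 0$; the correct statement (Proposition \ref{prop:IY-param}) gives $x\tfrac{\p}{\p x}+F(y,\lambda)\tfrac{\p}{\p y}$ with $F(y,\lambda)=y^2f(y,\lambda)$ only on $\Lambda_0$. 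Your candidate $\wt\phi_\lambda=\tfrac12x^2+\tfrac13\sgn(f_\lambda(0))y^3$ then fails off $\Lambda_0$: for $F(y,\lambda)=y^2-t$ with small $t>0$ one gets $d\wt\phi_\lambda(Z)=x^2+y^2(y^2-t)$, which is negative near the $y$-axis for $0<y^2<t$. Nor can you recover $\Omega$ by openness, since the Lyapunov inequality is \emph{not} open at a degenerate zero: with $|\nabla\wt\phi|^2=x^2+y^4$ a perturbation of size $O(|u|)$ is not dominated by $x^2+y^4$ near the $y$-axis. The conclusion of (ii) explicitly requires an open neighborhood $\Omega$ of $\Lambda_0$, so this must be repaired. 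The fix is exactly the paper's choice: after applying Proposition \ref{prop:IY-param} on $\Omega$, take $\psi_\lambda=\tfrac{x^2}2+\int_0^yF(u,\lambda)\,du$, for which the normalized field is literally the Euclidean gradient of $\psi_\lambda$, uniformly over $\Omega$ and through the bifurcation.
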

 \begin{proof}
 (i)  Let us first show  that $\pi^{-1}(Z)\neq\varnothing$ for any $Z\in \ZZ$. If $A_\lambda=d_0Z_\lambda$ has real eigenvalues and diagonalizable in a basis $v_1,v_2$ with eigenvalues $\lambda_1,\lambda_2$ then the function  $\lambda_1x_1^2+\lambda_2x_2^2$, where $(x_1,x_2)$  are for coordinates in that basis, is  Lyapunov for $Z_\lambda$. If $A_\lambda$ has a Jordan form $\left(\begin{matrix}a_\lambda&1\\0&a_\lambda\end{matrix}\right)$, $a_\lambda=\frac12\Tr A_\lambda$, then the function $x_1^2+cx_2^2$ is Lyapunov provided that  $c>4a_\lambda^2$. If $A_\lambda$ has complex eigenvalues $\frac{a_\lambda}2\pm\alpha_\lambda i$ with       eigenvectors $v^1_\lambda\pm i v^2_\lambda$ are the corresponding eigenvectors, and $B$ is the matrix made of columns $v^1_\lambda, v_\lambda^2$ then the quadratic form  $||Bx||^2$ is Lyapunov for $Z_\lambda$.
 
Next, we observe that the fiber $\pi^{-1}(Z)$ is a convex subset  in the space of functions, and hence, contractible. Hence, it is sufficient to check a microfibration property. But this is straightforward because a Lyapunov function for $Z$ serves also as a Lyapunov function for  any  $Z'$ which is $C^2$-close to $Z$ and has an isolated singularity at the origin. 
 
 \smallskip
 (ii) According to Lemma \ref{lm:Takens-param} and  Proposition \ref{prop:IY-param} the family $Z_{\lambda}$ for $\lambda\in\Omega:=\Op_\Lambda\Lambda_0$ is orbitally equivalent to a family of vector fields
$  x \frac{\p}{\p x}+   F(y,\lambda)\frac{\p}{\p y} ,\; f(0)\neq 0,\; F(y,\lambda)=y^2f(y) $ for $\lambda\in\Lambda_0$. 
Define $\psi_{\lambda,s}=\frac{x^2}2+\int\limits_0^y F(u,\lambda)du.$    Then $Z_{\lambda}$ is a gradient vector field for  $\psi_{\lambda}$ for the standard Euclidean metric on $\R^2$.
 %  Take a cut-off function $\theta :\R_+\to\R$ which is supported in $[0,1]$ and  equals 1  near $0$,  set
 %  $\Theta_{\eps}(x,y):= \theta\left(\frac{x^2+y^2}{\eps^2})\right)$, and define 
 %  $$\phi_{\lambda}=\Theta_\eps\psi_{\lambda,s}+(1-\Theta_\eps)\psi_{\lambda,0}.$$
  % Then for a sufficiently small $\eps>0$ the family $ \phi_{\lambda,s}$ has the required properties in the neighborhood $U:=\{x^2+y^2\leq \eps^2\}$ and $\Omega:=\{|t|\leq\eps^3\}$.
    \end{proof}
  \begin{lemma}\label{lm:ext-to-hyp} Let  $Z$ be a vector field generating a characteristic foliation $\FF$.
\begin{itemize}
\item[a)] Let $e_1,e_2$ be positive  elliptic zeroes of $Z$, and $h$ a hyperbolic zero. Suppose that the incoming separatrices $\gamma_1, \gamma_2$ of $h$ terminate at $e_1$ and $e_2$. Let $\phi$ be a Lyapunov function for $Z$ on $\Op\{e_1,e_2,h\}$  with $\phi(h)>\max(\phi(e_1),\phi(e_2))$. Then   there exists an arbitrarily small neighborhood $U$ of  $\gamma_1\cup\gamma_2$ and a     Lyapunov function  $\Phi$ on $U$ which is constant on $\p U$ and coincides with $\phi$ in a neighborhood of critical points. See Fig. \ref{fig:ext-to-sep}a).
 \begin{figure}[h]
\centerline{\includegraphics[width=11cm]{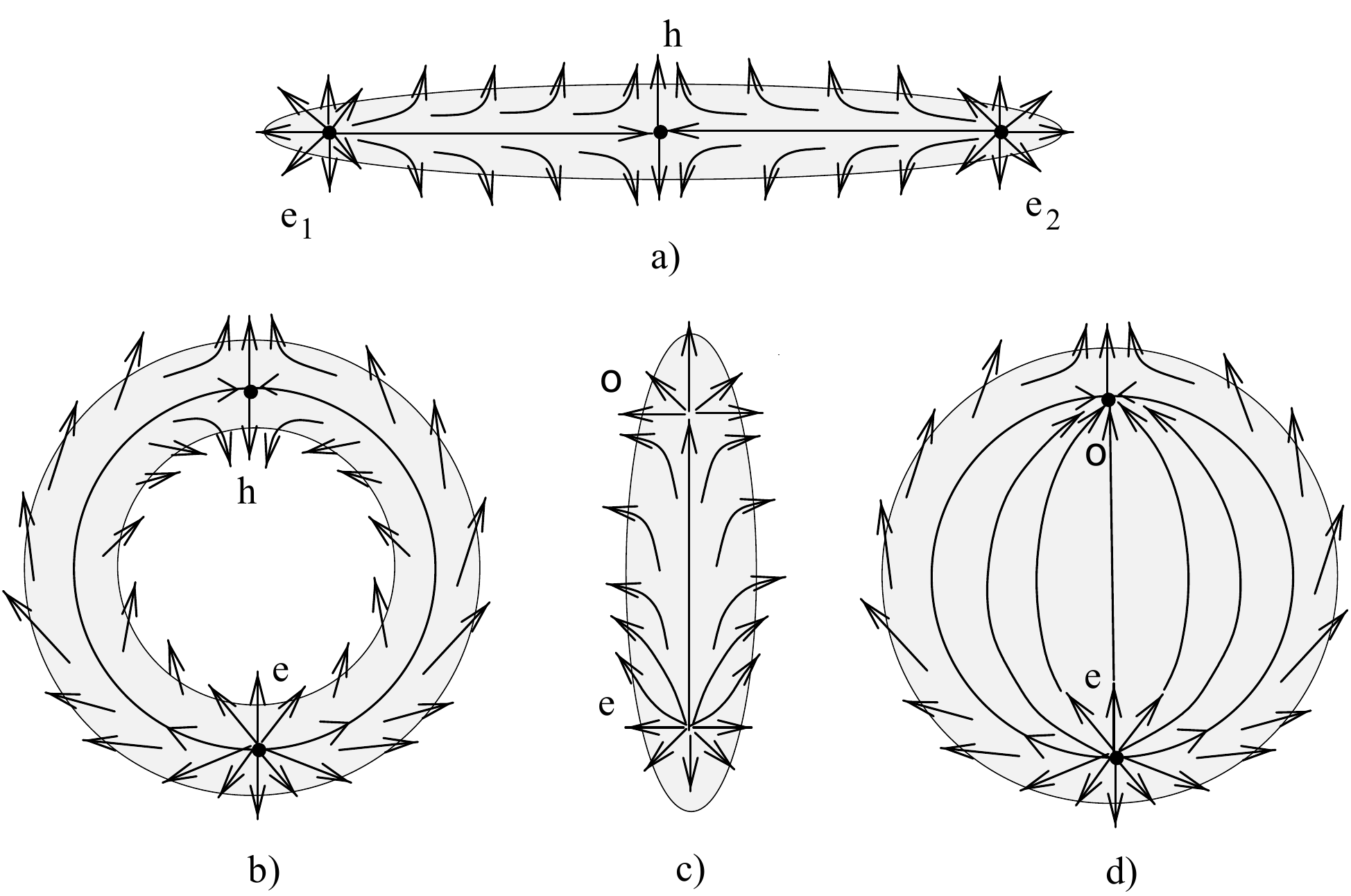}}
\caption{Lyapunov function on a neighborhood of separatrices} 
\label{fig:ext-to-sep}
\end{figure}

\item[b)] Suppose that $e, h$ are  positive   elliptic and     hyperbolic zeroes of $Z$.  Suppose that the incoming separatrices $\gamma_1, \gamma_2$ of $h$ terminate at $e$.  Let $\phi$ be a Lyapunov function for $Z$ on $\Op \{e,h\} $ with $\phi(h)> \phi(e) $.  Then   there exists an arbitrarily small neighborhood $U$ of  $\gamma_1\cup\gamma_2$ and a       Lyapunov function  $\Phi$ on $U$ which is constant on $\p U$ and coincides with $\phi$ in a neighborhood of critical points. See Fig. \ref{fig:ext-to-sep}b).

\item[c)] Suppose that $e$ is   positive    elliptic and $o$ is   positive embryo points of $Z$.    Suppose that the incoming separatrix $\gamma$ of $o$ ends at $o$.  Let $\phi$ be a Lyapunov function for $Z$ on $\Op \{e,o\} $ with $\phi(o)>\max \phi(e)$.   Then   there exists an arbitrarily small neighborhood $U$ of  $\gamma $ and a       Lyapunov function  $\Phi$ on $U$ which is constant on $\p U$ and coincides with $\phi$ in a neighborhood  of critical points. See Fig. \ref{fig:ext-to-sep}c).

\item[d)] Suppose that $e$ is positive elliptic and $o$ is a negative embryo. Suppose all incoming to $o$ trajectories originate at $e$, and $D$ is the union of these trajectories.  Let $\phi$ be a Lyapunov function for $Z$ on $\Op \{e,o\} $ with $\phi(o)>\max \phi(e)$. Then there exists an arbitrary small neighborhood $U\supset D$  and a       Lyapunov function  $\Phi:U\to\R$ which is constant on $\p U$ and coincides with $\phi$ in a neighborhood  of critical points. See Fig. \ref{fig:ext-to-sep}d).

\end{itemize}
\end{lemma}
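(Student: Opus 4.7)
The four cases share a unified template: $\phi$ is already Lyapunov on small disjoint balls around the critical points involved, and the strict inequality between $\phi$ at the upper critical point and $\phi$ at the lower elliptic one(s) lets us pick an intermediate regular value $c_0$ with $\max\phi(\text{lower}) < c_0 < \phi(\text{upper})$. The plan is to take $U$ as the union of these balls joined by a thin tubular neighborhood of the separatrices (or, in case (d), a thickening of the disc $D$), and to construct $\Phi$ so that the level set $\{\Phi = c_0\}$ coincides with $\partial U$.

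Near each lower elliptic source $e$, the set $\{\phi = c_0\}$ is a small embedded circle since $e$ is a local minimum of the Lyapunov $\phi$. Near the upper critical point, the local model of $\phi$ determines the topology of $\{\phi = c_0\}$: two hyperbolic branches transverse to $Z$, one in each sector bounded by a pair of incoming separatrices, for the saddle $h$ in cases (a) and (b); and an analogous transverse arc supplied by Takens' normal form (Lemma \ref{lm:Takens-param}) together with the explicit Lyapunov function from the proof of Lemma \ref{lm:local-taming}(ii) for the embryos in cases (c) and (d). Near each critical point we set $\Phi := \phi$ and take $\partial U$ to be the relevant pieces of these level sets; this immediately ensures the ``coincides with $\phi$ near critical points'' and ``$\Phi = c_0$ on $\partial U$'' conditions in a neighborhood of each singular point.

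Between the balls, along each separatrix $\gamma$ (or throughout a thickening of $D$ in case (d)), introduce flow-box coordinates $(s,t)$ with $Z = \partial/\partial s$ and $\gamma = \{t=0\}$, and set
$$\Phi(s,t) \;=\; c_0 \;-\; \bigl(c_0 - f(s)\bigr)\,\rho(t),$$
where $f(s)$ is strictly increasing and matches $\phi$ on the endcaps where the flow box meets the balls, and $\rho \geq 0$ is a bump with $\rho(0) = 1$ and $\rho(\pm\delta) = 0$. Then $\Phi \equiv c_0$ on the lateral boundary $\{t = \pm\delta\}$, $\Phi = f(s)$ along $\gamma$, and $d\Phi(Z) = f'(s)\,\rho(t) > 0$ throughout the interior of the flow box.

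The main technical hurdle is the smooth, Lyapunov-preserving match across the endcaps where a flow box meets a ball around a critical point. The remedy is to choose the endcap to lie inside $\{\phi < c_0\}$ with its outer boundary on $\{\phi = c_0\}$, and to use $\phi$ itself as a transverse coordinate near the critical point; this converts the gluing into an interpolation between two functions having positive derivative along $Z$, which admits a convex space of solutions. The construction is local to each separatrix and to each critical point, so cases (a), (b), (c) are handled uniformly. Case (d) differs only in that $D$ is two-dimensional: the ``flow box'' acquires one additional transverse parameter along $D$, $\rho$ is replaced by a product of bumps in the two transverse directions, and the negative-embryo normal form from Lemma \ref{lm:local-taming}(ii) supplies the positive derivative of $\phi$ along $Z$ on all of $D$, so that the same interpolation scheme goes through verbatim.
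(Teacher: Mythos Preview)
There is a genuine gap: your choice of the boundary level $c_0$ with $c_0<\phi(h)$ is incompatible with the other requirements. The neighborhood $U$ must contain $h$ (it is a neighborhood of $\gamma_1\cup\gamma_2$, and $h$ is an endpoint of the $\gamma_j$), and you require $\Phi=\phi$ on a ball around $h$. But then the \emph{outgoing} separatrices of $h$ lie in $U$ near $h$ and must eventually exit through $\partial U$; since $\Phi$ is Lyapunov, $\Phi$ strictly increases along them from the value $\Phi(h)=\phi(h)$, so at the exit points $\Phi>\phi(h)>c_0$. This contradicts $\Phi\equiv c_0$ on $\partial U$. Equivalently, on the closed disc $U$ a function constant on $\partial U$ with interior critical set consisting of two minima and one saddle must take its maximum on $\partial U$, forcing the boundary value to exceed $\phi(h)$. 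The same obstruction arises in cases (b), (c), (d): the upper critical point always has at least one outgoing trajectory that leaves $U$.

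Your local description near $h$ reflects this: for $c_0<\phi(h)$ the level $\{\phi=c_0\}$ lies in the two sectors containing the \emph{incoming} separatrices, so its branches cut across $\gamma_1,\gamma_2$ and cannot serve as pieces of $\partial U$ once $U$ is required to contain all of $\gamma_1\cup\gamma_2\cup\{h\}$. Choosing instead $c_0>\phi(h)$ puts the branches of $\{\phi=c_0\}$ in the outgoing sectors, which is what you actually need; but then you can no longer use level sets of the given $\phi$ near the elliptic points (where $\phi$ is only defined on small balls), and your flow-box formula must be rewritten so that $f(s)$ runs through values both below $c_0$ (near $e_i$) and above $\phi(h)$ but below $c_0$ (near $h$), with the lateral boundary sitting at $c_0$. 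Getting $\partial U$ to pass smoothly \emph{around} the hyperbolic point and remain transverse to $Z$ is exactly the delicate step; the paper handles it not via level sets of $\phi$ at all, but by first constructing a disc $D\supset\gamma_1\cup\gamma_2$ with $\partial D$ outward-transverse to $Z$ (via a comparison of $Z$ with a modified differential equation near $h$, following the smooth-surrounding argument of \cite{CE12}), and only afterwards manufacturing $\Phi$ from $\phi$ and a collar coordinate as $\Phi=\max(\phi,2t)$, which automatically makes the boundary value exceed $\phi(h)$.
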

\begin{proof} We consider only the case a); all  other cases are similar.  The construction mimics the smooth surrounding Lemma 9.20  in \cite{CE12}. Denote $\Gamma:=\gamma_1\cup\gamma_2$. First, we apply 
   We can assume that $\phi$ is equal to  $0$ at elliptic points and to 1 at the hyperbolic one, and then extend $\phi$ to $\Op(\Gamma)$ as increasing along $\gamma_j$ when going from $e_j$ to $h$, $j=0,1$. Choose an arbitrary small neighborhood $U\supset\Gamma$ on which $\phi$ is already defined.
 
  Our next goal is to construct a disk $D\subset U, \Int D\supset \Gamma$ such that $\p D$ is transverse to $Z$. Choose a sufficiently small $\eps>0$ such that the level set $\{\phi\leq \eps\}$ consists of  2 domains $\Delta_1\ni e_1,\Delta_2\ni e_2$ which are diffeomorphic to the 2-disc.
  There exists a  tubular neighborhood $\Omega\supset(\Gamma_{\eps}:=\Gamma\setminus\{\phi\leq  \eps\})$ and  its spliting $\Omega=\Gamma_{\eps}\times\Delta$, $\Delta=(-\delta,\delta)$ such that
  \begin{itemize}
  \item[-] the outgoing separatrices of $h$ form the fiber $h\times \Delta$;
  \item[-] $\p\Delta_1\cap\Omega$ and $\p \Delta_2\cap\Omega$ are fibers over the end points of the interval $\Gamma_\eps$,
   and 
   \item[-] the field $Z$ is transverse to the fibers elsewhere.
   \end{itemize}
  Hence, there are local coordinates $(s,u)\in[-1,1]\times\Delta$ in $U$ and a function $f:[-1,1]\times\Delta\setminus\{s=0,u\neq 0\}\to\R$ such that $\Gamma_{\eps}=[-1,1]$, $h=(0,0)$, and  the line field spanned by the vector field $Z$ can be given by the differential equation $$\frac{d u}{d s}=f(s,u), s\in[-1,1], s\neq 0,u\in\Delta.$$ The function $f$ satisfies   $f(s,0)=0$,  and  there exists a sufficiently small $\sigma>0$ such $f(s,u)>0$ for $|s|<\sigma, s\neq 0,u>0$ and
 $f(s,u)>0$ for $|s|<\sigma, s\neq 0,u<0$. In fact, we have 
  $\mathop{\lim}\limits_{s\to 0,u\neq 0} f(s,u)\to\pm\infty$.   Choose  a  smooth function $\wt f:[-1,1]\times\Delta\to\R$ with the following properties:
  \begin{itemize}
  \item[-] $\wt f(s,0)=0,\; s\in[-1,1]$;
  \item[-] $\wt f(s,u)=0,\; s\in[-\sigma,\sigma], u\in\Delta$;
  \item[-] $\wt f(s,u)<f(s,u),\; s\in[-1,1], u\in(0,\delta)$;
  \item[-] $\wt f(s,u)>f(s,u),\; s\in[-1,1], u\in(-\delta,0)$;
  \end{itemize}
 
Consider  a differential equation $$\frac{d u}{d s}=\wt f(s,u), s\in[-1,1], u\in\Delta,\;\;\hbox{See Fig.~\ref{fig:modif-eq}}.$$
 \begin{figure}[h]
\centerline{\includegraphics[width=13.5cm]{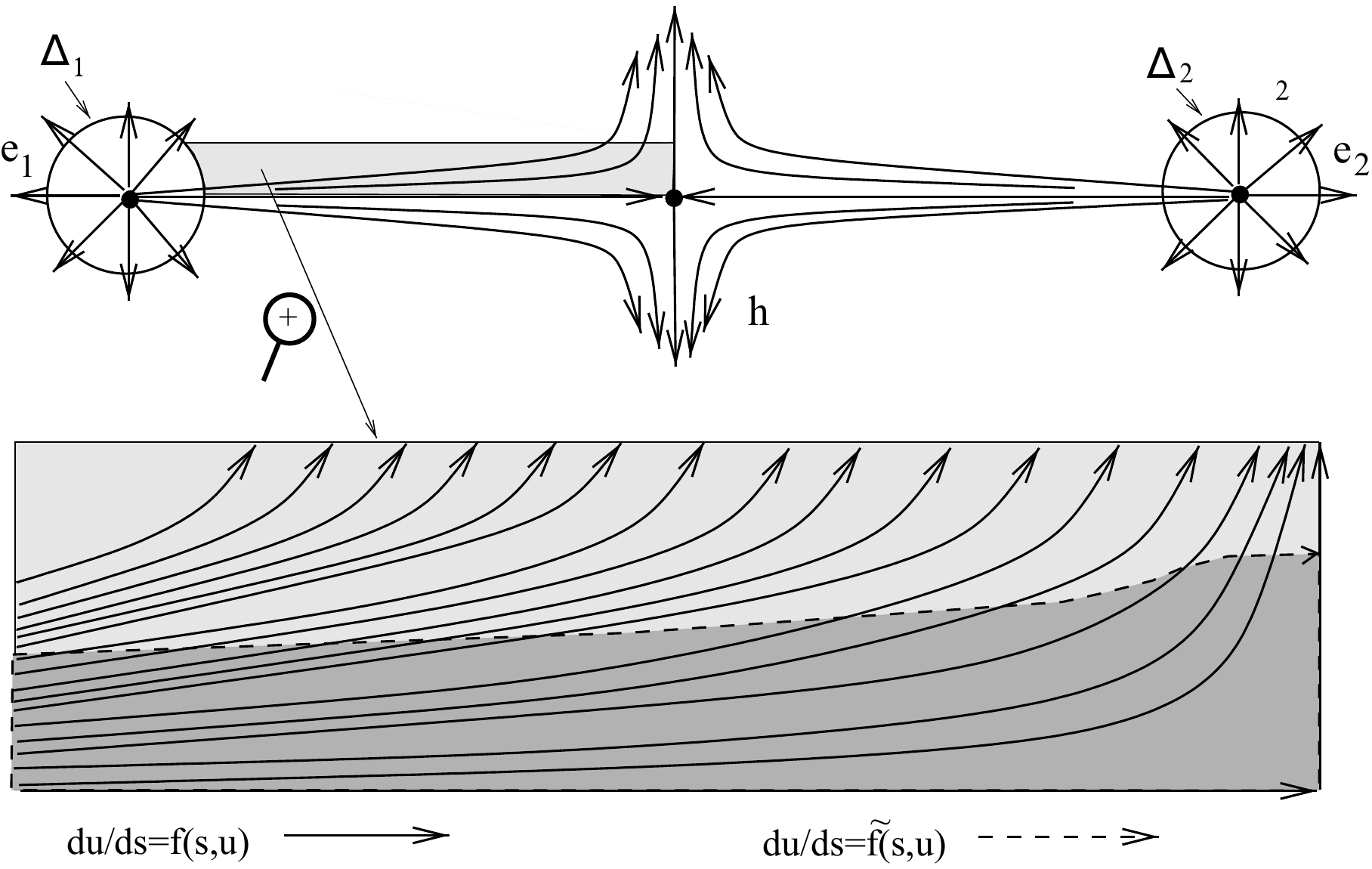}}
\caption{Solutions of differential equations  $\frac{d u}{d s}=f(s,u)$ and  $\frac{d u}{d s}=\wt f(s,u)$}
\label{fig:modif-eq}
\end{figure}
 For a  sufficiently small $\theta>0$   the equation has a solution  $u=\psi_t(s)$  with the initial condition $\psi_t(-1)=t$ if $|t|\leq\theta$, and the vector field $Z$ is outward transverse to the domain $U_\theta\subset\Omega$ bounded by graphs $u=\psi_{\pm \theta}(s), s\in[-1,1]$.  Moreover,   the graphs $u=\psi_{\pm\theta}(s)$ are  ouward transverse to  $\p \Delta_1\cup\p\Delta_2\subset\p U_\theta$.  Smoothing the corners of the domain $\Delta_1\cup\Delta_2\cup U_\theta $,
we   get the required disc with   the boundary transverse to $\Delta$.

To finish the construction  consider   a small collar $C=\p D\times[-1,1]\supset \p D, C\subset \Omega$, such that $\p D\times t$, $t\in[-1,1]$, transverse to the vector field $Z$ and such that $\p D\times (-1)\subset \Int D$, $\p D\times 0= \p D$. Consider the function  $\Phi$ equal to $\phi$ on $D\setminus C$, and to  $\max(\phi, 2t)$ on $D\cup C$. After smoothing, this function has the required properties.

\end{proof}
\begin{lemma}\label{lm:local-type}
Let $\xi$ be a contact structure on $\R^3$ such that the characteristic foliation $\FF$ on $\R^2=\{x_3=0\}\subset\R^3$  has  an isolated singularity at $0$. Let $\phi$ be a Lyapunov function for $\FF$ on $\Op 0$. Let $\FF_\loc$ be any other characteristic foliation on $\R^2 $ with an isolated  at $0$ of the same type with a local Lyapunov function $\phi_\loc$. Suppose that $\phi(0)=\phi_\loc(0)=0$.
Then for any neighborhood $U\ni 0, U\subset\R^2$, there is a   supported in $U$ arbitrary $C^1$-small 2-parametric isotopy $j_{s,t}:\R^2\to \R^3_+$, $s\in[-1,1],t\in[0,1]$, such that  
\begin{itemize}
\item[-]$j_{s,0}: \R^2\hookrightarrow \R^3$ is the inclusion for all $s\in[0,1]$;
\item[-]  $j_{1,t}(\R^2)\subset \R^3_+=\{x_3\geq 0\}, \;\;  j_{-1,t}(\R^2)\subset \R^3_-=\{x_3\leq 0\} $;
\item[-] the induced characteristic foliation $(j_{s,1})^*\xi$  
  has a unique   singularity  at $0$, 
  equals    to $\FF_\loc $ near $0$, and 
  admits a Lyapunov function $\wt\phi$ which is equal to $\phi$ near $\p U$ and to  $\phi_\loc$ near $0$. 
 
\end{itemize}
\end{lemma}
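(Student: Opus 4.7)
The plan is to realize $\FF_\loc$ locally as the characteristic foliation of a small graph over $\R^2$ of definite sign, use two such graphs (one non-negative, one non-positive) to build the $2$-parameter family, and glue $\phi$ and $\phi_\loc$ into a single Lyapunov function on a transition annulus.

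Since $0$ is an isolated singularity of $\FF$, the contact plane $\xi(0)$ coincides with $T_0\R^2$, so $\p_z$ is transverse to $\xi$ near $0$ and any surface $C^1$-close to $\R^2$ near $0$ is a graph $\{z=h(x,y)\}$. Writing $\xi=\ker\alpha$, the characteristic foliation on such a graph is $\ker(\alpha|_{\{z=h\}})$, so the germ of the foliation at its singular point depends only on a finite jet of $h$ at $0$ (the $2$-jet for non-degenerate singularities via Hartman--Grobman, a suitable finite jet in the embryo case via Lemma \ref{lm:Takens-param} together with Proposition \ref{prop:IY-param}).

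I would first pick a base function $h_\loc$ supported in an arbitrarily small $V\Subset U$ with $h_\loc(0)=dh_\loc(0)=0$ such that the graph $\{z=h_\loc\}$ induces $\FF_\loc$ on a neighborhood $V'\Subset V$ of $0$; such an $h_\loc$ exists because the relevant jet of the induced foliation at $0$ is controlled surjectively by the corresponding jet of $h$. Next, set
\[
h^+(x,y)=h_\loc(x,y)+\lambda\chi(x,y)(x^2+y^2)^N,\qquad h^-(x,y)=h_\loc(x,y)-\lambda\chi(x,y)(x^2+y^2)^N,
\]
where $\chi$ is a bump equal to $1$ on $\supp h_\loc$ and supported in $V$, $N$ is large enough that $(x^2+y^2)^N$ has vanishing jet of the relevant order at $0$ (so that $h^+$ and $h^-$ both still realize $\FF_\loc$ near $0$), and $\lambda>0$ is large enough that $h^+\geq 0$ and $h^-\leq 0$ throughout $V$. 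Define
\[
H_{s,t}(x,y)=t\Bigl(\tfrac{1+s}{2}h^+(x,y)+\tfrac{1-s}{2}h^-(x,y)\Bigr),\qquad j_{s,t}(x,y)=(x,y,H_{s,t}(x,y)).
\]
Then $j_{s,0}$ is the inclusion; $H_{\pm 1,t}=th^\pm$ has the correct sign so $j_{\pm1,t}(\R^2)\subset\R^3_\pm$; the family is supported in $V\subset U$ and is $C^1$-small for small $\lambda$ and small $h_\loc$; on a subneighborhood of $0$ inside $V'$ where $H_{s,t}=th_\loc$, the induced foliation $(j_{s,1})^*\xi$ equals $\FF_\loc$ for every $s$ (after a harmless reparametrization of $t$ making the effective graph function at $t=1$ agree with $h_\loc$ near $0$). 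On $U\setminus V'$ the foliation is a $C^1$-small perturbation of $\FF$, so by openness of non-singularity no new singularities appear.

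Finally, for the Lyapunov function I would set $\wt\phi=\phi_\loc$ on a neighborhood of $0$ inside $V'$, set $\wt\phi=\phi$ near $\p U$, and interpolate by a convex combination $\wt\phi=\rho\phi_\loc+(1-\rho)\phi$ on the transition annulus, using a bump $\rho$ equal to $1$ near $0$ and $0$ near $\p U$. Convex combinations of Lyapunov functions for the same vector field are Lyapunov, and on the annulus both $\phi_\loc$ (extended through openness of the Lyapunov condition) and $\phi$ (still Lyapunov for the $C^1$-close perturbation $(j_{s,1})^*\xi$) serve as Lyapunov functions, so the interpolation works uniformly in $s$. The main obstacle is the realization step: in the non-degenerate case the matching of the $2$-jet of $h_\loc$ to the linear part of the Liouville field defining $\FF_\loc$ is straightforward, but in the embryo case one must invoke the parametric Takens form from Lemma \ref{lm:Takens-param} together with Proposition \ref{prop:IY-param} to put both foliations in normal form and match the higher-order coefficients while keeping the perturbations of $h_\loc$ of definite sign.
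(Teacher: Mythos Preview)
There is a genuine gap in the sign--control step. You want $h^+=h_\loc+\lambda\chi(x^2+y^2)^N\geq 0$ with $N$ large enough that the added term does not disturb the jet of $h_\loc$ at $0$ determining $\FF_\loc$. But $h_\loc$ vanishes only to second order at $0$ (you impose $h_\loc(0)=0$, $dh_\loc(0)=0$), and to realize a generic $\FF_\loc$---for instance a hyperbolic one, or any non-degenerate singularity whose linear part differs from that of $\FF$---the Hessian of $h_\loc$ at $0$ is nonzero and typically indefinite, so $h_\loc\sim -cr^2$ along some ray. Since $\lambda r^{2N}=o(r^2)$ for every fixed $\lambda$ once $N\geq 2$, you get $h^+<0$ on a sector near the origin for every $\lambda>0$. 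The two requirements you state---``$\lambda$ large enough that $h^+\geq 0$'' and ``$C^1$-small for small $\lambda$''---are in direct conflict and cannot both be met.

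The missing idea, which the paper uses, is that after writing the contact form as $\beta=dx_3+\alpha$ the characteristic foliation on a graph $\{x_3=f\}$ is exactly $\{\alpha+df=0\}$. After a Darboux normalization one has $\alpha_\loc=\alpha+dH$ with $H$ arbitrarily $C^1$-small, and one sets $j_{s,t}(u)=(u,\,t(H(u)+s\sigma\theta(u)))$ with $\theta$ a cutoff \emph{equal to $1$} on a full neighborhood of $0$. Near $0$ the shift $s\sigma\theta\equiv s\sigma$ is a \emph{constant}, so $d(s\sigma\theta)=0$ there and the induced foliation is still exactly $\FF_\loc$; at the same time $|H|<\sigma$ forces $H\pm\sigma\theta$ to have a definite sign. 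Two further issues in your sketch: your ``finite--jet'' realization would only produce a foliation orbitally equivalent to $\FF_\loc$, not equal to it as the lemma requires (Hartman--Grobman gives only topological conjugacy); and your Lyapunov interpolation is not justified, since on the transition annulus the induced foliation is neither $\FF$ nor $\FF_\loc$, so there is no reason both $\phi$ and $\phi_\loc$ are Lyapunov for it there. The paper instead produces the interpolated Lyapunov function $\wh\phi$ simultaneously with the interpolated Liouville form $\wh\alpha$, via the Weinstein--structure interpolation of \cite{CE12}, Proposition~12.12.
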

 \begin{proof}
 We can write  the contact form  $\beta$ for    the contact structure $\xi$  as $dx_3+\alpha$, where $\alpha$ is a Liouville form on $\R^2$. Let $\alpha_\loc$ be the form on $\R^2$ which  defines $\FF_\loc$.
 The pairs   $(\alpha , \phi)$ and $(\alpha_\loc,\phi_\loc)$ are  two local Weinstein structures on $\R^2$ with isolated singularities of the same type at the origin.   Using Proposition 12.12 from \cite{CE12} we can construct a Weinstein structure $(\wh\alpha,\wh\phi)$ which coincides  with $(\alpha,\phi)$  outside of  a neighborhood $U'\Subset U, U'\ni 0$ and with   $(\alpha_\loc,\phi_\loc)$ on a neighborhood $U_\loc\Subset U', U_\loc\ni 0$.  Moreover,  using Darboux's theorem we  can arrange that the symplectic forms  $d\alpha$ and $d\wh\alpha$ coincide, and hence, $\wh\alpha=\alpha +dH$, where $H(0)=0$ and $d_0H=0$.   %%%%%
 By shrinking the neighborhood $U_\loc$ we can make the function $H$ arbitrarily $C^1$-small. Let $\theta:U\to[0,1]$ be a cut-off function supported in $U$ which is equal  to $1$ on $U'$. There exists $\sigma>0$ such that the function $\phi$ is Lyapunov for the    foliation on  $ {U\setminus U'}$ defined by a Pfaffian equation $ \alpha+sd\theta=0$ for any $|s|<\sigma$.    Hence, the function $\wh\phi+sd\theta$ is Lyapunov for $\wh\FF$ defined by a Pfaffian equation
  $\{\wh\alpha+s d\theta=0\}$, $|s|<\sigma$. Indeed, outside of $U'$ we have $\wh\alpha =\alpha$, and on $U'$ we have $d\theta=0$.
 Note that if $|H|<\sigma$ then the function $H-\sigma  \theta\leq 0\leq H+\sigma\theta$. We claim that  the family of isotopies $j_{s,t}$ defined by the formula.
 $$j_{s,t}(x_1,x_2):=(x_1,x_2,t(H+ s\sigma\theta)), s\in[-1,1], t\in[0,1],$$ has the required properties. Indeed, we have $j_{1,t}(\R^2)\subset\R^3_+$, $j_{-1,t}(\R^2)\subset\R^3_-$ for all $t\in [0,1]$, $j_{s,1}^*\beta=\wh\alpha+s\sigma d\theta  $, and the function  $\wh\phi$ is Lyapunov for the  characteristic foliation $\wh \FF_s=\{(j_{s,1})^*\beta=0\}$.   
 \end{proof}
 \begin{remark}\label{rm:local-parametric} Lemma \ref{lm:local-type} holds also in the relative parametric form.
 \end{remark}
%%%%
\subsection{Taming functions for characteristic foliations }

A function   $\phi:S\to\R$ is said to {\em tame} the characteristic foliation $\FF$ if  the foliation can be generated by a vector field $Z$ which is gradient-like for $\phi$ and such  that its positive and negative hyperbolic points  are, respectively, positive and negative zeroes of the characteristic foliation. 

Note that local  minima  of a taming functions are automatically positive, and local maxima are negative elliptic points of $\FF$, while and positive  (resp. negative) embryos of $\FF$ are embryos the function $\phi$ of index $\frac12$ (resp. $\frac32$)  of the function $\phi$, i.e. they split into index $0$ and $1$ (resp. index $1$ and $2$) critical points under a small perturbation, see Fig. \ref{fig:simple-pos-neg-embryo}.

   \begin{figure}[h]
\centerline{\includegraphics[width=10cm]{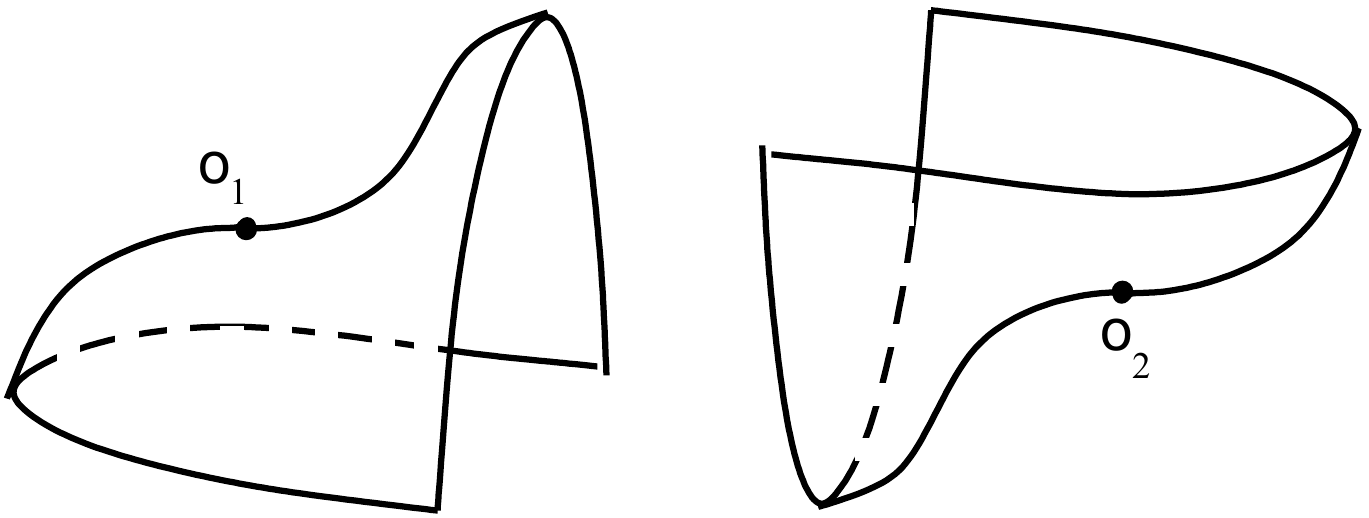}}
\caption{Embryos $o_1$ of index $\frac12$  and  $o_2$ of index   $\frac32$} 
\label{fig:simple-pos-neg-embryo}
\end{figure}

\begin{lemma}\label{lm:dplus-for-taming}
Any connected component of  a regular sublevel set of a  taming function has $d_+=1$.
\end{lemma}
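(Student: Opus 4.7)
The plan is induction on the critical values of $\phi$. Let $c_1<c_2<\dots<c_N$ enumerate the critical values, and set $\Omega(c):=\{\phi\le c\}$. For $c<c_1$ the sublevel set is empty, so the base case is vacuous. Assuming that every component of $\Omega(c_i-\eps)$ has $d_+=1$, I will show the same holds for $\Omega(c_i+\eps)$. Note that the vector field $Z$ generating $\FF$ is gradient-like for $\phi$, so at any regular level $\{\phi=c\}$ the foliation is outward transverse to the boundary $\p\Omega(c)$, and the framework of Lemma~\ref{lm:d} applies component by component.

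The inductive step is a case analysis on the type of the critical point $p\in S$ on the level $\{\phi=c_i\}$. Assume first that $\FF$ is Morse. (a) If $p$ is a positive elliptic point of $\FF$, i.e.\ a local minimum of $\phi$, then a new disc component is born carrying only $p$, so $d_+=1$. (b) If $p$ is a positive hyperbolic point, then by the definition of a taming function $p$ is also positive in the Morse sense; its two stable separatrices enter \emph{different} components $\Omega_1,\Omega_2$ of $\Omega(c_i-\eps)$, each with $d_+=1$ by induction. The merged component $\Omega'=\Omega_1\cup\Omega_2\cup(\text{index-1 handle})$ satisfies $e_+(\Omega')=e_+(\Omega_1)+e_+(\Omega_2)$ and $h_+(\Omega')=h_+(\Omega_1)+h_+(\Omega_2)+1$, so $d_+(\Omega')=1+1-1=1$. (c) If $p$ is a negative hyperbolic point, the taming condition forces both stable separatrices to approach from the same component, so the $1$-handle attaches within one component; only $h_-$ changes (by $+1$) and $d_+$ is preserved. (d) If $p$ is a negative elliptic point, a boundary circle of some component is capped off; only $e_-$ changes.

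For embryos I would argue by a small perturbation of $\phi$: a positive embryo splits into a positive elliptic--hyperbolic pair at two nearby, distinct critical values, and a negative embryo splits analogously. The Morse analysis above applies to the perturbed function and yields $d_+=1$ for every component of every regular sublevel set. The perturbation is localized in an arbitrarily small neighborhood of the embryo and does not alter the components of $\Omega(c)$ for $c$ outside a small neighborhood of $c_i$; moreover, the pair of critical points introduced by the perturbation contributes $+1-1=0$ to $d_+$ of the component containing them. Passing to the limit therefore preserves the identity $d_+=1$ on each component.

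I do not expect a serious obstacle. The only nontrivial point is matching the two notions of ``positive'' and ``negative'' critical points—those coming from the sign of the singularity of $\FF$ and those coming from the simple/Morse handle-attachment convention of Section~\ref{sec:simple}—but this alignment is precisely what the taming condition builds in by definition. Everything else is a direct bookkeeping of how $(e_\pm,h_\pm)$ transform under the elementary surgeries of index $0,1,2$ on sublevel sets.
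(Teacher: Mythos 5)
Your argument is essentially the paper's own proof: an induction over critical values in which a positive hyperbolic point connect-sums two components with $d_+=1$ into a single component with $d_+=1$, while a negative hyperbolic point attaches a handle inside one component and leaves $d_+$ unchanged. You are in fact more thorough than the paper, which only treats the two hyperbolic cases; for embryos, though, it is cleaner to observe directly that passing an embryo level changes neither the sublevel set up to deformation retract nor the count $e_+-h_+$ (an embryo of $\FF$ contributes to neither), since perturbing $\phi$ alone splits the critical point of the \emph{function} without splitting the embryo of $\FF$, so the perturbed function is no longer taming the same foliation near that point.
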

\begin{proof} Arguing by induction, suppose the claim is true for sublevel  sets $\{\phi\leq a\}$ for $a< c$, where $c$ is a critical value of $\phi$. The stable separatrices of a  positive hyperbolic point  $h\in A_c$  end at two different components of $A_c^-$, and hence two different components of $\{\phi\leq c-\eps\}$. Hence, passing through $h$ connect sums two components with $d_+=1$ into   1 component with $d_+=1$.  The stable separatrices of a  negative hyperbolic point  $h\in A_c$  end at the same component of  $\{\phi\leq c-\eps\}$ and hence, passing through $h$ does not change $d_+$ of the corresponding component.
\end{proof}

  The following lemma is straightforward.
 \begin{lemma}\label{lm:taming-nearby} Suppose $\phi$ tames a Morse  characteristic foliation $\FF$. Then if $\FF'$ is sufficiently $C^2$-close to the given one and  has the same  singular points then $\phi$ tames $\FF'$ as well.
 \end{lemma}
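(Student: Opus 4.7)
The plan is to fix a gradient-like vector field $Z$ generating $\FF$ that realizes the taming of $\FF$ by $\phi$, and then to produce a generator $Z'$ of $\FF'$ sharing the zero set $\Sigma$ of $Z$ and $C^2$-close to $Z$ on all of $S$; I then verify that $\phi$ still tames $\FF'$ via $Z'$. Such a $Z'$ exists because on $S\setminus\Sigma$ the two line fields are $C^2$-close and non-singular, so a consistent choice of direction together with a suitable rescaling (arranged to match the linearization of $Z$ at each $p\in\Sigma$) yields a global smooth generator of $\FF'$ that is $C^2$-close to $Z$, using that the singular sets coincide.

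The verification of the Lyapunov inequality $d\phi(Z')\geq c_1|Z'|^2+c_2|\nabla\phi|^2$ naturally splits into two regimes. On the complement of a small neighborhood $N$ of $\Sigma$, the field $Z$ is uniformly bounded below in norm and the inequality is strict, hence persists under a $C^0$-small perturbation with slightly smaller constants. Near each singular point $p$ I would invoke the openness part of Lemma \ref{lm:local-taming}(i): the set of Liouville germs at $p$ for which a fixed $\phi$ is a Lyapunov function is $C^2$-open. Since $Z'$ and $Z$ share the zero at $p$ and are $C^2$-close, $\phi$ remains Lyapunov for the germ of $Z'$ at $p$. Combining the two regimes, and noting that the inequality is global (no partition of unity is needed since $Z'$ itself is a single smooth field), gives the global Lyapunov property.

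Finally, the sign-matching condition survives the perturbation automatically. The contact-geometric sign of a hyperbolic zero $h$ of $\FF$ is governed by the sign of $\Tr d_h Z$, a locally constant function of the Liouville field near $h$, while the Morse-theoretic positive/negative designation of the corresponding index $1$ critical point of $\phi$ depends only on $\phi$. So the agreement assumed for $\FF$ is automatic for $\FF'$. The only step with genuine analytic content is the local $C^2$-openness of the Lyapunov condition invoked in the previous paragraph, and I do not expect any serious obstacle there given the parametric tools already developed in Section \ref{sec:Lyapunov}.
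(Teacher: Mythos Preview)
Your argument is correct; the paper gives no proof, simply declaring the lemma ``straightforward.'' Your decomposition into the openness of the Lyapunov inequality (handled separately away from and near the fixed singular set, the latter being precisely the observation used in the microfibration check inside the proof of Lemma~\ref{lm:local-taming}(i)) together with the stability of the hyperbolic sign data under $C^2$-small perturbation is exactly the routine verification the authors have in mind.
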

  \begin{lemma}\label{lm:bound-basin}
 Suppose a function $\phi$ tames a   tight foliation $\FF$ without homoclinic connections between hyperbolic points or embryos.
 Given any two positive elliptic points $e_+, e_+'$, let   $\gamma(e_+, e_+')$  be the unique Legendrian path $e_+h_+^1e_+^1\dots h_+^ke_+^k$ provided by Corollary \ref{cor:unique-path}. Denote   $$c(e_+,e_+';\phi)=\min\{\phi(h_+^j), h_+^j\in\gamma(e_+, e_+')\}.$$
 We set $c(e_+,e_+;\phi)=0$ (assuming that $\phi=0$ at positive elliptic points). For each hyperbolic point  we denote by $E(h)=\{e_+,e_+')$ the positive elliptic end points of its stable separatrices.
 Then for any negative hyperbolic point $h_-$ of $\FF$ we have
\begin{align}\label{eq:simplicity}\phi(h_-)>c(e_+,e_+';\phi),\;\; \hbox{where}\;\;E(h_-)=\{e_+,e_+').\end{align}
\end{lemma}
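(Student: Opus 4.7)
The strategy will be to track connectivity of the positive elliptic points inside sublevel sets $\{\phi\leq a\}$ and to compare this against the tree $\Lambda$ of Lemma \ref{lm:no-loops}. The key claim I will establish is that $e_+$ and $e_+'$ already lie in a common component of a sublevel just below $\phi(h_-)$; together with the uniqueness of tree paths (Corollary \ref{cor:unique-path}), this forces every positive hyperbolic point on $\gamma(e_+,e_+')$ to have $\phi$-value strictly less than $\phi(h_-)$, which is exactly the asserted inequality.

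First I will dispose of the degenerate case $e_+=e_+'$: since $\phi$ is Lyapunov and strictly decreases along each separatrix $h_-\to e_+$, one has $\phi(h_-)>0=c(e_+,e_+;\phi)$. In the main case $e_+\neq e_+'$, pick a regular value $a=\phi(h_-)-\delta$ with no critical values in $(a,\phi(h_-))$. By the very definition of a \emph{negative} hyperbolic point, the two stable separatrices of $h_-$ meet the level $\{\phi=a\}$ at the \emph{same} component $C$ of $\{\phi\leq a\}$; flowing them further along $-Z$ down to $e_+$ and $e_+'$ places both endpoints inside $C$. The central step will then be to argue that only positive hyperbolic points can merge distinct components of sublevels, so that the merge history up to level $a$ is recorded exactly by the subforest $\Lambda_a\subset\Lambda$ spanned by the edges $h_+^j$ with $\phi(h_+^j)\leq a$. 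Indeed, a positive $h_+$ attaches a $1$-handle whose feet lie in two different components of the prior sublevel (by definition of positivity) and fuses them; a negative $h_-$ attaches a $1$-handle with both feet on one component, and since $S=S^2$ the result remains connected, merely acquiring an extra boundary loop; negative elliptic points simply cap off a boundary circle. This bookkeeping is consistent with Lemma \ref{lm:dplus-for-taming} because a tree with $k$ vertices has $k-1$ edges. From $e_+,e_+'\in C$ one concludes that these vertices lie in a common tree of $\Lambda_a$; Corollary \ref{cor:unique-path} then identifies their connecting path with $\gamma(e_+,e_+')$, giving $\phi(h_+^j)\leq a<\phi(h_-)$ for every $h_+^j$ on $\gamma$, and therefore $c(e_+,e_+';\phi)<\phi(h_-)$.

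The main obstacle I anticipate is cleanly justifying the ``negative hyperbolic points do not merge components'' step. I plan to combine the local $1$-handle attachment picture on the $2$-sphere with the arithmetic of $d_\pm$ from Lemma \ref{lm:d}: any spurious merge beyond those coming from positive hyperbolic points would force some component to violate the $d_+=1$ constraint of Lemma \ref{lm:dplus-for-taming}. A secondary subtlety is the possible presence of positive embryos on $\gamma(e_+,e_+')$ in the generalized Morse case, which I would handle by first perturbing each positive embryo into an adjacent elliptic--hyperbolic pair via Lemma \ref{lm:killing}(iii) and then applying the Morse argument verbatim.
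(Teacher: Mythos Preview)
Your proposal is correct and follows essentially the same idea as the paper's proof: both track whether the two incoming separatrices of $h_-$ land in the same or different components as the level $c$ varies, and both conclude by comparing the situation just below $\phi(h_-)$ (same component, by negativity of $h_-$) with the situation at small $c$ (different components). The paper's proof simply asserts the second fact (``for $c<c(e_+,e_+')$ we must have $C_c\neq C_c'$'') without argument; your subforest $\Lambda_a$ and the $1$-handle bookkeeping on $S^2$ supply exactly the missing justification, and your use of sublevel-set components rather than level-set components is a harmless variation.
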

\begin{remark}\label{rem:taming-no-hom}
Note that if $\phi$ tames a   tight foliation $\FF$ with  homoclinic connections, then it also tames a   tight foliation $\FF'$ without  homoclinic connections, because both properties, taming and tightness, are   open.
\end{remark}
\begin{proof} 
Let $E(h_-)=\{e_+,e_+')$. If $e_+=e_+'$ then the statement is vacuous, so we suppose that $e_+\neq e_+'$.  For any  regular value $c\in(0,\phi(h_-))$ denote by $C_{c}, C_c'$ components  of the level set $A_c$ intersecting separatrices of $h_-$ originated at $e_+$ and $e_+'$, respectively. For $c$ close  to $\phi(h_-)$ we have  $C_c=  C_c'$  while  for 
$c<c(e_+,c_+')$ we must have  $C_c\neq  C_c'$.
\end{proof}
 
 \begin{lemma}\label{lm:2-pos}
 
 Let $\FF$ be a tight characteristic foliation without homoclinic connections. Then any Lyapunov function for $\FF$  which satisfies condition \eqref{eq:simplicity} is taming.   \end{lemma}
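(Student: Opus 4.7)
The plan is to verify, saddle by saddle, that the hyperbolic singular points of $\FF$ are positive Morse critical points of $\phi$ exactly when they are positive contact hyperbolic, and negative Morse exactly when they are negative contact. Since $\phi$ is Lyapunov, the directing vector field $Z$ is gradient-like for $\phi$, so every zero of $Z$ is already a Morse critical point of $\phi$, with positive elliptic points at local minima and negative elliptic points at local maxima; only the signs at the hyperbolic saddles need to be matched.

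I would argue by induction on the critical values of $\phi$, maintaining as the inductive hypothesis that for every regular value $c$, two positive elliptic points $e,e'$ of $\FF$ lie in the same component of $\{\phi\le c\}$ if and only if every edge of the unique tree-path $\gamma(e,e')$ in the tree $\Lambda$ of Lemma \ref{lm:no-loops} has $\phi$-value below $c$. For $c$ below the lowest saddle value the claim is immediate, as each positive elliptic point is isolated in its own small disk; crossings at elliptic critical values of $\phi$ only add or cap a disk and trivially preserve the hypothesis.

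At a saddle value $a=\phi(h_+)$ with $h_+$ positive contact, Lemma \ref{lm:pos-psv} and tightness force the two incoming separatrices of $h_+$ to terminate at distinct positive elliptic points $e_1,e_2$, and $h_+$ is itself the edge of $\Lambda$ joining them. The inductive hypothesis then places $e_1,e_2$ in different components of $\{\phi\le a-\eps\}$, so the stable manifold of $h_+$ meets $A_a^-$ in two different components: $h_+$ is a positive Morse saddle, and the attached $1$-handle merges the two components, preserving the hypothesis. At a saddle value $a=\phi(h_-)$ with $h_-$ negative contact and $E(h_-)=\{e,e'\}$, condition \eqref{eq:simplicity} together with the inductive hypothesis already puts $e,e'$ in the same component of $\{\phi\le a-\eps\}$. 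To upgrade ``same component'' to ``same boundary circle of that component,'' I would observe that the opposite conclusion produces a Legendrian polygon bounded by the two incoming separatrices of $h_-$ together with a suitable return path in $\Lambda$, and all its elliptic vertices and pseudovertices would be positive, contradicting Lemma \ref{lm:tight-polygon}. Hence $h_-$ is a negative Morse saddle, and the corresponding handle splits a boundary circle without altering which positive elliptics share a component.

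The main obstacle is precisely this boundary-circle step in the negative case: the bare sub-level component information supplied by \eqref{eq:simplicity} is not by itself sufficient, and the tight-polygon obstruction of Lemma \ref{lm:tight-polygon} plays the essential role. Generalized-Morse foliations with embryos are handled at the end by approximation: resolve each embryo into an elliptic-hyperbolic pair via Lemma \ref{lm:killing}(iii), apply the Morse version just established, and return to $\FF$ using the openness of the taming property recorded in Lemma \ref{lm:taming-nearby}.
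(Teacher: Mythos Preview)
Your inductive scheme matches the paper's, but the key step in your negative case does not go through. You assert that condition~\eqref{eq:simplicity} together with the inductive hypothesis places $e,e'$ in the same component of $\{\phi\le a-\eps\}$. Your inductive hypothesis says this happens exactly when \emph{every} edge of $\gamma(e,e')$ has $\phi$-value below $a$, i.e.\ when $\max_j\phi(h_+^j)<a$; but \eqref{eq:simplicity} only gives $\min_j\phi(h_+^j)<a$. These are not the same: with three positive elliptics $e_1,e_2,e_3$, edges $h_{12},h_{23}$ at $\phi$-values $1,2$, and a negative hyperbolic $h_-$ with $E(h_-)=\{e_1,e_3\}$ and $\phi(h_-)=1.5$, condition~\eqref{eq:simplicity} holds yet $e_1,e_3$ lie in different components of $\{\phi\le 1.5-\eps\}$, so $h_-$ is Morse-positive and $\phi$ is not taming. (The paper's own Case~2 appeals to \eqref{eq:simplicity} in the same way; the argument is only correct if $c(e_+,e_+';\phi)$ is read as a maximum rather than a minimum.)

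Your ``upgrade'' step is also off. The Legendrian polygon you describe---the two incoming separatrices of $h_-$ closed up by the tree path $\gamma(e,e')$---carries $h_-$ itself as a \emph{negative} pseudovertex (two incoming separatrices meeting there), so Lemma~\ref{lm:tight-polygon} yields no contradiction. In fact no polygon is needed: on $S^2$ every level circle separates, so if $e,e'$ lie in the same sublevel component $U$ then both separatrices of $h_-$, being $\phi$-monotone arcs ending at the single point $h_-$ in one complementary disk of $U$, must exit through the same boundary circle. This is exactly the paper's parenthetical ``taking into account that $S$ is the sphere.'' Finally, for the direction you call Case~A the paper argues differently: rather than invoking the tree directly, it starts from a Morse-negative saddle, uses the induction to get $d_+(U)=1$, and applies Lemma~\ref{lm:d}(ii) to deform $U$ down to a single positive elliptic, forcing both separatrices to the same source and hence contact-negativity via Lemma~\ref{lm:pos-psv}. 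Your tree argument for that direction is correct in isolation, but it only maintains the inductive hypothesis if the negative case is also handled at each level.
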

 
 \begin{proof}  Let us  check that condition  \eqref{eq:simplicity} implies that the two notions of positivity coincide for all hyperbolic points.  Suppose  that $h$ is  negative for the function, i.e.   both incoming separatrices of $h$ intersect the same component    $C\subset A_c^-$, and hence the same component $U$ of $\{\phi\leq c-\eps\}$.  Arguing by induction, we can assume that the claim is already proven for hyperbolic points in $\{\phi<c\}$, and hence Lemma \ref{lm:dplus-for-taming} implies that $d_+(U)=1$  and hence,  we can use Lemma \ref{lm:d}(ii)  to  eliminate  all critical points in $U$ except  1 positive elliptic point. Then both separatrices of $h$  will be   originating at the same positive elliptic point which by Lemma \ref{lm:no-loops} implies that $h$ is negative for $\FF$. If $h$ is positive for $\phi$ then incoming separatrices of $h$ intersect different  components    of  $  A_c^-$, and hence, taking into account that $S$ is the sphere,  different  components  $U,U'$  of $\{\phi\leq c-\eps\}$.     Hence, generically   the stable separatrices   of $h$ originate at positive elliptic points $e\in U$ and $e'\in U'$. Let  $\gamma(e,e')$ be the path provided by Corollary \ref{cor:unique-path}. If $h$ were negative for $\FF$ then we would have $\phi(h)\leq c(e,e')$, contradicting  condition \eqref{eq:simplicity}. Hence, $h$ is positive for $\FF$.

   \end{proof}
  %%%%%%%%%
  \begin{lemma}\label{lm:tame-simple}
  Any taming function is simple.
  \end{lemma}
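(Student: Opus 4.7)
The plan is to argue by contradiction: I assume that at some critical level $a$ the graph $\Gamma_a^+(\phi)$ contains a cycle, and I will build a closed walk in the positive-separatrix skeleton $\Lambda$ that traverses certain edges an odd number of times, contradicting the fact that $\Lambda$ is a tree (Lemma \ref{lm:no-loops}). To invoke Lemma \ref{lm:no-loops} I first use Remark \ref{rem:taming-no-hom} to replace $\FF$ by an arbitrarily small perturbation that is still tight, still tamed by $\phi$, and has no homoclinic connections. Any cycle in $\Gamma_a^+(\phi)$ then yields distinct components $C_0,\ldots,C_{k-1}$ of $A_a^-$ and distinct positive hyperbolic critical points $h_1,\ldots,h_k$ of $\phi$ at level $a$, with $h_j$ giving the edge between $C_{j-1}$ and $C_j$ (indices mod $k$, $k\geq 2$). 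Because $\phi$ tames $\FF$, each $h_j$ is also a positive hyperbolic point of $\FF$, and its two stable separatrices in $\FF$ end at positive elliptic points $\alpha_j,\beta_j$, with $\alpha_j$ in the component $V_{j-1}$ of $\{\phi\leq a-\eps\}$ adjacent to $C_{j-1}$ and $\beta_j$ in $V_j$ adjacent to $C_j$; in particular $\beta_j$ and $\alpha_{j+1}$ both lie in $V_j$.

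The key technical step is to connect $\beta_j$ to $\alpha_{j+1}$ by a path in $\Lambda$ consisting of stable separatrices of positive hyperbolic points at levels strictly less than $a$. Let $\Lambda_j\subset \Lambda$ be the subgraph whose vertices are the positive elliptic points of $\FF$ lying in $V_j$ and whose edges correspond to the positive hyperbolic points of $\FF$ lying in $V_j$. Since $\phi$ is Lyapunov and strictly decreases from any positive hyperbolic to its elliptic endpoints along stable separatrices, any such separatrix issuing from a positive hyperbolic in $V_j$ stays in $V_j$, so $\Lambda_j$ is genuinely a subgraph of $\Lambda$ with $e_+(V_j)$ vertices and $h_+(V_j)$ edges. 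Lemma \ref{lm:dplus-for-taming} gives $d_+(V_j)=e_+(V_j)-h_+(V_j)=1$, so the subforest $\Lambda_j$ of the tree $\Lambda$ has exactly one connected component; it is itself a tree and contains a unique path from $\beta_j$ to $\alpha_{j+1}$.

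Concatenating the edges $h_1,\ldots,h_k$ with these interior paths produces a closed walk in the tree $\Lambda$ in which each $h_j$ appears exactly once, while every other edge traversed lies in some $\Lambda_j$ and hence corresponds to a positive hyperbolic point of level strictly less than $a$, so it is not among $\{h_1,\ldots,h_k\}$. A closed walk in a tree must traverse every edge an even number of times, which yields the desired contradiction. The step I expect to be the main obstacle is the connectivity of $\Lambda_j$: it rests on the Lyapunov property (used to confine the separatrices of interior positive hyperbolics to $V_j$) together with the sharp equality $d_+(V_j)=1$ from Lemma \ref{lm:dplus-for-taming}, and it is precisely here that the full taming hypothesis (rather than merely being a Lyapunov function) is essential.
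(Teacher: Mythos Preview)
Your argument is correct and reaches the same contradiction as the paper, but the route is genuinely different in execution. The paper uses Lemma~\ref{lm:dplus-for-taming} together with Lemma~\ref{lm:d}(ii) to \emph{deform} the foliation inside each sublevel component $V_j$ so that only one positive elliptic point remains; then the stable separatrices of the cycle hyperbolics $h_1,\dots,h_k$ run directly to those elliptic points and assemble into a Legendrian polygon with only positive vertices, contradicting Lemma~\ref{lm:no-loops}. You instead leave $\FF$ untouched and work combinatorially: you use $d_+(V_j)=1$ not to deform but to count, concluding that the subforest $\Lambda_j\subset\Lambda$ has $e_+(V_j)-h_+(V_j)=1$ component and is therefore itself a tree, which lets you thread a path through each $V_j$ and close up a walk in $\Lambda$ that hits each $h_j$ exactly once. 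The paper's deformation is morally the operation of collapsing each connected $\Lambda_j$ to a point; your argument makes that connectivity explicit and avoids the isotopy. Both approaches hinge on exactly the same two inputs: the taming hypothesis (so the $h_j$ are positive for $\FF$) and $d_+(V_j)=1$.

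One small technical point you should patch: Lemma~\ref{lm:no-loops} is stated only for Morse foliations, and Remark~\ref{rem:taming-no-hom} lets you remove homoclinics but not embryos. If $\FF$ has positive embryos in some $V_j$, then a stable separatrix of a positive hyperbolic in $V_j$ can terminate at a positive embryo rather than a positive elliptic point, and your vertex--edge count for $\Lambda_j$ no longer matches the graph you actually build. This is not a serious obstacle---the paper itself notes that the analogous statements hold in the generalized Morse case, and alternatively the deformation via Lemma~\ref{lm:d}(ii) disposes of embryos inside each $V_j$---but as written your invocation of Lemma~\ref{lm:no-loops} needs a sentence justifying the reduction to the Morse case.
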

\begin{proof}
Suppose there is a  loop  in  the graph $\Gamma_+^c$  for a critical value $c$. Using  Lemmas \ref{lm:dplus-for-taming} and \ref{lm:d}(ii) we deform $\FF$  on components  $\{\phi\leq c-\eps\}$ to leave exactly one positive elliptic point in each component. But then the separatrices  of positive hyperbolic points  forming the cycle could be continued to these elliptic points to create a Legendrian polygon with only positive vertices,
 thus contradicting Lemma \ref{lm:no-loops}.
\end{proof}
 
  %%%%%%%%
%%%%%%%%%%%%%%%%%
% \section{Proof of Proposition \ref{prop:tight-2-sphere}}\label{sec:proof-taming}

 \subsection{Existence of taming functions}
 
 \begin{prop}\label{prop:tight-2-sphere} Let $(S,\FF)$ be  a  generalized Morse  tight  foliation on  a 2-sphere. Then  $S$ admits a taming function.
\end{prop}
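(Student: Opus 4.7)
The plan is to proceed by induction on the total number of non-elliptic singularities of $\FF$, using Proposition \ref{prop:key} to peel off an allowable singularity at each step and rebuilding the taming function via Lemmas \ref{lm:ext-to-hyp} and \ref{lm:local-taming}. By Remark \ref{rem:taming-no-hom} together with Lemma \ref{lm:taming-nearby} and the openness of tightness, we may assume $\FF$ has no homoclinic connections between hyperbolic points or embryos. The base case, when $\FF$ has only elliptic singularities, follows from $d_\pm(S)=1$ (Lemma \ref{lm:d}(i)): $\FF$ has exactly one positive elliptic $e_+$ and one negative elliptic $e_-$, and a standard height function on $S^2$ agreeing with the local Lyapunov models of Lemma \ref{lm:local-taming}(i) near $e_\pm$ tames $\FF$.

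For the inductive step, Proposition \ref{prop:key} provides an allowable singularity $x$. In each of the four allowable configurations, $x$ possesses a natural elliptic partner against which it can be cancelled by a $C^1$-small ambient isotopy of $S$ using Lemma \ref{lm:killing}(i)--(ii). The resulting foliation $\FF'$ is tight (tightness is invariant under ambient isotopy), generalized Morse, and has strictly fewer non-elliptic singularities, so by induction $\FF'$ admits a taming function $\phi'$. Reversing the isotopy reintroduces $x$; we then modify $\phi'$ only inside a small neighborhood $U$ of the recreated separatrices by invoking the relevant case of Lemma \ref{lm:ext-to-hyp}, which produces a Lyapunov function $\Phi$ for $\FF$ on $U$ constant on $\p U$ and exhibiting the correct local Morse model at $x$. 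The boundary values of $\Phi$ and $\phi'$ are matched by reparametrizing $\phi'$ along its gradient flow, exactly as in the proof of Proposition \ref{prop:simple-extension}, yielding a globally Lyapunov function $\phi$ for $\FF$.

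To upgrade $\phi$ from Lyapunov to taming, we apply Lemma \ref{lm:2-pos}: it suffices to preserve the height condition $(*)$ of Lemma \ref{lm:bound-basin}. The inductive hypothesis gives $(*)$ for $\phi'$ and $\FF'$, and the single new hyperbolic or embryo point is reinserted with precisely the positivity type dictated by its allowable classification, so $(*)$ propagates automatically to $\phi$ and $\FF$. The resulting $\phi$ therefore tames $\FF$, completing the induction.

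The main obstacle is the case of an allowable negative hyperbolic $x=h_-$, which Lemma \ref{lm:ext-to-hyp}(a)--(d) does not explicitly treat: both $\FF$-incoming separatrices of $h_-$ originate at a common positive elliptic, while its unstable $\FF$-separatrices run off to negative singularities, and there is no direct local extension of the Lyapunov function from the positive-elliptic side across a negative saddle in the statements at hand. This is resolved either by Morse-theoretic duality via Corollary \ref{cor:symmetry}, applying the construction to $(-\phi,-Z)$ so that the negative hyperbolic becomes a positive one and Lemma \ref{lm:ext-to-hyp}(a) applies, or by adapting the surrounding-lemma technique from \cite{CE12} used in the proof of Lemma \ref{lm:ext-to-hyp}(a) directly to this configuration, using that the two $\FF$-incoming separatrices of $h_-$ together with $e_+$ bound a disk on which the Lyapunov data can be surrounded and extended.
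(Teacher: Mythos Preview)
Your approach shares the paper's inductive backbone (Proposition \ref{prop:key} to locate an allowable singularity, Lemma \ref{lm:ext-to-hyp} to build a local Lyapunov model), but the induction is organized differently and this creates a real gap. The paper does not cancel the allowable point against an elliptic partner. Instead it invokes Lemma \ref{lm:main} to build the taming function on an entire \emph{sublevel set} $V=\{\phi\le 1\}$ containing the allowable point together with \emph{all} positive elliptic points; it then simplifies $\FF$ on $V$ via Lemma \ref{lm:d}(ii) (or, in the negative-hyperbolic case, on the two complementary discs via Lemma \ref{lm:d}(iii)), applies the inductive hypothesis to the simplified foliation $\FF'$, and concatenates $\phi|_V$ with the inductively obtained $\psi|_{S\setminus V}$. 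The point of working on a sublevel set is that the seam $\partial V$ is a \emph{level curve} of both functions, so the concatenation is automatically Lyapunov and the sign condition at every hyperbolic point is inherited verbatim from one side or the other.

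Your cancel-and-reinsert scheme loses exactly this. After cancelling $h_+$ with $e_1$, the neighborhood $U$ of $\gamma_1\cup\gamma_2$ from Lemma \ref{lm:ext-to-hyp} is \emph{not} a sublevel set of the inductively produced $\phi'$: it contains $e_2$, where $\phi'$ already has a minimum, and $\phi'|_{\partial U}$ is not constant. The suggestion to ``reparametrize $\phi'$ along its gradient flow'' as in Proposition \ref{prop:simple-extension} does not solve this; that argument shifts values by a small function, whereas here you must force $\partial U$ to become a level set. More seriously, cancelling $(h_+,e_1)$ redirects the incoming separatrices of \emph{every other} positive hyperbolic point that used to land at $e_1$; the tree of Lemma \ref{lm:no-loops} for $\FF'$ is not obtained from that of $\FF$ by deleting a leaf, so the paths $\gamma(e_+,e_+')$ and the constants $c(e_+,e_+';\cdot)$ change globally. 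Your assertion that condition \eqref{eq:simplicity} ``propagates automatically'' is therefore unjustified: you would need to check it for all negative hyperbolic points of $\FF$, whose separatrix endpoints may differ from those in $\FF'$.

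Finally, the ``main obstacle'' you flag is a misreading. In Lemma \ref{lm:ext-to-hyp}(b) the word ``positive'' modifies only the elliptic point; the sign of $h$ is irrelevant to the surrounding construction, and the paper explicitly invokes case (b) for the allowable negative hyperbolic in the proof of Lemma \ref{lm:main}. The duality argument via Corollary \ref{cor:symmetry} is circular in any case: that corollary concerns simple functions on $S$ already known to exist, not taming functions under construction.
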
 
 
\begin{remark} As we will see in Proposition \ref{cor:taming-tight} below existence of a taming function for a characteristic foliation is not only necessary, but  also sufficient for tightness.
\end{remark}
%\begin{lemma}\label{lm:red-to-Morse} Let $\FF$ be an Igusa foliation, and $\wh\FF$ is a Morse foliation obtained by resolving each embryo to am elliptic-hyperbolic pair. Then if $\wh\phi$  is a simple taming function for $\wh\FF$ then it can be deformed to a simple taming function $\phi$ for $\FF$.
%\end{lemma}
 
 \begin{lemma}\label{lm:main}
 Let $\FF$ be a tight foliation on $S$.  Suppose that $\FF$ has non-elliptic critical points.
 Then there exists a function $\phi:S\to\R$ which has the following properties:
 \begin{itemize}
 \item [-] $\phi$ attains its minimal value $0$ at all  positive elliptic points;
 \item [-] $\phi$ tames $\FF$ on $V:=\{\phi\leq 1\}\subset S$. There is a unique critical point $c$ in
 $\{0<\phi\leq 1\}$;
 \item [-] $\phi(c)=\frac12$ and $c$     is either hyperbolic or embryo.
 \end{itemize}
 Moreover, we can arbitrarily up to additive constants prescribe the Lyapunov function $\phi$ near the critical points of $\phi$ in $\{\phi\leq 1\}$.
 \end{lemma}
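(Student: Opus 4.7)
The plan is to locate one allowable critical point of $\FF$ using Proposition \ref{prop:key}, surround it by a disc carrying a Lyapunov function supplied by Lemma \ref{lm:ext-to-hyp}, and assemble $\phi$ from this disc together with small Lyapunov discs around the remaining positive elliptic points. Concretely, I first apply Proposition \ref{prop:key} to produce an allowable singular point $c$ of $\FF$; by definition $c$ is hyperbolic or an embryo, and in each of the four allowable cases the incoming separatrices of $c$ (or, for a negative embryo, the entire disc of incoming trajectories) issue from one or two positive elliptic points $e_1$ and possibly $e_2$.

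Using Lemma \ref{lm:local-taming}, I choose local Lyapunov functions $\psi_{e_i}$ near each $e_i$ and $\psi_c$ near $c$, arranged so that $\psi_{e_i}(e_i)=0$ and $\psi_c(c)$ is a prescribed positive value. I then invoke the matching part (a)--(d) of Lemma \ref{lm:ext-to-hyp} to extend these germs to a Lyapunov function $\Phi$ on a neighborhood $U_c$ of $\{c\}\cup\overline{\gamma_1\cup\gamma_2}$, or $\{c\}\cup\overline{\gamma}$, or the disc $D$ in case (d); after overall rescaling I may demand $\Phi(e_i)=0$, $\Phi(c)=\frac12$, and $\Phi|_{\partial U_c}\equiv 1$. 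At each remaining positive elliptic point $e$ of $\FF$, Lemma \ref{lm:local-taming} supplies a local Lyapunov function $\psi_e$ with $\psi_e(e)=0$, rescaled so that $\{\psi_e\leq 1\}$ is a small disc $D_e$ disjoint from $U_c$. I define $\phi$ to equal $\Phi$ on $U_c$ and $\psi_e$ on each $D_e$, and extend arbitrarily above level $1$ over the rest of $S$; smoothness is easy to arrange because our freedom is unconstrained on $\{\phi>1\}$, and no new critical points appear in $V=\{\phi\leq 1\}=U_c\cup\bigcup_e D_e$.

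It remains to check the taming condition on $V$. By construction the only non-elliptic critical point of $\phi$ in $\{0<\phi\leq 1\}$ is $c$, and the sign of $c$ as a critical point of $\phi$ matches its sign as a singularity of $\FF$: in case (a) the two stable separatrices of $c$ land in the different components of $\{\phi\leq \frac12-\eps\}$ containing $e_1\neq e_2$, so $c$ is positive for $\phi$; in case (b) both separatrices land in the single component containing $e$, so $c$ is negative for $\phi$; in cases (c) and (d) the embryo indices $\frac12$ and $\frac32$ follow directly from the local normal form of Lemma \ref{lm:Takens-param}. The main technical obstacle I anticipate is exactly this sign-matching step, together with ensuring that the ordering $\psi_c(c)>\psi_{e_i}(e_i)$ required by Lemma \ref{lm:ext-to-hyp} is compatible with the prescribed local Lyapunov functions (which is what the ``prescribe up to additive constants'' clause in the statement buys us). Once this bookkeeping is in place, the four-case definition of ``allowable'' in \S\ref{sec:allowable} was engineered precisely to make the verification automatic, and the rest of the argument reduces to the Lyapunov-function constructions already provided by Lemmas \ref{lm:local-taming} and \ref{lm:ext-to-hyp}.
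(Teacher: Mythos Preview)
Your proposal is correct and follows essentially the same route as the paper: invoke Proposition~\ref{prop:key} to locate an allowable singularity $c$, use Lemma~\ref{lm:local-taming} for local Lyapunov germs at $c$ and the positive elliptic points, and apply the matching case (a)--(d) of Lemma~\ref{lm:ext-to-hyp} to build $\Phi$ on a neighborhood of the relevant separatrix configuration with $\Phi\equiv 1$ on the boundary. Your write-up is in fact more careful than the paper's on two points the latter leaves implicit: the explicit sign-matching verification that $c$ is positive/negative for $\phi$ exactly when it is positive/negative for $\FF$, and the extension of $\phi$ above level~$1$ to all of $S$.
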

 \begin{proof} Lemma \ref{lm:main} is essentially  a reformulation of   the Key Proposition \ref{prop:key}. First,   let us    construct the taming function   with the critical value $0$ on  neighborhoods of positive elliptic points, see Lemma \ref{lm:local-taming}. According to Proposition  \ref{prop:key} there exists an allowable vertex, which is either
 \begin{itemize}
 \item[a)] a positive hyperbolic point with two  incoming separatrices from   different  positive elliptic points, or
 \item[b)] a negative hyperbolic point with two  incoming separatrices from  the same  positive elliptic point, or
 \item[c)] a positive embryo with the  incoming separatrix from  a  positive elliptic point, or
 \item[d)] a negative embryo with {\em all} the  incoming trajectories from  a  positive elliptic point.
 \end{itemize}
In case a) let  $h$  be a positive hyperbolic point with the  incoming separatrices from two positive elliptic points. Then according to Lemma \ref{lm:ext-to-hyp}a) there exists a function on a neighborhood $U$ of the union of both separatrices which has the elliptic points as their minima and  the hyperbolic point as the saddle point and which is constant on the boundary $\p U$. Similarly, in case b)  let $h$ be  a negative hyperbolic point with the  incoming separatrices from the same positive elliptic point.  Then Lemma \ref{lm:ext-to-hyp}b) yields a taming  function on a neighborhood $U$ of the union of both separatrices which has the elliptic point as its minimum and  the hyperbolic point as the saddle point and which is constant on the boundary $\p U$. In both cases the simplicity condition is satisfied. Cases c) and d), of a positive  and negative embryos   follow from  Lemma \ref{lm:ext-to-hyp}c) and d).
In all the above cases we can prescribe $\phi$ to be equal (up to an additive constant) to any Lyapunov function. \end{proof}
 
 \begin{proof}[Proof of  Proposition \ref{prop:tight-2-sphere}]
 We prove  Proposition \ref{prop:tight-2-sphere}  by induction in the number of critical points.
Suppose   the proposition is already proved  for less than $k$ hyperbolic points or embryos.
Consider the function $\phi$ provided by Lemma \ref{lm:main}.
If $c$ is a positive hyperbolic point then the simplicity condition implies that each component of $V=\{\phi\leq 1\}$ is a disc with $d_+=1$. The same obviously holds if $c$ is an embryo. Hence, Lemma \ref{lm:d}(ii) allows us to kill all critical points in these discs except 1 positive elliptic. By induction hypothesis the new tight foliation $\FF'$ which coincides with $\FF$ on $\{\phi\geq 1\}$ admits a taming function $\psi$. We can assume that  $\{\psi\leq 1\}= V$. Hence,   concatenating    $\phi|_V$  with $\psi|_{S\setminus V}$ we get the required taming function for $\FF$. If $c$ is a negative hyperbolic point, then one of the components of  $V$ is an annulus $A$. The complement of $A$ is the union of two discs, $S\setminus \Int A=D_1\cup D_2$. For each of the complementary discs, $D_j':=S\setminus\Int D_j, j=1,2$ we have $d_+(D_j')=1$, according to Lemma \ref{lm:d}(iii). Hence, we can deform away all singular points but 1 positive elliptic point on $D_1'$, and using the induction hypothesis construct a  function $\psi_1$  which tames the foliation $\FF_1$ on $S$ which is equal to $\FF$ on $D_1$ and to the foliation with 1 elliptic singularity on $D_1'$. Similarly, we construct a 
 function $\psi_1$  which tames the foliation $\FF_2$ on $S$ which is equal to $\FF$ on $D_2$ and to the foliation with 1 elliptic singularity on $D_2'$. We can assume that $\psi_1|_{\p D_1}= \psi_2|_{\p D_2}=1$. Concatenating functions $\psi_1|_{D_1}, \psi_2|_{D_2}$ and $\phi|_{A}$ we get the required taming function for $\FF$.
 \end{proof}
 \subsection{The parametric case}\label{sec:param}
 %%%%%%%%%%%%%%%%%%%%%%%%%%%%%%%%%%%%%%%%%%%%%%%%%%%%%%%%%%%%%%%%%%%%%%%%%%%%%%%%%%%%%%%%%%%%%%%%%%%%%%%%%%%%%%%%%%%%%%%%
  \begin{lemma}\label{lm:homotopy-taming}
 The space of  taming functions  for a tight foliation is  a convex set  $C^\infty(S)$.
 \end{lemma}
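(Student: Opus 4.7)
The plan is to verify, one condition at a time, that the three defining features of a taming function for $\FF$---being a Lyapunov function for some vector field $Z$ directing $\FF$, having critical points of the right local type at each singularity of $\FF$, and matching the $\FF$-signs at hyperbolic singularities---are each preserved under convex combinations in $\phi$.

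Let $\phi_0,\phi_1$ be two taming functions for $\FF$ and set $\phi_t:=(1-t)\phi_0+t\phi_1$ for $t\in[0,1]$. First, since the line field of $\FF$ is one-dimensional, any two generating vector fields differ by multiplication by a positive function; after rescaling I may fix a single $Z$ directing $\FF$ for which both $\phi_0$ and $\phi_1$ are Lyapunov. The quantity $d\phi(Z)$ is linear in $\phi$, so
\[
d\phi_t(Z)=(1-t)\,d\phi_0(Z)+t\,d\phi_1(Z)>0
\]
away from the zeros of $Z$; together with standard estimates on $|\nabla \phi_t|$, this yields the Lyapunov inequality for $\phi_t$ with appropriate constants. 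In particular the critical set of $\phi_t$ coincides with the singular set of $\FF$.

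Next, at each singular point $p$ with linearization $A=d_pZ$, the Lyapunov condition on the symmetric Hessian $H$ is that $HA+A^{T}H$ be positive definite. As the preimage of the (convex) positive-definite cone under the \emph{linear} map $H\mapsto HA+A^{T}H$, this is a convex condition on $H$. Hence $H_t:=(1-t)H_0+tH_1$ satisfies it, forcing the correct signature---positive (resp.\ negative) definite at a positive (resp.\ negative) elliptic zero, and indefinite of signature $(+,-)$ with its positive eigendirection along the unstable direction of $A$ at a hyperbolic zero. At an embryo I run the same convex-preimage argument on the two-jet using the Takens normal form supplied by Lemma~\ref{lm:Takens-param}.

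The main obstacle is the matching of signs at hyperbolic critical points, which is a global property of sublevel sets of $\phi$ and not just a local one. My plan is to invoke Lemma~\ref{lm:2-pos}: assuming $\FF$ has no homoclinic connections, a Lyapunov function is taming exactly when it satisfies the inequalities~\eqref{eq:simplicity}, each of which is a strict linear inequality on the $\phi$-values at a fixed, $\FF$-determined finite set of singular points, and hence carves out an open half-space in $C^\infty(S)$. Their intersection with the convex set of Lyapunov functions for $Z$ is therefore convex and contains $\phi_0,\phi_1$, hence all $\phi_t$. The case with homoclinic connections is reduced to the previous one via Remark~\ref{rem:taming-no-hom}: since both tightness and taming are $C^\infty$-open, one may perturb $\FF$ slightly to destroy its homoclinic connections while preserving the taming of $\phi_0$ and $\phi_1$, apply the no-homoclinic case, and conclude.
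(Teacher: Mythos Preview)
Your proof follows essentially the same route as the paper's: the Lyapunov condition is convex, one reduces to the no-homoclinic case via Remark~\ref{rem:taming-no-hom}, and then the sign-matching is handled through the characterization of Lemmas~\ref{lm:bound-basin} and~\ref{lm:2-pos} in terms of the inequalities~\eqref{eq:simplicity}, which are preserved under convex combination. The paper's own proof is two sentences long and does exactly this; your additional local Hessian/two-jet analysis in the second paragraph is correct but redundant, since the Lyapunov inequality for $Z$ already forces the correct generalized-Morse type at each singularity.

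One point deserves care. You assert that each instance of~\eqref{eq:simplicity} is ``a strict linear inequality'' that ``carves out an open half-space.'' Taken literally, the condition $\phi(h_-)>c(e_+,e_+';\phi)=\min_j\phi(h_+^j)$ involves a minimum and so defines a \emph{union} of half-spaces, which need not be convex. What rescues the argument is that a taming function in fact satisfies the stronger system $\phi(h_-)>\phi(h_+^j)$ for \emph{every} $h_+^j$ on the path $\gamma(e_+,e_+')$: for $h_-$ to be negative for $\phi$, the elliptic endpoints $e_+,e_+'$ of its separatrices must lie in the same component of $\{\phi<\phi(h_-)\}$, and since component-merging in the sublevel sets happens precisely at the positive saddles on the tree of Lemma~\ref{lm:no-loops}, this forces $\phi(h_-)$ to exceed the $\phi$-value of each positive saddle along $\gamma(e_+,e_+')$. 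This stronger system is a genuine intersection of half-spaces, hence convex, and still feeds into Lemma~\ref{lm:2-pos}. The paper glosses over this distinction as well, so your argument is at the same level of precision as the original.
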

 \begin{proof}
  Convex combination of Lyapunov functions for the same characteristic foliation  $\FF$ is again a Lyapunov  function.   Note that according to Remark \ref{rem:taming-no-hom} we can assume that the tight foliation  $\FF$ has no homoclinic conditions. Hence, we can apply   the simplicity condition \eqref{eq:simplicity} which  survives taking  a convex combination.
     \end{proof}
     
Let us denote by  $\TT$ the space of  tight  generalized Morse foliations on the sphere $S=S^2$ and by  $\PP$ the space of pairs $(\phi,\FF)$, where $\FF\in\TT$ and $\phi$ is  its taming function.  
\begin{prop}\label{prop:taming-parametric}
Let $\Lambda$ be a compact manifold with boundary. Then for any maps  $h:\Lambda\to\TT$ and $H\colon\Op\p\Lambda\to\PP$ such that $\pi\circ H=h|_{\Op \p\Lambda}$ there exists a map $\wh H:\Lambda\to\PP$ such that $\wh H|_{\Op\p\Lambda}=H$ and $\pi\circ \wh H= h$.
\end{prop}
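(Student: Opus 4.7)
The plan is to prove this by a partition-of-unity patching argument, exploiting the key fact from Lemma \ref{lm:homotopy-taming} that the space of taming functions for a \emph{fixed} tight foliation is convex in $C^\infty(S)$. For each $\lambda_0\in\Lambda$ I would build a local section $\phi_{\lambda_0}$ of $\pi$ over $h$ defined on a small neighborhood $U_{\lambda_0}$ of $\lambda_0$ in $\Lambda$. For $\lambda_0\in\Op\p\Lambda$ take $H$ itself. For $\lambda_0$ in the interior, Proposition \ref{prop:tight-2-sphere} produces a taming function $\psi$ for $h(\lambda_0)$; I would then extend this to a family over a neighborhood of $\lambda_0$ as follows: where the combinatorial type of the characteristic foliation is locally stable, Lemma \ref{lm:taming-nearby} says $\psi$ itself continues to tame $h(\lambda)$ after a small $C^2$-deformation; where embryo bifurcations occur, combine Lemma \ref{lm:local-taming}(ii) with the parametric normal forms of Lemma \ref{lm:Takens-param} and Proposition \ref{prop:IY-param} to produce continuously varying Lyapunov models near the bifurcating critical points, and glue these to the bulk construction via parametric versions of Lemma \ref{lm:ext-to-hyp}. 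Birth or death of homoclinic connections is handled via Remark \ref{rem:taming-no-hom}: on each $U_{\lambda_0}$ perturb $h(\lambda)$ slightly away from homoclinics, build the taming function for the perturbation, and note that openness of the taming condition makes the resulting function tame the original $h(\lambda)$ as well.

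Given a locally finite refinement $\{U_i\}$ of this cover together with a subordinate smooth partition of unity $\{\rho_i\}$, arranged so that partition-of-unity functions coming from the boundary collar sum to $1$ in a smaller neighborhood of $\p\Lambda$, I set
\[
\wh H(\lambda) \;=\; \Bigl(\sum_i \rho_i(\lambda)\,\phi_i(\lambda),\; h(\lambda)\Bigr).
\]
By Lemma \ref{lm:homotopy-taming}, for each fixed $\lambda$ this convex combination is again a taming function for $h(\lambda)$, so $\wh H(\lambda)\in\PP$ and $\pi\circ\wh H=h$. Boundary compatibility $\wh H|_{\Op\p\Lambda}=H$ follows directly from the arrangement of the partition of unity.

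The main obstacle is that the inductive construction of taming functions in Proposition \ref{prop:tight-2-sphere} depends on combinatorial choices — which vertex is allowable in Proposition \ref{prop:key}, the position of the sublevel set $V=\{\phi\leq 1\}$, the deformations and eliminations invoked via Lemma \ref{lm:d}(ii) — that are not canonical and hence cannot be expected to vary continuously in $\lambda$. Different local sections $\phi_i$ will generically disagree on overlaps. The escape from this difficulty is precisely the convexity of the fiber of $\pi$ over a fixed $h(\lambda)$: it allows us to build $\wh H$ by convex combination rather than by a single coherent parametric implementation of the non-parametric argument. A secondary technical point is that the Lyapunov models near embryos produced by Lemma \ref{lm:local-taming}(ii) must genuinely match the bulk construction in each $U_{\lambda_0}$; the statement of that lemma is given in precisely the relative parametric form required, and the openness of both the Lyapunov condition and the simplicity inequality \eqref{eq:simplicity} ensures the match survives small perturbations across the cover.
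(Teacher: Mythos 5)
Your proposal is correct and follows essentially the same route as the paper: local sections of $\pi$ are produced from the non-parametric existence result (Proposition \ref{prop:tight-2-sphere}) together with the stability of taming under small perturbations (Lemma \ref{lm:taming-nearby}) and the parametric Lyapunov models near embryos (Lemma \ref{lm:local-taming}(ii) via Proposition \ref{prop:IY-param}), and these local sections are then glued by a partition of unity using the convexity of the fiber (Lemma \ref{lm:homotopy-taming}). The paper packages this as a modified Serre covering-homotopy property over $D^k\times[0,1]$ with a local lifting lemma, but the mechanism is identical to yours.
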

\begin{proof}
We will deduce the statement by proving a slightly weaker version of the claim that $\pi:\PP\to\TT$ is a Serre fibration with a contractible fiber.  The fiber is indeed contractible, as Lemma \ref{lm:homotopy-taming} shows. Let us  show the following  modified Serre covering homotopy property: {\em  given a map $f:D^k\times [0,1]\to \TT$ and a covering map $F:D^k\times[0,\eps)\to\PP, \;\pi\circ F=f|_{D^k\times[0,\eps)}$ there exists a lift
$\wh F: D^k\times [0,1]\to \PP$ such that $\pi\circ \wh F=f$ and $\wh F=F$  on $ D^k\times 0$.}
 We will need the following 
 \begin{lemma}\label{lm:lift-local}
  Given any map $h:D^k(1)\to \FF$    there exists $\eps\in(0,1]$ and a map $  H:D^k(\eps)\to X$ such that $p\circ H=h$.
  \end{lemma}
    The notation $D^k(r)$  stands here  for the centered at $0$ disk of radius $r$ in $R^k$. 
\begin{proof}
Denote  $\FF_s:=h(s)$, $s\in D^k(r)$.
According to Proposition  \ref{prop:tight-2-sphere} it admits  a taming function $\phi$.  

\noindent(i)   Suppose first that $\FF_0$ has no embryos. Then  for a sufficiently small $\eps>0$ and $|s|\leq \eps$ there exists a family of diffeomorphisms $\alpha_{s }$,  $\alpha_{ 0}=\Id$,   such that the foliation  $( \alpha_{s })_*\FF_{s}$ has the same critical points   as $\FF_{0}$, and  hence according to Lemma \ref{lm:taming-nearby} the functions $\phi_{s}\circ\alpha_{s} $ tame $\FF_{s}$  for $s\in D^k(\eps)$ if  $\eps$ is chosen small enough. Hence, we can set $H(s)=(\FF_s,\phi_s)$ for $|s|\leq \eps$.
 
\smallskip  \noindent (ii) If $\FF_0$ has embryo points then,    according to Lemma \ref{lm:local-taming}ii), there exists $\eps>0$ and a family of   functions $\psi_{s}$, $ |s|<\eps$,  on a neighborhood $U$ of embryos of $\FF_0$ such that $\psi_0=\phi|_U$, and $\psi_s$ is Lyapunov for $\FF_s$.     
   Take a smaller  neighborhood $U'\Subset U$ of the embryo locus and consider a  cut-off function $\theta :U\to[0,1] $ which is supported in $U$ and  equals 1 on  $U'$,  
     and define 
 $\phi_{s}|_U:=\theta\psi_s+(1-\theta)\phi.$  Then for a small enough $\eps$ and $|s|<\eps$ the function $\phi_s$  is Lyapunov for $\FF_s|_U$, and hence,   applying the
   argument in (i) to $\FF_s|_{S\setminus U}$   we can  extend  the family $\phi_s$ to $S$.

\end{proof}

Now we can conclude the proof of the modified Serre covering homotopy property.
  Lemma \ref{lm:lift-local} allows us to construct   a  finite covering $$\bigcup\limits_1^N U_j\supset D^k\times[ \eps/2,1],\;\;  U_j\subset D^k\times( \eps/ 3, 1],$$ and maps $F_j:U_j\to\PP$ covering $f|_{U_j}$. Consider a partition of unity $\theta_j$, $j=0,\dots, N$, subordinated to the covering $U_0:=D^k\times(0,\eps), U_1,\dots, U_N$. Set $F_0:=F$.  Denote
$\FF_s=f(s), s\in D^k\times[0,1]$, $F_j(s)=(\FF_s,\phi_s^j),\; j=0,\dots, N$ for $s\in U_j$,  we can define
   the required
lift $\wh F$ by the formula $\wh F(s)=(\FF_s,\sum\limits\theta_j\phi^j_s),\;  s\in D^k\times[0,1]$.
\end{proof}
     \section{Igusa type theorem}
  We prove in this section an analog of Igusa's theorem about functions with moderate singularities for normalization of singularities of characteristic foliations on a family of embedded surfaces. We follow the general scheme of our papers \cite{EM00,EM12}, while  specializing the argument for the specific context of the current paper. 
  \medskip
  
 Given a family $\xi_\lambda$ of  contact structures, or more generally  codimension $1$ distributions on a manifold $U$, we will view  it   as a fiberwise distribution $\xi=\{\xi_\lambda\}_{\lambda\in\Lambda}$ on the trivial fibration $\Lambda\times U\to\Lambda$.  The notation $\zeta_0$  stands for the standard contact structure on $\R^3$. The parameter space $\Lambda$  will be assumed to be  a    compact manifold with boundary $\p\Lambda$. 
  
 \begin{prop}\label{prop:Igusa}
 Let $\xi=\{\xi_\lambda\}_{\lambda\in\Lambda}$   be a  fiberwise   contact structure  on $\Lambda\times U$, where $U=\Op_{\R^3}D$ is a neighborhood of  the  2-disc  $D\subset \R^2=\{x_3=0\}\subset\R^3$. Suppose that   $\xi_\lambda $ is transverse to $D$  for $\lambda$ in     $\p\Lambda $. Then there exists a $C^0$-small fiberwise isotopy $\phi^t:D\times\Lambda\to\Op D\times\Lambda$, $t\in[0,1] $, such that 
 \begin{itemize}
 \item[a)]    $\phi^t =\Id$ on   $\Op( \p \Lambda\times D \cup \Lambda\times \p D ) $, $t\in[0,1]$;
   and
 \item[b)] $\phi^1(\Lambda \times D)$ has fiberwise  generalized Morse tangencies to $\xi$.
 \end{itemize}
 \end{prop}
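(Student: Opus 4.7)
The plan is to adapt the parametric Igusa-type normalization of \cite{Ig84, EM00, EM12} to the present setting, using fiberwise normal perturbations of $D$ as the source of flexibility. The argument proceeds in four steps.

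First, I reformulate the problem in terms of Liouville vector fields. At every tangency point, $d\alpha_\lambda|_D \ne 0$, so $\alpha_\lambda|_D$ is a Liouville form with dual vector field $Z_\lambda$ whose zeros are exactly the tangencies of $\xi_\lambda$ to $D$. Being non-degenerate (Morse) or an embryo are open conditions on the $2$-jet of $Z_\lambda$, and they stratify the space of singular $2$-jets so that the embryo stratum has codimension $1$ in the singular locus while the remaining, ``worse than embryo'', degenerations form strata of codimension $\ge 2$ there.

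Second, I translate fiberwise normal isotopies of $D$ into perturbations of the Liouville form. In local coordinates in which $\alpha_\lambda = dx_3 + \beta_\lambda(x_1,x_2,x_3)$, the pullback of $\alpha_\lambda$ to the graph $\{x_3 = f_\lambda(x_1,x_2)\}$ with $f_\lambda$ supported in $\Int D$ equals $\alpha_\lambda|_D + df_\lambda + O(|f_\lambda|^2)$. Hence any $C^\infty$-small exact perturbation of the Liouville form can be realized, modulo higher-order terms, by a $C^0$-small normal isotopy. Since exact perturbations are enough to perturb the $2$-jet of $Z_\lambda$ at each zero into an arbitrary nearby $2$-jet (the constraint $\Tr = 1$ being automatic), normal isotopies of $D$ form a class flexible enough to support jet-transversality arguments.

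Third, I carry out the parametric normalization. Jet-transversality applied to the family $\{Z_\lambda\}$ over $\Lambda$ reduces us, after a $C^0$-small fiberwise isotopy, to a family whose singular locus in $\Lambda \times D$ is stratified with the expected dimensions: Morse points of dimension $\dim\Lambda$, embryos of dimension $\dim\Lambda - 1$, and worse-than-embryo strata of dimension $\le \dim\Lambda - 2$. For $\dim\Lambda \le 1$ only Morse and embryo singularities remain and we are done. For $\dim\Lambda \ge 2$, I invoke the wrinkling technique of \cite{EM00, EM12}: in an arbitrarily small neighborhood of each worse-than-embryo stratum, a further $C^0$-small normal isotopy introduces pairs of canceling embryos that surround the bad locus in parameter space, replacing it by generalized Morse singularities only. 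Iterating over the codimension strata from worst to least yields a family with only generalized Morse tangencies. The boundary hypothesis is handled automatically: for $\lambda$ in a neighborhood of $\p\Lambda$ the transversality assumption gives $Z_\lambda \ne 0$, so no modification is needed there, and all perturbations can be localized away from $\p\Lambda$ and from $\Lambda \times \p D$.

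The main obstacle is the wrinkling step within the \emph{constrained} class of Liouville vector fields. One must verify that the local models used in \cite{EM00, EM12} to remove higher-codimension singularities can be realized by exact $1$-form perturbations of $\alpha_\lambda|_D$ (equivalently, by normal isotopies of $D$), and that the newly created embryos conform to the Takens normal form of Lemma \ref{lm:Takens-param} and Proposition \ref{prop:IY-param}. This adaptation of the unconstrained Eliashberg-Mishachev construction to the Liouville setting is the technical heart of the proof.
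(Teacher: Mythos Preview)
Your approach is genuinely different from the paper's, and the ``main obstacle'' you flag at the end is a real gap rather than a routine verification. You attempt to normalize the singularities of the family of Liouville fields $Z_\lambda$ directly, using normal isotopies of $D$ to realize exact perturbations of $\alpha_\lambda|_D$, and then invoke the wrinkling models of \cite{EM00,EM12}. But those wrinkling models are built for \emph{functions} (or unconstrained maps), where arbitrary jet perturbations are available; here your perturbations of $Z_\lambda$ are Hamiltonian, hence traceless at first order, and the higher jets are similarly constrained. You correctly identify that one must check the cancellation/creation models survive under this constraint, but you do not carry this out, and it is not obvious that they do. (Also, your formula for the pullback to a graph drops the $O(|f|)$ term coming from $\partial_{x_3}\beta_\lambda$, which matters for controlling the jet of $Z_\lambda$ at a tangency.)

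The paper sidesteps this obstacle entirely by a two-stage maneuver. First (Lemma~\ref{lm:Igusa1}), it forgets the contact condition and homotopes $\xi$ through plane \emph{distributions} so that the tangency locus $V\subset W$ becomes a codimension-$2$ submanifold in an admissible folded position with small fold; this uses the unconstrained result of \cite{EM00} on folded positions of framed submanifolds. Second (Lemma~\ref{lm:Igusa-local}), it writes down by hand an explicit fiberwise contact structure $\zeta$ on $\Op V$ whose tangencies to $D$ are generalized Morse and lie exactly on $V$, adjusting the framing by an extra double-fold if necessary. Finally, it compares $\zeta$ with the original $\xi$ along $V$ via Darboux, extends the resulting tangential homotopy over $W$, and integrates it to an actual fiberwise isotopy using the \emph{wrinkled embedding} $h$-principle of \cite{EM09}. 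Thus the paper never tries to wrinkle within the constrained Liouville class; it moves the distribution first, builds an explicit contact model, and then uses wrinkled embeddings to realize the matching isotopy of $D$. This is what your proposal is missing.
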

  Denote $W:=\Lambda\times D$, $\wh W:=\Lambda\times U$, where $U$ is a neighborhood of $D$ in $\R^3$. Denote by $\Vert^3$ the rank $3$ vector bundle over $\wh W$ tangent to the fibers of the projection $\pi:\wh W\to \Lambda$. and by $\Vert^2$ the rank $2$ bundle over $W$ tangent to the fibers $W\to\Lambda$.
   We have $\p W=\p\Lambda\times D\cup\Lambda\times\p D$.
 
 We begin  the proof with two lemmas.
 A  codimension 2 submanifold  $V\subset W$  is said to be  in an {\em  admissible  folded  position} if the restriction $\pi|_V:V\to\Lambda$ has only fold singularities, the  fold $\Sigma$ divides $V$ into $V=V_+\cup V_-$,  $ V_+\cap V_- =\Sigma$, and the line bundle $\Ker(d\pi|_{TV}) $ over $\Sigma$ extends to $V_-$ as a trivial subbundle $\mu$ of $\Vert^3|_{V_-}$. We say that its fold $\Sigma=\Sigma^1( \pi|_V)$ is {\em small} if it is contained in a union of balls  in $  V$.

 \begin{lemma}\label{lm:Igusa1}
 Under an assumption of Proposition \ref{prop:Igusa} there exists a fixed on  $\Op( \p W) $  fiberwise homotopy $\xi^t$, $t\in[0,1]$, such that  the tangency locus $V=\{( \lambda,x)\in \Lambda\times D,  \xi^1_\lambda(x)=T_xD\}$ is a codimension 2 submanifold of $W$ which is in an admissible  folded position with a small fold.  \end{lemma}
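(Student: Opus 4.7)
The strategy is to view the tangency locus as the zero set of a section of a rank-two bundle over $W=\Lambda\times D$, and then invoke an Igusa-type parametric h-principle in order to arrange fold singularities for the projection of this zero set onto $\Lambda$.

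Fix a family of contact 1-forms $\alpha_\lambda$ with $\xi_\lambda=\Ker\alpha_\lambda$ on $\wh W$. The restriction $\alpha_\lambda|_D$ is a 1-form on $D$ depending on $\lambda\in\Lambda$, i.e.\ a section $s$ of the pull-back bundle $\pi^*T^*D$ over $W$; by construction $V=s^{-1}(0)$. Since the contact condition is open in the $C^1$-topology of forms, any sufficiently small perturbation of $s$, supported away from $\Op(\p W)$, can be realized by adjusting $\alpha$ in a neighborhood of $D$ and thus corresponds to an admissible homotopy $\xi^t$. The hypothesis on $\p\Lambda$ forces $s$ to be nowhere zero over $\Op(\p\Lambda)$, so the entire construction will be carried out relative to $\p W$.

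After a first small parametric perturbation, Thom's transversality theorem lets us assume that $s$ is transverse to the zero section of $\pi^*T^*D$. Then $V$ is a smooth codimension-two submanifold of $W$ with $\dim V=\dim\Lambda$, and $\pi|_V\colon V\to\Lambda$ is a map between equidimensional manifolds. The parametric Igusa-type h-principle of \cite{EM00, EM12} (following \cite{Ig84}) now allows us, by a further small homotopy of $s$, to reduce the singularities of $\pi|_V$ to folds only. Embryo tangencies of $D$ to $\xi$ arise precisely at these fold points, which is consistent with the target class of generalized Morse foliations.

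The smallness of the fold locus $\Sigma$ and the triviality of the kernel extension come out of the wrinkled form of the same h-principle. After a last small homotopy one arranges $\Sigma$ as a disjoint union of spheres, each enclosing a small wrinkle-disc in $V$, so $\Sigma$ lies in a union of balls. Inside the standard wrinkle model there is a canonical side $V_-$ on which $\Ker(d\pi|_{TV})$ extends to a trivial line subbundle of $\Vert^3$, given by an explicit vertical direction transverse to $V$; patching these local trivializations over the wrinkle-discs yields the required global trivial bundle $\mu$. The main obstacle is to execute all three requirements (fold singularities, smallness of $\Sigma$, triviality of $\mu$ on $V_-$) simultaneously while keeping the homotopy of sections inside the space of restrictions of contact forms. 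This is exactly what the wrinkled-map h-principle of \cite{EM00, EM12} delivers, and contact compatibility is ensured by the openness of the contact condition under the $C^1$-small perturbations of $\alpha_\lambda$ used throughout.
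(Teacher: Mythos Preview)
Your approach and the paper's are close in spirit---both encode the tangency locus $V$ as a codimension-two framed submanifold of $W$ and then invoke the folding result from \cite{EM00}---but your write-up contains a genuine gap stemming from a misreading of what the lemma asks for.

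The homotopy $\xi^t$ in the statement is a homotopy of \emph{plane fields}, not of contact structures. (Look at how the lemma is used in the proof of Proposition~\ref{prop:Igusa}: the output $\wt\xi$ is called ``a fiberwise distribution'' and Lemma~\ref{lm:Igusa-local} is then needed precisely to make it contact near $V$.) You, however, insist that the perturbations be realized by contact forms, and justify this by openness of the contact condition under $C^1$-small perturbations. The problem is that the h-principle of \cite{EM00} (Theorem~1.2 there) only produces a $C^0$-small ambient diffeotopy $\alpha_t:W\to W$ moving $V$ into folded position. The induced change on your section $s$ (or on the contact form) is $C^0$-small but in general \emph{not} $C^1$-small, so your openness argument does not apply and you cannot keep the forms contact along the way. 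The paper simply sidesteps this: it takes the unit normal $\nu$ to $\xi$, transports it by $\nu\circ\alpha_t^{-1}$, and declares $\xi^t$ to be the orthogonal plane field---a perfectly good plane distribution with the right tangency locus, but not contact.

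A secondary point: to apply the folding theorem from \cite{EM00} you need the normal bundle of $V$ in $W$ to be trivial. You do not verify this. In your setup it follows because $V=s^{-1}(0)$ for a section of $\pi^*T^*D$, which is trivial since $D$ is a disc; in the paper's setup $V$ is a regular preimage under a map $W\to S^2$ and hence framed. Either way it should be said explicitly, since this is the hypothesis that drives the whole argument.

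If you drop the contactness requirement and realize the isotopy of $V$ by pulling back the whole $1$-form (or the normal vector) via the ambient diffeotopy, your argument becomes essentially identical to the paper's.
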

 
 \begin{proof}   We  trivialize $\Vert^3$ by an orthonormal frame $e_1,e_2,e_3$ such that $e_1,e_2\in TD$, i.e. $e_1,e_2$ generate $\Vert^2$. Denote by $\Uert $ the associated to $\Vert^3$ unit  sphere bundle. The trivialization of $\Vert^3$ yields a splitting $\Uert=\wh W\times S^2$.  A fiberwise orthogonal to $\xi$   unit vector field $\nu$   yields a map $F:=F_\xi:W\to S^2$, and    
  $V:=  F^{-1}( e_3)\cup F^{-1}( -e_3) \subset W  $  is the tangency locus of $D$ to $\xi=\{\xi_\lambda\}$.   Perturbing, if necessary, $\xi$   we can assume that $\pm e_3$ are regular values of $F$, and hence $V$ 
   is a framed codimension 2 submanifold  of $\wh W$.
   
   The triviality of the normal bundle to $ V$ in $ W$ implies, see  e.g. Theorem 1.2 in  \cite{EM00},    that there exists a $C^0$-small diffeotopy    $\alpha_t: W \to W$, which is fixed over $\p\Lambda$, and  such that   $\alpha_1(V)$
   is in an admissible  folded position with a small  fold.
   Therefore, the homotopy of  plane distributions orthogonal to the  homotopy  of vector fields   $\nu\circ\alpha_t^{-1}\in\Vert^3$ has the required properties.
  \end{proof}
 
\begin{lemma}\label{lm:Igusa-local} \begin{enumerate}
\item Suppose that $ V={}^+V\cup {}^-V \subset W$  is in an    admissible  folded  position with small folds. Then there exists   a fiberwise contact structure  $\zeta $ on $\Op V\subset\wh W$ such that $F_{\zeta}^{-1}(\pm e_3)={}^\pm V$. 
 \item Given a  framing $F$ of $ V$ one can find a $C^0$-small isotopy $\alpha_t:V\to W$ and a fiberwise contact structure $\wt\zeta$ on $\Op\alpha_1(V)$   such that
\begin{itemize}
\item[-]  the isotopy $\alpha_t:V\to W$ is  supported away  from $\Sigma$; 
\item[-] $\alpha_1(V)$  is  in an admissible folded position, and
\item[-]  the push-forward framing $(\alpha_1)_*F$ is homotopic to the framing $F_{\wt\zeta}$.
\end{itemize}
\end{enumerate}
\end{lemma}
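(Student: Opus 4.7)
The plan is to treat both parts as local constructions near the codimension-two tangency locus, using the explicit parametric normal forms already established in the paper.

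For part (1), I would build $\zeta$ by patching explicit local models. Trivialize $\Vert^3$ by $e_1,e_2,e_3$ so that $D=\{x_3=0\}$ fiberwise. Near a point of ${}^\pm V\setminus\Sigma$, the submanifold is the graph of a section $\lambda\mapsto v(\lambda)=(v_1(\lambda),v_2(\lambda))$, and in adapted fibered coordinates $(\lambda,x_1,x_2,x_3)$ I use the fiberwise contact form
$$\alpha^{\pm}_\lambda\;=\;\pm dx_3\,+\,(x_1-v_1(\lambda))\,dx_2\,-\,(x_2-v_2(\lambda))\,dx_1,$$
which satisfies $\alpha^{\pm}\wedge d\alpha^{\pm}=\pm 2\,dx_3\wedge dx_1\wedge dx_2\neq 0$, has tangency locus with $D$ equal to the graph of $v$, and is co-oriented by $\pm e_3$ there. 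Near a fold point $\sigma\in\Sigma$, the admissible folded position combined with the parametric Takens normal form (Lemma \ref{lm:Takens-param}) and the Ilyashenko--Yakovenko unfolding (Proposition \ref{prop:IY-param}) produces, in suitable fibered coordinates on $D$, a Liouville field $Z_\lambda=x_1\frac{\p}{\p x_1}+F(x_2,\lambda)\frac{\p}{\p x_2}$ whose zero locus is exactly $V$ near $\sigma$; I then set $\alpha_\lambda=dx_3+\iota_{Z_\lambda}(dx_1\wedge dx_2)$, which is contact (one computes $\alpha_\lambda\wedge d\alpha_\lambda=(1+\partial_{x_2}F)\,dx_3\wedge dx_1\wedge dx_2$, nonzero near the embryo where $\partial_{x_2}F$ vanishes to first order) with tangency locus $V$ near $\sigma$. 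A partition of unity $\{\rho_\beta\}$ on a finite cover of $\Op V$ by such charts then produces $\alpha=\sum\rho_\beta\alpha^{(\beta)}$ whose restriction to $D$ still vanishes exactly along $V$ with the correct co-orientation (all local forms agree on this), and which is contact on a small enough neighborhood of $V$ by openness of the contact condition.

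For part (2), start from the $\zeta$ of part (1) and let $F_\zeta$ be its induced framing of the normal bundle of $V$. Framings of an oriented codimension-$2$ submanifold $V\subset W$ form a torsor over $H^1(V;\Z)$, and near $\Sigma$ the parametric Takens model pins down the framing uniquely up to homotopy, so the discrepancy class $\kappa=[F]-[F_\zeta]$ is represented in $H^1(V\setminus\Op\Sigma;\Z)$. I would realize $\kappa$ by a two-step process: (i) a $C^0$-small isotopy $\alpha_t$ supported in a tubular neighborhood of a cycle $\gamma\subset V\setminus\Op\Sigma$ dual to $\kappa$ (automatically preserving admissible folded position since $\alpha_t=\Id$ near $\Sigma$), and (ii) a compactly supported modification of the contact form on $\Op\alpha_1(V)$ inserting a full rotation of the contact plane around each generator of $\kappa$, realized as a rotation of the Gauss map around the $\pm e_3$ axis whose derivative along $\gamma$ changes by $2\pi$. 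Each such twist changes the induced framing by one generator of $H^1(V;\Z)$, and by stretching it over a long portion of a tube it stays $C^1$-small everywhere, keeping the deformation inside the open contact locus; iterating produces $\wt\zeta$ with $F_{\wt\zeta}$ homotopic to $(\alpha_1)_*F$.

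The main obstacle in both parts is preserving the contact condition through the patching and twisting. Since contactness is $C^1$-open and the local models are explicit, this is handled by shrinking neighborhoods and performing modifications in small steps. A more subtle point is the uniqueness (up to homotopy) of the framing produced by the Takens model near $\Sigma$, which is needed to localize the discrepancy $\kappa$ away from $\Sigma$; this follows from the parametric uniqueness of the Takens--Ilyashenko--Yakovenko normal form and the fact that the fibered diffeomorphism it provides is the identity on $\Lambda_0$-type loci up to homotopy.
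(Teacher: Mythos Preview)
There is a genuine gap in part (i). Your model $\alpha^{\pm}_\lambda=\pm dx_3+(x_1-v_1)dx_2-(x_2-v_2)dx_1$ on $V\setminus\Sigma$ produces only \emph{elliptic} tangencies, but the embryo model you write near $\Sigma$ necessarily has one elliptic and one hyperbolic branch: for $Z_\lambda=x_1\partial_{x_1}+F(x_2,\lambda)\partial_{x_2}$ the linearization at a zero $x_2=c$ has eigenvalues $1$ and $\partial_{x_2}F(c,\lambda)$, and these have opposite sign on the two sheets of the fold. So on the $V_-$ sheet your two local charts disagree in index, and a partition-of-unity average cannot interpolate between them: with $\beta_1=\pm dx_3+x_1dx_2-x_2dx_1$ (elliptic) and $\beta_2=\pm dx_3+x_1dx_2+cx_2dx_1$ (hyperbolic, $c>0$), the combination $\rho_1\beta_1+\rho_2\beta_2$ becomes degenerate where $\rho_1=c\rho_2$ and is not contact there. ``Openness of the contact condition'' does not help --- it controls small perturbations, not convex combinations of structurally different models. (Also, the appeal to Lemma~\ref{lm:Takens-param} and Proposition~\ref{prop:IY-param} is in the wrong direction: those results normalize a \emph{given} Liouville field, whereas here you are trying to manufacture one with prescribed zero locus; what is actually needed is just the explicit formula $F(y,s)=y^2-s$ in fold coordinates.)

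The paper's construction avoids all of this by writing down a single global formula near the fold, $\wh\alpha=\pm dz+xdy+(s-y^2)dx$, and then \emph{extending} it by explicit coordinate changes separately over $V_+$ (to an elliptic model $\pm dz+r^2d\phi$) and over $V_-$ (to a hyperbolic model $\pm dz+2xd\wt y+\wt y dx$, using the line bundle $\mu$ that comes with the admissible folded position). No partition of unity is used; the models are made to literally agree on overlaps.

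For part (ii) you have the right cohomological picture (the discrepancy lives in $H^1(V;\Z)$, represented by a codimension-one cycle $\Delta$), but you miss the reason the isotopy $\alpha_t$ is there at all. Twisting the framing by a full turn along $\Delta$ can be realized by twisting the coordinate frame of the \emph{hyperbolic} model, since that model has a distinguished line $\mu$ to rotate; the elliptic model has no such line, so one cannot twist over $V_+$. The isotopy in the paper is used precisely when $\Delta\not\subset V_-$: one creates an additional double fold along $\Delta$, producing a new thin $V_-$ collar $\Delta\times(-\eps,\eps)$, and performs the twist there. Your step (ii), ``insert a full rotation of the contact plane,'' is not a well-defined operation over $V_+$ without this preliminary move.
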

\begin{proof}
(i) Denote ${}^\pm\Sigma:={}^\pm V\cap\Sigma$. Choose coordinates  $(x,y,z,s,u)$  on a tubular neighborhood ${}^\pm N$ of ${}^\pm \Sigma$ in $ \wh W$ such that  the projection $\pi$ is given by $(x,y,z,s,u)\mapsto \lambda=(s,u)$, the coordinate $z$ is equal to $x_3$ (so that $W=\{z=0\}$)
 $V=\{s=y^2\}\subset W$, $\Sigma=\{s=0\}\subset V$, and  the vector field $\frac{\p}{\p y}$ along $\Sigma$ is inward transverse to the boundary of $V_+={}^-V_+\cup {}^+V_+$.   

Define the fiberwise  contact structure $\wh \zeta$ on ${}^\pm N$ by the $1$-form 
$\wh \alpha:=\pm dz+xdy+( s-y^2)dx$.   Let  us  change the variable $\wt y=s-y^2$  on  ${}^\pm N\cap \Op {}^\pm V_-$,   so that we have       ${}^\pm V_-=\{  x=\wt y=0\}$.
 
Note that   on ${}^\pm N\cap \Op {}^\pm V_-$ we have $\wh \alpha:=\pm dz+\wt ydx+\frac{x}{2\sqrt{s-\wt y}}d\wt y$. Choosing a smaller neighborhood ${}^\pm \wh N\Subset {}^\pm N$ we can find   a contact  form $\alpha$  on  ${}^\pm \wh N\cup( {}^\pm N\cap \Op {}^\pm V_-)$  which is equal to $\wh\alpha$ on ${}^\pm \wh N$ and equal to
$ \pm dz+  2x d\wt y +\wt ydx$ near $(\p {}^\pm N)\cap \Op ({}^\pm V_-)$.  Extend the coordinates $(x,\wt y)$ to $\Op({}^\pm V_-)$ in such a way that $\frac{\p}{\p\wt y}$ spans $\mu$  over $\Sigma$, and extend $ \alpha$ to 
$\Op({}^\pm V_-)$  as $ \pm dz+   2x d\wt y +\wt ydx$ to $\Op({}^\pm V_-)\setminus{}^\pm N$.
 
 Similarly, choosing  a  coordinate   $\wt y=y^2-s$ on   ${}^\pm N\cap \Op( {}^\pm V_+)$  we get  $\wh \alpha:=\pm dz-\wt ydx+\frac{x}{2\sqrt{ \wt y+s}}d\wt y$. Hence, we can find a contact form $\alpha={}^\pm \wh N\cup( {}^\pm N\cap \Op ({}^\pm V_+)$  which is equal to $\wh\alpha$ on ${}^\pm \wh N$ and equal to
$ \pm dz+   x d\wt y -\wt ydx=\pm dz+r^2d\phi$ near $(\p {}^\pm N)\cap \Op( {}^\pm V_+)$, where $r,\phi$ are the corresponding polar coordinates.
  Finally  we   extend $\alpha$  to  the rest of $\Op V_+$ as equal to  $\ dz+r^2d\phi$.

\medskip (ii) The required  framing $F$ differs from   the framing $\wt F$    defined  by the constructed structure  $\zeta$ by a homotopy class of a map $h:V\to S^1$. Let $\Delta\subset V\setminus B$ be a hypersurface dual to the cohomology class $h^*\mu$, where $\mu$ is a fundamental class of $S^1$.  If $\Delta$ is contained in $V_{-}$ then by twisting the coordinate system along fibers of the normal bundle to $\Delta$ we can make the framing $\wt F$ homotopic to $F$. Otherwise, by  a $C^0$-small isotopy supported in $\Op \Delta$ we create an  additional  double fold bounding a new component  $\Delta\times(-\eps,\eps)$ of $V_-$, see Fig. \ref{fig:pleating}.
    \begin{figure}[h]
\centerline{\includegraphics[width=11cm]{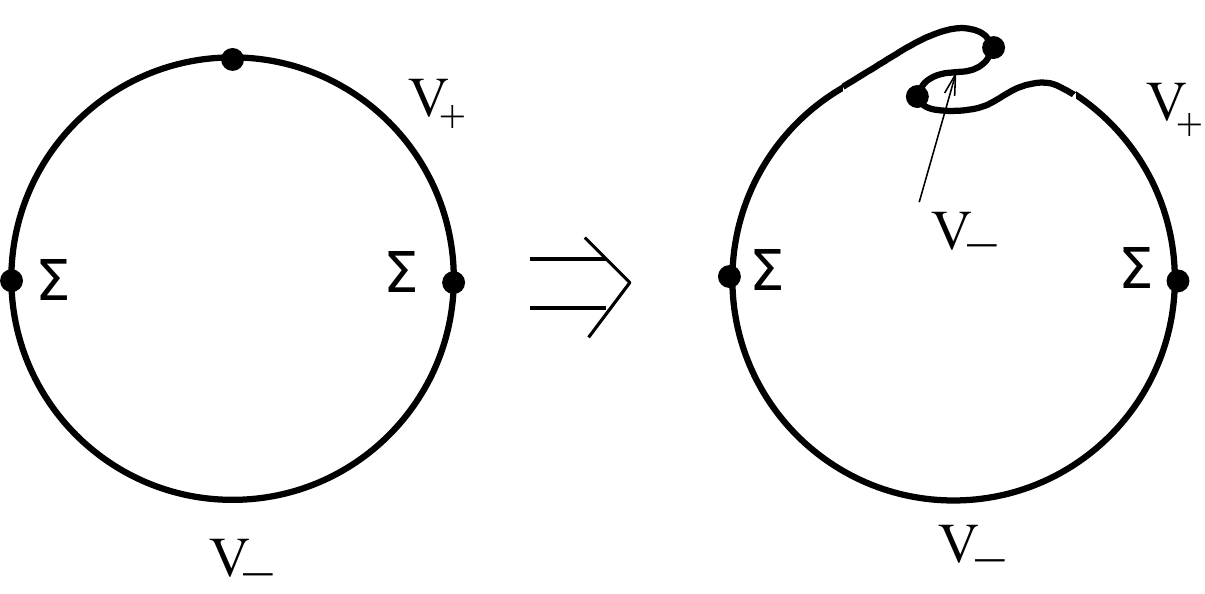}}
\caption{Creating a double fold along $\Delta$} 
\label{fig:pleating}
\end{figure}
 By appropriately twisting the line bundle $\lambda$ across this component, see Fig. \ref{fig:twisting}, we can change the homotopy class of the framing to make it coincide with $F$. 
 
    \begin{figure}[h]
\centerline{\includegraphics[width=9.5cm]{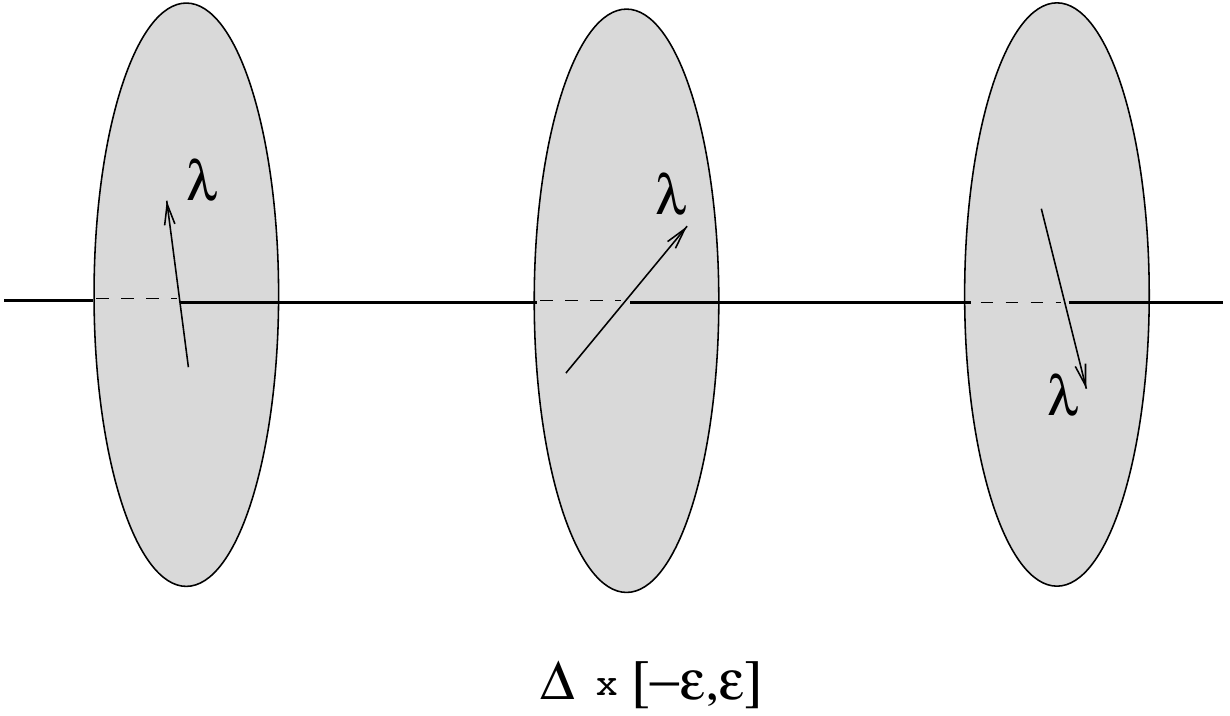}}
\caption{Twisting the line bundle $\lambda$ across $\Delta$} 
\label{fig:twisting}
\end{figure}
\end{proof}
%%\begin{lemma}\label{lm:Igusa-local}
%Let $V\subset W$ be a framed codimension $2$ submanifold which projects to $\Lambda$ with only fold singularities with the fold locus $\Sigma$ contained in a ball $B\subset V$.
%Then, after a possible double pleating of $V$ along a hypersurface in $V\setminus  B$, one can construct a fiberwise contact structure $\zeta$ on $\Op V$, which defines a prescribed framing of $V$, and which is fiberwise generalized Morse. 
%\end{lemma}
 
\begin{proof}[Proof of Proposition \ref{prop:Igusa}]
Let $\wt\xi$ be a fiberwise distribution on $\wh W$ provided by Lemma \ref{lm:Igusa1}.
Using Lemma \ref{lm:Igusa-local} we can modify $\wt\xi$ to make it contact on $\Op V$ with the fiberwise  generalized Morse tangencies to   the fibers of the fibration $W\to\Lambda$.  There exists a fiberwise diffeotopy $\delta_t:\Op V\to \Op V$  such that  \begin{itemize}
\item[-] $\delta_t(V)=V$;
\item[-] $(\delta_1)_*\xi=\wt\xi$;
\item the homotopy $(\delta_t)_*\xi|_{V}$ extends  to  a homotopy of distributions $\wt\xi_t$ over $\wh W$   connecting $\xi$ and $\wt\xi$.
\end{itemize}

  Note that two fiberwise contact structures $(\delta_1)^{*}\wt\xi$ and  $ \xi|_{V}$ coincide along $V$, and hence, the fiberwise Darboux theorem implies that there exists    a fiberwise isotopy   $\alpha_t:\Op V\to \Op V$ such that $\alpha_t|_V=\Id$, $(\alpha_1)_*\xi=(\delta_1)^{*}\wt\xi$. Let $\beta_t$ be the concatenation of the isotopies $\alpha_t$ and $\delta_t$.  The homotopy of homomorphisms $d\beta_t:\Vert^3\to\Vert^3$ over $\Op V$ can be extended to $\Op W$ as a homotopy  $\Phi_t$ of injective homomorphisms. Let us consider the  fiberwise tangential homotopy $\tau_t=(\Phi_1\circ \Phi_t^{-1})_{TD}$. Using \cite{EM09} it can be integrated to a wrinkled isotopy $w_t$, $C^0$-close to $\tau_t$ and which coincide with $ \alpha_1\circ{\alpha_t}^{-1}$ on $\Op V$. Hence, $w_1$ is (after the smoothing) is the  required embedding. Note the the details of this construction in the case of non-integrable distributions is provided in the Honors  Bachelor dissertation of Ying Hong Tham, see \cite{Th16}.
\end{proof}
%%%%%%%%%%%%%%%%%%%%%%%%%%%%%%%%%%%%%%%%%%%%%%%%%%%%%%%%%%%%%%%%%%%%%%%%%%%%%%%%%%
\section{Extension of contact structures to the $3$-ball}\label{sec:extension}

A characteristic foliation on the sphere $S$ is called {\em normalized} if it has  a fixed   positive and negative elliptic points at its poles, and a fixed arc $\gamma$ connecting the poles and transverse to $\FF$.

  A normalized characteristic foliation $(\FF,\gamma)$ is called {\em standard} if  it is diffeomorphic to the characteristic foliation on the   round  sphere $S_R=\{x_1^2+y_1^2+x_2^2=R^2\} \subset (\R^3, \{dz+r^2d\phi=0\})$ with a meridian connecting the poles as $gamma$.       The  space of standard   foliations is contractible,   and hence any standard foliation  has a homotopically canonical extension as a  tight contact structure on the ball $B$.

The goal of this section is  the following proposition. 
\begin{prop}\label{prop:ext-fol-cont0}
Let $\FF_\lambda$ be a family of tight normalized generalized Morse foliations on the sphere $S=\p B$, parameterized by a compact manifold $\Lambda$.  Suppose that $\FF_\lambda$ is standard for $\lambda\in\p \Lambda$. Then $\FF_\lambda$ extends to a family of tight contact structures $\xi_\lambda$  on the ball $B$, where the extension is standard for  $\lambda\in\p \Lambda$.
\end{prop}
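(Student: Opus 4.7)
My plan is to combine, in sequence, the three preparatory results of the paper with the complex-geometric machinery suggested in the introduction. First, I apply Proposition~\ref{prop:taming-parametric} to the family $\FF_\lambda$. For $\lambda\in\p\Lambda$ the standard foliation $\FF_\lambda$ admits a canonical taming function (for instance, the restriction of the height function $x_3$ to a round sphere $S_R\subset(\R^3,\zeta_0)$), so I first fix, on a neighborhood of $\p\Lambda$, a boundary datum of taming pairs $(\FF_\lambda,\phi_\lambda)$, and then Proposition~\ref{prop:taming-parametric} extends this datum to a family of taming functions $\phi_\lambda\colon S\to\R$ defined over all of $\Lambda$. By Lemma~\ref{lm:tame-simple} each $\phi_\lambda$ is simple, so Proposition~\ref{prop:simple-extension} produces a family $\Phi_\lambda\colon B\to\R$ of simple functions on the $3$-ball without critical points, restricting to $\phi_\lambda$ on $S$ and agreeing with the standard extension for $\lambda\in\p\Lambda$; moreover, this extension is homotopically canonical because the restriction map has contractible fibers.

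The decisive step is to convert the family $\Phi_\lambda$ into a family of tight contact structures $\xi_\lambda$ on $B$ whose characteristic foliation on $S$ equals $\FF_\lambda$. The approach is complex-geometric: each $\Phi_\lambda$ is realized as the defining function of a family of strictly pseudoconvex domains with respect to a suitable almost complex structure on a neighborhood of $B$, so that $\xi_\lambda$ is cut out by the field of maximal complex tangencies; structures built this way are Stein-fillable, and hence tight. The simplicity condition on $\Phi_\lambda$ is precisely the input needed for such a $J$-convex presentation: the fact that components of regular sublevels $\{\Phi_\lambda\le a\}$ are $3$-balls guarantees that at each critical value of $\phi_\lambda$ on the boundary the corresponding Weinstein-style handle can be attached. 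Positive elliptic points on $S$ contribute index-$0$ handles, positive hyperbolic points contribute Weinstein $1$-handles attached along isotropic $S^0$'s lying in \emph{distinct} component balls---this is exactly the tree condition on $\Gamma^+_a(\phi_\lambda)$---negative hyperbolic points correspond to upside-down handle attachments on the complementary Stein structure, and embryos correspond to handle-pair births. The taming property of $\phi_\lambda$ ensures that the Liouville field of the constructed structure on $\Op S$ directs the characteristic foliation $\FF_\lambda$.

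The principal obstacle is the parametric, relative-in-$\p\Lambda$ nature of this Stein realization: non-parametrically the handle-by-handle construction is classical, compare \cite{CE12}, but one must now produce a family that depends continuously on $\lambda$, restricts to the standard filling for $\lambda\in\p\Lambda$, and induces \emph{exactly} $\FF_\lambda$ on $\p B$. I would address this in two stages. Near the boundary, a parametric contact Darboux--Moser argument installs a germ of $\xi_\lambda$ on $\Op S$ compatible with the taming pair $(\FF_\lambda,\phi_\lambda)$, and the space of such germs up to contact isotopy is contractible. Inward propagation then proceeds by parametric Weinstein handle attachment across the critical values of $\Phi_\lambda$, with the isotropic attaching data at each positive hyperbolic critical value lying in the contractible space of embedded arcs between the relevant component balls; standardness on $\p\Lambda$ is preserved throughout. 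Tightness of the final family is automatic from Stein-fillability, and agreement with the given standard extension on $\p\Lambda$ follows from the contractibility of the space of standard fillings.
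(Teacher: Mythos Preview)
Your outline captures the overall architecture---taming functions via Proposition~\ref{prop:taming-parametric}, extension to the ball via Proposition~\ref{prop:simple-extension}, then a complex-geometric filling---but there is a genuine gap at the decisive step. A taming function $\phi_\lambda$ is merely a Lyapunov function for $\FF_\lambda$; it does not determine $\FF_\lambda$. Your Weinstein handle construction, or any construction driven solely by $(\Phi_\lambda,\phi_\lambda)$, will produce a contact structure on $B$ whose boundary characteristic foliation has $\phi_\lambda$ as a Lyapunov function and the correct singular set, but there is no reason for it to equal $\FF_\lambda$. Your sentence ``the taming property of $\phi_\lambda$ ensures that the Liouville field of the constructed structure on $\Op S$ directs the characteristic foliation $\FF_\lambda$'' is simply false as stated: many inequivalent characteristic foliations share the same Lyapunov function. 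The Darboux--Moser step you invoke to fix this only matches germs along $S$, not the global $\Phi_\lambda$-level structure you have just built inward.

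The paper closes this gap with two ingredients you omit. First, the notion of a \emph{special} taming function (Section~\ref{sec:Stein-type}): near each singular point one records an enrichment $(i,h)$ so that $\FF=\FF(\phi,i,h)$ is actually \emph{determined} by these data, and the compatible immersion of Proposition~\ref{prop:imm-fol-cont} together with Lemma~\ref{lm:global-Steiness} then produces a pseudoconvex hypersurface inducing \emph{exactly} that foliation. Second, since a generic $\FF_\lambda$ has singularities which are not of complex geometric type, one cannot do this directly; Proposition~\ref{prop:constr-enriched-taming} supplies a two-parameter isotopy $J_{s,t}$ of the sphere inside a collar so that the perturbed foliations $\FF_{\lambda,s}$ \emph{do} admit special taming functions, with $j_{\lambda,1}(S)$ pushed outward and $j_{\lambda,-1}(S)$ pushed inward. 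One then extends each $\FF_{\lambda,s}$ via Propositions~\ref{prop:imm-fol-cont} and~\ref{prop:ext-fol-cont}, glues with the collar contact structure $\zeta_\lambda$ to get $\eta_{\lambda,s}$, observes that $\eta_{\lambda,1}$ is tight (it is the pseudoconvex filling), and uses Gray stability in $s$ to conclude that $\eta_{\lambda,-1}|_B$ is tight as well---and \emph{this} one induces the original $\FF_\lambda$ on $S$, since $j_{\lambda,-1}(S)$ lies inside $B$. This sandwich argument is what your proposal is missing.
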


\subsection{Special simple taming functions}\label{sec:Stein-type}
Let $\phi:S\to\R$ be a simple taming function for a  generalized Morse characteristic foliation $\FF$.  We call $\phi$ {\em special} if there exists a neighborhood $U\subset S$ of the critical point locus of $\FF$, a complex structure $i$ on $U$ and a smooth function $h:U\to\R$ such that  $\FF|_{\Op p}$ is given by a Pfaffian equation
\begin{equation}\label{eq:special}
d^\C\phi+hd\phi=0,\;\hbox{where}\; d^\C:=d\circ i.\end{equation} 
We will refer to $(i,h)$ as an {\em enrichment} of the special simple taming function $\phi$. 
We will always assume  the   complex  structure $i$  compatible with the orientation of $S$ near positive points and    opposite to it  near the negative ones. This assumption automatically makes   the function  $\phi$   subharmonic on a sufficiently small neighborhood of singular points.

   Given a triple $(\phi,i, h)$   we denote by $\FF( \phi,i,h)$ the   foliation  defined by  the Pfaffian equation \eqref{eq:special}. 
\begin{lemma}\label{lm:ct-Lyap} The function $\phi$ serves as a Lyapunov function for the foliation $\FF ( \phi,i, h)$.
\end{lemma}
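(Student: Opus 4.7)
The plan is to exhibit an explicit vector field $Z$ on $U$ that spans the line field $\FF(\phi,i,h)$ and verify the Lyapunov inequality $d\phi(Z)\geq c_1|Z|^2+c_2|\nabla\phi|^2$ directly. Since the Lyapunov property is independent of the choice of ambient metric, I would first equip $U$ with a Riemannian metric $g$ compatible with $i$, i.e.\ $g(iX,iY)=g(X,Y)$; such a metric exists regardless of whether $i$ agrees with the orientation of $S$ on a given component, so the setup is uniform near positive and negative singular points. A standard consequence of compatibility is the orthogonality relation $g(iX,X)=0$, which will do most of the work below.

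Next, I would define
\[
Z\ :=\ \nabla_g\phi\ +\ h\cdot i(\nabla_g\phi).
\]
To check that $Z$ spans $\FF(\phi,i,h)$, write $\alpha=d^\C\phi+h\,d\phi$ and use $d\phi(Y)=g(\nabla\phi,Y)$ together with $d^\C\phi(Y)=d\phi(iY)=g(\nabla\phi,iY)$. Then
\[
d\phi(Z)=|\nabla\phi|^2+h\,g(\nabla\phi,i\nabla\phi)=|\nabla\phi|^2,
\]
\[
d^\C\phi(Z)=g(\nabla\phi,i\nabla\phi)-h\,g(\nabla\phi,\nabla\phi)=-h|\nabla\phi|^2,
\]
so $\alpha(Z)=0$ identically. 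Moreover $|Z|^2=(1+h^2)|\nabla\phi|^2$, again by orthogonality of $\nabla\phi$ and $i\nabla\phi$. Since the closure of the relevant neighborhood $U$ is compact, $h$ is bounded by some constant $H$, and the estimate
\[
d\phi(Z)=|\nabla\phi|^2\ \geq\ \tfrac12|\nabla\phi|^2+\tfrac{1}{2(1+H^2)}|Z|^2
\]
furnishes the Lyapunov inequality with explicit positive constants $c_1=\tfrac{1}{2(1+H^2)}$ and $c_2=\tfrac12$.

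The only potentially delicate point is a sign/orientation check: one must verify that $Z$ points in the direction of the characteristic flow (forward at positive elliptic sources, forward toward negative elliptic sinks), rather than the reverse. But $d\phi(Z)=|\nabla\phi|^2>0$ off the zero set of $\nabla\phi$ already means that $\phi$ strictly increases along $Z$, and by the taming convention local minima of $\phi$ correspond to positive elliptic points (sources) while local maxima correspond to negative elliptic points (sinks), so flowing uphill in $\phi$ is precisely the convention of the characteristic orientation in both cases. No other obstacle arises; the lemma is essentially a one-line computation once the candidate $Z$ is written down, and the compatibility assumption that $\phi$ be subharmonic with respect to $i$ near critical points is used only to ensure the whole enrichment setup is consistent, not in the Lyapunov estimate itself.
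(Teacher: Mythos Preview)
Your proof is correct and follows essentially the same route as the paper's: the paper works in a local holomorphic coordinate $z=x+iy$ with the Euclidean metric, writes the directing vector field as $Z=(\phi_x+h\phi_y)\partial_x+(\phi_y-h\phi_x)\partial_y$, and reads off $d\phi(Z)=\|d\phi\|^2\geq \|Z\|^2/(1+h^2)$, which is exactly your computation $d\phi(Z)=|\nabla\phi|^2$, $|Z|^2=(1+h^2)|\nabla\phi|^2$ done coordinate-free with an $i$-compatible metric. Your final paragraph about orientation is unnecessary for the Lyapunov statement (any directing $Z$ with $d\phi(Z)>0$ suffices), and note that under the paper's convention $d^\C\phi=-\phi_y\,dx+\phi_x\,dy$ one has $(d^\C\phi)(Y)=-d\phi(iY)$, so your $Z$ should be $\nabla\phi-h\,i\nabla\phi$; this sign flip in $h$ is immaterial to the estimate.
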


\begin{proof}
In local holomorphic coordinate $z=x+iy$ we have $d^\C\phi=-\phi_ydx+\phi_xdy$, 
$d^C\phi=(-\phi_y+h\phi_x)dx+(\phi_x+h \phi_y)dy$, and hence, $\FF(\phi,i,h)$ is directed by the vector field $Z=(\phi_x+h\phi_y)\frac{\p}{\p x}+(\phi_y-h\phi_x)\frac{\p}{\p y}.$
Hence, we  have $$d\phi(Z)=\phi_x^2+\phi_y^2=||d\phi||^2\geq\frac{||Z||^2}{(1+|f|^2||)}.   $$
\end{proof}
An isolated  singularity of a foliation $\FF$ is said to be of complex geometric type, if it has the form $\FF(\phi,i,h)$ for some $\phi,i,h$. 

\begin{Example}\label{ex:Stein}
 The following are examples of singularities   of complex geometric type:  
(i)      $\FF$ is directed by a linear vector field $Z(u)=Au$ on $\R^2$, where $A$ is non-degenerate;

 (ii) an embryo singularity.
 \end{Example}
 %%%%

\begin{proof}
 
(i) Suppose first that   that $A$ is diagonalizable in a real basis, and hence, we can assume it is  diagonal, $A=\left(\begin{matrix}
\lambda&0\\0&\mu\end{matrix}\right)$.  Then  the tangent  to $\FF$ line field is defined by the 1-form $d^\C\phi$ for $\phi=\frac12(\lambda x^2+\mu y^2)$  and  the standard complex structure. If $A$ is elliptic and has complex eigenvalues then  we can assume that
$A=  \left(\begin{matrix}
a &-1\\1&a\end{matrix}\right), a,b\neq 0.$ Then the $1$-form annihilating  $Z$ is proportional to $\alpha=-(x+ay)dx+(-y+ax)dy$.
Take $\phi=\frac a2(x^2+y^2)$ and choose  $h=-1$.
Then $d^\C\phi-fd\phi =\alpha$.  

 Finally, suppose  that $A$ is in a Jordan form   $A=\left(\begin{matrix}
 \mu&1\\0&\mu\end{matrix}\right).$
 One can check that it corresponds to the singularity $\FF(i,\phi,f)$, where
   $\phi=ax_1^2+bx_1y_1+cy_1^2$ with  $a=2\mu^3,\; b=2\mu^2,\; c=\mu(1+2\mu^2), $ and $h=\frac{1+4\mu^2}{2\mu}.$  
 
  (ii) As it was already stated in Section \ref{sec:char-fol} a result of F. Takens in \cite{Ta74}  implies that in    suitable coordinate system the directing vector field $Z$  can be written as
  $Z=x\frac{\p}{\p x}+y^2f(y)\frac{\p}{\p y}$, see Section \ref{sec:char-fol}, and hence, it corresponds  to 
  $\FF(\phi, i, 0)$ with   $\phi=\frac12x^2+\int
  \limits_0^yu^2f(u)du$.
\end{proof}
  \begin{remark}
  It follows from a smooth version of Poincar\'e-Dulac theorem, see \cite{Du04,IY91}, that a Liouville vector field with a non-degenerate  zero is orbitally equivalent to its linear part, provided the non-resonance  condition for its eigenvalues, i.e. that there are no non-negative integers $n_1,n_2$ with $n_1+n_2\geq 2$ such that $\lambda_j=n_1\lambda_1+n_2\lambda_2,\; j=1,2$. Hence,  most (and possibly all)    generalized Morse singularities   are    of complex geometric type. 
  \end{remark}
  The following lemma, whose proof is straightforward, clarifies the geometric meaning of singularities of complex geometric type.
  
\begin{lemma}\label{lm:com-geom-meaning}

Consider  a function $\phi:(\C,0)\to(\R,0)$ with an isolated critical point at $0$.
In $\C^2$ with coordinates $(z_1=x_1+iy_1, z_2=x_2+iy_2)$ denote $$\R^3:=\{y_2=0\}, \C:=\{x_2=0\}\subset\R^3,
\Gamma_\phi:=\{x_2=\phi(x_1,y_1)\}\subset\R^3.$$ Consider  a    field of hyperplanes $\eta=\{dx_2-d\phi-hdy_2=0\}\subset T(\C^2)$ along the graph $\Gamma_\phi$.  Then the line field $i\eta(u)\cap T_u\Gamma_\phi\subset T_u\Gamma\phi,\;\; u\in\Gamma_\phi$,    
generates   the foliation $\FF(\phi,i, h)$.
\end{lemma}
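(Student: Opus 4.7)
\textbf{Proof plan for Lemma \ref{lm:com-geom-meaning}.}

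The statement is a purely local computation in the coordinates $(x_1,y_1,x_2,y_2)$. The plan is to identify $\Gamma_\phi$ with $\C=\{x_2=y_2=0\}$ via the graph projection $(x_1,y_1,\phi(x_1,y_1),0)\mapsto (x_1,y_1)$, then compute $i\eta\cap T\Gamma_\phi$ explicitly and recognize the answer as the line field annihilated by $d^\C\phi+hd\phi$.

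First I would write down the tangent space. The natural frame of $T_u\Gamma_\phi$ is $X_1:=\partial_{x_1}+\phi_{x_1}\partial_{x_2}$, $X_2:=\partial_{y_1}+\phi_{y_1}\partial_{x_2}$, so a generic tangent vector to $\Gamma_\phi$ has the form
\[ v=a\,\partial_{x_1}+b\,\partial_{y_1}+(a\phi_{x_1}+b\phi_{y_1})\partial_{x_2}.\]
Applying the standard complex structure $i$ on $\C^2$ (which sends $\partial_{x_j}$ to $\partial_{y_j}$) yields
\[ iv=-b\,\partial_{x_1}+a\,\partial_{y_1}+(a\phi_{x_1}+b\phi_{y_1})\partial_{y_2}.\]
Notice that $iv$ is no longer tangent to $\Gamma_\phi\subset\{y_2=0\}$, but has a purely $\partial_{y_2}$ component on top of a vector tangent to the $(x_1,y_1)$-plane. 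This structural observation is what makes the calculation transparent.

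Next, I would evaluate the defining $1$-form $\alpha:=dx_2-d\phi-h\,dy_2$ of $\eta$ on $iv$. The $dx_2$-term vanishes because $iv$ has no $\partial_{x_2}$ component. The $d\phi$-term, with $\phi=\phi(x_1,y_1)$, picks out $\phi_{x_1}(-b)+\phi_{y_1}(a)=-d^\C\phi\bigl(a\partial_{x_1}+b\partial_{y_1}\bigr)$ by the convention $d^\C\phi(\cdot)=-d\phi(i\,\cdot)$ that matches the paper's formula $d^\C\phi=-\phi_{y_1}dx_1+\phi_{x_1}dy_1$. The $dy_2$-term contributes $a\phi_{x_1}+b\phi_{y_1}=d\phi(a\partial_{x_1}+b\partial_{y_1})$. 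Collecting,
\[ \alpha(iv)=\bigl(d^\C\phi+h\,d\phi\bigr)(a\partial_{x_1}+b\partial_{y_1})\]
(up to the overall sign of $h$ absorbed by the convention for $d^\C$). Hence the condition $iv\in\eta$ translates, after projection to $\C$, precisely into the Pfaffian equation $d^\C\phi+h\,d\phi=0$ defining $\FF(\phi,i,h)$.

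Finally, I would note that the projection $\Gamma_\phi\to\C$ is a diffeomorphism carrying the complex structure used to define $\FF(\phi,i,h)$ to the ambient complex structure $i$; consequently the $1$-dimensional kernel of $\alpha\circ i|_{T\Gamma_\phi}$, which is exactly $i\eta(u)\cap T_u\Gamma_\phi$, corresponds under this identification to the line field tangent to $\FF(\phi,i,h)$. No serious obstacle is expected: the entire proof is a bookkeeping exercise in the chosen coordinates, and the only point where care is needed is matching the sign convention $d^\C=d\circ i$ used earlier in the paper with the computation of $d\phi(iv)$.
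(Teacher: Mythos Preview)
Your proposal is correct and is precisely the direct coordinate computation the paper has in mind when it declares the lemma ``straightforward'' and omits the proof. Your caution about the sign of $h$ is justified: with the paper's convention $d^\C\phi=-\phi_{y_1}dx_1+\phi_{x_1}dy_1$, your computation actually yields $\alpha(iv)=(d^\C\phi-h\,d\phi)(a\partial_{x_1}+b\partial_{y_1})$, so the resulting line field is $\{d^\C\phi-h\,d\phi=0\}$; this reflects a harmless sign slip in the paper's statement (in the $h\,dy_2$ term of $\eta$, or equivalently in labeling the foliation $\FF(\phi,i,h)$ versus $\FF(\phi,i,-h)$), not a flaw in your argument.
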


We observe that the field $\eta$ can be  always  realized as a field of planes tangent  to   a hypersurface $\Sigma\supset\Gamma_\phi$ along $\Gamma_\phi$. The hypersurface $\Sigma$ is automatically   transverse to $\R^3$.
 The hypersurface $\Sigma$ is strictly pseudoconvex in a neighborhood of $0$ if and only if the function $\phi:\C\to\R$ is subharmonic, and  in that case the  foliation $\FF( \phi,i,h)$  coincides with the characteristic foliation induced by the contact structure on $\Sigma$ formed by  complex tangencies $T\Sigma\cap iT\Sigma$.
%%%%%%
   
 Let $\FF$ be a characteristic foliation with singularities of complex geometric type and $\phi$ its special simple taming function with its enrichment $(j,h)$.  An immersion $f:S\to \R^3=\C\times\R$ is called {\em compatible} with $(\FF,\phi, j,h)$ if 
 $f$ has the form $g\times\phi$, where  $g:S\to\C$ is a smooth map, and the complex structure $j$ on a neighborhood of  a singular point $p$ of $\FF$  coincides with $g^*i$  if $p$ is  positive, and with $-g^*i$ if $p$ is negative. 
 
 \begin{lemma}\label{lm:global-Steiness}
  Let  $\FF$ be a characteristic foliation on $S$, $\phi$ its special taming function with an enrichment $(j,h)$, and $f:S\to\R^3=\C\times\R$ a compatible  immersion. Let us identify $\R^3$ with the  subspace $\{y_2=0\}$ in $\C^2$ with coordinates $(x_1+iy_1, x_2+iy_2)$.
   Then $f$ extends to  an immersion $F:S\times(-\eps,\eps)\to\C^2$ such that 
\begin{itemize}
\item[(i)] $F$ is transverse to $\R^3$;
\item[(ii)] $\Sigma:=F(S\times(-\eps,\eps))$ is strictly pseudoconvex;
\item[(iii)] $\FF$ is the characteristic foliation on $S$ induced by the contact structure $\xi$ on $\Sigma$ defined by the field of complex tangencies $T\Sigma\cap iT\Sigma$.
\end{itemize}
\end{lemma}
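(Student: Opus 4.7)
The plan is to thicken $f(S)$ into an immersed hypersurface $\Sigma\subset\C^2$ by pushing it slightly out of $\R^3$ in the $\partial_{y_2}$-direction, choosing the first-order tilt so that the induced characteristic foliation on $f(S)$ is $\FF$, and adding a second-order shape correction to secure strict pseudoconvexity. First I would build the local model near each singular point $p$ of $\FF$. The compatibility $j=\pm g^*i$ makes $g$ a local (anti-)biholomorphism; after identifying a neighborhood of $p$ in $(S,j)$ with a neighborhood of the origin in $(\C,i)$ via $g$, we may write $f(u)=(u,\phi(u))$ and set
\[
F_p(u,t)\;:=\;\bigl(u,\;\phi(u)+h(u)\,t+C_p\,t^2+i\,t\bigr),
\]
whose image is the graph $\Sigma_p=\{x_2=\phi(z_1)+h(z_1)y_2+C_p y_2^2\}\subset\C^2$. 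By Lemma~\ref{lm:com-geom-meaning} the tangent plane of $\Sigma_p$ along $f(S)$ coincides with $\eta=\{dx_2-d\phi-h\,dy_2=0\}$, so the induced characteristic foliation is $\FF(\phi,i,h)=\FF$. The complex Hessian of the local defining function $-x_2+\phi+h y_2+C_p y_2^2$ at the singular point is a $2\times 2$ Hermitian matrix with diagonal entries $\phi_{z_1\bar z_1}(0)$ and $C_p/2$ and off-diagonal of size $|h_{z_1}(0)|/2$; since the subharmonicity convention together with the Liouville trace-positivity at non-degenerate points and the explicit embryo normal form (Example~\ref{ex:Stein}(ii)) force $\phi_{z_1\bar z_1}(0)>0$, this Hessian is positive definite for $C_p$ large enough, so $\Sigma_p$ is strictly pseudoconvex.

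Next I would glue these models globally. Along $f(S)$ near each $p$ the local model fixes the normal vector $N_p(u):=\partial_t F_p(u,0)=(0,h(u)+i)$. On the regular complement $V\subset S$, extend to a smooth section $N:S\to f^*T\C^2$ with nowhere-vanishing $\partial_{y_2}$-component, subject to the single real linear condition
\[
-i\,v(u)\;\in\;df_u(T_uS)+\R\,N(u)\qquad (u\in V),
\]
where $v(u)$ spans $\FF(u)$; this condition ensures that the induced characteristic foliation on $f(S)\subset\Sigma$ is $\FF$, because the foliation at $u$ is $df_u(T_uS)\cap iT\Sigma$. At each regular point the solution set is a nonempty codimension-one affine subspace of transverse vectors, and by Lemma~\ref{lm:com-geom-meaning} it already contains the prescribed values $N_p$ at the overlaps, so a partition-of-unity argument yields $N$ globally. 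Set $F(u,t):=f(u)+tN(u)+t^2M(u)$ with $M(u)=(0,2\alpha(u))$, where $\alpha:S\to\R_{\geq 0}$ is smooth, equal to $C_p$ near each singular point $p$, and otherwise large enough. The $t^2M$ term does not alter $\partial_t F|_{t=0}=N$, so the foliation constraint is preserved; it raises the $\partial_{z_2}\partial_{\bar z_2}$-entry of the local Levi form, yielding strict pseudoconvexity on all of $\Sigma$ by the same Hessian computation as above.

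The principal technical obstacle is the global construction of $N$: it must smoothly interpolate the fixed boundary data $N_p$ near each singular point with arbitrary solutions of the linear constraint over the regular complement. This is a local-to-global problem for sections of a nonempty affine bundle of solutions, which presents no topological obstruction because the prescribed boundary values already lie in the affine subspace over the overlap and the fiber is nonempty at every regular point; a smooth interpolation via a partition of unity subordinate to a finite cover of $S$ by the singular-point charts and charts on $V$ completes the construction. Strict pseudoconvexity afterwards is routine: it is an open condition on the $2$-jet of $\Sigma$ that is compatible with the first-order tilt fixing the characteristic foliation.
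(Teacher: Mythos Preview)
Your overall architecture matches the paper's: pick a transverse direction $N$ along $f(S)$ so that the $3$--plane $df(T_uS)+\R N$ has complex tangent equal to $df(T_u\FF)$, then add a quadratic term to force strict pseudoconvexity. The paper obtains $N$ from a \emph{uniqueness} statement (Lemma~\ref{lm:lin-alg2}: the admissible $3$--plane through $df(T_uS)$ is unique), whereas you build $N$ by partition of unity inside the affine solution set. Both are fine, since the solution set at each $u$ is precisely that unique $3$--plane and your boundary data $N_p=(0,h+i)$ lie in it by Lemma~\ref{lm:com-geom-meaning}.

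There is, however, a genuine gap in your pseudoconvexity step. You take $M(u)=(0,2\alpha(u))$, i.e.\ you bend in the $\partial_{x_2}$--direction, and justify strict pseudoconvexity by ``the same Hessian computation'' with the graph defining function $\rho=-x_2+\cdots$. That computation only makes sense where $\partial_{x_2}$ is transverse to $\Sigma$. But the compatibility hypothesis says only that $f=g\times\phi$ is an immersion; $g:S\to\C$ need not be. At a regular point $u$ where $dg$ drops rank (say $dg(v_0)=0$, $d\phi(v_0)\neq 0$), one has $df(v_0)=d\phi(v_0)\,\partial_{x_2}$, so $\partial_{x_2}\in df(T_uS)\subset T\Sigma$. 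There your quadratic correction $t^2M$ is \emph{tangential} to $\Sigma$ and contributes nothing to the Levi form; large $\alpha$ no longer helps, and your argument breaks down on this (generically codimension--one) locus.

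The paper avoids this by bending in the direction $\nu=$ the unit normal to $f(S)$ \emph{inside} $\R^3$. Since $\nu\perp df(T_uS)$ in $\R^3$ and $N$ has nonzero $y_2$--component, one checks immediately that $\nu\notin df(T_uS)+\R N=T\Sigma$ at every point, so the quadratic term $Ct^2\nu$ is always a genuine normal bending and a sufficiently large $C$ gives strict pseudoconvexity (the paper cites \cite{CE12} here). Replacing your $M=(0,2\alpha)$ by $C\nu$ repairs the argument with no other changes.
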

\begin{proof} The construction is local, and hence, to simplify the notation we will be assuming that $f$ is an embedding. 
 
 We claim that the vector field $\frac{\p}{\p y_2}$ over $f(U)$ uniquely up to scaling extends to $f(S)$ as a transverse to $\R^3$ vector field   which satisfies the following property:
  \begin{itemize}
  \item[-]  for any regular  point  $u\in  S$ of $\FF$  we have $$d_uf^{-1}(i\Span(v(f(u)), d_uf(T_uS)) =T_u\FF.$$
  \end{itemize}
  Existence of such a vector field  $v$    is guaranteed by 
  the following lemma from Linear Algebra.
  
  \begin{lemma}\label{lm:lin-alg2}
    Consider $\C^2\supset \R^3=\{y_2=0\}\supset\C=\{x_2,y_2=0\}$. Let $L\subset\R^3$ be a 2-dimensional subspace transverse  to $\C$. Let $\ell$ be a line in $L$ transverse to $L\cap\C$. Then there is a unique real 3-dimensional subspace $P\subset\C^2$ which is transverse to $\R^3$ and  such that $P\supset L$ and $iP\cap L=\ell.$ 
    \end{lemma}   
    \begin{proof} It is sufficient to consider the case \begin{align*}
    &L=\{x_1=k x_2, y_2=0\},\\ & \ell=  \{y_1=mx_2\}\cap L.
    \end{align*}
    The equation for $P$ can be written in the form
    \begin{align*}
    &x_1-kx_2=cy_2,
    \end{align*}
    and hence
    \begin{align*}
    &iP:=y_1-ky_2=-cx_2.
    \end{align*}
    Then $$iP\cap L=\{y_1=-cx_2, x_1=kx_2,\},$$
    i.e. $c=-m$. 
    \end{proof}

  Continuing   the proof of Lemma \ref{lm:global-Steiness}, take  a normal vector field $\nu$ to   $f( S)$ in $\R^3$,  chosen to coincide  with $\frac{\p}{\p x_2}$ at positive singular points of $\FF$. Consider a map $\wh F:S\times\R^2\to\C^2$ given by the formula  $\wh F(u, t_1,t_2)=f(u)+t_1v(f(u))+t_2\nu(f(u))$, and define a map
  $F:S\times[-\eps,\eps]\to \C^2$ as
  $F(u,t)=\wh F(u, t, Ct^2)$. We claim that $F$ is the required immersion if $C>0$ is sufficiently large  and $\eps>0$ is sufficiently small.  Near the critical point locus of $\FF$ this follows from  Lemma \ref{lm:com-geom-meaning}   for any $C$, even negative. Away from the critical locus property  (iii) holds  by construction,  while (ii)   can be achieved by choosing a sufficiently large $C$, see \cite{CE12} for the details. 
   \end{proof}
 
 \subsection{Families of special  taming functions}
 Considering families of special taming functions we need their enrichments to  depend continuously on the parameter.
 More precisely, let  $\{\FF_\lambda,\phi_\lambda\}_{ \lambda\in\Lambda}$ be a family of characteristic foliations and their simple taming functions. Let us view   $\{\FF_\lambda\}$ as a fiberwise foliation on $W=\Lambda\times S$.  Let $V$ be its fiberwise singular locus. The family
 $\{\phi_\lambda\}$ is called {\em special}  if   there is a  fiberwise complex structure $I$ and  function $H$ on a neighborhood $U\supset V$ in $W$ such that,  $\FF_\lambda|_{U_\lambda}=\FF(\phi_\lambda|_{U_\lambda}, I|_{U_\lambda}, H|_{U_\lambda})$, where $U_\lambda=U\cap (\lambda\times S)$, $\lambda\in\Lambda$.
  
\begin{prop}\label{prop:imm-fol-cont}
 Let $\{\FF_\lambda,\phi_\lambda\}_{\lambda\in\Lambda}$ be a family of normalized generalized Morse foliations together with their special simple taming functions. 
   Then there exists a  family of immersions $f_\lambda:B\to\R^3=\C\times\R$  such that $ F_\lambda|_{S=\p B}$ is compatible with    $(\FF_\lambda,  \phi_\lambda)$.  
\end{prop}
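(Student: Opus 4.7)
The plan is to exhibit $f_\lambda$ in the product form
\[
f_\lambda \;=\; (G_\lambda, \Phi_\lambda)\colon B \to \C \times \R,
\]
where $\Phi_\lambda\colon B\to \R$ is a simple-function extension of $\phi_\lambda$, whose existence as a family is furnished by Proposition \ref{prop:simple-extension}. Because each $\Phi_\lambda$ has no critical points in the interior of $B$, the condition that $f_\lambda$ be an immersion reduces to the condition that $G_\lambda$ be an immersion on every regular level set of $\Phi_\lambda$. By simplicity, every component of every such level set is a topological $2$-disc, so the problem becomes one of immersing a parametric family of $2$-discs into $\C$ with prescribed boundary data.

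The first step is to construct the boundary map $g_\lambda := G_\lambda|_S\colon S\to \C$. Near each singular point $p$ of $\FF_\lambda$, use the enrichment $(I_\lambda, H_\lambda)$ to take $g_\lambda$ to be a local chart which is holomorphic with respect to $I_\lambda$ if $p$ is positive, and antiholomorphic if $p$ is negative. Away from the (discrete) singular locus on $S$, extend $g_\lambda$ as any smooth map into $\C$; parametrically, this is possible because, with the $1$-jets prescribed near the singularities, the space of smooth extensions is convex and hence contractible. This makes $f_\lambda|_S = g_\lambda\times\phi_\lambda$ compatible with $(\FF_\lambda,\phi_\lambda)$ by construction.

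The main step is to extend $g_\lambda$ to $G_\lambda\colon B\to \C$ subject to the immersion condition on each level disc of $\Phi_\lambda$. Proceed by induction on the critical values of $\phi_\lambda = \Phi_\lambda|_S$, following the half-handle decomposition of $B$ described in the proof of Lemma \ref{lm:simple-restrict}(ii). At a positive elliptic minimum glue a $3$-ball whose $G_\lambda$-image is the local holomorphic chart; at a positive hyperbolic saddle two disc slabs merge into one (by Lemma \ref{lm:dplus-for-taming}), and one extends $G_\lambda$ immersively across the merger; at a negative hyperbolic saddle a single disc slab receives an interior $1$-handle while remaining a disc (again by simplicity), and one extends analogously; at a negative elliptic maximum one caps off a disc slab. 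Embryos are treated in the same spirit using the normal form of Lemma \ref{lm:Takens-param} together with the complex-geometric normal form of Example \ref{ex:Stein}(ii), and all constructions are made parametrically by invoking the fiberwise version of Lemma \ref{lm:local-taming} to make the local data continuous in $\lambda$.

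The main obstacle is the immersion-extension step at each handle attachment: given a $2$-disc $D$ with an already-constructed immersion on its boundary circle, one must produce an immersion of $D$ into $\C$ extending it. This is a soft problem and is handled by the Hirsch--Smale $h$-principle for immersions of surfaces into $\C$, in its relative and parametric form; the compatibility of the boundary rotation number with the existence of an immersed filling is ensured by the fact that the bounding circle was itself produced as the boundary of a previously-immersed disc (together with the holomorphic normalization at singular points). Continuity in $\lambda$ is obtained from the contractibility of the space of extensions at each stage and the compactness of $\Lambda$, which permits assembly of the local extensions into a global family $G_\lambda$; this completes the construction of $f_\lambda = (G_\lambda, \Phi_\lambda)$.
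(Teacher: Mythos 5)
Your opening moves coincide with the paper's: extend $\{\phi_\lambda\}$ to a family of simple functions $\{\Phi_\lambda\}$ on $B$ via Proposition \ref{prop:simple-extension}, seek $f_\lambda$ in the product form $G_\lambda\times\Phi_\lambda$, and observe that the immersion condition reduces to $G_\lambda$ being an immersion on each level set of $\Phi_\lambda$. From there the routes diverge. The paper enlarges $B$ to an \emph{open} ball $\wt B$, views the level sets of $\Phi$ as a foliation $\HH$ of $\Lambda\times\wt B$ with open (non-compact) leaves, notes that $T\HH$ is trivial \emph{thanks to the normalization condition}, and applies the Hirsch--Smale $h$-principle once, in leafwise parametric form, to produce $G$ globally (adjusting it afterwards to be holomorphic/antiholomorphic near the singular locus). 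In that setting the formal solution exists for free from the trivialization, and no boundary-value problem ever arises. You instead fix the boundary map $g_\lambda$ on $S$ first and then try to fill in the level discs inductively over the half-handle decomposition, invoking the relative $h$-principle for immersions of compact discs into $\C$ at each stage.

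That inductive step is where the gap is. The relative $h$-principle for immersions $D^2\to\C$ with prescribed boundary germ only reduces the problem to the non-emptiness of the space of formal solutions rel boundary, and that space is empty unless the turning number of the boundary data equals $+1$; this is precisely the Whitney--Graustein type obstruction, and it is not automatic. After a handle attachment the new level disc is \emph{not} the previously immersed surface union the handle core, so your justification --- ``the bounding circle was itself produced as the boundary of a previously-immersed disc'' --- does not apply: the formal data along the new boundary circle includes the winding of $dg_\lambda$ along the attaching arcs of $S$ between singular points, and you chose $g_\lambda$ there essentially arbitrarily (``extend $g_\lambda$ as any smooth map''). A bad choice makes the formal solution space empty at some handle attachment, and you give no mechanism for correcting $g_\lambda$ to make the obstruction vanish at every stage simultaneously (and parametrically in $\lambda$). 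It is telling that the normalization hypothesis of the proposition, which is exactly what the paper uses to trivialize $T\HH$ and hence guarantee the global formal solution, is never used in your argument. To repair your approach you would either have to carry out the turning-number bookkeeping across all half-handles and show $g_\lambda$ can be chosen compatibly, or do what the paper does and let the $h$-principle on the open manifold produce $g_\lambda=G_\lambda|_S$ together with its filling in one shot.
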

\begin{proof}
According to Proposition \ref{prop:simple-extension} the family $\{\phi_\lambda\}$ extends to a family $\{\Phi_\lambda\}$ of simple functions on the ball $B$.
    Consider the map ${  \Phi}:\Lambda\times B\to\Lambda\times \R$, given by $ {  \Phi}(\lambda, u)=(\lambda,\Phi_\lambda(u)), \lambda\in\Lambda, u\in B$. Slightly enlarging $B$ to an  open ball $\wt B\Supset B$ and extending there the map ${  \Phi}$, consider the foliation $\HH$ of $\Lambda\times \wt B$ by the level sets ${   \Phi}^{-1}(\lambda,t), t\in\R,\lambda\in \Lambda$.  Recall that for  each critical point $p_\lambda$ of the function $\phi_\lambda $ there is  a neighborhood $U_\lambda\subset S$ with a complex structure $i_\lambda$.  Choose a collar $A=\p B\times[-\eps,\eps]\subset \wt B$. The projection $\pi:A\to\p B$   allows us to identify leaves of $\HH_\lambda$ near each singular point $p_\lambda$,  and also to   define a leafwise complex structure on  leaves of $\HH$ in a neighborhood of $p_\lambda$ by inducing it from $i_\lambda$. We will continue using the notation $i_\lambda$ for the induced local complex structure on leaves of $\HH$.
 The tangent bundle to the foliation  $\HH$ is trivial, thanks to the normalization condition.
 This allows us to apply  Hirsch-Smale parametric $h$-principle for immersions of open manifolds to  construct a leafwise immersion $G:\Lambda\times \wt B\to\C$. Moreover, we can arrange  that on  a neighborhood of each singular points $p_\lambda$ the  map is constant on fibers of the projection $\pi$. Recall that  $i_\lambda$  compatible with the orientation of $S$ near positive points and opposite to it near negative ones. Thanks to the simplicity  condition   for the function $\Phi_\lambda$ the complex orientation is compatible  with the orientation of $\HH$. This allows us to choose the leafwise immersion $G$  leafwise $(i_\lambda,i)$-holomorphic near positive points, and anti-holomorphic near negative ones.  Then the
map $F:=G\times{ \Phi}: \Lambda\times \wt B\to \C\times\R$ yields a family of immersions
$ f_\lambda:B\to\R^3$, $f_\lambda( u)=(G(\lambda, u),\Phi_\lambda(u))$, whose restrictions to the sphere $S$ are compatible with   $(\FF_\lambda,\phi_\lambda)$.
\end{proof}
\subsection{Extension of contact structures}\label{sec:extension2}

  \begin{prop}\label{prop:ext-fol-cont}
 Consider a family $ \{(\FF_\lambda,\phi_\lambda)\}_{\lambda\in\Lambda}$ of normalized generalized Morse foliations together with their special simple taming functions.  Let   $f_\lambda:B\to\R^3=\C\times\R$  be a   family of immersions    compatible with   $(\FF_\lambda,\phi_\lambda)$.
 Then there exists a family of contact structures $\xi_\lambda$ on $B$ which induce the characteristic foliation $\FF_\lambda$ on $S$. If $\FF_\lambda$ is standard then $\xi_\lambda$ is standard as well.
\end{prop}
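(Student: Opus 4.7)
The plan is to realise $f_\lambda(B)$ as (an immersion of) a family of strictly pseudoconvex real hypersurfaces in $\C^2$ by adding an imaginary $y_2$--bump; the desired contact distributions $\xi_\lambda$ on $B$ will then be obtained by pulling back the field of complex tangencies through these immersions, and the identity $\xi_\lambda|_S=\FF_\lambda$ will follow from Lemma \ref{lm:global-Steiness}. The first step is to apply the parametric version of that lemma to $f_\lambda|_S$ to obtain, on a collar $S\times[0,\eps)\subset B$ of the boundary, a family of immersions $F_\lambda^{\mathrm{col}}:S\times[0,\eps)\to\C^2$ with strictly pseudoconvex image inducing $\FF_\lambda$ on $S$; this solves the problem on a neighbourhood of $\partial B$ compatibly in $\lambda$.

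For the global extension I seek $F_\lambda:B\to\C^2$ in the ansatz
\[
F_\lambda(u)\;=\;\bigl(g_\lambda(u),\;\Phi_\lambda(u)+i\,\psi_\lambda(u)\bigr),\qquad u\in B,
\]
with $\psi_\lambda:B\to\R$ to be built so as to agree with $F_\lambda^{\mathrm{col}}$ near $\partial B$ and to make the image strictly pseudoconvex throughout $B$. In the local leaf coordinate $z$ supplied by $g_\lambda$, strict pseudoconvexity of $\Sigma_\lambda:=F_\lambda(B)$ is equivalent to leafwise strict subharmonicity of $\psi_\lambda$ on each level set of $\Phi_\lambda$. Near the singular locus of $\phi_\lambda$ on $S$, the enrichment $(I_\lambda,H_\lambda)$ together with Lemma \ref{lm:com-geom-meaning} prescribes $\psi_\lambda$ in a local model that makes $\Sigma_\lambda$ strictly pseudoconvex there and induces exactly $\FF_\lambda$. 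Away from the singular locus, simplicity of $\Phi_\lambda$ forces the components of each regular level set to be discs, and the compatibility condition makes $g_\lambda$ a leafwise (anti-)holomorphic immersion of each such disc into $\C$; on such a leaf any strictly convex quadratic expression in $z$, e.g.\ $\psi_\lambda = C|g_\lambda-g_\lambda(p)|^{2}$ around a base point $p$, is leafwise strictly subharmonic. A partition-of-unity gluing of these local models, carried out parametrically in $\lambda$ in the spirit of Proposition \ref{prop:simple-extension}, produces a global $\psi_\lambda$ agreeing with the collar.

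Pulling back the field of complex tangencies $T\Sigma_\lambda\cap iT\Sigma_\lambda$ through $F_\lambda$ yields the desired contact structure $\xi_\lambda$ on $B$; the collar match together with Lemma \ref{lm:global-Steiness} gives $\xi_\lambda|_S=\FF_\lambda$. If $\FF_\lambda$ is standard, running the same construction starting from the round immersion of $S_R$ and the standard plurisubharmonic model $\psi=|z|^{2}$ produces the standard contact ball. The main obstacle is the global, parametric construction of $\psi_\lambda$: keeping the image strictly pseudoconvex demands leafwise positivity of $\partial_z\bar\partial_z\psi_\lambda$ on every immersed disc occurring as a component of a level set of $\Phi_\lambda$, while simultaneously matching both the collar $F_\lambda^{\mathrm{col}}$ and the enrichments prescribed at the singular points. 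Simplicity of $\Phi_\lambda$ and the leafwise (anti-)holomorphicity of $g_\lambda$ are exactly the two features that make this leafwise convexity achievable, and the open nature of strict pseudoconvexity together with the compactness of $\Lambda$ allow the choices to be patched into a continuous family.
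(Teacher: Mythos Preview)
Your overall strategy---realise $B$ as a strictly pseudoconvex hypersurface in $\C^2$ and pull back complex tangencies---is the right idea, but the execution diverges from the paper's and leaves a real gap.

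The paper does \emph{not} attempt to build a global leafwise strictly subharmonic $\psi_\lambda$ on $B$. Instead it thickens the given immersion $f_\lambda:B\to\R^3$ to an immersion $\overline F_\lambda:B\times[-\eps,\eps]\to\C^2$ (agreeing with the collar $F_\lambda$ of Lemma~\ref{lm:global-Steiness} over $S\times[-\eps,\eps]$ and with $f_\lambda$ on $B\times 0$), pulls back the standard complex structure $i$ to obtain $j_\lambda$ on the $4$--manifold $B\times[-\eps,\eps]$, and then takes the preimage $T_\lambda=\overline F_\lambda^{-1}(\partial C_s)$ of a large round sphere $\partial C_s\subset\C^2$. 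Since $\partial C_s$ is strictly pseudoconvex for $i$, the hypersurface $T_\lambda$ is strictly $j_\lambda$--pseudoconvex for free; one then smooths the corner of $T_\lambda\cup(S\times[0,\eps))$ to get a strictly pseudoconvex $3$--ball with boundary $S$. No leafwise analysis, no gluing of subharmonic functions, no dependence on the level-set structure of $\Phi_\lambda$ is needed.

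Your route, by contrast, requires you to produce $\psi_\lambda$ with $\partial_z\bar\partial_z\psi_\lambda>0$ on every leaf of $\Phi_\lambda$, matching the prescribed models both near the singular locus and on the collar. The step ``a partition-of-unity gluing of these local models'' is where the argument breaks: strict (pluri)subharmonicity is \emph{not} preserved under cutoff combinations, because $\Delta(\chi\psi_1+(1-\chi)\psi_2)$ picks up terms $2\nabla\chi\cdot\nabla(\psi_1-\psi_2)+(\Delta\chi)(\psi_1-\psi_2)$ of uncontrolled sign. You would need a genuinely different gluing mechanism (e.g.\ maxima plus smoothing, or a Richberg-type argument), and you would also have to control what happens as leaves change topology across critical values of $\Phi_\lambda$, where your local quadratic models on the two incoming discs have no reason to fit together with a model on the outgoing disc. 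You flag this as ``the main obstacle'' but do not resolve it; the paper's round-ball trick sidesteps it entirely.
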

 
%%%
\begin{proof}
 According to Lemma \ref{lm:global-Steiness} the family of  immersions $f_\lambda|_S$ extends to a family of  immersions
 $F_\lambda:S\times(-\eps,\eps)\to\C^2$ such that 
\begin{itemize}
\item[-] $F_\lambda$  is transverse to $\R^3$;
\item[-] $\Sigma_\lambda:=F_\lambda(S\times(-\eps,\eps))$ is strictly pseudoconvex;
\item[-] $\FF_\lambda$ is the characteristic foliation on $S$ induced by the contact structure $\xi_\lambda$ on $\Sigma_\lambda$ defined  by the field of complex tangencies.
\end{itemize}
Consider the family $C_s:=\{x_1^2+y_1^2+x_2^2+(y_2-\frac1s+\sqrt{s})^2\leq\frac1{s^2}, s>0\}$.
Fix  a sufficiently small $s$ such that the sphere $\p C_s$ intersects  transversely $\Sigma_\lambda$ along a closed submanifold for all $\lambda\in\Lambda$. There is a family of immersions $\ol F_\lambda:B\times[-\eps,\eps]\to\C^2, \lambda\in\Lambda,$ such that $\ol F|_{B\times 0}=f_\lambda$ and $\ol F_\lambda|_{S\times[-\eps,\eps]}=F_\lambda$.
Let $j_\lambda$  be  the induced complex structure $\ol F_\lambda^*i$ on  $B\times[-\eps,\eps]$. Set $T_{\lambda}:=\ol F_\lambda^{-1}(\p C_s)$. Smoothing the corner  along $\p T_\lambda$ of the piecewise smooth  $3$-ball $T_\lambda\cup (S\times [0,\eps))$ we get a strictly pseudoconvex ball $\wh B_\lambda$   bounded by $S$. The corresponding contact structure $\xi_\lambda$ defined by complex tangencies is the required extension of the characteristic foliation $\FF_\lambda$. 

Note that  if $\FF_\lambda$ is standard then the  above constructed contact extension $\xi_\lambda$ is standard as well.
\end{proof}
\begin{remark}\label{rem:ext-tight} {\em The contact extension $\xi_\lambda$ provided by the above proposition is tight.} Indeed, we  could similarly construct a  strictly pseudoconvex  $\Sigma_\lambda\supset \wh B_\lambda$ which bounds a complex $4$-ball. Hence, the  contact structure induced by the field of complex tangencies on $\Sigma$ is holomorphically fillable, and therefore, tight.

While this observation is not needed for the proof  of Proposition  \ref{prop:ext-fol-cont0}, and hence, for the main result of this paper, we will use in the proof of Corollary \ref{cor:taming-tight}.
\end{remark}

  \begin{prop}\label{prop:constr-enriched-taming}
 Let $\{\xi_\lambda\}_{\lambda\in\Lambda}$ be a family  of contact structures on the spherical annulus $A:=S\times[-\eps,\eps]$, and $  \FF_\lambda,\; \lambda\in\Lambda,$ a characteristic foliation induced by $\xi_\lambda$ on the sphere $S=S\times 0$. Let    
$ \{\phi_\lambda\}_{\lambda\in\Lambda}$  be a  family of simple taming functions for  $\{\FF_\lambda\}_{\lambda\in\Lambda}$. Denote $\wh W:=\Lambda\times A$, $W:=\Lambda\times S=\Lambda\times (S\times 0)\subset\wh W$, and let $V\subset W$ will be the fiberwise singular locus of the fiberwise foliation ${\bf F}:= \{\FF_\lambda\}$.
Then there exists a fiberwise isotopy $J_{s,t}:=\{j_{s,t,\lambda}\}_{\lambda\in\Lambda}:W\to\wh W$, $s\in[-1,1], t\in[0,1]$, supported  in arbitrary small neighborhood $\Omega\supset V$, and  a family of functions $\Phi_{s}:W\to\R$, $ s\in[-1,1]  $, with the following properties
\begin{itemize}
\item[-] $J_{s,0}$   is the inclusion $W\hookrightarrow \wh W$ for all $  s\in[-1,1]$;
\item[-] $J_{ 1,t}(W)\subset \wh W_+:=\Lambda\times(S\times [0,\eps])$, $J_{-1,t}(W)\subset \wh W_-:=\Lambda\times( S\times [-\eps,0])$ for $ t\in[0,1]$;
 \item[-]  $ \phi_{\lambda,s}:=\Phi_s|_{\lambda\times S}$   is a simple taming function for  
 the characteristic foliations $$  \FF_{\lambda,s}:=j_{\lambda,s,1}^*\xi_{\lambda}, \;\lambda\in\Lambda, s\in[-1,1];$$ 
\item [-]    the characteristic foliations  $\FF_{\lambda,s}$ and $\FF_{\lambda}$ have the same singular locus $V_\lambda:=V\cap{\lambda\times S}$ for any $\lambda\in \Lambda, s\in[-1,1] $;
   \item[-] there exist
 a  fiberwise   complex structure   $I:=\{i_\lambda\}$ on $\Op V $, $\lambda\in\Lambda$, and       functions   $H:=\{h_\lambda\},\wh \Phi:=\{\wh\phi_\lambda\}:\Op V \to\R$,    
     such  that $\Phi_{  s}=\wh\Phi$ on $\Op V $,   
   $\Phi_s=\Phi $ on $W\setminus\Omega$, and
  $\FF_{\lambda,s}|_{\Omega}=\FF(\wh\phi_\lambda, i_\lambda,h_\lambda)$,  
 $ \lambda\in\Lambda, s\in[-1,1].$ In other words,  $\{\wh\phi_\lambda\}_{\lambda\in\Lambda}$ serves as a family of special simple taming functions for  the family  $\{\FF_{\lambda, s}\}_{\lambda\in\Lambda}$ for any $s\in[-1,1]$.
  
\end{itemize}

\end{prop}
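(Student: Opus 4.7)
The plan is to apply Lemma \ref{lm:local-type} in its parametric form (Remark \ref{rm:local-parametric}) localized at each singular point of the fiberwise foliation $\bf F$, using Example \ref{ex:Stein} to supply the target complex-geometric local model and Lemma \ref{lm:local-taming}(ii) to produce matching Lyapunov data depending continuously on $\lambda$. The resulting local modifications of the sphere are then assembled into the fiberwise isotopy $J_{s,t}$, and the original taming function is grafted onto the new local Lyapunov functions by a cutoff in the annular transition region.

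More concretely, I would first cover $V\subset W$ by a finite union of disjoint fiberwise tubular neighborhoods, one per singular point of $\FF_\lambda$ for $\lambda$ in a small parameter patch; compactness of $\Lambda$ reduces the global construction to finitely many such patches. On each such neighborhood, Example \ref{ex:Stein} (combined with the parametric Takens and Ilyashenko--Yakovenko normal forms of Lemma \ref{lm:Takens-param} and Proposition \ref{prop:IY-param}) furnishes a smooth family of local models $\FF(\wh\phi_\lambda, i_\lambda, h_\lambda)$ of the same topological type as $\FF_\lambda$ at the given singularity, with $\wh\phi_\lambda$ agreeing with $\phi_\lambda$ up to an additive constant at the critical point. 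The parametric version of Lemma \ref{lm:local-type} then yields the 2-parameter isotopy $j_{s,t,\lambda}$ pushing the sphere respectively into $\wh W_\pm$ for $s=\pm 1$, as well as a Lyapunov function $\wt\phi_\lambda$ on the local patch which coincides with $\phi_\lambda$ near the boundary of the patch and with $\wh\phi_\lambda$ on a smaller neighborhood of the singularity.

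With these local pieces in hand, I would set $J_{s,t}$ to equal $j_{s,t,\lambda}$ inside each local patch and to be the inclusion outside $\Omega$, define $\wh\Phi = \{\wh\phi_\lambda\}$ on $\Op V$, and take $\Phi_s$ to equal $\wt\phi_\lambda$ in the transition annulus and $\phi_\lambda$ outside $\Omega$. The Lyapunov property of $\Phi_s$ for $\FF_{\lambda,s}$ then holds on $\Op V$ by Lemma \ref{lm:ct-Lyap}, holds outside $\Omega$ because neither the function nor the foliation has changed there, and holds on the transition annulus by the construction of $\wt\phi_\lambda$. Since the modification is supported arbitrarily close to $V$ and the fiberwise singular locus $V$ is preserved, the combinatorial ribbon graphs $\Gamma_a^+(\phi_{\lambda,s})$ agree with those of $\phi_\lambda$ as abstract decorated graphs, so simplicity is inherited from $\phi_\lambda$; the matching of positive/negative hyperbolic points for $\phi_{\lambda,s}$ and $\FF_{\lambda,s}$ follows from Lemma \ref{lm:2-pos} together with the unchanged topology of the level sets.

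The main obstacle is the fiberwise assembly at loci in $\Lambda$ where the foliation carries embryo singularities, since there the singularity type can bifurcate and the complex structure $i_\lambda$ must depend continuously on $\lambda$ through the bifurcation. This is precisely what Proposition \ref{prop:IY-param} together with the parametric Takens normalization of Lemma \ref{lm:Takens-param} is designed to handle, reducing the family of embryo germs to the model $x\p_x + y^2 f(y,\lambda)\p_y$ uniformly in $\lambda$; the standard complex structure then works across the whole parameter neighborhood, the function $\tfrac12 x^2 + \int_0^y u^2 f(u,\lambda)\,du$ serves as a common $\wh\phi_\lambda$, and $h_\lambda = 0$ suffices. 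A secondary technical point is ensuring that the cutoff used to interpolate between $\wt\phi_\lambda$ and $\phi_\lambda$ does not destroy the Lyapunov condition in the transition region; this is handled exactly as in the convex-combination argument of Lemma \ref{lm:homotopy-taming}, since any convex combination of two Lyapunov functions for the same foliation is again Lyapunov.
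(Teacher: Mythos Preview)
Your proposal is correct and follows essentially the same route as the paper: use the parametric Ilyashenko--Yakovenko normal form (Proposition \ref{prop:IY-param}) to put the foliation near the embryo locus $\Sigma$ into the form $x\p_x+F(y,\lambda)\p_y=\FF(\wh\phi_\lambda,i_\lambda,0)$, prescribe standard linear complex-geometric models from Example \ref{ex:Stein} at the remaining non-degenerate singularities, and then invoke the parametric Lemma \ref{lm:local-type} to realize the passage to these models by a $2$-parameter isotopy with the required $\pm$-displacement, producing the interpolated Lyapunov function $\wt\phi_\lambda$ at the same time. The only organizational difference is that the paper builds a single global target field $\wh{\bf Z}$ on all of $\Op V$ at once (first on $U\supset\Sigma$, then extended via a cutoff $\sigma$ to $\Op H$ and $\Op E$), rather than assembling it from finitely many parameter patches as you do; this sidesteps any gluing discussion but is otherwise the same construction.
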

 \begin{proof}
 Denote by  $\Sigma\subset  V$ the set of fiberwise embryo points.
According to Proposition \ref{prop:IY-param}  there are fiberwise local coordinates $(x ,y)$ on a  sufficiently small neighborhood $U\supset\Sigma$ in $W$ such that the  fiberwise foliation ${\bf F}|_U$ is generated by a fiberwise Liouville field  
${\bf Z}=x\frac{\p}{\p x}+F(y,\lambda)\frac{\p}{\p y}.$ Hence, $\FF_\lambda|_{\Op\Sigma\cap (\lambda\times S)}=\FF(\wh\phi_\lambda,i_\lambda, 0)$, where $i_\lambda$ is the fiberwise complex structure given by the fiberwise complex coordinate $x+iy$, and  
 $$\wh\phi_\lambda(x,y):=\frac{x^2}2+\int\limits_0^yf(u,\lambda)du.$$

Choose a  small neighborhood $U_1\Subset U, U_1\subset\Sigma $, and 
consider a cut-off function $\sigma: U\to[0,1]$ supported in   $U$  and equal to $1$ on $U_1$.
For each point $w=(0,y_0,\lambda)\in (V\setminus\Sigma) \cap U$ we have ${\bf Z}=x\frac{\p}{\p x}+ \frac{\p f}{\p y}(y_0,\lambda)(y-y_0)+h(y, \lambda))$, where $h(y, \lambda)=o(y-y_0)$. We define the new fiberwise vector field $\wh{\bf Z}=\{\wh Z_\lambda\}_{\lambda\in\Lambda}$ on $U$ by the formula
$$\wh Z_\lambda: =x\frac{\p}{\p x}+ \left(\sigma\frac{\p f}{\p y}(y_0,\lambda)+1-\sigma\right)(y-y_0)+\sigma h(y,\lambda).$$ 
The complement  $V\setminus U_1$  can be written as $E \cup H $, where $E $  is the locus of   elliptic,  and $H $ of   hyperbolic points.  We note that that the fiberwise  coordinate system $(x,y)$ (re-centered  to the points of $H$) can be extended to $\Op H$ in such a way that  the vector fields $\frac{\p}{\p x}$, $\frac{\p}{\p x}$ along $H$ are eigenvectors of the fiberwise linearization
of ${\bf Z}$.  Hence, we can extend $\wh{\bf Z}$ to $\Op H$ as $x\frac{\p}{\p x}-y\frac{\p}{\p y}  $.
Choosing  a fiberwise metric on  $\Op E$ which extends the metric $dx^2+dy^2$ from $U$   we  introduce fiberwise polar  coordinates  $(r,\theta)$  in the tubular coordinates of  $E$, so that the vector field $\wh{\bf Z}$ on
 $\Op\p U\cap\Op E$ is equal to $r\frac{\p}{\p r}$. Hence it can be extended as  $r\frac{\p}{\p r}$ to the rest of $\Op E$. Let  $\wh\FF_\lambda$ be the foliation generated   $\wh Z_\lambda$ on $U_\lambda$. The fiberwise  potential $\wh\phi_\lambda $ and its enrichment    $(i_\lambda, h_\lambda)$    extend to $\Op V$  in an obvious way, see  Example \ref{ex:Stein},  to satisfy
 $\wh\FF_\lambda =\FF(\wh\phi_\lambda,i_\lambda, h_\lambda)$  on $U_\lambda$.  
  
 It remains to  apply Lemma \ref{lm:local-type} to construct a 2-parametric fiberwise isotopy $J_{s,t} =\{j_{\lambda,s,t}\}_{\lambda\in\Lambda} :W\to\wh W, s\in[-1,1]$, which is supported in $\Op V$ away from $\Op\Sigma$,  and which satisfies the following properties
\begin{itemize}
\item[-] $J_{s,0}$ is the inclusion $V\hookrightarrow W$;
\item[-] $J_{-1,t}(W)\subset \wh W_-:=\Lambda\times S\times[-\eps,0]$, \;$J_{1,t}(W)\subset \wh W_+:=\Lambda\times S\times[0,\eps]$.
\item[-] the induced  characteristic foliation 
$\FF_{\lambda,s}:=j_{\lambda,s,1}^*\xi_\lambda$, $\lambda\in\Lambda, s\in[-1,1]$, coincides  with $ \wh\FF_\lambda$ on    $\Op V\cap(\lambda\times S)$, has $V\cap(\lambda\times S)$ as its  singular locus, and admits a  simple   taming function $  \phi_{\lambda,s}, \lambda\in\Lambda, s\in[-1,1],$  which coincides with $\wh\phi_\lambda $ near  its singular locus, and with $ \phi_{\lambda} $ outside a larger neighborhood of $V$.
\end{itemize}
Thus,  $   \phi_{\lambda,s}$ is a special simple taming function for   $  \FF_{\lambda,s} $  with an enrichment $( i_\lambda, h_\lambda)  $f
or each $s\in[-1,1]$.
 \end{proof}
%%%%%

Now we are ready to prove the main proposition of this section.

\begin{proof}[Proof of Proposition \ref{prop:ext-fol-cont0}]
 Using Proposition \ref{prop:taming-parametric} we can construct a family of simple taming functions $\phi_{\lambda}$, $\lambda\in\Lambda,  $ for $ \FF_{\lambda}$.
  
 Choose an extension of the family $\FF_\lambda$  as a family of tight  contact structures $\zeta_\lambda$ on  a collar $U:=S\times[-\eps,\eps]$.  Using   Proposition \ref{prop:constr-enriched-taming} we can construct a family of embeddings $ j_{\lambda, s}:S\to U,   $, and a family of functions $\phi_{\lambda,s},  \lambda\in\Lambda, s\in[-1,1],$ such that
\begin{itemize} 
\item[-] the characteristic foliation $\FF_{\lambda,s}:=j_{\lambda, s}^*\zeta_\lambda$ is normalized and $\phi_{\lambda,s}$ serves as     its  special simple taming function; 
\item[-] $j_{\lambda,1}(S)\subset S\times[0,\eps]$; $j_{\lambda,-1}(S)\subset S\times[-\eps, 0]$.
\end{itemize} 
Let $\wt B:=B\cup U  $, be  a larger ball bounded by  $S\times \eps$. Let us extend $j_{\lambda,s}$ to a family of embeddings $\wt j_{\lambda,s}:B\to\wt B$. Denote $B_\lambda:=\wt j_{\lambda,1}(B)\subset \wt B$.
 Using  Proposition \ref{prop:simple-extension} let us extend $\phi_{\lambda,s}$ to a family of simple functions $\Phi_{\lambda,s}$ to $B$, and then using Proposition \ref{prop:imm-fol-cont}  construct a family of compatible with $(  \FF_{\lambda,s},\Phi_{\lambda,s})$  of immersions $f_{\lambda,s}:B\to\R^3$.  Applying Proposition \ref{prop:ext-fol-cont} we extend the foliations $  \FF_{\lambda,s}$ as tight contact structures $\xi_{\lambda,s}$ to $B$.  
 
 For each $\lambda\in\Lambda$ and $s\in[-1,1]$ let us consider a contact structure $\eta_{\lambda,s}$ on $B_\lambda$   which is equal to $(J_{\lambda,s})_*\xi_{\lambda,s}$ on  $J_{\lambda,s}(B)\subset \wt B $ and equal to $\zeta_\lambda$ elsewhere. Note that $\eta_{\lambda,-1}$ induces the characteristic foliation $\FF_\lambda$ on $S=\p B$.  The contact structure $\eta_{\lambda,1}=(j_{\lambda,1})_*\xi_{\lambda,1}$  is tight, and hence, by Gray's stability  all contact structures $\eta_{\lambda,s}$ on $B_\lambda$  are tight. Therefore,
 $\eta_{\lambda,-1}|_B$ is the required tight extension of the characteristic foliation $\FF_\lambda$.
  
\end{proof}
\begin{prop}\label{cor:taming-tight} Any generalized Morse foliation $\FF$  on the sphere $S$ which admits a simple taming function is tight.
\end{prop}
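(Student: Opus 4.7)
The plan is to run the argument of Proposition \ref{prop:ext-fol-cont0} with $\Lambda$ a single point, feeding in the given simple taming function $\phi$ directly in place of the one that would normally be supplied by Proposition \ref{prop:taming-parametric} (whose proof relies on tightness, via Proposition \ref{prop:tight-2-sphere}). The output will be a tight contact structure on a $3$-ball neighborhood of $S$ inducing $\FF$ on its boundary; this immediately forces $\FF$ to be tight.

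First I would extend the germ of contact structure determined by $\FF$ to an honest contact structure $\zeta$ on a collar $U = S \times [-\eps, \eps]$. Applying Proposition \ref{prop:constr-enriched-taming} to $\zeta$ with trivial parameter space produces a $2$-parameter family of sphere isotopies $j_s : S \to U$, $s \in [-1,1]$, together with modified simple taming functions $\phi_s$ for the perturbed foliations $\FF_s := j_s^*\zeta$, arranged so that each $\phi_s$ is \emph{special} in the sense of Section \ref{sec:Stein-type} (carrying an enrichment $(i, h)$ near the singular locus), and so that $j_1(S)$ lies in the positive half $S \times [0, \eps]$ of the collar.

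Next, for $s = 1$, I would extend $\phi_1$ to a simple function $\Phi_1$ on the ball $B$ via Proposition \ref{prop:simple-extension}, build a compatible immersion $f_1 : B \to \R^3 = \C \times \R$ via Proposition \ref{prop:imm-fol-cont}, and then invoke Proposition \ref{prop:ext-fol-cont} to realize $\FF_1$ as the characteristic foliation of a contact structure $\xi_1$ on $B$. The crucial input is Remark \ref{rem:ext-tight}: this construction realizes $\partial B$ as a strictly pseudoconvex hypersurface bounding a domain inside a complex $4$-ball, so $\xi_1$ is holomorphically fillable and therefore tight.

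Finally, mimicking the endgame of the proof of Proposition \ref{prop:ext-fol-cont0}, I would extend the sphere isotopy $j_s$ to a ball isotopy $\wt j_s : B \to \wt B := B \cup U$ and, for each $s \in [-1,1]$, define a contact structure $\eta_s$ on the enlarged ball $B_1 := \wt j_1(B)$ by declaring it to be $(\wt j_s)_*\xi_s$ on $\wt j_s(B)$ and $\zeta$ elsewhere. Then $\eta_1 = (\wt j_1)_* \xi_1$ is globally tight, and Gray's stability theorem propagates tightness along the continuous family $\{\eta_s\}_{s\in[-1,1]}$ to $\eta_{-1}$, whose restriction to $B$ has $\FF$ as its characteristic foliation on $\partial B = S$. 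This places $\FF$ in a tight contact neighborhood, proving tightness. The only substantive analytic ingredient is the holomorphic fillability statement in Remark \ref{rem:ext-tight}; everything else is formal manipulation with the machinery already developed in Sections \ref{sec:simple}--\ref{sec:extension}.
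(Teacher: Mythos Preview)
Your approach is essentially identical to the paper's own proof: reduce to the special-taming-function case via Proposition~\ref{prop:constr-enriched-taming}, then run the endgame of Proposition~\ref{prop:ext-fol-cont0} using Remark~\ref{rem:ext-tight} for the tightness of the pseudoconvex extension.

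There is one slip to fix. In your third paragraph you carry out the extension procedure (Propositions~\ref{prop:simple-extension}, \ref{prop:imm-fol-cont}, \ref{prop:ext-fol-cont}) only for $s=1$, producing a single tight contact structure $\xi_1$ on $B$. But in the fourth paragraph you then write $\eta_s := (\wt j_s)_*\xi_s$ on $\wt j_s(B)$, gluing in $\zeta$ elsewhere, and appeal to Gray stability along the family $\{\eta_s\}$. This requires a contact structure $\xi_s$ on $B$ extending $\FF_s$ for \emph{every} $s\in[-1,1]$, not just $s=1$: without the full family there is no homotopy of contact structures on $B_1$ connecting $\eta_1$ to something inducing $\FF$ on $S$. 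The fix is immediate and is exactly what the paper does: run the extension step parametrically in $s$ (Propositions~\ref{prop:simple-extension}, \ref{prop:imm-fol-cont}, \ref{prop:ext-fol-cont} are all stated for families), obtaining $\xi_s$ for all $s$. Only the tightness of $\xi_1$ (via Remark~\ref{rem:ext-tight}) is actually needed, since Gray then transports tightness from $\eta_1$ to $\eta_{-1}$.
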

\begin{proof} If $\FF$ admits a {\em special} simple taming function then the claim follows from  Proposition \ref{prop:ext-fol-cont} and Remark \ref{rem:ext-tight}. In the general case, we extend $\FF$ to a contact structure $\xi$  on an annulus  $S\times[-\eps,\eps]$ and  find an isotopy $j_s:S\to S\times[-\eps,\eps], s\in[-1,1]$, such that $j_1(S)\subset S\times[0,\eps)$, $j_{-1}(S)\subset S\times[-\eps,0]$ and the family of foliations $\{\FF_s:=j_s^*\xi\}$ admits a family of special taming functions.
  Arguing as in the proof of Proposition  \ref{prop:ext-fol-cont0}  we conclude  that $\FF$ embeds into a tight contact ball bounded by $\FF_1$, and therefore it is tight itself.  \end{proof}
\begin{remark} Note that the simplicity assumption for a taming function $\phi$ was used in the proof twice. First, in  the proof of Lemma \ref{lm:simple-restrict}(ii) in order to  extend the taming function $\phi$ to the ball, and second time in the proof of Proposition \ref{prop:imm-fol-cont} to construct a compatible immersion to $\R^3$.
\end{remark}
%%%%%%%%%%%%%%%%%%%%%
  \section{Proof of   the main theorem} \label{sec:proof-main}
   Let us choose the contact form $dz+r^2d\phi$ on $\R^3$ for the standard contact structure $\zeta_0$.
 Let $\xi_\lambda$, $\lambda\in \Lambda$, be a family of tight contact structures  on $\R^3 $  which coincides with   $\zeta_0 $ outside of a  compact set $K$. We can assume that $K$ is a ball of radius $1$ centered at a point with cylindrical coordinates $z=0,\phi=0, r=3$. Consider the family $B_r$ of balls of radius $r$ centered at $0$, $r\in[1,5]$, so that $B_5\supset K$ and $B_1\subset \R^3\setminus K$. Choose a family of meridians $\gamma_r\subset \p B_r$ connecting the poles. 
  Note that  the characteristic foliations $(\FF_{\lambda,r},\gamma_r)$ induced by $\xi_\lambda$ on $ \p B_r$ are normalized.
 
    Applying  Proposition \ref{prop:Igusa} to the  complements of neighborhoods of $\gamma_r\subset \p B_r$, we  can arrange that   the   characteristic foliations  $\FF_{\lambda,r}$  are generalized Morse.    Hence, we can use Proposition \ref{prop:ext-fol-cont0} to find  extensions of $\FF_{\lambda,r}$ to $B_r$ as tight contact structures $\zeta_{\lambda,r}$  on $B^r$ $r\in [1,5]$. Moreover, for $r=1,5$ and all $\lambda$ and for $\lambda\in\Lambda_0$ and all $r\in[1,5]$ the foliations $\FF_{\lambda,r}$ are standard. Therefore, their extensions $\zeta_{\lambda,r}$  are standard as well.

  Denote by $\eta_{\lambda,r}$ the contact structure which is equal to $\xi_\lambda$ on $\R^3\setminus B_r$, and equal to $\zeta_{\lambda,r}$ on $B_r$.
 Then $\eta_{\lambda,1}=\xi_\lambda$ while $\eta_{\lambda,5}=\zeta_0$.
This concludes the proof of Theorem \ref{thm:main}.

 \end{document}